\theoremstyle{plain}
\newtheorem{theorem}{Theorem}[section]
\newtheorem{lemma}[theorem]{Lemma}
\newtheorem{proposition}[theorem]{Proposition}
\newtheorem{corollary}[theorem]{Corollary}
\theoremstyle{definition}
\newtheorem{remark}[theorem]{Remark}
\newcommand{\bignorm}[1]{{\left\|#1\right\|}}
\newcommand{\norm}[1]{{\|#1\|}}
\newcommand{\wtilde}[1]{{\widetilde{#1}}}
\def\supp{\mathop{\mathrm{supp}}\nolimits}
\def\Id{\mathop{\mathrm{Id}}\nolimits}
\def\Op{\mathop{\mathrm{Op}}\nolimits}
\def\IK{\mathop{\mathrm{IK}}\nolimits}
\def\Range{\mathop{\mathrm{Range}}\nolimits}
\def\sgn{\mathop{\mathrm{sgn}}\nolimits}
\def\Re{\mathop{\mathrm{Re}}\nolimits}
\def\Im{\mathop{\mathrm{Im}}\nolimits}
\def\loc{\mathop{\mathrm{loc}}\nolimits}
\def\com{\mathop{\mathrm{com}}\nolimits}
\def\R{{\mathbb{R}}}
\def\Z{{\mathbb{Z}}}
\def\N{{\mathbb{N}}}
\def\C{{\mathbb{C}}}
\def\S{{\mathcal{S}}}
\def\F{{\mathcal{F}}}
\def\H{{\mathcal{H}}}
\def\D{{\mathcal{D}}}
\def\<{{\langle}}
\def\>{{\rangle}}
\def\ep{{\varepsilon}}
\DeclareMathOperator*{\slim}{s-lim}
\title{Strichartz estimates for Schr\"odinger equations with slowly decaying potentials}
\author{Haruya Mizutani\footnote{Department of Mathematics, Graduate School of Science, Osaka University, Toyonaka, Osaka 560-0043, Japan. E-mail address: \texttt{haruya@math.sci.osaka-u.ac.jp} }}
\date{\empty}
\begin{document}
\maketitle

\footnotetext{2010 \textit{Mathematics Subject Classification}.Primary 35Q41; Secondary 35B45.}\footnotetext{\textit{Key words and phrases}. Schr\"odinger equation, Strichartz estimates, slowly decaying potentials.}

\begin{abstract}
For Schr\"odinger equations with a class of slowly decaying repulsive potentials, we show that the solution satisfies global-in-time Strichartz estimates for any admissible pairs. Our admissible class of potentials includes the positive homogeneous potential $Z|x|^{-\mu}$ with $Z>0$ and $0<\mu<2$ in three and higher dimensions, especially the repulsive Coulomb potential. The proof employs several techniques from scattering theory such as the long time parametrix construction of Isozaki-Kitada type, propagation estimates and local decay estimates. 
\end{abstract}

\section{Introduction}
\subsection{Main results}
\label{main_results}

Let $H=-\Delta+V(x)$ be the  Schr\"odinger operator  on $\R^n$ with a real-valued potential $V(x)$ decaying at infinity. Consider the Cauchy problem of Schr\"odinger equation
\begin{align}
\label{equation_1}
i\partial_t u=Hu+F,\quad (t,x)\in \R\times\R^n;\quad u|_{t=0}=u_0,
\end{align}
where $u_0:\R^n\to \C$ and $F:\R\times \R^n\to \C$ are given data. The present paper is concerned with the so-called {\it Strichartz estimates}, which is a family of space--time inequalities of the form
\begin{align}
\label{Strichartz}
\norm{u}_{L^p(\R;L^q(\R^n))}\le C\norm{u_0}_{L^2(\R^n)}+C\norm{F}_{L^{\tilde p'}(\R;L^{\tilde q'}(\R^n))}
\end{align}
where $(p,q)$ and $(\tilde p,\tilde q)$ satisfy the admissible condition
\begin{align}
\label{admissible}
p\ge2,\quad
2/p=n(1/2-1/q),\quad
(n,p,q)\neq(2,2,\infty). 
\end{align}
There is a vast literature on Strichartz estimates for Schr\"odinger equations with potentials (see the   discussion below). However, the case when $V$ is slowly decaying in the sense that 
\begin{align}
\label{slowly_decaying}
|V(x)|\sim |x|^{-\mu},\quad |x|\to \infty,
\end{align}
for some $\mu\in (0,2)$, is less understood. To the author's best knowledge, there is only one negative result in this case given by Goldberg-Vega-Visciglia \cite{GVV} where they showed that if 
$$
V\in C^3(\R^n\setminus\{0\};\R),\quad V(x)=|x|^{-\mu}\wtilde V(\theta),\quad \theta=x/|x|,\quad \mu\in [0,2),
$$
and  $\wtilde V$ has a non-degenerate minimum point so that $\min\limits\wtilde V=0$ then, for any admissible pair, (global-in-time) Strichartz estimates cannot hold in general. Note that radially symmetric potentials clearly do not satisfy the above condition for the counterexample. 

In light of those observations, the purpose of this paper to prove Strichartz estimates for a class of slowly decaying potentials which particularly includes radially symmetric positive potentials $|x|^{-\mu}$ with $\mu\in (0,2)$. More precisely, we first consider the following condition. 

\begin{itemize}
\item[(H1)] For all $\alpha\in \Z_+^n$, there exists $C_\alpha>0$ such that
$$
|\partial_x^\alpha V(x)|\le C_\alpha (1+|x|)^{-\mu-|\alpha|},\quad x\in\R^n.$$
\item[{(H2)}] There exists $C_1>0$ such that $$V(x)\ge C_1(1+|x|)^{-\mu},\quad x\in\R^n.$$  
\item[{(H3)}] There exists $R_0,C_2>0$ such that $$-x\cdot\nabla V(x)\ge C_2(1+|x|)^{-\mu},\quad |x|\ge R_0.$$ 
\end{itemize}

Under condition (H1), $-\Delta+V$ is essentially self-adjoint on $C_0^\infty(\R^n)$ and we denote by $H$ its unique self-adjoint extension on $L^2(\R^n)$ with domain $D(H)=D(-\Delta)$. The solution $u$ to \eqref{equation_1} is given by Duhamel's formula
$$
u=e^{-itH}u_0-i\int_0^te^{-i(t-s)H}F(s)ds.
$$
Moreover, $H$ is purely absolutely continuous: $\sigma(H)=\sigma_{\mathrm{ac}}(H)=[0,\infty)$. In particular, $H$ has no eigenvalue.  The main result for smooth potentials then is as follows.

\begin{theorem}
\label{theorem_1}
Let $n\ge 2$, $\mu\in (0,2)$ and $V\in C^\infty(\R^n;\R)$ satisfy \emph{(H1)}--\emph{(H3)}. Then, for any $(p,q)$ and $(\tilde p,\tilde q)$ satisfying \eqref{admissible}, there exists $C>0$ such that
\begin{align}
\label{theorem_1_1}
\norm{e^{-itH}u_0}_{L^p(\R;L^q(\R^n))}&\le C\norm{u_0}_{L^2(\R^n)},\\
\label{theorem_1_2}
\bignorm{\int_0^t e^{-i(t-s)H}F(s)ds}_{L^p(\R;L^q(\R^n))}&\le C\norm{F}_{L^{\tilde p'}(\R;L^{\tilde q'}(\R^n))},
\end{align}
for all $u_0\in L^2(\R^n)$ and $F\in L^{\tilde p'}(\R;L^{\tilde q'}(\R^n))\cap L^1_{\mathrm{loc}}(\R;L^2(\R^n))$. 
\end{theorem}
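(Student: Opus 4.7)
A $TT^\ast$ argument combined with the Christ--Kiselev lemma reduces the inhomogeneous Strichartz estimate \eqref{theorem_1_2} to the homogeneous estimate \eqref{theorem_1_1} (the double endpoint $(p,q)=(\tilde p,\tilde q)=(2,2n/(n-2))$ requiring a separate bilinear treatment). The natural first move for \eqref{theorem_1_1} is a spectral localization $\Id = \chi_0(H) + \sum_{h\in 2^{-\N}} \chi(h^2 H)$ with $\chi_0 \in C_0^\infty(\R)$ cutting off near zero and $\chi \in C_0^\infty((0,\infty))$. Using almost-orthogonality and the fact that Strichartz norms with $p,q \ge 2$ tolerate square-function estimates, it suffices to establish \eqref{theorem_1_1} uniformly for each spectral piece.

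For the low-energy piece $\chi_0(H)u_0$, I would combine Kato--Yajima local smoothing---obtained from the positive-commutator estimate for $i[H,A]$ with the generator of dilations $A=(x\cdot D+D\cdot x)/2$, whose positivity comes from (H3), together with the absence of eigenvalues guaranteed by (H2)--(H3)---with weighted $L^2$ boundedness of $\chi_0(H)$ and Sobolev embedding. The combination yields \eqref{theorem_1_1} for the low piece via a direct Keel--Tao argument using the resulting dispersive bound on $\chi_0(H)e^{-itH}$ obtained from the resolvent expansion near zero energy.

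For the dyadic piece $\chi(h^2 H)$ with $h\in(0,1]$, the plan is a semiclassical Isozaki--Kitada parametrix. Decompose microlocally into outgoing and incoming semiclassical symbols $a^\pm_h(x,\xi)$ supported in regions of the form $\{\pm x\cdot\xi \ge -\sigma |x|\,|\xi|,\ |\xi|\sim 1\}$ with $\sigma$ slightly less than $1$. Under (H1)--(H3) the classical flow is non-trapping and outgoing trajectories stay outgoing for all positive times, so the eikonal equation $|\nabla_x \phi^\pm|^2+V(x)=|\xi|^2$ admits global outgoing/incoming solutions $\phi^\pm$ with symbolic control on $\phi^\pm(x,\xi)-x\cdot\xi$, and the transport equations for $a^\pm_h$ are solvable along the Hamiltonian flow. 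The resulting semiclassical FIOs $J_\pm$ satisfy the intertwining
$$
 H J_\pm = J_\pm(-\Delta) + R_\pm,
$$
with remainder $R_\pm$ having symbol rapidly decaying in $\<x\>$ within the outgoing/incoming region. Duhamel's formula then gives
$$
e^{-itH} J_\pm - J_\pm e^{it\Delta} = -i\int_0^t e^{-i(t-s)H}\, R_\pm\, e^{is\Delta}\,ds,
$$
whose right-hand side is controlled in $L^p_t L^q_x$ via dual Kato-smoothing bounds $\|\<x\>^{-s}e^{-itH}\|_{L^2_{t,x}\leftarrow L^2}$ and $\|\<x\>^{-s}e^{it\Delta}\|_{L^2_{t,x}\leftarrow L^2}$ (both by-products of the same positive-commutator analysis). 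The main term $J_\pm e^{it\Delta}\chi(h^2|D|^2) J_\pm^\ast$ inherits the free dispersive estimate $\lesssim |t|^{-n/2}$ via stationary-phase analysis of its Schwartz kernel, so Keel--Tao yields Strichartz. A time-reversal argument interchanges the outgoing and incoming microlocal regions.

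\textbf{Main obstacle.} The hardest step is constructing the phase $\phi^\pm$ and symbol $a^\pm_h$ with uniform semiclassical symbolic estimates when $\mu\le 1$, i.e.\ in the scattering-theoretically long-range regime. Here $\phi^\pm(x,\xi)-x\cdot\xi$ is unbounded (growing like $|x|^{1-\mu}$, logarithmically in the Coulomb case $\mu=1$), so one must employ a long-range modifier of Dollard / Isozaki--Kitada type adapted to the full range $\mu\in(0,2)$ and track all its derivatives sharply in order to apply semiclassical pseudodifferential/FIO calculus. A second subtlety is matching the low-, intermediate- and high-energy constructions uniformly in $h$, which requires propagation and local decay estimates valid uniformly in energy; these in turn rest on the Mourre-type positive commutator supplied by (H2)--(H3).
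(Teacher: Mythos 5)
Your outline matches the paper's strategy at high energies (dyadic localization, outgoing/incoming splitting, semiclassical Isozaki--Kitada parametrix with stationary phase for the main term and propagation/local-decay bounds for the remainder), but it has a genuine gap exactly where the paper's main difficulty lies: the low-energy regime. You treat all low energies as a single block $\chi_0(H)u_0$ and claim Strichartz for it from Kato smoothing, Sobolev embedding, and ``a dispersive bound on $\chi_0(H)e^{-itH}$ obtained from the resolvent expansion near zero energy.'' No such dispersive estimate is available for slowly decaying potentials, and it is not proved (nor needed) in the paper; for $\mu<2$ the zero-energy behavior is anomalous and a full $|t|^{-n/2}$ bound for the whole low-energy block is essentially as hard as the theorem itself. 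The paper instead keeps the homogeneous dyadic decomposition down to $\lambda\to 0$ (summed via square-function estimates for $H$, which use $V\ge 0$ and Gaussian heat-kernel bounds) and must then face the fact that the naively rescaled potential $V^\lambda(x)=\lambda^{-2}V(\lambda^{-1}x)$ blows up as $\lambda\to 0$; the key device is the second, anomalous scaling $h=\lambda^{2/\mu-1}$, producing $H_h=-h^2\Delta+V_h$ with $V_h$ uniformly bounded away from the origin, after which the Isozaki--Kitada machinery, semiclassical Mourre/local decay estimates, and the dispersive bound can be run uniformly in $h$. Your plan contains no substitute for this step, and your local-smoothing-plus-Sobolev argument cannot recover it: without an energy-by-energy Bernstein gain $\norm{\varphi(\lambda^{-2}H)}_{2\to 2^*}\lesssim\lambda$ (uniform as $\lambda\to0$) the weighted $L^2$ bound does not convert into $L^p_tL^q_x$ control of the full low-energy piece.

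Two further points would need repair even granting the above. First, your microlocal splitting into $a_h^\pm$ omits the spatially bounded region: the eikonal/IK construction only works in $\Gamma^\pm(R,I,\sigma)$ with $|x|>R$ large, and the part of the symbol supported in $\{|x|\lesssim R\}$ requires a separate argument (the paper uses a Staffilani--Tataru-type scheme: short-time WKB parametrix glued over unit time intervals by the local smoothing estimate). Second, the double-endpoint inhomogeneous estimate $(p,q)=(\tilde p,\tilde q)=(2,2^*)$, which is part of the statement for $n\ge3$, cannot be obtained from the homogeneous bound and Christ--Kiselev; the paper devotes its Section 4 to the retarded estimate, decomposing the propagator into diagonal and off-diagonal outgoing/incoming/compact/remainder pieces and exploiting the good sign of $t-s$ for each. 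Your proposal only flags this as ``a separate bilinear treatment'' without supplying it.
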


Here we have taken the condition $F\in L^1_{\mathrm{loc}}(\R;L^2(\R^n))$ to make sure  that the map $F\mapsto \int_0^t e^{-i(t-s)H}F(s)ds$ has clear sense. Of course, \eqref{theorem_1_2} implies that it extends to a bounded operator from $L^{\tilde p'}(\R;L^{\tilde q'}(\R^n))$ to $L^p(\R;L^q(\R^n))$. The same remark also applies to the next theorem. 

We next consider potentials with local singularity. Combining with Theorem \ref{theorem_1} and weighted space-time $L^2$-estimates proved by \cite{BoMi2}, we have the following. 

\begin{theorem}	
\label{theorem_2}
Let $n\ge3$, $Z>0$ and $\mu\in (0,2)$. Suppose that $V_S\in C^\infty(\R^n;\R)$ satisfies
$$
|\partial_x^\alpha V_S(x)|\le C_\alpha(1+|x|)^{-1-\mu-|\alpha|},\quad x\in \R^n,
$$ 
for all $\alpha\in \Z^n_+$. Let $H_1=-\Delta+Z|x|^{-\mu}+\ep V_S(x)$. Then there exists $\ep_*=\ep_*(Z,\mu,V_S)>0$ such that for all $\ep\in [0,\ep_*)$ and $(p,q)$ and $(\tilde p,\tilde q)$ satisfying \eqref{admissible}, there exists $C>0$ such that
\begin{align*}
\norm{e^{-itH_1}u_0}_{L^p(\R;L^q(\R^n))}&\le C\norm{u_0}_{L^2(\R^n)},\\
\bignorm{\int_0^t e^{-i(t-s)H_1}F(s)ds}_{L^p(\R;L^q(\R^n))}&\le C\norm{F}_{L^{\tilde p'}(\R;L^{\tilde q'}(\R^n))}
\end{align*}
for all $u_0\in L^2(\R^n)$ and $F\in L^{\tilde p'}(\R;L^{\tilde q'}(\R^n))\cap L^1_{\mathrm{loc}}(\R;L^2(\R^n))$. 
\end{theorem}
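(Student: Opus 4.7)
The plan is to reduce Theorem \ref{theorem_2} to Theorem \ref{theorem_1} by a standard perturbative argument in which the local singularity at the origin is absorbed via the weighted space--time $L^2$ estimates of \cite{BoMi2}. First I would fix $\phi\in C^\infty(\R^n;\R)$ with $\phi(x)=|x|^{-\mu}$ on $\{|x|\ge 1\}$ and $\phi>0$ everywhere, set
\[
V_0(x):= Z\phi(x)+\ep V_S(x),\qquad W(x):=Z(|x|^{-\mu}-\phi(x)),
\]
and write $H_1=H_0+W$ with $H_0=-\Delta+V_0$ smooth and $W$ compactly supported in $\{|x|\le 1\}$ with $|W(x)|\lesssim |x|^{-\mu}$. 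For $\ep$ small enough the potential $V_0$ satisfies \emph{(H1)}--\emph{(H3)}: the derivative bounds follow from smoothness of $\phi$ and the hypothesis on $V_S$; the main term $Z|x|^{-\mu}$ at infinity dominates $\ep V_S=O(\ep\<x\>^{-1-\mu})$, yielding \emph{(H2)} globally (taking $\phi\ge c_0>0$ on $\{|x|\le1\}$) and \emph{(H3)} for $|x|\ge R_0$ with $R_0$ large. Consequently Theorem \ref{theorem_1} provides the Strichartz estimates \eqref{theorem_1_1}--\eqref{theorem_1_2} for $e^{-itH_0}$.

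With Strichartz for $H_0$ in hand, I would expand via Duhamel's formula
\[
e^{-itH_1}u_0=e^{-itH_0}u_0-i\int_0^t e^{-i(t-s)H_0}We^{-isH_1}u_0\,ds,
\]
and factor $W=W_1W_2$ with $|W_j|\lesssim |x|^{-\mu/2}\mathbf{1}_{\{|x|\le 1\}}$. For non-endpoint admissible pairs the Christ--Kiselev lemma removes the retardation, and duality against the homogeneous Strichartz estimate for $H_0$ reduces the second term to bilinear pairings of the form $\|W_1e^{-isH_1}u_0\|_{L^2_sL^2_x}\|W_2e^{-isH_0}g\|_{L^2_sL^2_x}$. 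Both factors are controlled by the weighted space--time $L^2$ estimates of \cite{BoMi2}, which apply to $H_0$ by virtue of \emph{(H1)}--\emph{(H3)} and, for $\ep$ sufficiently small, also to $H_1$. The inhomogeneous estimate for $H_1$ follows from the same template after one extra duality step.

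The main obstacle is to ensure that the weighted $L^2$ estimates of \cite{BoMi2} hold with a weight strong enough to dominate the singular factor $|x|^{-\mu/2}$ of $W_j$ near the origin; this is where the assumption $n\ge 3$ intervenes, since in that range $|x|^{-\mu/2}$ with $\mu<2$ lies in suitable Kato/Lorentz classes compatible with the resolvent bounds underlying \cite{BoMi2} and the local regularity of $H_1$ is sufficient to accommodate the Coulomb--type singularity. A secondary issue is the endpoint pair $(p,q)=(2,2n/(n-2))$, where Christ--Kiselev fails and one must instead run $TT^*$ directly, combining the endpoint Strichartz estimate for $H_0$ available from Theorem \ref{theorem_1} with the same weighted estimate for $H_1$; this step is routine once the correct weighted estimate is in place, and it is precisely this combination that forces the smallness condition on $\ep$.
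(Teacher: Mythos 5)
Your overall strategy is the paper's own: split the potential into a smooth part satisfying (H1)--(H3) plus a compactly supported singular remainder bounded by $\chi(x)|x|^{-\mu}$, apply Theorem \ref{theorem_1} to the smooth Hamiltonian, run the Rodnianski--Schlag Duhamel scheme, and absorb the singular factor through the weighted space--time $L^2$ estimates of \cite{BoMi2} applied to $H_1$ itself, with the smallness $\ep<\ep_*$ coming precisely from the repulsivity conditions \eqref{proposition_singular_1_1} for $Z|x|^{-\mu}+\ep V_S$. Two points in your write-up, however, do not go through as stated. First, the claim that the weighted estimates of \cite{BoMi2} ``apply to $H_0$ by virtue of (H1)--(H3)'' is incorrect: the hypotheses of Proposition \ref{proposition_singular_1} require $V(x)\gtrsim|x|^{-\mu}$ and $-x\cdot\nabla V(x)\gtrsim|x|^{-\mu}$ all the way down to the origin, which a smooth bounded potential $V_0$ cannot satisfy, so you cannot quote that result for the factor $\norm{W_2e^{-isH_0}g}_{L^2_tL^2_x}$. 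The estimate you need is nevertheless true, but its correct justification is the one the paper uses: since $n\ge3$ and $\mu<2$, the weight $\chi(x)|x|^{-\mu/2}$ lies in $L^n$, so this factor is controlled by H\"older against the endpoint Strichartz norm for $H_0$ furnished by Theorem \ref{theorem_1} (in the paper this appears as the factor $\norm{v_1}_{L^n}$ paired with $\Gamma_H:L^2_tL^{2_*}_x\to L^2_tL^{2^*}_x$). This is exactly why the restriction $n\ge3$ enters, as the paper's remark after Theorem \ref{theorem_2} explains.

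Second, dismissing the endpoint and the inhomogeneous estimate as a ``routine $TT^*$'' step and ``one extra duality step'' hides the only place where an additional ingredient is genuinely needed: for the retarded term $\Gamma_H[V_2\Gamma_{H_1}[F]]$ one must control $\norm{v_2\Gamma_{H_1}[v_1 G]}_{L^2_tL^2_x}$, i.e.\ the \emph{retarded double-weighted} bound $\norm{\rho\,\Gamma_{H_1}[\rho F]}_{L^2_tL^2_x}\lesssim\norm{F}_{L^2_tL^2_x}$, which is the second estimate of Proposition \ref{proposition_singular_1}, used in combination with the three Duhamel identities \eqref{proof_theorem_2_1}--\eqref{proof_theorem_2_3}; a bare $TT^*$ argument does not produce the retarded endpoint estimate. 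Since that double-weighted bound is also supplied by \cite{BoMi2} under the same smallness condition on $\ep$, your argument closes once you make these two substitutions, and it then coincides with the proof in the paper.
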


\begin{remark}
Under the conditions in Theorem \ref{theorem_2}, $H_1$ is defined as the unique self-adjoint operator generated by the lower semi-bounded sesquilinear form $\<(-\Delta+Z|x|^{-\mu}+\ep V_S)f,g\>$ on $C_0^\infty(\R^n)$. Moreover, $H_1$ is purely absolutely continuous and has no eigenvalue. 
\end{remark}

\begin{remark}
It will be seen in the proof that one can choose $\ep_*=\min(Z/M_0,\ \mu Z/M_1)$ in Theorem \ref{theorem_2}, where $M_\ell =\norm{|x|^\mu (x\cdot \nabla)^\ell V_S}_\infty$. 
\end{remark}

\begin{remark}
Both of conditions (H1)--(H3) and conditions in Theorem \ref{theorem_2} do not intersect with one for the above counterexample by \cite{GVV}. 
\end{remark}

\begin{remark}
The restriction $n\ge3$ on the space dimension in Theorem \ref{theorem_2} is due to the use of the following space-time $L^2$-estimate with a singular weight
\begin{align*}
\norm{\chi(x)|x|^{-\mu/2}e^{-it(-\Delta+V)}u_0}_{L^2(\R^{1+n})}\le C \norm{u_0}_{L^2(\R^n)}
\end{align*}
 for $V$ satisfying (H1)--(H3), where $\chi\in C_0^\infty(\R^n)$. This estimate immediately follows from the endpoint Strichartz estimate  if $n\ge3$ since $\chi(x)|x|^{-\mu/2}\in L^{n}$, but this is not the case if $n=2$ since the endpoint Strichartz estimate cannot hold in two space dimensions. It might be possible to obtain this estimate by revisiting the argument of Subsection \ref{subsection_smoothing} in this paper with a more careful analysis. However, we do not pursue this issue for the sake of simplicity. 
\end{remark}

We here recall some known results. When $V\equiv0$, {\it i.e.},  the free case $H=-\Delta$, Strichartz estimates \eqref{Strichartz} were found by Strichartz \cite{Str} in case of $p=q$ and then extended by \cite{GiVe,Yaj} for all admissible pairs, except for the endpoint cases: $p$ or $\tilde p=2$. The endpoint cases were settled by \cite{KeTa}. While the original proof of Strichartz \cite{Str} relied on a Fourier restriction theorem, the proof by \cite{GiVe,Yaj,KeTa} employed the so-called $TT^*$-argument which is based on the $L^2$-boundedness of the unitary group $e^{it\Delta}$ and the following dispersive estimate
$$
\norm{e^{it\Delta}}_{1\to \infty}\le C|t|^{-n/2},\quad  t\in \R\setminus\{0\}.
$$
In fact, Keel-Tao \cite{KeTa} showed in a quite abstract setting that these two estimates imply Strichartz estimates for all admissible pairs. In case with decaying potentials $V$, there is also a huge literature on Strichartz estimates. In particular, in a seminal paper  \cite{RoSc}, Rodnianski-Schlag showed that if $V$ is of very-short range in the sense that there exists $\ep>0$ such that
\begin{align*}
|V(x)|\le C(1+|x|)^{-2-\ep},
\end{align*}
then Strichartz estimates holds for all non-endpoint admissible pairs, provided that initial data belong to the continuous subspace of $H$ and the zero energy is neither an eigenvalue nor a resonance of $H$. Moreover, in the proof, they introduced a simple but very useful perturbation method which states briefly that, for two self-adjoint operators $H_0$ and $H=H_0+V$, if $e^{-itH_0}$ satisfies the Strichartz estimate for a non-endpoint admissible pair $(p,q)$, and there exists a decomposition $V=A^*B$ such that $A$ is $H_0$-smooth and $BP_{\mathrm{ac}}(H)$ is $H$-smooth in the sense of Kato \cite{Kat}, then $e^{-itH}P_{\mathrm{ac}}(H)$ satisfies the Strichartz estimate for the same pair $(p,q)$. For further results on Strichartz estimates with very short-range potentials in a more general sense, we refer to \cite{Gol,Bec,Miz2}. There are also many results in the case when $V$ is of the inverse-square type in the sense that $|x|^2V\in L^\infty(\R^n)$ (see \cite{BPST2,BoMi2,Miz1}). In the slowly decaying case, there are several results on weighted $L^2$ estimates, or equivalently, uniform resolvent estimates \cite{Nak2,FoSk,Ric,BoMi2}. However, as explained above, there is no known positive result on global-in-time Strichartz estimates. Note that, even if $1<\mu<2$, the property \eqref{slowly_decaying} is too weak to apply Rodnianski-Schlag's method since $\<x\>^{-\alpha}$ is $\Delta$-smooth if and only if $\alpha\ge1$ for $n\ge3$ and $\alpha>1$ for $n=2$ (see \cite{Wal}). 

We conclude this subsection by stating a simple application of the above theorems. As is well known, Strichartz estimates are very useful for studying scattering theory for nonlinear dispersive and wave equations. It is hence very likely that Theorems \ref{theorem_1} and \ref{theorem_2} could be used to study the following nonlinear Schr\"odinger equation
\begin{align}
\label{NLS}
(i\partial_t+\Delta-V)v=\sigma |v|^{p}v,\quad (t,x)\in \R\times\R^n;\quad v(0,x)=v_0(x),\quad x\in \R^n,
\end{align}
for $V$ satisfying (H1)--(H3) or the assumption in Theorem \ref{theorem_2} and a suitable range of $p$, where $\sigma=\pm1$.  For instance, we let $n\ge3$, $H=-\Delta+V$ satisfy (H1)--(H3) and consider the mass-critical case $p=4/n$. Then  one can show in both cases $\sigma=\pm1$ the small data scattering, namely  if $\norm{v_0}_{L^2}$ is sufficiently small then there exists a unique (mild) solution $v\in C(\R;L^2(\R^n))\cap L^{2+4/n}(\R^{1+n})$ to \eqref{NLS} such that $v$ scatters in the sense that there exist $v_\pm\in L^2$ such that $$\lim_{t\to\pm\infty}\norm{v(t)-e^{-itH}v_\pm}_{L^2}=0.$$ This is a straightforward consequence of Theorem \ref{theorem_1} and the proof is analogous to the free case  (see, for instance, \cite[Section 3]{KiVi}). Moreover, by linear scattering theory (see, {\it e.g.}, \cite{DeGe}), we see that, for the short-range case $1<\mu<2$, the wave operators 
$$
W^\pm:=\slim_{t\to\pm\infty}e^{itH}e^{it\Delta}
$$
exist and are asymptotically complete: $\Range W_\pm=\H_{\mathrm{ac}}(H)$, the absolutely continuous subspace of $H$, while in the long-range case $0<\mu\le1$, the modified wave operators 
$$
W^\pm_S:=\slim_{t\to\pm\infty}e^{itH}e^{-iS(t,D)}
$$
exist and are asymptotically complete: $\Range W_S^\pm=\H_{\mathrm{ac}}(H)$, where $S(t,D)=\F^{-1}S(t,\xi)\F$ is a Fourier multiplier by an approximate solution to the Hamilton-Jacobi equation
$$
\partial_t S(t,\xi)=|\xi|^2+V(\nabla_\xi S(t,\xi)). 
$$
Since $\H_{\mathrm{ac}}(H)=L^2(\R^n)$ in the present case, we have proved the following corollary. 
\begin{corollary}
\label{corollary_1}
Let $n\ge 3$, $p=4/n$ and $\sigma=\pm1$. If $\norm{v_0}_{L^2}$ is small enough, then \eqref{NLS} admits a unique global mild solution $v\in C(\R;L^2(\R^n))\cap L^{2+4/n}(\R^{1+n})$. Moreover, there exist $\tilde v_\pm\in L^2$ such that
$$
\lim_{t\to\pm\infty}\norm{v(t)-e^{it\Delta}\tilde v_\pm}_{L^2}=0\quad\text{if}\quad 1<\mu<2
$$
and that
$$
\lim_{t\to\pm\infty}\norm{v(t)-e^{-iS(t,D)}\tilde v_\pm}_{L^2}=0\quad\text{if}\quad 0<\mu\le1. 
$$
\end{corollary}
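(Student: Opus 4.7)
The plan is to run a contraction mapping argument at the mass-critical admissible pair using Theorem \ref{theorem_1}, and then re-express the $H$-asymptotic state in terms of the free (or modified free) flow via the linear scattering theory recalled just before the statement. Set $r=2+4/n$; since $n\ge 3$, $(r,r)$ is a non-endpoint admissible pair, so \eqref{theorem_1_1}--\eqref{theorem_1_2} of Theorem \ref{theorem_1} with $(p,q)=(\tilde p,\tilde q)=(r,r)$ apply. Work in the space
$$
X=\Big\{v\in C(\R;L^2(\R^n))\cap L^r(\R^{1+n}) : \norm{v}_X:=\norm{v}_{L^\infty_tL^2_x}+\norm{v}_{L^r_{t,x}}<\infty\Big\},
$$
and consider the Duhamel map $\Phi(v)(t)=e^{-itH}v_0-i\sigma\int_0^te^{-i(t-s)H}(|v|^{4/n}v)(s)\,ds$. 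Theorem \ref{theorem_1} gives
$$
\norm{\Phi(v)}_X\le C\norm{v_0}_{L^2}+C\norm{v}_{L^r_{t,x}}^{1+4/n},
$$
since $r'(1+4/n)=r$; the matching Lipschitz bound for $\Phi(v)-\Phi(w)$ comes from the pointwise estimate $\bigl||a|^{4/n}a-|b|^{4/n}b\bigr|\lesssim(|a|^{4/n}+|b|^{4/n})|a-b|$ combined with H\"older. Hence, for $\norm{v_0}_{L^2}$ sufficiently small, $\Phi$ is a contraction on a small ball of $X$ and produces a unique global mild solution $v\in X$.

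For scattering, write $e^{itH}v(t)=v_0-i\sigma\int_0^te^{isH}(|v|^{4/n}v)(s)\,ds$; applying the dual inhomogeneous Strichartz bound on time slabs $[t_1,t_2]$, together with $|v|^{4/n}v\in L^{r'}(\R^{1+n})$, shows that $e^{itH}v(t)$ is Cauchy in $L^2$ as $t\to\pm\infty$, so there exist $v_\pm\in L^2$ with $\norm{v(t)-e^{-itH}v_\pm}_{L^2}\to 0$. Under (H1)--(H3), $H$ has no eigenvalue or singular continuous spectrum, so $\H_{\ac}(H)=L^2(\R^n)$, and classical time-dependent scattering theory yields that the wave operators $W^\pm=\slim_{t\to\pm\infty}e^{itH}e^{it\Delta}$ (for $1<\mu<2$), respectively the modified wave operators $W^\pm_S=\slim_{t\to\pm\infty}e^{itH}e^{-iS(t,D)}$ (for $0<\mu\le 1$), are unitary from $L^2$ onto $L^2$. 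Setting $\tilde v_\pm=(W^\pm)^{-1}v_\pm$ or $\tilde v_\pm=(W^\pm_S)^{-1}v_\pm$ and using unitarity of $e^{-itH}$ converts the $e^{-itH}v_\pm$ asymptotics into the two desired forms.

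I do not foresee a genuine obstacle. The restriction $n\ge 3$ removes the two-dimensional endpoint issue; the algebraic identity $r'(1+4/n)=r$ makes the critical contraction close exactly as in the $V\equiv 0$ case; and the existence and asymptotic completeness of the (modified) wave operators under (H1)--(H3) is classical. All the new analytic input flows from Theorem \ref{theorem_1}, and the remaining work mirrors the free-potential treatment cited in the excerpt.
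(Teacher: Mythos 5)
Your proposal is correct and follows essentially the same route as the paper: a small-data contraction at the symmetric mass-critical admissible pair $(r,r)$, $r=2+4/n$, using Theorem \ref{theorem_1} (the paper delegates this to the standard argument in \cite[Section 3]{KiVi}), followed by the Cauchy-criterion construction of $v_\pm$ and the conversion to free, respectively modified free, asymptotics via the (modified) wave operators, whose existence and asymptotic completeness together with $\H_{\ac}(H)=L^2$ are exactly the linear scattering facts the paper invokes from \cite{DeGe}. The algebraic identity $r'(1+4/n)=r$ and the identification $\tilde v_\pm=(W^\pm)^{-1}v_\pm$ (or $(W^\pm_S)^{-1}v_\pm$) are the same key points used implicitly in the paper's sketch.
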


\subsection{Notation}
Throughout the paper we use the following notation. $\<x\>$ stands for $\sqrt{1+|x|^2}$. $\S(\R^n)$ denotes the Schwartz space. For Banach spaces $X$ and $Y$, $\mathbb B(X,Y)$ denotes the Banach space of bounded linear operators from $X$ to $Y$,  $\mathbb B(X)=\mathbb B(X,X)$ and $\norm{\cdot}_{X\to Y}:=\norm{\cdot}_{\mathbb B(X,Y)}$. We write $L^q=L^q(\R^n)$, $L^p_tL^q_x=L^p(\R;L^q(\R^n))$, $$\norm{\cdot}_q:=\norm{\cdot}_{L^q},\quad \norm{\cdot}_{p\to q}:=\norm{\cdot}_{L^p\to L^q},\quad \norm{\cdot}:=\norm{\cdot}_{2\to2}.$$ $\<f,g\>$ stands for the inner product $\int f\overline g$ on $L^2$. $\H^\gamma(\R^\ell)$ denotes the $L^2$-Sobolev space on $\R^\ell$ of order $\gamma$ with norm $\norm{f}_{\H^\gamma}=\norm{\<D\>^\gamma f}_2$. For $p\in [1,\infty]$, $p'=p/(p-1)$ denotes its H\"older conjugate exponent. We denote the Sobolev critical exponent and its dual by
$$
2^*=\begin{cases}\frac{2n}{n-2},&n\ge3,\\ \infty,&n=2,\end{cases},\quad 2_*=(2^*)'=\frac{2n}{n+2}.
$$  
For positive constants or operators $A,B$, $A\lesssim B$ (resp. $A\gtrsim B$) means that there exists a non-essential constant $C>0$ such that $A\le CB$ (resp, $A\ge CB$). In particular, the symbols $\lesssim$ and $\gtrsim$ will be often used when inequalities are uniform with respect to the frequency parameters $\lambda,h$ or to the temporal parameter $t$. 

\subsection{Outline of the proof}
Here we briefly explain the ideas of the proof of Theorem \ref{theorem_1}. We employ the scheme developed by \cite{BGT,BoTz1,BoTz2,BoMi1} in the study of Strichartz estimates on manifolds. 

For simplicity, we may consider the homogeneous estimate \eqref{theorem_1_1} only. At first, since $V$ is non-negative, an abstract argument shows that the square function estimates for the homogeneous Littlewood-Paley decomposition associated with $H$ hold true. Then the proof can be reduced to proving following energy localized estimate
$$
\norm{\varphi(\lambda^{-2}H)e^{-itH}u_0}_{L^p_tL^q_x}\lesssim \norm{u_0}_{L^2},\quad \lambda>0,
$$
where $\varphi\in C_0^\infty((0,\infty))$ and the implicit constant should be independent of $\lambda$. Since the high energy case $\lambda\ge1$ can be handled similarly, we here focus on the low energy case $\lambda\in (0,1]$. 

We next decompose the energy localized solution $\varphi(\lambda^{-2}H)e^{-itH}u_0$ into two regions $\{\lambda |x|\le 1\}$ and $\{\lambda |x|\ge1\}$.  For the part $\{\lambda |x|\le 1\}$, by virtue of Bernstein's inequality $\norm{\varphi(\lambda^{-2}H)}_{2\to 2^*}\lesssim\lambda$, the desired Strichartz estimate can be deduced from a weighted $L^2$-estimate
$$
\norm{\<x\>^{-1}e^{-itH}u_0}_{L^2_tL^2_x}\lesssim\norm{u_0}_{L^2}
$$
which follows from the uniform resolvent estimate proved by Nakamura \cite{Nak2} under the conditions (H1)--(H3) and Kato's smooth perturbation theory \cite{Kat}. For the non-compact part $\{\lambda |x|\ge1\}$, we approximate the energy localization $\varphi(\lambda^{-2}H)$ by a suitable rescaled pseudodifferential operator $\D\Op(a^\lambda)^*\D^*$ modulo an error term, where $\D f(x)=\lambda^{n/2}f(\lambda x)$ is the usual dilation and $a^\lambda(x,\xi)$ is a smooth bounded symbol supported in the region $\{|x|\ge1,\ |\xi|^2+V^\lambda(x)\sim1\}$ and $V^\lambda(x)=\lambda^{-2}V(\lambda^{-1}x)$. Then the error term can be handled by a similar argument as that for the compact part. By rescaling, the main term $\D\Op(a^\lambda)^*\D^*e^{-itH}$ can be written in the form
\begin{align}
\label{outline_1}
\D\Op(a^\lambda)^*e^{-it\lambda^2 H^\lambda }\D^*
\end{align}
where $H^\lambda=-\Delta+V^\lambda(x)$. Here note that $V^\lambda(x)$ only satisfies 
$$
|V^\lambda(x)|\lesssim \lambda^{-2}(1+\lambda^{-1}|x|)^{-\mu}=\lambda^{-2+\mu}(\lambda+|x|)^{-\mu}
$$ and, thus, may blow up as $\lambda\to0$. Hence it seems to be difficult to handle the operator \eqref{outline_1} uniformly in small $\lambda$. To overcome this difficulty, we introduce another small parameter 
$$
h=\lambda^{2/\mu-1}.
$$
By a scaling argument with respect to $h$, the problem then is reduced to treat the operator
\begin{align}
\label{outline_2}
\Op_h(a_h)^*e^{-itH_h/h},
\end{align}
where $H_h=-h^2\Delta+V_h(x)$, $V_h(x)=\lambda^{-2}V(\lambda^{-2/\mu}x)$, and $\Op_h(a_h)$ is a semiclassical pseudodifferential operator with the symbol $a_h$ supported in 
$\{|x|\gtrsim 1,\ |\xi|^2+V_h(x)\sim1\}$. The main advantage to introduce the parameter $h$ is that, 
since $V_h$ obeys $$|V_h(x)|\lesssim (\lambda^{2/\mu}+|x|)^{-\mu}\lesssim \<x\>^{-\mu}$$ for $|x|\gtrsim1$ uniformly in $h$, we can use the semiclassical analysis to handle the operator \eqref{outline_2} uniformly in $h$. Then, we further decompose $\Op_h(a_h)^*e^{-itH_h}$ into two regions $\{|x|\sim1\}$ and $\{|x|\gg1\}$. For the former part, we employ a similar idea as in Staffilani-Tataru \cite{StTa} which yields that the desired Strichartz estimate can be deduced from the semiclassical WKB parametrix construction for sufficiently small $t$, and the local smoothing estimate of the form
$$
\norm{\<x\>^{-1}\varphi(H_h)e^{-itH_h/h}u_0}_{L^2_tL^2}\lesssim \norm{u_0}_{L^2}
$$
which can be obtained by using a semiclassical version of Mourre's theory. It follows from this step that we may assume that $a_h$ is supported in $\{|x|\gg1, |\xi|\sim1\}$. We then decompose $a_h$ into the outgoing part $a_h^+$ and incoming part $a_h^-$. Thanks to the abstract $TT^*$-argument by Keel-Tao \cite{KeTa}, it suffices to show the following dispersive estimate
$$
\norm{\Op_h(a_h^\pm)^*\varphi(H_h)e^{-itH_h/h}\Op_h(a_h^\pm)}_{1\to \infty}\lesssim |th|^{-n/2},\quad t\neq0,\ h\in (0,1].
$$
To this end, we essentially follow the idea of Bouclet-Tzvetkov \cite{BoTz2}. The main ingredient for the proof of this dispersive estimate is the construction of the semiclassical Isozaki-Kitada (IK) parametrix of $e^{-itH_h/h}\Op_h(a_h^+)$ whose main term is of the form
$$
J_h^+(c^+)e^{ith\Delta}J_h^+(d^+)^*,
$$
where $J_h^+(w)$, which is called the IK modifier, is a semiclassical Fourier integral operator with a time-independent phase function $S^+(x,\xi)=x\cdot\xi+O(\<x\>^{1-\mu})$. The dispersive estimate for the main part of the IK parametrix is a simple consequence of the standard stationary phase theorem, while several propagation estimates will be used in order to deal with the error term. To prove such propagation estimates, we employ the local decay estimate for the propagator which can be obtained by means of the semiclassical version studied by Nakamura \cite{Nak1} of the multiple commutator method by Jensen-Mourre-Perry \cite{JMP}. 

The paper is organized as follows. We collect several preliminary materials in Section 2 which include a brief review on the semiclassical pseudodifferential operator calculus, the dilation and rescaled Hamiltonians, basic properties of the spectral multiplier $\varphi(\lambda^{-2}H)$ and an associated Littlewood-Paley decomposition, the approximations of the spectral multiplier in terms of rescaled pseudodifferential operators, and local smoothing and local decay estimates for the propagator. The proofs of homogeneous estimates \eqref{theorem_1_1} and inhomogeneous estimates \eqref{theorem_1_2} of Theorem \ref{theorem_1} are given in Sections 3 and 4, respectively. Section 5 concerns with the proof of Theorem \ref{theorem_2} which is based on Theorem \ref{theorem_1} and Rodnianski-Schlag's method. Appendix A is devoted to  the proof of a  uniform weighted bound for the power of semiclassical resolvent which will be used in the proof of local decay estimates. \\\\
\noindent{\bf Acknowledgements.} The author is partially supported by JSPS KAKENHI Grant-in-Aid for Young Scientists (B) \#JP17K14218 and Grant-in-Aid for Scientific Research (B) \#JP17H02854.

\section{Preliminaries}
We start with collecting several preliminary results which will be used in the sequel. 

\subsection{Semiclassical pseudodifferential calculus}
Let $S^{\mu,m}=S(\<x\>^\mu\<\xi\>^m,\<x\>^{-2}dx^2+\<\xi\>^{-2}d\xi^2)$ be a family of symbols $a\in C^\infty(\R^{2n})$ satisfying
\begin{align}
\label{PDO_0}
|\partial_x^\alpha\partial_\xi^\beta a(x,\xi)|\le C_{\alpha\beta}\<x\>^{\mu-|\alpha|}\<\xi\>^{m-|\beta|},\quad x,\xi\in\R^n. 
\end{align}
As usual, we set $S^{\mu,-\infty}=\bigcap_{m\ge0}S^{\mu,-m}$ and $
S^{-\infty,m}:=\bigcap_{\mu\ge0}S^{-\mu,m}$. Henceforth, for a given $h$-dependent symbol $a_h\in C^\infty(\R^{2n})$ with a small parameter $h\in (0,1]$, we also say that $a_h\in S^{\mu,m}$ if $\{a_h\}_{h\in (0,1]}$ is bounded in $S^{\mu,m}$, namely $C_{\alpha\beta}$ in \eqref{PDO_0} may be taken uniformly in $h\in (0,1]$.

For $a\in S^{\mu,m}$, the semiclassical pseudodifferential operator ($h$-PDO) $\Op_h(a)$ is defined by
$$
\Op_h(a)f(x)=a(x,hD)f(x):=\frac{1}{(2\pi h)^n}\int e^{i(x-y)\cdot\xi/h}a(x,\xi)f(y)dyd\xi.
$$
We also use the notation $\Op(a):=\Op_1(a)$ when $h=1$. $\Op_h(a)$ maps from $\S(\R^n)$ to $\S'(\R^n)$ for any $\mu,m\in \R$. If $a\in S^{0,0}$ then Calder\'on-Vaillancourt's theorem shows that $$\sup_{h\in (0,1]}\norm{\Op_h(a)}\lesssim \sum_{|\alpha+\beta|\le N}\norm{\partial_x^\alpha\partial_\xi^\beta a}_{L^\infty(\R^{2n})}$$ with some $N\in \N$ depending only on $n$. If $a\in S^{\mu,m}$ with $\mu\le0$ and $m<-n$, then $\Op_h(a)$ extends to a bounded operator from $L^p$ to $L^q$ for any $1\le p\le q\le\infty$, satisfying
\begin{align}
\label{PDO_1}
\sup_{h\in (0,1]}h^{n(\frac1p-\frac1q)}\norm{\Op_h(a)}_{p\to q}\le C_{npq}<\infty
\end{align}
(see \cite[Proposition 2.4]{BoTz1}). The following symbolic calculus is also well known (see \cite{Hor}).

\begin{proposition}
\label{proposition_PDO_1}
Let $\mu,\mu',m,m',\in\R$, $a\in S^{\mu,m}$ and $b\in S^{\mu',m'}$. Then there exists $a\# b\in S^{\mu+\mu',m+m'}$ such that $\Op_h(a)\Op_h(b)=\Op_h(a\# b)$ and
\begin{align}
\label{proposition_PDO_1_1}
a\# b=\sum_{|\alpha|<N}\frac{h^{|\alpha|}}{\alpha!}\partial_\xi^\alpha a D_x^\alpha b +h^N r_N,\quad
r_N\in S^{\mu+\mu'-N,m+m'-N}.
\end{align}
Moreover, there exists $a^*\in S^{\mu,m}$ such that $\Op_h(a)^*=\Op_h(a^*)$ and
\begin{align}
\label{proposition_PDO_1_2}
a^*=\sum_{|\alpha|<N}\frac{h^{|\alpha|}}{\alpha!}\partial_\xi^\alpha D_x^\alpha \overline a +h^N r_N^*,\quad
r_N^*\in S^{\mu-N,m-N},
\end{align}
where $\Op_h(a)^*$ is the formal adjoint of $\Op_h(a)$. 
\end{proposition}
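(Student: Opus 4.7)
The plan is to follow the standard derivation of the Kohn--Nirenberg symbolic calculus, as in \cite{Hor}, while explicitly tracking the semiclassical parameter $h \in (0,1]$. A convenient reduction is the substitution $\eta = \xi/h$, under which $\Op_h(a) = \Op_1(\tilde a_h)$ with $\tilde a_h(x,\eta) := a(x, h\eta)$; since $\partial_\eta^\alpha \tilde a_h = h^{|\alpha|}(\partial_\xi^\alpha a)(\cdot, h\cdot)$, each term in the classical composition formula for $\tilde a_h$ and $\tilde b_h$ already carries the factor $h^{|\alpha|}$ appearing in \eqref{proposition_PDO_1_1}. Alternatively, and more transparently for tracking the remainders, one can work directly with the oscillatory integral representations of the operator kernels.

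For \eqref{proposition_PDO_1_1}, starting from the kernel
\[
K(x,y) = (2\pi h)^{-2n}\iiint e^{i[(x-z)\cdot\xi + (z-y)\cdot\eta]/h}\, a(x,\xi)\, b(z,\eta)\, dz\, d\xi\, d\eta
\]
of $\Op_h(a)\Op_h(b)$ and extracting the left symbol via $c(x,\eta) = e^{-ix\cdot\eta/h}\int e^{iy\cdot\eta/h} K(x,y)\, dy$, together with the substitutions $w = z - x$ and $\zeta = \xi - \eta$, one obtains
\[
(a\# b)(x,\eta) = (2\pi h)^{-n}\iint e^{-iw\cdot\zeta/h}\, a(x,\zeta+\eta)\, b(x+w,\eta)\, dw\, d\zeta.
\]
Taylor expanding $a(x,\zeta+\eta)$ in $\zeta$ around $0$ and using $\zeta^\alpha e^{-iw\cdot\zeta/h} = (-hD_w)^\alpha e^{-iw\cdot\zeta/h}$ followed by integration by parts in $w$ and Fourier inversion then produces the main terms $\sum_{|\alpha|<N}\frac{h^{|\alpha|}}{\alpha!}\partial_\xi^\alpha a(x,\eta)\, D_x^\alpha b(x,\eta)$ together with an oscillatory integral remainder $h^N r_N$. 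The adjoint formula \eqref{proposition_PDO_1_2} is obtained analogously: the kernel of $\Op_h(a)^*$ equals $(2\pi h)^{-n}\int e^{i(x-y)\cdot\xi/h}\overline{a(y,\xi)}\, d\xi$ (a right quantization), and conversion to left quantization gives $a^*(x,\eta) = (2\pi h)^{-n}\iint e^{-iz\cdot\zeta/h}\overline{a(x+z,\eta+\zeta)}\, dz\, d\zeta$; expanding $\overline a$ as a Taylor series in $(z,\zeta)$ around the origin and using the identity $(2\pi h)^{-n}\iint e^{-iz\cdot\zeta/h}z^\alpha\zeta^\beta\, dz\, d\zeta = (-ih)^{|\alpha|}\alpha!\,\delta_{\alpha\beta}$ yields precisely \eqref{proposition_PDO_1_2}.

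The main obstacle will be verifying that the remainders $r_N$ and $r_N^*$ lie in $S^{\mu+\mu'-N,m+m'-N}$ and $S^{\mu-N,m-N}$ respectively, with constants uniform in $h \in (0,1]$. The defining oscillatory integrals converge only distributionally, so each seminorm estimate for $\partial_x^\alpha\partial_\eta^\beta r_N$ must be obtained after non-stationary-phase integration by parts: I would split the $w$-integration into the regions $|w| \le \<\eta\>^{-1}$ and $|w| \ge \<\eta\>^{-1}$ and, on the latter, exploit $(1+|w|^2)^{-M}(1 - h^2\Delta_\zeta)^M e^{-iw\cdot\zeta/h} = e^{-iw\cdot\zeta/h}$ to transfer $\zeta$-derivatives onto $a(x,\zeta+\eta)$ and gain decay $\<w\>^{-2M}$. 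The structural mechanism producing the claimed classes is that each factor of $h$ in the expansion is paired with one $\xi$-derivative of $a$ (losing $\<\xi\>^{-1}$) and one $x$-derivative of $b$ (losing $\<x\>^{-1}$), matching the weights $\<x\>^{\mu+\mu'-N}\<\xi\>^{m+m'-N}$; since only nonnegative powers of $h$ appear after integration by parts and $h \le 1$, all constants are automatically uniform in $h$.
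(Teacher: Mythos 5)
The paper itself offers no proof of this proposition (it is quoted from \cite{Hor}), so the only issue is whether your argument is complete as a semiclassical statement, i.e.\ with all symbol seminorms of $r_N$, $r_N^*$ uniform in $h\in(0,1]$. Your formal derivation is the standard one and is correct at that level: the reduction of the composed kernel to $(a\#b)(x,\eta)=(2\pi h)^{-n}\iint e^{-iw\cdot\zeta/h}a(x,\zeta+\eta)b(x+w,\eta)\,dw\,d\zeta$, the Taylor expansion with $\zeta^\alpha e^{-iw\cdot\zeta/h}=(-hD_w)^\alpha e^{-iw\cdot\zeta/h}$ producing the terms of \eqref{proposition_PDO_1_1} and the explicit integral remainder (consistent with Remark \ref{remark_PDO_1}), and the adjoint computation via $(2\pi h)^{-n}\iint e^{-iz\cdot\zeta/h}z^\alpha\zeta^\beta\,dz\,d\zeta=(-ih)^{|\alpha|}\alpha!\,\delta_{\alpha\beta}$ giving \eqref{proposition_PDO_1_2}.

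The gap is exactly where you locate the ``main obstacle,'' and your plan does not close it. The closing claim that ``only nonnegative powers of $h$ appear after integration by parts, so all constants are automatically uniform in $h$'' ignores the prefactor $(2\pi h)^{-n}$ in the oscillatory integrals for $a\#b$ and $a^*$. The non-stationary-phase scheme you describe yields an integrand bounded by $\langle w\rangle^{-2M}\langle\zeta\rangle^{-2L}$ with $h$-independent constants, so after taking absolute values the best it gives is $|r_N|\lesssim h^{-n}\langle x\rangle^{\mu+\mu'-N}\langle\eta\rangle^{m+m'-N}$ (and similarly for every seminorm): a loss of $h^{-n}$, which is precisely the semiclassical content you must rule out. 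Two standard repairs: rescale $(w,\zeta)=(h^{1/2}w',h^{1/2}\zeta')$, so the phase becomes $h$-free and the Jacobian cancels $(2\pi h)^{-n}$, and then run integration by parts and Peetre's inequality in the new variables (quadratic stationary phase with remainder); or prove the crude $h^{-n}$-lossy bound for $r_{N'}$ with $N'$ large and write $r_N=\sum_{N\le|\alpha|<N'}\frac{h^{|\alpha|-N}}{\alpha!}\partial_\xi^\alpha a\,D_x^\alpha b+h^{N'-N}r_{N'}$, each explicit term lying in $S^{\mu+\mu'-N,m+m'-N}$ uniformly since $h\le1$, and $N'$ chosen large enough (depending on the seminorm) to absorb the loss. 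A smaller omission: your integration by parts gains decay only in $w$; you also need decay in $\zeta$, obtained by integrating by parts in $w$ with $(1+|\zeta|^2)^{-L}(1-h^2\Delta_w)^{L}$, both for absolute convergence of the $\zeta$-integral (the factor $(\partial_\xi^\alpha a)(x,\eta+t\zeta)\sim\langle\eta+t\zeta\rangle^{m-N}$ need not be integrable) and to absorb the $\langle\zeta\rangle^{|m-N|}$ loss from Peetre's inequality when converting $\langle\eta+t\zeta\rangle^{m-N}$ into $\langle\eta\rangle^{m-N}$; the splitting of the $w$-integration at $|w|=\langle\eta\rangle^{-1}$ does not by itself accomplish this.
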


\begin{remark}
\label{remark_PDO_1}
The remainder $r_N$ in the expansion \eqref{proposition_PDO_1_1} can be written explicitly as
$$
r_N(x,\xi)=\frac{N}{(2\pi h)^n}\sum_{|\alpha|=N}\frac{1}{\alpha!}\iiint_0^1e^{-y\cdot \eta/h}\frac{\eta^\alpha}{h^\alpha}(1-t)^{N-1} (\partial_\xi^\alpha a)(x,\xi+t\eta)b(x+y,\xi)dtdyd\eta. 
$$
In particular, if $\partial_\xi^\alpha a\equiv0$ on $\R^{2n}$ for all $|\alpha|\ge N$ then $r_N\equiv0$. This is the case when $N=3$ and $a=|\xi|^2+V(x)$, the symbol of $H$. This fact will be used in the proof of Proposition \ref{proposition_functional_2} below. 
\end{remark}

\subsection{Dilation and rescaled Hamiltonians}
\label{subsection_dilation}
The dilation group $\D(\theta)$ defined by $\D(\theta) f(x):=\theta^{n/2}f(\theta x)$ for $\theta>0$  plays a crucial role throughout the paper. $\D(\theta)$ is unitary on $L^2(\R^n)$ and its dual is given by $\D(\theta)^*=\D(\theta^{-1})$. Moreover, $\D(\theta)$ is bounded on $L^q$ satisfying
\begin{align}
\label{dilation_1}
\norm{\D(\theta) f}_{q}=\theta^{n(\frac12-\frac1q)}\norm{f}_q,\quad 1\le q\le\infty.
\end{align}
For a Borel measurable function $\varphi$ on $\R^n$, the actions of $\D(\theta)$ on $\varphi(x)$ and $\varphi(D)$ are given by 
\begin{align}
\label{dilation_2}
\varphi(x)=\D(\theta)\varphi(\theta^{-1}x)\D(\theta)^*,\ 
\varphi(D)=\D(\theta)\varphi(\theta D)\D(\theta)^*,
\end{align}
respectively, where $D=-i\nabla$ and $\varphi(D):=\F^{-1}\varphi(\xi)\F$ is the Fourier multiplier. Given a parameter $\lambda>0$, we will use dilations with two different scaling parameters $$\D:=\D(\lambda),\quad \D_\mu:=\D(\lambda^{2/\mu}).$$ The actions of $\D $ and $\D_\mu $ on $H$ are given by 
\begin{align}
\label{dilation_3_0}
H=\lambda^2\D  H^\lambda \D ^*=\lambda^2\D_\mu  H_{h_\lambda}\D_\mu ^*,\quad 
\end{align}
where the rescaled Hamiltonians $H^\lambda$ and $H_{h_\lambda}$ are defined by 
\begin{align*}
&H^\lambda:=-\Delta +V^\lambda(x),\quad \ \ V^\lambda(x):=\lambda^{-2}V(\lambda^{-1}x),\\
&H_{h_\lambda}:=-{h_\lambda^2}\Delta +V_{h_\lambda}(x),\quad V_{h_\lambda}(x):=\lambda^{-2}V(\lambda^{-2/\mu}x)=h_\lambda^{-\frac{2\mu}{2-\mu}}V(h_\lambda^{-\frac{2}{2-\mu}}x)
\end{align*}
with $h_\lambda=\lambda^{2/\mu-1}$. In what follows we drop the subscript $\lambda$ and use the notation 
$$
h=\lambda^{2/\mu-1}
$$
everywhere. It follows from \eqref{dilation_3_0} that the actions of $\D $ and $\D_\mu $ on $\psi(H)$ are given by
\begin{align}
\label{dilation_3}
\psi (\lambda^{-2}H)=\D \psi (H^\lambda)\D ^*=\D_\mu \psi (H_h )\D_\mu ^*. 
\end{align}
Here, for a Borel measurable function $\psi$ on $\R$, the spectral multiplier $\psi(H)$ is defined by 
$$
\psi(H)=\int\psi(\rho)\, dE_H(\rho), $$
where $E_H$ is the spectral measure  associated with $H$. 

\begin{lemma}
\label{lemma_dilation_1}
Assume {\rm (H1)--(H3)}. Then $V^\lambda$ and $V_h$ satisfy the following estimates:
\begin{align}
\label{lemma_dilation_1_1}
|\partial_x^\alpha V^\lambda(x)|&\le C_\alpha\lambda^{-2+\mu}(\lambda+|x|)^{-\mu-|\alpha|},\quad x\in \R^n,\\
\label{lemma_dilation_1_2}
V^\lambda(x)&\ge C_1\lambda^{-2+\mu}(\lambda+|x|)^{-\mu},\quad x\in \R^n,\\
\label{lemma_dilation_1_3}
x\cdot (\nabla V^\lambda)(x)&\le -C_2\lambda^{-2+\mu}(\lambda+|x|)^{-\mu},\quad |x|\ge \lambda R_0,\\
\label{lemma_dilation_1_4}
|\partial_x^\alpha V_h(x)|&\le C_\alpha (\lambda^{2/\mu}+|x|)^{-\mu-|\alpha|},\quad x\in \R^n,\\
\label{lemma_dilation_1_5}
V_h(x)&\ge C_1(\lambda^{2/\mu}+|x|)^{-\mu},\quad x\in \R^n,\\
\label{lemma_dilation_1_6}
x\cdot (\nabla V_h)(x)&\le -C_2(\lambda^{2/\mu}+|x|)^{-\mu},\quad |x|\ge \lambda^{2/\mu}R_0.
\end{align}
\end{lemma}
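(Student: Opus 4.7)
The plan is a direct computation: each of the six bounds follows by applying the chain rule to the definitions of $V^\lambda$ and $V_h$ and then invoking the corresponding hypothesis (H1), (H2), or (H3). There is no genuine obstacle; the only bookkeeping trick worth highlighting is the identity
\[
(1+\theta^{-1}|x|)^{-\mu-|\alpha|} = \theta^{\mu+|\alpha|}(\theta+|x|)^{-\mu-|\alpha|}, \qquad \theta>0,
\]
which is used to convert bounds on $V$ at the rescaled argument $\lambda^{-1}x$ or $\lambda^{-2/\mu}x$ into bounds of the form stated in terms of $(\lambda+|x|)^{-\mu}$ or $(\lambda^{2/\mu}+|x|)^{-\mu}$.

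First I would establish \eqref{lemma_dilation_1_1}--\eqref{lemma_dilation_1_3} for $V^\lambda$. The chain rule gives $\partial_x^\alpha V^\lambda(x)=\lambda^{-2-|\alpha|}(\partial_x^\alpha V)(\lambda^{-1}x)$, and (H1) combined with the scaling identity above yields \eqref{lemma_dilation_1_1}. The lower bound \eqref{lemma_dilation_1_2} follows from the same substitution applied to (H2). For \eqref{lemma_dilation_1_3}, I would write
\[
x\cdot(\nabla V^\lambda)(x)=\lambda^{-2}\bigl((\lambda^{-1}x)\cdot(\nabla V)(\lambda^{-1}x)\bigr),
\]
so (H3) applies provided $|\lambda^{-1}x|\ge R_0$, i.e.\ $|x|\ge \lambda R_0$, and the scaling identity converts the resulting weight.

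Next I would repeat the same procedure for $V_h(x)=\lambda^{-2}V(\lambda^{-2/\mu}x)$, using the substitution $\theta=\lambda^{2/\mu}$ in the scaling identity. Differentiating gives $\partial_x^\alpha V_h(x)=\lambda^{-2-2|\alpha|/\mu}(\partial_x^\alpha V)(\lambda^{-2/\mu}x)$; (H1) together with the scaling identity produces $\lambda^{-2-2|\alpha|/\mu}\cdot \lambda^{2+2|\alpha|/\mu}(\lambda^{2/\mu}+|x|)^{-\mu-|\alpha|}$, and the two powers of $\lambda$ cancel exactly, giving \eqref{lemma_dilation_1_4}. The same cancellation occurs in (H2) to give \eqref{lemma_dilation_1_5}. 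For \eqref{lemma_dilation_1_6}, writing
\[
x\cdot(\nabla V_h)(x)=\lambda^{-2}\bigl((\lambda^{-2/\mu}x)\cdot(\nabla V)(\lambda^{-2/\mu}x)\bigr),
\]
(H3) applies when $|\lambda^{-2/\mu}x|\ge R_0$, i.e.\ $|x|\ge \lambda^{2/\mu}R_0$, and once more the factor $\lambda^{-2}$ combines with $\lambda^{2}$ coming from the scaling identity to give the clean bound $-C_2(\lambda^{2/\mu}+|x|)^{-\mu}$.

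Since the arguments for $V_h$ and for $V^\lambda$ are essentially identical up to the choice of scaling, I would write out the $V^\lambda$ case in detail and simply note that the same computation with $\theta=\lambda^{2/\mu}$ in place of $\theta=\lambda$ yields \eqref{lemma_dilation_1_4}--\eqref{lemma_dilation_1_6}; in particular, the role of $h=\lambda^{2/\mu-1}$ is precisely to arrange the cancellation of $\lambda$-powers so that the bounds for $V_h$ are uniform in $h\in(0,1]$ (this uniformity is the whole point of passing from $V^\lambda$ to $V_h$ in the outline of the proof).
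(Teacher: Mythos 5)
Your computation is correct and is exactly what the paper intends: the paper's proof simply states that the lemma "follows immediately from (H1)--(H3)," and your direct chain-rule calculation together with the scaling identity $1+\theta^{-1}|x|=\theta^{-1}(\theta+|x|)$ is that immediate verification, with all powers of $\lambda$ accounted for correctly in both the $V^\lambda$ and $V_h$ cases.
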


\begin{proof}
The lemma follows immediately from (H1)--(H3). 
\end{proof}

\begin{remark}
By virtue of this lemma, the action of $\D $ is well adapted to an analysis in the high energy regime $\lambda\ge1$, while the action of $\D_\mu $ is well adapted  to an analysis in the low energy regime $0<\lambda\le1$. Moreover $H_h $ can be regarded as a semiclassical Hamiltonian with the semiclassical parameter $h=\lambda^{2/\mu-1}$ if $\lambda\le1$. 
\end{remark}

\subsection{Spectral multiplier and Littlewood-Paley decomposition}
\label{subsection_multiplier}
This subsection concerns with  mapping properties of the spectral multiplier and square function estimates for the Littlewood-Paley decomposition associated with $H$. We also show here that Theorem \ref{theorem_1} follows from corresponding energy localized estimates (see Theorem \ref{theorem_LP_2}). Throughout this subsection, we assume that $V\ge0$ and $V\in L^1_{\loc}$ so that $H=-\Delta+V$ is defined as the unique self-adjoint operator associated with the non-negative quadratic form $\<(-\Delta+V) u,u\>$ on $C_0^\infty(\R^n)$. We begin with stating H\"ormander's type multiplier theorem. 

\begin{lemma}
\label{lemma_LP_1} 
Let $m\in L^\infty(\R)$ satisfy, with some $\gamma>(n+1)/2$ and non-trivial $\varphi\in C_0^\infty((0,\infty))$, 
$$
|m|_\gamma:=\sup_{t>0}\norm{\varphi(\cdot)m(t\cdot)}_{\H^\gamma(\R)}<\infty.
$$ 
Then all of $m(\lambda^{-2}H),m(H^\lambda)$ and $m(H_h)$ are bounded on $L^p$ for any $1<p<\infty$ and $\lambda>0$. Moreover, their operator norms on $L^p$ are uniformly bounded with respect to $\lambda>0$.
\end{lemma}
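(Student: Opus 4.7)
The plan is to reduce all three multiplier bounds to a single scale-free statement for $H$ itself, and then to invoke a H\"ormander-type spectral multiplier theorem for Schr\"odinger operators with nonnegative potentials.

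First I would use the dilation identities \eqref{dilation_3} to write $m(H^\lambda)=\D^*m(\lambda^{-2}H)\D$ and $m(H_h)=\D_\mu^*m(\lambda^{-2}H)\D_\mu$. By \eqref{dilation_1}, $\D$ and $\D^*=\D(\lambda^{-1})$ are bounded on $L^p$ with operator norms $\lambda^{n(1/2-1/p)}$ and $\lambda^{-n(1/2-1/p)}$ respectively, and likewise for $\D_\mu$; the reciprocal scaling factors cancel in the composition, so
$$\norm{m(H^\lambda)}_{p\to p}=\norm{m(H_h)}_{p\to p}=\norm{m(\lambda^{-2}H)}_{p\to p}.$$
Moreover, the seminorm $|\cdot|_\gamma$ is itself invariant under affine rescaling: setting $m_\lambda(\rho):=m(\lambda^{-2}\rho)$ and changing variables $s=t\lambda^{-2}$ in the supremum,
$$|m_\lambda|_\gamma=\sup_{t>0}\norm{\varphi(\cdot)m(t\lambda^{-2}\cdot)}_{\H^\gamma(\R)}=|m|_\gamma.$$
These two observations together reduce the lemma to the single $\lambda$-free statement $\norm{m(H)}_{p\to p}\lesssim|m|_\gamma$ for every $1<p<\infty$ and every $m$ satisfying $|m|_\gamma<\infty$.

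For this core estimate the key input is a Gaussian upper bound on the heat kernel of $H$. Since $V\ge 0$ and $V\in L^1_{\loc}$, the Feynman-Kac representation (or equivalently the Trotter product formula combined with the positivity of $e^{t\Delta}$) yields the pointwise domination
$$0\le e^{-tH}(x,y)\le (4\pi t)^{-n/2}e^{-|x-y|^2/(4t)},\qquad t>0,\ x,y\in\R^n,$$
by the free heat kernel. With this Gaussian bound in hand I would invoke the classical H\"ormander-type spectral multiplier theorem for nonnegative self-adjoint operators whose heat semigroups satisfy such bounds (as developed e.g.\ by Duong--Ouhabaz--Sikora, Hebisch, or Alexopoulos), which gives the $L^p$-boundedness of $m(H)$ for every $1<p<\infty$ with norm controlled by $|m|_\gamma$ whenever $\gamma>(n+1)/2$.

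I do not anticipate any serious difficulty in executing this plan: both inputs---the Gaussian heat kernel bound via Feynman-Kac and the spectral multiplier theorem for operators admitting such bounds---are entirely standard. The only genuine point that needs care is the double scale-invariance step in the first paragraph, which is precisely what ensures that the constant in the final bound is independent of $\lambda>0$ (equivalently of $h\in(0,1]$), as the lemma asserts.
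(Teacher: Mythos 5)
Your proposal is correct and follows essentially the same route as the paper: the Gaussian heat kernel bound coming from $V\ge0$, an abstract H\"ormander-type multiplier theorem for operators with such bounds (the paper cites Cacciafesta--D'Ancona in place of Duong--Ouhabaz--Sikora et al.), and the dilation identities \eqref{dilation_1}, \eqref{dilation_3} to get $\lambda$-uniformity. The only cosmetic difference is that you rescale the multiplier (using the invariance $|m_\lambda|_\gamma=|m|_\gamma$) and apply the theorem to $H$ alone, whereas the paper applies it directly to each $H^\lambda$ after noting that the rescaled heat kernels satisfy the Gaussian bound uniformly in $\lambda$.
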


\begin{proof}
Since $V\ge0$, the kernel $e^{-tH}(x,y)$ of $e^{-tH}$ satisfies the upper Gaussian bound (see \cite{Sim})
$$
0\le e^{-tH}(x,y)\le e^{t\Delta}(x,y)=(4\pi t)^{-\frac n2}e^{-\frac{|x-y|^2}{4t}},\quad t>0,\ x,y\in\R^n,
$$
which, together with the formula $e^{-tH^\lambda }=\D ^*e^{-\lambda^{-2}tH}\D$, implies
\begin{equation}
\begin{aligned}
\label{proof_lemma_LP_1_1}
e^{-tH^\lambda }(x,y)&=\lambda^{-n}e^{-\lambda^{-2}tH}(\lambda^{-1}x,\lambda^{-1}y)
\le (4\pi t)^{-\frac n2}e^{-\frac{|x-y|^2}{4t}}.
\end{aligned}
\end{equation}
Then an abstract theorem \cite[Theorem 1.2]{CaDa} applies to $H$, yielding 
$$
\norm{m(H^\lambda )f}_{p}\lesssim (p+(p-1)^{-1})(1+|m|_\gamma+\norm{m}_{L^\infty}^2)\norm{f}_{L^p},\quad f\in \S(\R^n),
$$
for all $1<p<\infty$ uniformly in  $\lambda>0$. This estimate, combined with the fact $$
\norm{m(\lambda^{-2}H)}_{p\to p}=\norm{m(H^\lambda )}_{p\to p}=\norm{m(H_h)}_{p\to p}
$$
which follows from \eqref{dilation_1} and \eqref{dilation_3}, implies the desired assertion.
\end{proof}

\begin{lemma}\label{lemma_LP_2}Let  $m\in \mathcal S(\R)$, $1\le p\le q\le\infty$ and $(p,q)\neq(1,1),(\infty,\infty)$. Then\begin{align}\label{lemma_LP_2_1}\norm{m(\lambda^{-2}H)}_{p\to q}\le C_{pq}\lambda^{n(\frac1p-\frac1q)},\quad \lambda>0.\end{align}\end{lemma}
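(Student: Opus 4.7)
The plan is to reduce \eqref{lemma_LP_2_1} to a $\lambda$-uniform estimate for $m(H^\lambda)$ and then combine the Gaussian upper bound \eqref{proof_lemma_LP_1_1} with Lemma~\ref{lemma_LP_1} via a factorisation involving powers of $(1+H^\lambda)^{-1}$. Starting from the dilation identity $m(\lambda^{-2}H)=\D m(H^\lambda)\D^*$ in \eqref{dilation_3} together with the $L^q$-scaling \eqref{dilation_1} applied to $\D=\D(\lambda)$ and to $\D^*=\D(\lambda^{-1})$, I obtain
$$
\norm{m(\lambda^{-2}H)}_{p\to q}=\lambda^{n(\frac12-\frac1q)}\lambda^{n(\frac1p-\frac12)}\norm{m(H^\lambda)}_{p\to q}=\lambda^{n(\frac1p-\frac1q)}\norm{m(H^\lambda)}_{p\to q},
$$
so \eqref{lemma_LP_2_1} reduces to $\norm{m(H^\lambda)}_{p\to q}\lesssim 1$ uniformly in $\lambda>0$.

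For this $\lambda$-uniform bound, I would first convert the pointwise Gaussian bound in \eqref{proof_lemma_LP_1_1} into the uniform heat-semigroup estimate $\norm{e^{-tH^\lambda}}_{p\to q}\lesssim t^{-\frac n2(\frac1p-\frac1q)}$, valid for all $1\le p\le q\le\infty$ and $t>0$, by Young's convolution inequality. Substituting this estimate into the subordination identity
$$
(1+H^\lambda)^{-N}=\frac{1}{\Gamma(N)}\int_0^\infty t^{N-1}e^{-t}e^{-tH^\lambda}\,dt,
$$
which is legitimate since $H^\lambda\ge 0$, I get the resolvent-power bound
$$
\norm{(1+H^\lambda)^{-N}}_{p\to q}\lesssim 1,\quad \lambda>0,\ 1\le p\le q\le\infty,
$$
for every $N>\tfrac{n}{2}(\tfrac1p-\tfrac1q)$, the relevant $t$-integral being a finite Gamma value.

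Finally, under the hypothesis $(p,q)\neq(1,1),(\infty,\infty)$ with $p\le q$, one can always choose $r\in(1,\infty)$ with $p\le r\le q$; fix such an $r$ and integers $N_1>\tfrac{n}{2}(\tfrac1r-\tfrac1q)$, $N_2>\tfrac{n}{2}(\tfrac1p-\tfrac1r)$, and set $\tilde m(E):=(1+E)^{N_1+N_2}m(E)$, which still belongs to $\S(\R)$. Since $\sigma(H^\lambda)\subset[0,\infty)$, the spectral theorem gives
$$
m(H^\lambda)=(1+H^\lambda)^{-N_1}\,\tilde m(H^\lambda)\,(1+H^\lambda)^{-N_2},
$$
and a short verification using the Schwartz decay of $\tilde m$ shows that the quantity $|\tilde m|_\gamma$ appearing in Lemma~\ref{lemma_LP_1} is finite; hence $\norm{\tilde m(H^\lambda)}_{r\to r}\lesssim 1$ uniformly in $\lambda$. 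Combining this with the resolvent-power bound on the two outer factors yields the required $\lambda$-uniform estimate. The main obstacle is that Lemma~\ref{lemma_LP_1} only provides $L^r$-boundedness for $r\in(1,\infty)$, so the factorisation breaks down exactly when $[p,q]\cap(1,\infty)$ is empty, namely for the two excluded pairs $(p,q)=(1,1)$ and $(p,q)=(\infty,\infty)$.
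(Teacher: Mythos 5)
Your proof is correct and takes essentially the same route as the paper: the dilation identity reduces the claim to a $\lambda$-uniform bound for $m(H^\lambda)$, which is then obtained from the factorization $m(H^\lambda)=(1+H^\lambda)^{-N_1}\tilde m(H^\lambda)(1+H^\lambda)^{-N_2}$, with the resolvent powers controlled via the subordination formula and the Gaussian heat-kernel bound \eqref{proof_lemma_LP_1_1}, and the middle factor handled by Lemma \ref{lemma_LP_1} on an intermediate $L^r$. The only (harmless) organizational difference is that the paper first reduces to $q=\infty$ by duality and Riesz--Thorin and uses two equal powers $M$, whereas you treat general $(p,q)$ directly by picking $r\in[p,q]\cap(1,\infty)$ and splitting the powers asymmetrically.
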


\begin{proof} Since $\norm{m(\lambda^{-2}H)}_{p\to q}=\lambda^{n(1/p-1/q)}\norm{m(H^\lambda )}_{p\to q}$ by  \eqref{dilation_1} and \eqref{dilation_3},  it suffices to show $m(H^\lambda )\in \mathbb B(L^p,L^q)$ with uniform bounds in $\lambda$. We also may assume $q=\infty$ since other cases follow from the duality argument, Riesz-Thorin's theorem and Lemma \ref{lemma_LP_1}. Taking a large constant $M$ specified later, we write 
$m(H^\lambda )=(H^\lambda +1)^{-M}\wtilde m(H^\lambda )(H^\lambda +1)^{-M}$,  
where $\wtilde m(t)=(t+1)^{2M}m(t)$. Since $\wtilde m$ satisfies the condition in Lemma \ref{lemma_LP_1}, $\wtilde m(H^\lambda )$ is bounded on $L^r$ uniformly in $\lambda$ for any $1<r<\infty$. Let $r>p$. It remains to show $(H^\lambda +1)^{-M}\in \mathbb B(L^p,L^r)\cap \mathbb B(L^r,L^\infty)$ with uniform bounds in $\lambda$. To this end, we use the well-known formula
$$(H^\lambda +1)^{-M}=\frac{1}{\Gamma(M)}\int_0^\infty t^{M-1}e^{-t}e^{-tH^\lambda }dt$$
which, together with the following decay estimates of the semi-group
$$
\norm{e^{-tH^\lambda }}_{r_1\to r_2}\lesssim t^{-\frac n2(\frac{1}{r_1}-\frac{1}{r_2})},\quad t>0,\quad 1\le r_1\le r_2\le \infty,\ (r_1,r_2)\neq (1,1),(\infty,\infty),
$$
obtained from \eqref{proof_lemma_LP_1_1} and Lemma \ref{lemma_LP_1}, shows that $(H^\lambda +1)^{-M}\in \mathbb B(L^p,L^r)\cap \mathbb B(L^r,L^\infty)$ with operator norms being independent of $\lambda$, provided that $M>n+1$. 
\end{proof}

\begin{remark}Let $V_\pm\ge0$ be such that $V=V_+-V_-$. 
It was proved by \cite{JeNa} that if $V_+\in \mathcal K_n^{\loc}$, $V_-\in \mathcal K_n$ and $m^{(k)}(t)=O(\<t\>^{-\ep-k})$ with some $\ep>0$ and all $k\ge0$, then $m(H)\in \mathbb B(L^p)$ for any $1\le p\le\infty$. Here $\mathcal K_n$ and $\mathcal K_n^{\loc}$ are Kato and local Kato classes, respectively \cite{Sim}. We do not know whether $m(H)$ is bounded on $L^p$ at $p=1$ or $\infty$ under the condition $0\le V\in L^{1}_{\loc}$ only, even if $m\in \S(\R)$. However, the above lemma is sufficient for the purpose of this paper. 
\end{remark}

\begin{remark}
In contrast to Sobolev's inequality $\norm{f}_{{2^*}}\le C\norm{\nabla f}_{2}$ which fails if $n=2$, \eqref{lemma_LP_2_1} with $(p,q)=(2,2^*)$ holds for all $n\ge 2$. This fact will play a crucial role in case of $n=2$. 
\end{remark}

We next recall the square function estimates for the Littlewood-Paley decomposition associated with $H$. Consider a homogeneous dyadic partition of unity on $(0,\infty)$: 

\begin{align}
\label{proposition_LP_3_1}
\sum\limits_{j\in\Z}f(2^{-j}s)=1,\quad s>0,
\end{align}
where $f\in C_0^\infty(\R)$, $\supp f\subset [1/2,2]$, $f(s)=1$ for $3/4<s<3/2$ and $0\le f\le 1$. 
\begin{proposition}
\label{proposition_LP_3}Suppose that $H=-\Delta+V$ has no eigenvalue and $1<q<\infty$. Then
\begin{align}
\label{proposition_LP_3_2}
C_q^{-1}\norm{v}_{q}\le \Big\|\Big(\sum_{j\in\Z}|f(2^{-j}H)v|^2\Big)^{1/2}\Big\|_{q}\le C_q\norm{v}_{q}.
\end{align}
with some $C_q>1$. In particular, we have
\begin{align}
\label{proposition_LP_3_3}
&\norm{v}_{q}\le C_q\Big(\sum_{j\in\Z}\norm{f(2^{-j}H)v}_{q}^2\Big)^{1/2}\quad\text{if}\quad 2\le q<\infty,\\
\label{proposition_LP_3_4}
&\Big(\sum_{j\in\Z}\norm{f(2^{-j}H)v}_{q}^2\Big)^{1/2}\le C_q\norm{v}_{q}\quad\text{if}\quad 1<q\le2.
\end{align}
\end{proposition}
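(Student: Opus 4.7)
My plan is to deduce the two-sided bound \eqref{proposition_LP_3_2} from Lemma \ref{lemma_LP_1} via the standard Rademacher randomization trick, and then derive \eqref{proposition_LP_3_3}--\eqref{proposition_LP_3_4} as corollaries by Minkowski's inequality for mixed $\ell^2$--$L^q$ norms. Let $\{\ep_j\}_{j\in\Z}$ be independent Rademacher variables on a probability space $(\Omega,\mu)$ and set
$$
m_\ep^\omega(\rho):=\sum_{j\in\Z}\ep_j(\omega)f(2^{-j}\rho),\quad \rho>0.
$$
By Khintchine's inequality applied pointwise in $x$ followed by Fubini,
$$
\bignorm{\Big(\sum_{j\in \Z}|f(2^{-j}H)v|^2\Big)^{1/2}}_q^q\sim_q \int_\Omega\norm{m_\ep^\omega(H)v}_q^q\, d\mu(\omega),
$$
so the upper bound in \eqref{proposition_LP_3_2} reduces to the uniform multiplier estimate $\sup_\omega\norm{m_\ep^\omega(H)}_{q\to q}\le C_q$. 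To verify this I would check the H\"ormander condition $\sup_\omega |m_\ep^\omega|_\gamma<\infty$: fixing a non-trivial $\varphi\in C_0^\infty((0,\infty))$ with $\supp\varphi\subset [a,b]$, the sum $\varphi(s)m_\ep^\omega(ts)=\sum_j\ep_j(\omega)\varphi(s)f(2^{-j}ts)$ involves only $j$ with $2^j\in[at/2,2bt]$, hence has at most $O(1)$ non-zero terms uniformly in $t>0$ and $\omega$, each of uniformly bounded $\H^\gamma(\R)$-norm. Lemma \ref{lemma_LP_1} then yields the upper inequality in \eqref{proposition_LP_3_2}.

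For the lower bound I would use a duality argument. Choose $\wtilde f\in C_0^\infty((0,\infty))$ with $\wtilde f\equiv1$ on $\supp f$, so that $f\wtilde f=f$. Since $V\ge0$ and $H$ has no eigenvalue at $0$ by hypothesis, the spectral theorem combined with the partition-of-unity identity \eqref{proposition_LP_3_1} yields $\sum_{j\in \Z}f(2^{-j}H)=\mathbf 1_{(0,\infty)}(H)=\Id$ in the strong $L^2$ sense. Therefore, for $v,w\in L^2\cap L^q$,
$$
\<v,w\>=\sum_{j\in\Z}\<f(2^{-j}H)v,\wtilde f(2^{-j}H) w\>,
$$
and pointwise Cauchy-Schwarz in $j$ followed by H\"older gives
$$
|\<v,w\>|\le \bignorm{\Big(\sum_j|f(2^{-j}H)v|^2\Big)^{1/2}}_q\cdot \bignorm{\Big(\sum_j|\wtilde f(2^{-j}H) w|^2\Big)^{1/2}}_{q'}.
$$
The second factor is controlled by $C_{q'}\norm{w}_{q'}$ by the upper inequality already proved (applied with $\wtilde f$ in place of $f$, whose proof used only that the bump lives in $C_0^\infty((0,\infty))$). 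Taking the supremum over $\norm{w}_{q'}=1$ yields the lower bound in \eqref{proposition_LP_3_2}, and density extends it to all $v\in L^q$.

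Finally, \eqref{proposition_LP_3_3} follows from the lower bound in \eqref{proposition_LP_3_2} combined with Minkowski's embedding $\ell^2(L^q)\hookrightarrow L^q(\ell^2)$ valid for $q\ge2$, while \eqref{proposition_LP_3_4} follows from the upper bound together with the reverse embedding $L^q(\ell^2)\hookrightarrow \ell^2(L^q)$ valid for $q\le2$. The main subtle point of the whole argument is the identity $\sum_{j\in\Z}f(2^{-j}H)=\Id$, which is precisely where the no-eigenvalue hypothesis enters: it excludes any contribution from the spectral subspace at $\rho=0$ that the homogeneous dyadic partition $\{f(2^{-j}\cdot)\}$ cannot see. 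The multiplier bounds themselves are essentially automatic once Lemma \ref{lemma_LP_1} is in place, since the Rademacher sums live in a finitely-overlapping dyadic decomposition.
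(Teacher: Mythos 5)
Your proposal is correct and follows essentially the route the paper indicates: Khintchine randomization combined with the uniform H\"ormander multiplier bound of Lemma \ref{lemma_LP_1} for the upper bound in \eqref{proposition_LP_3_2}, duality together with $\sum_{j\in\Z}f(2^{-j}H)=\mathds 1_{(0,\infty)}(H)=\Id$ (where the no-eigenvalue hypothesis enters) for the lower bound, and Minkowski's inequality for \eqref{proposition_LP_3_3}--\eqref{proposition_LP_3_4}, which is exactly the standard Littlewood--Paley argument the paper cites from Sogge. Your fattened cutoff $\wtilde f$ with $f\wtilde f=f$ plays the same role as the almost-orthogonality $f(2^{-j}H)f(2^{-k}H)=0$ for $|j-k|>2$ that the paper invokes, so this is not a genuinely different proof.
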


\begin{proof}
The proof follows essentially the same argument as that in \cite[Section 0.2]{Sog}. In order to prove \eqref{proposition_LP_3_2} we consider the following square function associated with $H$:
$$
Sv(x)=\Big(\sum_{j\in \Z}|f(2^{-j}H)v(x)|^2\Big)^{1/2},\quad v\in \S(\R^n).
$$
Let $u,v\in \S(\R^n)$. Since $H$ has no eigenvalue and hence $\mathds1_{\{0\}}(H)=0$, it follows from \eqref{proposition_LP_3_1} that $$\<u,v\>=\sum_{j,k\in\Z}\<f(2^{-j}H)u,f(2^{-k}H)v\>.$$ Moreover, the support property 
\begin{align}
\label{proposition_LP_3_proof_1}
f(2^{-j}s)f(2^{-k}s)=0,\quad |j-k|>2,\quad s>0,
\end{align}
implies that $f(2^{-j}H)$ satisfies the following almost orthogonality: 
$$
f(2^{-j}H)f(2^{-k}H)=0,\quad |j-k|>2.
$$
Hence H\"older's inequality implies
\begin{align*}
|\<u,v\>|\le \sum_{|j-k|\le 2}|\<f(2^{-j}H)u,f(2^{-k}H)v\>|\lesssim\norm{Su}_{q}\norm{Sv}_{{q'}}.
\end{align*}
By duality, it remains to show that $S$ is bounded on $L^q$. To this end, we set $$S_\pm v(x)=\Big(\sum_{\pm j\ge0}|f(2^{-j}H)v(x)|^2\Big)^{1/2}$$ and may prove the $L^q$-boundedness of $S_+$, the proof for $S_-$ being analogous. Let $r_j(t)$, $j=0,1,...$, be Rademacher functions on $[0,1]$ and consider the spectral multiplier $m_t(H)$, where
$$
m_t(s)=\sum_{j=0}^\infty r_j(t)f(2^{-j}s).
$$
Using \eqref{proposition_LP_3_proof_1}, we see that for any $k=0,1,2,...$ there exists $C_k$ independent of $t\in [0,1]$ such that
$$
|\partial_s^k m_t(s)|\le C_k s^{-k},\quad s>0.
$$
In particular, $m_t$ satisfies the conditions in Lemma \ref{lemma_LP_1} and hence $\norm{m_t(H)}_{q\to q}\lesssim1$ uniformly in $t\in [0,1]$ by Lemma \ref{lemma_LP_1} with $\lambda=1$. Combining with Khintchine's inequality, we have 
$$
\norm{S_+v}_q\lesssim \bignorm{\norm{m_t(H)v}_{L^q([0,1]_t)}}_{L^q(\R^n_x)}\lesssim \norm{m_t(H)v}_{L^q([0,1]_t;L^q(\R^n_x))}\lesssim \norm{v}_q.
$$
This completes the proof of \eqref{proposition_LP_3_2}. \eqref{proposition_LP_3_3} and \eqref{proposition_LP_3_4} follow immediately  from \eqref{proposition_LP_3_2}.
\end{proof}

We conclude this subsection by observing that, by virtue of Proposition \ref{proposition_LP_3}, Theorem \ref{theorem_1} follows from the following energy localized version. 

\begin{theorem}
\label{theorem_LP_2}
Assume that $n\ge2$ and $V$ satisfies {\rm (H1)--(H3)}. Let $(p,q)$ and $(\tilde p,\tilde q)$ satisfy \eqref{admissible}. Then, for any $\varphi\in C_0^\infty(\R)$ with $\supp \varphi \Subset (0,\infty)$ one has, uniformly in $\lambda>0$, 
\begin{align}
\label{theorem_LP_2_1}
\norm{\varphi(\lambda^{-2}H)e^{-itH}u_0}_{L^p(\R;L^q(\R^n))}&\lesssim \norm{u_0}_{L^2(\R^n)},\\
\label{theorem_LP_2_2}
\bignorm{\int_0^t \varphi(\lambda^{-2}H)e^{-i(t-s)H}F(s)ds}_{L^p(\R;L^q(\R^n))}&\lesssim \norm{F}_{L^{\tilde p'}(\R;L^{\tilde q'}(\R^n))}. 
\end{align}
\end{theorem}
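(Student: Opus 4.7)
The plan is to follow the outline sketched in the introduction. First, by Keel--Tao's abstract $TT^*$ argument, the inhomogeneous bound \eqref{theorem_LP_2_2} follows once the homogeneous bound \eqref{theorem_LP_2_1} is established for every admissible pair uniformly in $\lambda$, so I concentrate on \eqref{theorem_LP_2_1}. This splits into a high-energy regime $\lambda\ge 1$, treated via the dilation $\D$, and a low-energy regime $\lambda\in(0,1]$, treated via $\D_\mu$; the two are parallel, and I describe the low-energy case, which is the new one. Using \eqref{dilation_3}, a compensating time rescaling, and the admissibility relation $2/p=n(1/2-1/q)$ to cancel all $\lambda$-powers coming from the dilation, the claim reduces to proving the $h$-uniform Strichartz estimate
\begin{align*}
\norm{\varphi(H_h)e^{-itH_h/h}u_0}_{L^p_tL^q_x}\lesssim \norm{u_0}_{L^2},\quad h=\lambda^{2/\mu-1}\in(0,1],
\end{align*}
where by Lemma \ref{lemma_dilation_1} the semiclassical Hamiltonian $H_h=-h^2\Delta+V_h$ has a smooth, positive, repulsive potential with $|V_h(x)|\lesssim\<x\>^{-\mu}$ on $|x|\gtrsim 1$ uniformly in $h$.

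Next I split $\varphi(H_h)$ spatially with $\chi\in C_0^\infty(\R^n)$. For the compact piece $\chi(x)\varphi(H_h)e^{-itH_h/h}$, I would first prove a weighted local smoothing estimate
\begin{align*}
\norm{\<x\>^{-1}\varphi(H_h)e^{-itH_h/h}u_0}_{L^2_tL^2_x}\lesssim \norm{u_0}_{L^2},
\end{align*}
by a semiclassical Mourre argument with conjugate operator $A=(x\cdot D+D\cdot x)/2$: (H2) and (H3) yield $i[H_h,A]=-2h^2\Delta-x\cdot\nabla V_h$, which is strictly positive on $\supp\varphi(H_h)$ uniformly in $h$. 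Combining this with the Bernstein-type bound $\norm{\chi(x)\varphi(H_h)}_{L^2\to L^{2^*}}\lesssim h$ (from Lemma \ref{lemma_LP_2} pulled through $\D_\mu$) and interpolation handles every admissible pair, including the endpoint. For the non-compact piece $(1-\chi)\varphi(H_h)$, I would approximate $(1-\chi)\varphi(H_h)$, modulo an error gaining decay in $h$ or in $\<x\>^{-1}$ and thus absorbed by the local smoothing estimate, by a semiclassical pseudodifferential operator $\Op_h(a_h)^*$ whose symbol $a_h$ is supported in $\{|x|\gtrsim 1,\ |\xi|^2+V_h(x)\sim 1\}$.

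The remaining task is Strichartz for $\Op_h(a_h)^*e^{-itH_h/h}$ uniformly in $h$. I further decompose $a_h$ into a near piece supported in $\{|x|\sim 1\}$ and a far piece supported in $\{|x|\gg 1,\ |\xi|\sim 1\}$ (the latter because $V_h$ is negligible there). The near piece is handled by the Staffilani--Tataru scheme: a semiclassical WKB parametrix gives Strichartz on time intervals of length $\sim h$, patched into a global estimate via the local smoothing estimate above. The far piece is split along the sign of $x\cdot\xi$ into outgoing and incoming pieces $a_h^\pm$, and by Keel--Tao the claim reduces to the dispersive bound
\begin{align*}
\norm{\Op_h(a_h^\pm)^*\varphi(H_h)e^{-itH_h/h}\Op_h(a_h^\pm)}_{L^1\to L^\infty}\lesssim |th|^{-n/2},\quad t\neq 0,\ h\in(0,1].
\end{align*}
To prove this I would construct the semiclassical Isozaki--Kitada parametrix $e^{-itH_h/h}\Op_h(a_h^+)\approx J_h^+(c^+)e^{ith\Delta}J_h^+(d^+)^*$, where $J_h^+$ is an $h$-FIO whose phase $S^+(x,\xi)=x\cdot\xi+O(\<x\>^{1-\mu})$ solves the eikonal equation $|\nabla_x S^+|^2+V_h=|\xi|^2$ in the outgoing region, and $c^+,d^+$ are built from transport equations. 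The main term's dispersive bound comes from stationary phase applied to the free kernel; the error term is controlled by semiclassical propagation and local decay estimates for $e^{-itH_h/h}$, obtained through Nakamura's semiclassical implementation of the Jensen--Mourre--Perry multiple commutator method, which again requires (H1)--(H3).

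The hardest step is the Isozaki--Kitada construction in this slowly-decaying setting: since $S^+-x\cdot\xi$ is only $O(\<x\>^{1-\mu})$ with $\mu$ possibly $\le 1$, the phase correction is non-integrable at infinity, so one must work in carefully chosen symbol classes and use the semiclassical local-decay estimates (not pure spatial decay) to control remainders. A second delicate point is that every bound must be uniform in $h\in(0,1]$; this is ultimately possible because Lemma \ref{lemma_dilation_1} provides $h$-uniform control of $V_h$ on $|x|\gtrsim 1$, and the uniform Mourre inequality at the bottom of the whole scheme uses the full strength of the repulsiveness condition (H3).
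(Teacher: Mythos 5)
Your treatment of the homogeneous estimate \eqref{theorem_LP_2_1} follows essentially the same route as the paper (dilations $\D,\D_\mu$, the parameter $h=\lambda^{2/\mu-1}$, PDO approximation of the spectral cutoff, Staffilani--Tataru plus semiclassical local smoothing for $|x|\sim1$, outgoing/incoming splitting and the Isozaki--Kitada parametrix with JMP/Nakamura local decay for the far region). However, there are two genuine gaps.

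First, the inhomogeneous estimate \eqref{theorem_LP_2_2} does not follow from \eqref{theorem_LP_2_1} ``by Keel--Tao's abstract $TT^*$ argument.'' Keel--Tao produces the retarded (double endpoint) estimate only when the \emph{full} conjugated propagator satisfies an $L^1\to L^\infty$ dispersive bound, and no such bound is available here: dispersive estimates hold only after microlocalization into outgoing/incoming pieces, the mixed compositions $\Op_h(a^\mp)^*\varphi(H_h)e^{-itH_h/h}\Op_h(a^\pm)$ obey them only for one sign of $t$, and the spatially compact and remainder pieces obey none at all. From the homogeneous estimates one gets, by duality, only the untruncated estimate over $\R$, and Christ--Kiselev upgrades this to the retarded estimate only when $\tilde p'<p$, which excludes exactly the case $(p,q)=(\tilde p,\tilde q)=(2,2^*)$ that must be proved. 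This is why the paper devotes all of Section 4 to \eqref{theorem_LP_2_2}: one decomposes $\varphi^\lambda(H)e^{-itH}$ into diagonal and off-diagonal pieces built from $a^\pm$, $a^{\com}$ and the remainder $R^\lambda$, uses the observation (Hassell--Zhang) that for each piece one may fix the favorable sign of $t-s$, proves one-sided dispersive bounds for the incoming--outgoing compositions (Lemma \ref{lemma_inhomogeneous_5}), and runs separate weighted space-time arguments for the compact and remainder pieces (Lemmas \ref{lemma_inhomogeneous_3}, \ref{lemma_inhomogeneous_4}, \ref{lemma_inhomogeneous_6}, \ref{lemma_inhomogeneous_7}). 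None of this is addressed in your proposal, so the endpoint case of \eqref{theorem_LP_2_2} is simply missing.

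Second, your scaling bookkeeping in the low-energy reduction is off, and this breaks the argument for the spatially compact piece. Carrying out the rescaling with $\D_\mu$, the time change, and the admissibility relation leaves a loss: the correct reduced statement is $\norm{\varphi(H_h)e^{-itH_h/h}u_0}_{L^p_tL^q_x}\lesssim h^{-1/p}\norm{u_0}_{L^2}$ (cf. \eqref{IK_2}), and the no-loss estimate you state is false already for the free semiclassical propagator. Relatedly, pulling Lemma \ref{lemma_LP_2} through $\D_\mu$ gives $\norm{\chi(x)\varphi(H_h)}_{2\to2^*}\lesssim h^{-1}$, not $\lesssim h$, so Bernstein combined with the (correct) local smoothing bound only yields $h^{-1}$, which overshoots the admissible $h^{-1/2}$ at the endpoint; your treatment of $\chi(x)\varphi(H_h)e^{-itH_h/h}$ therefore does not close. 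The paper avoids this by handling the truly compact region at the \emph{original} scale, where Bernstein costs only $\lambda$ and $\lambda\chi_R(\lambda x)\lesssim\<x\>^{-1}$ lets Proposition \ref{proposition_smoothing_1} absorb it (Proposition \ref{proposition_reduction_1}), while the region $|x|\sim1$ at the semiclassical scale is treated by the Staffilani--Tataru scheme (Proposition \ref{proposition_dispersive_1}); note also that $h$-uniform control of $V_h$ fails for $|x|\lesssim h$, so that region cannot be handled by uniform semiclassical analysis at the $H_h$ level.
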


\begin{proof}[Proof of Theorem \ref{theorem_1}, assuming Theorem \ref{theorem_LP_2}]
Let $f$ be as that in Proposition \ref{proposition_LP_3} and $\varphi\in C_0^\infty((0,\infty))$ so that $\varphi\equiv1$ on $\supp f$. Consider the inhomogeneous estimate \eqref{theorem_1_2}. We may assume without loss of generality that $n\ge3$ and $(p,q)=(\tilde p,\tilde q)=(2,2^*)$ since other cases follow from complex interpolation or the homogeneous estimates \eqref{theorem_1_1} and Christ-Kiselev's lemma \cite{ChKi}. Since $2_*<2<2^*$, \eqref{proposition_LP_3_3}, \eqref{proposition_LP_3_4}  and \eqref{theorem_LP_2_2} with $\lambda=2^{j/2}$ imply
\begin{align*}
\bignorm{\int_0^t e^{-i(t-s)H}F(s)ds}_{L^2_tL^{2^*}_x  }^2
&\lesssim \sum_{j\in \Z}\bignorm{\int_0^t \varphi(2^{-j}H) e^{-i(t-s)H} f(2^{-j}H)F(s)ds}_{L^2_tL^{2^*}_x  }^2\\
&\lesssim \sum_{j\in \Z}\norm{f(2^{-j}H)F}_{L^2_tL^{2_*}_x  }^2\\&\lesssim \norm{F}_{L^2_tL^{2_*}_x  }^2.
\end{align*}
The homogeneous estimate \eqref{theorem_1_1} is verified similarly. 
\end{proof}

\subsection{Functional calculus}
\label{subsection_functional}
Let $\varphi\in C_0^\infty(\R)$ and $\chi\in C_0^\infty(\R^n)$ be such that $0\le \chi\le1$, $\chi(x)=1$ for $|x|<1$, $\chi(x)=0$ for $|x|>2$ and set $\chi_R(x)=\chi(x/R)$ with $R>0$. This subsection is devoted to the construction of approximations of the operators $(1-\chi_R(x))\varphi(H^\lambda )$ and  $(1-\chi_R(x))\varphi(H_h )$ in terms of suitable PDOs. We begin with rough weighted bounds of $(H^\lambda -z)^{-1}$ and $\varphi(H^\lambda )$. 

\begin{lemma}
\label{lemma_functional_1}
For any $\alpha\in \R$ there exists $C_\alpha>0$ such that, for any $\lambda>0$ and $z\in \C\setminus\R$, 
\begin{align}
\label{lemma_functional_1_1}
\norm{\<x\>^{\alpha}(H^\lambda -z)^{-1}\<x\>^{-\alpha}}\le C_\alpha\<z\>^{|\alpha|/2}|\Im z|^{-|\alpha|-1}.
\end{align}
Moreover, $\<x\>^\alpha \varphi(H^\lambda )\<x\>^{-\alpha}$ is bounded on $L^2$ uniformly in $\lambda>0$.
\end{lemma}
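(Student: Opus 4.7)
The plan is an iterated-commutator argument for integer $\alpha$, extended to real $\alpha$ by duality and complex interpolation, followed by the Helffer--Sj\"ostrand formula for the spectral multiplier. Uniformity in $\lambda$ rests on $V^\lambda\ge 0$, hence $H^\lambda\ge 0$ uniformly in $\lambda>0$. As a preliminary I would record the gradient bound
$$
\norm{\nabla(H^\lambda-z)^{-1}}+\norm{(H^\lambda-z)^{-1}\nabla}\le C\<z\>^{1/2}|\Im z|^{-1}
$$
uniformly in $\lambda$: the first term follows by combining $\norm{\nabla(H^\lambda+1)^{-1/2}}\le 1$ (a direct consequence of $V^\lambda\ge 0$) with the spectral-theoretic estimate $\norm{(H^\lambda+1)^{1/2}(H^\lambda-z)^{-1}}\le C\<z\>^{1/2}|\Im z|^{-1}$, and the second follows by duality since $((H^\lambda-z)^{-1})^*=(H^\lambda-\bar z)^{-1}$.

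For integer $\alpha=N\ge 0$ I would induct on $N$ using the identity
$$
\<x\>^N(H^\lambda-z)^{-1}\<x\>^{-N}=(H^\lambda-z)^{-1}-(H^\lambda-z)^{-1}[\<x\>^N,H^\lambda](H^\lambda-z)^{-1}\<x\>^{-N},
$$
where $[\<x\>^N,H^\lambda]=[\<x\>^N,-\Delta]=\Delta(\<x\>^N)+2\nabla(\<x\>^N)\cdot\nabla$ has coefficients of sizes $O(\<x\>^{N-2})$ and $O(\<x\>^{N-1})$. Writing the coefficients in the form $\<x\>^{N-j}\cdot(\text{bounded})$, the zeroth-order contribution is controlled by the inductive bound at level $N-2$, while the first-order contribution forces a companion induction for the auxiliary estimate
$$
\norm{\<x\>^{N-1}\nabla(H^\lambda-z)^{-1}\<x\>^{-N}}\le C_N\<z\>^{N/2}|\Im z|^{-N},
$$
which is the technical crux. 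I would derive this from an energy identity: setting $u=(H^\lambda-z)^{-1}\<x\>^{-N}g$, integrate $\int\<x\>^{2N-2}|\nabla u|^2$ by parts, substitute $-\Delta u=\<x\>^{-N}g+zu-V^\lambda u$, take real parts to discard the non-negative $V^\lambda$-term, and absorb the cross-term proportional to $\norm{\<x\>^{N-1}\nabla u}\cdot\norm{\<x\>^{N-2}u}$ into the left-hand side via AM--GM; the surviving terms are bounded by the inductive hypotheses at levels $\le N-1$, the dominant one being $\<z\>^N|\Im z|^{-2N}$.

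Negative integer $\alpha$ follows by taking adjoints (since $z\mapsto\bar z$ preserves both $|\Im z|$ and $\<z\>$), and non-integer $\alpha$ by Hadamard's three-lines theorem applied to the analytic operator-valued family $\zeta\mapsto\<x\>^\zeta(H^\lambda-z)^{-1}\<x\>^{-\zeta}$ on a vertical strip, using that $\<x\>^{i\tau}$ is an $L^2$-isometry for real $\tau$. For the bound on $\<x\>^\alpha\varphi(H^\lambda)\<x\>^{-\alpha}$, I would invoke the Helffer--Sj\"ostrand representation
$$
\varphi(H^\lambda)=-\frac{1}{\pi}\int_\C\bar\partial\tilde\varphi(z)\,(H^\lambda-z)^{-1}\,dL(z),
$$
conjugate by $\<x\>^{\pm\alpha}$ under the integral, and use that an almost-analytic extension $\tilde\varphi\in C_0^\infty(\C)$ of $\varphi$ satisfies $|\bar\partial\tilde\varphi(z)|\le C_M|\Im z|^M$ for any $M$; picking $M>|\alpha|$ makes the integrand dominated uniformly in $\lambda$, the compact $z$-support of $\tilde\varphi$ absorbing the $\<z\>^{|\alpha|/2}$ factor. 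The main obstacle is the auxiliary weighted-gradient estimate above: a purely algebraic commutator decomposition is circular, and it is precisely the sign of $V^\lambda$ that allows the energy identity to close the bootstrap.
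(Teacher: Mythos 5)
Your argument is correct and shares the paper's overall skeleton (commutator identity, induction on integer weights, Stein/three-lines interpolation, Helffer--Sj\"ostrand for $\varphi(H^\lambda)$), but it diverges at the one genuinely technical point: how to control the first-order part of $[\<x\>^N,-\Delta]$ sandwiched between resolvents. You prove a weighted gradient resolvent estimate $\norm{\<x\>^{N-1}\nabla(H^\lambda-z)^{-1}\<x\>^{-N}}\lesssim\<z\>^{N/2}|\Im z|^{-N}$ via an energy identity, discarding the $V^\lambda$-term by positivity and closing a joint bootstrap with the main bound at levels $N-1,N-2$; this works, and the exponents come out right. The paper instead avoids any such auxiliary estimate by a purely algebraic factorization: it writes the commutator term as $(H^\lambda-z)^{-1}\<H^\lambda\>^{1/2}\cdot\<H^\lambda\>^{-1/2}\<D\>\cdot\<D\>^{-1}[-\Delta,\<x\>^N]\<x\>^{-N+1}\cdot\<x\>^{N-1}(H^\lambda-z)^{-1}\<x\>^{-N}$, bounds the first factor by the spectral theorem, the second by $H^\lambda\ge-\Delta$ (again the only place positivity of $V^\lambda$ enters), the third by Calder\'on--Vaillancourt, and the last by the induction hypothesis at level $N-1$; so your remark that ``a purely algebraic commutator decomposition is circular'' is not accurate --- the trick is simply to pair the derivative with $\<H^\lambda\>^{-1/2}$ on the \emph{left} rather than to weight it on the right. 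What each route buys: the paper's factorization is shorter and has no domain issues; your energy identity is more elementary (no pseudodifferential calculus) and yields the weighted gradient bound as a by-product, but as written it needs one more justification, namely the a priori finiteness of $\int\<x\>^{2N-2}|\nabla u|^2$ before you may integrate by parts and absorb the cross term --- this is standard (insert cutoffs $\chi(x/R)$, note the extra term is again absorbable uniformly in $R$, and pass to the limit), but it should be said. The interpolation, adjoint, and Helffer--Sj\"ostrand steps (with $M>|\alpha|$ in $|\partial_{\overline z}\Phi|\le C_M|\Im z|^M$) coincide with the paper's.
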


\begin{proof}
By Stein's complex interpolation \cite{Ste}, we may assume that $\alpha=N\in \N\cup\{0\}$ and prove \eqref{lemma_functional_1_1} by induction in $N$. The case when $N=0$ is trivial. Next, a direct calculation yields
\begin{align*}
\<x\>^N(H^\lambda-z)^{-1}\<x\>^{-N}
=(H^\lambda-z)^{-1}+(H^\lambda-z)^{-1}[-\Delta,\<x\>^N](H^\lambda-z)^{-1}\<x\>^{-N},
\end{align*}
where the second term of the right hand side is written in the form
\begin{align*}
(H^\lambda -z)^{-1}\<H^\lambda \>^{1/2}\cdot \<H^\lambda \>^{-1/2}\<D\>\cdot \<D\>^{-1}[-\Delta,\<x\>^N]\<x\>^{-N+1}\cdot\<x\>^{N-1}(H^\lambda -z)^{-1}\<x\>^{-N}.
\end{align*}
By the spectral theorem, $\norm{(H^\lambda -z)^{-1}\<H^\lambda \>^{1/2}}\le C\<z\>^{1/2}|\Im z|^{-1}$. Since $H^\lambda \ge-\Delta$, we also obtain $\norm{\<H^\lambda \>^{-1/2}\<D\>}\le 1$. Moreover, since $\<\xi\>^{-1}\{|\xi|^2,\<x\>^N\}\<x\>^{-N+1}\in S^{0,0}$, Calder\'on-Vaillancourt's theorem shows $\<D\>^{-1}[-\Delta,\<x\>^N]\<x\>^{-N+1}\in  \mathbb B(L^2)$, where $\{f,g\}=\nabla_xf\cdot\nabla_\xi g-\nabla_\xi f\cdot\nabla_x g$ is the Poisson bracket. Hence
\begin{align*}
\norm{(H^\lambda -z)^{-1}[-\Delta,\<x\>^N](H^\lambda -z)^{-1}\<x\>^{-N}}
\le C_N\<z\>^{N/2}|\Im z|^{-N-1}
\end{align*}
for all $N$ by the hypothesis of induction. This completes the proof of \eqref{lemma_functional_1_1}. The assertion for $\<x\>^N\varphi(H^\lambda )\<x\>^{-N}$ is a consequence of \eqref{lemma_functional_1_1} and Helffer-Sj\"ostrand's formula
\begin{align}
\label{HS}
\varphi(H^\lambda )=-\frac{1}{\pi }\int_\C \frac{\partial \Phi}{\partial \overline z}(z)(H^\lambda -z)^{-1}dL(z), 
\end{align}
where $\Phi(z)$ is an almost analytic extension of $\varphi$ such that $\Phi$ is independent of $\lambda$, $\Phi|_{\R}=\varphi$, $\supp \Phi$ is compact, and $|\partial_{\overline z}\Phi(z)|\le C_N|\Im z|^N$ for any $N\ge0$ (see \cite{HeSj,Hor}). \end{proof}

\begin{proposition}
\label{proposition_functional_2}
Let $N\in \N\cup\{0\}$ and $R>0$. Then there exist a bounded set $\{a^\lambda\}_{\lambda>0}\subset S^{0,-\infty}$ satisfying $\supp a^{\lambda}\subset \supp[(1-\chi_R)\varphi\circ p^\lambda]$ and  $Q^\lambda\in \mathbb B(L^2)$ such that
\begin{align}
\label{proposition_functional_2_1}
(1-\chi_R)(x)\varphi(H^\lambda )=\Op(a^\lambda)^*+Q^\lambda,\quad \sup_{\lambda>0}\norm{\<D\>^{N}\<x\>^{N}Q^\lambda\<x\>^{N}}\le C_N<\infty,
\end{align}
where $p^\lambda(x,\xi)=|\xi|^2+V^\lambda(x)$ is the symbol of $H^\lambda$. 
\end{proposition}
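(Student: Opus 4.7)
The approach combines the Helffer--Sj\"ostrand formula \eqref{HS} with a pseudodifferential parametrix for $(H^\lambda - z)^{-1}$ cut off to $\{|x| \geq R\}$. Taking adjoints, the claim is equivalent to constructing a PDO $\Op(a^\lambda)$ that approximates $\varphi(H^\lambda)(1-\chi_R)(x)$, which is natural since HS expresses $\varphi(H^\lambda)$ as a contour integral of resolvents. The plan is therefore: build for each $z$ near $\supp\Phi$ a symbol $a^\lambda(\,\cdot\,;z)$ approximating $(H^\lambda - z)^{-1}(1-\chi_R)(x)$, integrate against $\partial_{\bar z}\Phi$, and handle the remainder via the uniform weighted resolvent bound of Lemma~\ref{lemma_functional_1}.

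Fix an almost analytic extension $\Phi$ of $\varphi$ and an auxiliary $\tilde\varphi \in C_0^\infty((0,\infty))$ with $\tilde\varphi \equiv 1$ on $\supp\varphi$. The zeroth-order term
$$
a_0^\lambda(x,\xi;z) := (1-\chi_R)(x)\,\tilde\varphi(p^\lambda(x,\xi))\,(p^\lambda(x,\xi)-z)^{-1}
$$
lies in $S^{0,-\infty}$ uniformly in $\lambda$, since $\tilde\varphi(p^\lambda)$ is $\xi$-compactly supported on $\{|x| \geq R\}$ by Lemma~\ref{lemma_dilation_1}. Because $p^\lambda$ is quadratic in $\xi$, Remark~\ref{remark_PDO_1} says the $\#$-expansion of $(p^\lambda-z)$ against any symbol truncates at order two, so corrections $a_j^\lambda(z) \in S^{-j,-\infty}$ (each carrying one extra $(p^\lambda - z)^{-1}$ and one $x$-derivative of $\chi_R$, $V^\lambda$, or $\tilde\varphi(p^\lambda)$) can be defined recursively so that $a^\lambda(z) := \sum_{j=0}^{M-1}a_j^\lambda(z)$ satisfies
$$
(p^\lambda - z)\,\#\,a^\lambda(z) = (1-\chi_R)(x)\tilde\varphi(p^\lambda) + r_M^\lambda(x,\xi;z),\qquad r_M^\lambda\in S^{-M,-M},
$$
with polynomial $|\Im z|^{-1}$-losses. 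Quantizing, dividing by $H^\lambda - z$, integrating against $-\tfrac{1}{\pi}\partial_{\bar z}\Phi$, and using $\Op((1-\chi_R)\tilde\varphi(p^\lambda)) = (1-\chi_R)(x)\Op(\tilde\varphi(p^\lambda))$ (which holds since $\partial_\xi(1-\chi_R)=0$) together with $\varphi(H^\lambda)\tilde\varphi(H^\lambda) = \varphi(H^\lambda)$ and the commutator $[(1-\chi_R),\tilde\varphi(H^\lambda)]$ (both discrepancies absorbed into $(Q^\lambda)^*$ by a parallel HS argument), one obtains $\varphi(H^\lambda)(1-\chi_R) = \Op(\tilde a^\lambda) + (Q^\lambda)^*$ with $\tilde a^\lambda := -\tfrac{1}{\pi}\!\int \partial_{\bar z}\Phi(z)\,a^\lambda(\,\cdot\,;z)\,dL(z)$. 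The support condition holds because $-\tfrac{1}{\pi}\!\int \partial_{\bar z}\Phi(z)(p^\lambda - z)^{-j-1}dL(z)$ is proportional to $\varphi^{(j)}(p^\lambda)$, and $\supp\varphi^{(j)} \subset \supp\varphi$ forces $\supp\tilde a^\lambda \subset \supp[(1-\chi_R)\varphi\circ p^\lambda]$; taking adjoints delivers the $\Op(a^\lambda)^*$-form with the same support.

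The principal obstacle is the uniform weighted bound $\|\langle D\rangle^N\langle x\rangle^N Q^\lambda\langle x\rangle^N\| \leq C_N$. The essential input is the weighted resolvent bound of Lemma~\ref{lemma_functional_1}, $\|\langle x\rangle^\alpha(H^\lambda-z)^{-1}\langle x\rangle^{-\alpha}\| \lesssim \langle z\rangle^{|\alpha|/2}|\Im z|^{-|\alpha|-1}$, which is uniform in $\lambda$. Combined with $|\partial_{\bar z}\Phi(z)| \leq C_K|\Im z|^K$ for arbitrary $K$ (from the almost analyticity of $\Phi$) and with $r_M^\lambda \in S^{-M,-M}$ gaining arbitrary $\langle x\rangle$- and $\langle\xi\rangle$-decay as $M \to \infty$, choosing $M$ sufficiently large compared to $N$ makes the $z$-integral defining $Q^\lambda$ converge absolutely in the $\langle D\rangle^N\langle x\rangle^N\cdot\langle x\rangle^N$ operator topology. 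The delicate step is bookkeeping the cascade of $|\Im z|^{-1}$-losses that accumulate from the iterative parametrix and from commuting $\langle x\rangle^{2N}$ past $(H^\lambda - z)^{-1}$, and verifying that they remain integrable against $|\partial_{\bar z}\Phi(z)|$; this forces $M$ to grow at a specific linear rate with $N$.
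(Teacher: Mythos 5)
Your overall architecture (Helffer--Sj\"ostrand formula, an iterative parametrix for the resolvent applied to $(1-\chi_R)$ that truncates because $p^\lambda$ is quadratic in $\xi$, Cauchy--Pompeiu to convert powers of $(p^\lambda-z)^{-1}$ into $\varphi^{(j)}\circ p^\lambda$ and obtain the support property, and Lemma \ref{lemma_functional_1} together with almost analyticity to control the weighted remainder) is the same as the paper's. The genuine gap is in how you secure uniformity in $\lambda$, which is the whole point of the proposition. By inserting the cutoff $\widetilde\varphi(p^\lambda)$ into the symbol from the start, your parametrix approximates $(H^\lambda-z)^{-1}\Op\big((1-\chi_R)\,\widetilde\varphi\circ p^\lambda\big)$, not $(H^\lambda-z)^{-1}(1-\chi_R)$. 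After the $z$-integration you are therefore left with the discrepancy
\[
D^\lambda:=\varphi(H^\lambda)\,(1-\chi_R)(x)\,\big[\mathrm{Id}-\Op(\widetilde\varphi\circ p^\lambda)\big],
\]
and you must prove that $\langle x\rangle^{N}D^\lambda\langle x\rangle^{N}\langle D\rangle^{N}$ is bounded uniformly in $\lambda>0$. Your parenthetical dismisses this by invoking $\varphi(H^\lambda)\widetilde\varphi(H^\lambda)=\varphi(H^\lambda)$ and the commutator $[(1-\chi_R),\widetilde\varphi(H^\lambda)]$, but these identities concern the spectral multiplier $\widetilde\varphi(H^\lambda)$, whereas your parametrix produces the quantization $\Op(\widetilde\varphi\circ p^\lambda)$. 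Controlling $\widetilde\varphi(H^\lambda)-\Op(\widetilde\varphi\circ p^\lambda)$ (with weights, uniformly in $\lambda$) is precisely a statement of the type being proved, so the "parallel HS argument" is circular as written. Nor is there any a priori smallness: the symbol $1-\widetilde\varphi(p^\lambda)$ has no decay in $x$ or $\xi$, so the only source of gain is the microlocal disjointness from the spectral localization $\varphi(H^\lambda)$ --- which again requires a parametrix to exploit.

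Two ways to close the gap. The paper's route is to run the parametrix directly with $q_0^\lambda=(1-\chi_R)/(p^\lambda-z)$ and obtain uniform-in-$\lambda$ symbol bounds from a dichotomy: with $\supp\Phi\subset\{|z|\le d_0\}$, on $\{p^\lambda\le 2d_0\}$ conditions (H1)--(H2) give $|\partial_x^\alpha V^\lambda|\lesssim\langle x\rangle^{-|\alpha|}V^\lambda\lesssim\langle x\rangle^{-|\alpha|}$ for $|x|\ge R$, while on $\{p^\lambda\ge 2d_0\}$ one has $|p^\lambda-z|\ge p^\lambda/2\gtrsim 1+|\xi|^2+V^\lambda$, which absorbs every potentially unbounded $V^\lambda$-factor in the numerators; the $\varphi$-localization then appears only after Cauchy--Pompeiu, so no discrepancy term arises. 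Alternatively, if you keep your cutoff, you must add a genuine second (elliptic) parametrix argument for $D^\lambda$: choose $\widetilde\varphi\equiv1$ on a neighborhood of $\supp\Phi\cap\R$ so that $|p^\lambda-z|\gtrsim 1+p^\lambda$ on $\supp(1-\widetilde\varphi\circ p^\lambda)$ for $z\in\supp\Phi$, and iterate as before with no $|\Im z|$-loss, again using $|\partial_x^\alpha V^\lambda|\lesssim\langle x\rangle^{-|\alpha|}V^\lambda$ on $|x|\ge R$. In either repair the lower bound (H2) is essential (both for the ratio estimate just quoted and for $|p^\lambda-z|\gtrsim|\Im z|\langle\xi\rangle^2$); your proposal uses it only implicitly and never where the uniformity in $\lambda$ is actually decided.
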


\begin{proof}
The proof is based on a standard argument using Helffer-Sj\"ostrand's formula and a microlocal parametrix of the resolvent (see, e.g., \cite{DiSj} or \cite[Proposition 2.1]{BGT}). However, these previous literatures cannot be applied directly to the present case since $V^\lambda$ may not uniformly bounded as $\lambda\to0$, so we give a complete proof. We shall show that 
$$
\varphi(H^\lambda )(1-\chi_R)(x)=\Op(a^\lambda)+\wtilde Q^\lambda,\quad \norm{\<x\>^NQ^\lambda\<x\>^N\<D\>^N}\lesssim1. 
$$ 
Taking $d_0>0$ with $\supp \Phi\subset \{|z|\le d_0\}$, we construct the parametrix of $(H^\lambda -z)^{-1}(1-\chi_R)$ for $|z|\le d_0$. By \eqref{proposition_PDO_1_1} and Remark \ref{remark_PDO_1}, for any symbol $q$, the symbol of $(H^\lambda -z)\Op(q)$ is given by
$$
(p^\lambda-z)\# q|_{h=1}=(p^\lambda-z)q-2i\xi\cdot\nabla_x q-\Delta_x q.
$$
With this expansion at hand, we define $q_k^\lambda=q_k^\lambda(z,x,\xi)$ inductively by
\begin{align}
\label{proof_proposition_functional_2_1}
q_0^\lambda=\frac{1-\chi_R}{p^\lambda-z},\quad
q_1^\lambda=\frac{2i\xi\cdot\nabla_xq_0^\lambda}{p^\lambda-z},\quad
 q_k^\lambda=\frac{2i\xi\cdot\nabla_xq_{k-1}^\lambda+\Delta_xq^\lambda_{k-2}}{p^\lambda-z},\quad k\ge2. 
\end{align}
We shall show that $q_k^\lambda$ satisfy
\begin{align}
\label{proof_proposition_functional_2_2}
|\partial_x^\alpha\partial_\xi^\beta q_k^\lambda(z,x,\xi)|\le C_{k\alpha\beta}\<x\>^{-k-|\alpha|}\<\xi\>^{-2-k-|\beta|}|\Im z|^{-1-2k-|\alpha|-|\beta|},\quad x,\xi\in \R^n,
\end{align}
uniformly in $\lambda>0$. To this end we observe from Lemma \ref{lemma_dilation_1} that
\begin{align}
\label{proof_proposition_functional_2_3}
V^\lambda(x)\ge C_1 \lambda^{-2+\mu}(\lambda+|x|)^{-\mu},\quad  
|\partial_x^\alpha V^\lambda (x)|\le C_\alpha\<x\>^{-|\alpha|}V^\lambda (x) 
\end{align}
uniformly in $\lambda>0$ and  $|x|\ge R$. Also note that, by Leibniz's rule, $\partial_x^\alpha\partial_\xi^\beta q_0^\lambda$ is of the form
\begin{align}
\label{proof_proposition_functional_2_4}
\partial_x^\alpha\partial_\xi^\beta q_0^\lambda=\sum_{j=0}^{|\alpha|}\sum_{\ell=0}^{|\beta|}\frac{d_{j\ell}^\lambda (x,\xi)}{(p^\lambda(x,\xi)-z)^{1+j+\ell}},
\end{align}
where $d_{j\ell}^\lambda$ is independent of $z$, a polynomial in $\xi$ of degree at most $\ell$, supported in $\supp(1-\chi_R)$ in the $x$-variable. Moreover, it follows from an induction argument in $j$ that $d_{j\ell}^\lambda$ is of the form
$$
d_{j\ell}^\lambda(x,\xi)=Q_\ell(\xi)\sum\partial_x^\gamma\left((1-\chi_R(x))\partial_{x}^{\alpha_1}V^\lambda(x)\cdots \partial_{x}^{\alpha_j}V^\lambda(x)\right)
$$
where $|\gamma|=|\alpha|-j$, $|\alpha_1|=\cdots=|\alpha_j|=1$, $Q_\ell(\xi)$ is a polynomial in $\xi$ of degree at most $\ell$ and independent of $x,z$ and $\lambda$. Therefore, we know by \eqref{proof_proposition_functional_2_4} and Leibniz's rule that
\begin{align}
\label{proof_proposition_functional_2_5}
|d^\lambda_{j\ell}(x,\xi)|
\le C_{j\alpha\ell}(V^\lambda (x))^{j}\<x\>^{-|\alpha|}\<\xi\>^\ell. 
\end{align}
The proof of \eqref{proof_proposition_functional_2_2} is then divided into two cases $p^\lambda\lessgtr2d_0$. In case of $p^\lambda\le 2d_0$, we have
$
V^\lambda (x)\le p^\lambda(x,\xi)\le 2d_0
$
which, together with  \eqref{proof_proposition_functional_2_3}, yields that, for all $\gamma\in \Z^n_+$, 
\begin{align}
\label{proof_proposition_functional_2_5_1}
|\partial_x^\gamma V^\lambda (x)|\le C_\gamma \<x\>^{-|\gamma|}
\end{align}
uniformly in $x\in \supp(1-\chi)$ and $\lambda>0$. Moreover, since $V^\lambda$ is positive we have
\begin{align}
\label{proof_proposition_functional_2_5_2}
|p^\lambda-z|\ge C |\Im z|\<\xi\>^2.
\end{align}
\eqref{proof_proposition_functional_2_3}--\eqref{proof_proposition_functional_2_5_2} imply \eqref{proof_proposition_functional_2_2} for $k=0$. On the other hand, if $p^\lambda(x,\xi)\ge2d_0$ then we have
$$
|p^\lambda-z|\ge p^\lambda/2\gtrsim |\xi|^2+V^\lambda(x)+1
$$
which, together with \eqref{proof_proposition_functional_2_3}--\eqref{proof_proposition_functional_2_5}, implies \eqref{proof_proposition_functional_2_2} for $k=0$. \eqref{proof_proposition_functional_2_2} for $k\ge1$ follows from an induction argument in $k$. Moreover, we learn from \eqref{proof_proposition_functional_2_1} and \eqref{proof_proposition_functional_2_4} that $q_k^\lambda$ is of the finite sum
$$
q_k^\lambda=\sum_{j=k}^{2k+1}\frac{\tilde d^\lambda_{kj}(x,\xi)}{(p^\lambda(x,\xi)-z)^{j+1}},
$$
where $\tilde d^\lambda_{kj}$ are independent of $z$, polynomials in $\xi$ of degree less than $j$. Moreover, $\tilde d^\lambda_{kj}$ are supported in $\supp(1-\chi)$ in the $x$-variable for all $\xi$ and satisfies 
\begin{align}
\label{proof_proposition_functional_2_6}
|\partial_x^\alpha \partial_\xi^\beta \tilde d^\lambda_{kj}(x,\xi)|\le C_{\alpha\beta}(V^\lambda (x))^j
\<x\>^{-j-|\alpha|}\<\xi\>^{j-|\beta|}. 
\end{align}
By the construction of $q_k^\lambda$ and Proposition \ref{proposition_PDO_1}, we obtain 
$$
(H^\lambda -z)^{-1}(1-\chi_R)(x)=\sum_{k=0}^{N-1}\Op(q_k^\lambda)+(H^\lambda -z)^{-1}\Op(r_N^\lambda),
$$
where $r_N^\lambda:=\Delta_xq^\lambda_{N-2}+2i\xi\cdot\nabla_x q^\lambda_{N-1}+\Delta q^\lambda_{N-1}$. By \eqref{proof_proposition_functional_2_2}, $\{r_N^\lambda\}_{\lambda>0}$ is a bounded set in $S^{-N,-N-2}$. Plugging this formula into \eqref{HS} and applying Cauchy-Pompeiu's formula give us the formula
$$
\varphi(H^\lambda )(1-\chi_R)(x)=\Op(a^\lambda)+\wtilde Q^\lambda,
$$
where $a^{\lambda}=b_0^\lambda+\cdots+b_{N-1}^\lambda$ and $\wtilde Q^\lambda$ are given by
\begin{align*}
b_{0}^\lambda(x,\xi)&=(1-\chi_R)(x)(\varphi\circ p^\lambda)(x,\xi),\quad\\
b_{k}^\lambda(x,\xi)&=\sum_{j=k}^{2k+1}\frac{(-1)^{j}}{j!}\tilde d^\lambda_{kj}(x,\xi)(\varphi^{(j)}\circ p^\lambda)(x,\xi),\quad 1\le k\le N-1,\\
\wtilde Q^\lambda &=-\frac{1}{\pi }\int_\C \frac{\partial \Phi}{\partial \bar z}(z)( H^\lambda -z)^{-1}\Op(r_N^\lambda)dL(z). 
\end{align*}
Note that $\Phi$ is an almost analytic extension of $\varphi$ given in \eqref{HS}. It is easy to see that $\supp a^{\lambda}\subset\supp(1-\chi)\cap\supp(\varphi\circ p^\lambda)
$. In particular, $p^\lambda\le 2d_0$ on $\supp a^\lambda$ and, thus, the same argument as above implies that $\partial_x^\alpha V^\lambda =O(\<x\>^{-|\alpha|})$ uniformly in  $\lambda>0$ on $\supp a^\lambda$. This bound and \eqref{proof_proposition_functional_2_6} show that $\{a^\lambda\}_{\lambda>0}$ is bounded in $ S^{0,-\infty}$. Finally, \eqref{proof_proposition_functional_2_2} implies
$$
\sup_{\lambda>0}\norm{\<x\>^{N/2}\Op(r_N^\lambda)\<x\>^{N/2}\<D\>^{N/2}}\le C_N|\Im z|^{-n(N)}
$$
with some $n(N)$ depending only on $N$, which, combined with Lemma \ref{lemma_functional_1},  implies
\begin{align*}
&\sup_{\lambda>0}\norm{\<x\>^{N/2}\wtilde Q^\lambda\<x\>^{N/2}\<D\>^{N/2}}\\
&\le C_N\int_\C\left|\frac{\partial \Phi}{\partial \bar z}(z)\right|\sup_{\lambda>0}\norm{\<x\>^{N/2}( H^\lambda -z)^{-1}\<x\>^{-N/2}}\sup_{\lambda>0}\norm{\<x\>^{N/2}\Op(r_N^\lambda)\<x\>^{N/2}\<D\>^{N/2}}|dL(z)|\\
&\le C_{N,M}\int_{\supp\Phi}\<z\>^{N/4}|\Im z|^{M-N/2-1-n(N)}|dL(z)|<\infty. 
\end{align*}
provided $M\ge N/2+1+n(N)$. Replacing $N$ by $2N$, we  complete the proof. \end{proof}

The following analogous results for $\varphi(H_h )$ will also be needed later. 

\begin{proposition}
\label{proposition_functional_3}
Let $\alpha\in \R$, $N\in \N\cup\{0\}$ and $h=\lambda^{2/\mu-1}$. Then $\<x\>^{\alpha}\varphi(H_h  )\<x\>^{-\alpha}$ is bounded on $L^2$ uniformly in $\lambda\in (0,1]$. Moreover, for any $R>0$, there exists $b_h\in S^{0,-\infty}$ supported in $\supp[(1-\chi_R)\varphi\circ p_h]$, where $p_h(x,\xi)=|\xi|^2+V_h$, such that
$$
(1-\chi_R)(x)\varphi(H_h  )=\Op_{h}(b_{h})^*+Q_h,\quad 
\sup_{h\in (0,1]}(h^{-N}\norm{\<hD\>^N\<x\>^NQ_h\<x\>^N})\le C_N.
$$
\end{proposition}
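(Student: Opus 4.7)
The plan is to mirror Proposition \ref{proposition_functional_2}, working throughout in the semiclassical calculus $\Op_h$ with $h=\lambda^{2/\mu-1}\in(0,1]$. First I would establish the weighted bound on $\varphi(H_h)$ via a semiclassical analog of Lemma \ref{lemma_functional_1}, namely
$$
\norm{\<x\>^\alpha(H_h-z)^{-1}\<x\>^{-\alpha}}\le C_\alpha\<z\>^{|\alpha|/2}|\Im z|^{-|\alpha|-1}
$$
uniformly in $h\in(0,1]$ and $z\in\C\setminus\R$. Stein's complex interpolation reduces this to $\alpha=N\in\N\cup\{0\}$, which is then proved by induction in $N$. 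The algebraic identity is identical to that in Lemma \ref{lemma_functional_1} with $(-\Delta,H^\lambda)$ replaced by $(-h^2\Delta,H_h)$: the commutator $[-h^2\Delta,\<x\>^N]=-2ih\,\nabla\<x\>^N\cdot hD-h^2\Delta\<x\>^N$ makes $\<hD\>^{-1}[-h^2\Delta,\<x\>^N]\<x\>^{-N+1}$ bounded on $L^2$ uniformly in $h$ via Calder\'on--Vaillancourt, and the estimates $\norm{(H_h-z)^{-1}\<H_h\>^{1/2}}\le C\<z\>^{1/2}|\Im z|^{-1}$ together with $\norm{\<H_h\>^{-1/2}\<hD\>}\le 1$ (the latter a consequence of $H_h\ge-h^2\Delta$) close the induction. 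Helffer--Sj\"ostrand's formula \eqref{HS} then yields the weighted bound on $\varphi(H_h)$.

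For the parametrix, Remark \ref{remark_PDO_1} shows that the expansion $(p_h-z)\# q=(p_h-z)q-2ih\xi\cdot\nabla_xq-h^2\Delta_xq$ is \emph{exact}. Accordingly I would define symbols $q_{h,k}$ by the recursion
$$
q_{h,0}=\frac{1-\chi_R}{p_h-z},\quad q_{h,1}=\frac{2ih\xi\cdot\nabla_xq_{h,0}}{p_h-z},\quad q_{h,k}=\frac{2ih\xi\cdot\nabla_xq_{h,k-1}+h^2\Delta_xq_{h,k-2}}{p_h-z}\ (k\ge2),
$$
and show by induction, using the uniform bounds $|\partial^\alpha V_h(x)|\lesssim\<x\>^{-\mu-|\alpha|}$ on $|x|\ge R$ from \eqref{lemma_dilation_1_4}, the key seminorm estimate
$$
|\partial_x^\alpha\partial_\xi^\beta q_{h,k}|\le C_{k\alpha\beta}h^k\<x\>^{-k-|\alpha|}\<\xi\>^{-2-k-|\beta|}|\Im z|^{-1-2k-|\alpha|-|\beta|},
$$
which exhibits the crucial gain of an extra factor $h$ at every step. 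Telescoping then produces
$$
(H_h-z)^{-1}(1-\chi_R)(x)=\sum_{k=0}^{N-1}\Op_h(q_{h,k})+(H_h-z)^{-1}\Op_h(r_{h,N}),
$$
with $r_{h,N}$ bounded in $h^N S^{-N,-2-N}$ uniformly in $h\in(0,1]$.

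Substituting into Helffer--Sj\"ostrand's formula and applying the Cauchy--Pompeiu trick to each $(p_h-z)^{-j-1}$ factor yields $\varphi(H_h)(1-\chi_R)=\Op_h(b_h)+\wtilde Q_h$, where $b_h=\sum_{k=0}^{N-1}b_{h,k}$ with $b_{h,0}=(1-\chi_R)\varphi(p_h)$ and, for $k\ge1$, $b_{h,k}$ carrying a factor $h^k$ and involving $\varphi^{(j)}(p_h)$ with $0\le j\le 2k+1$. Since $\supp b_h\subset\supp[(1-\chi_R)\varphi\circ p_h]$ and $p_h$ is bounded on this set, \eqref{lemma_dilation_1_4} forces $|\partial^\alpha V_h|\lesssim\<x\>^{-|\alpha|}$ on $\supp b_h$, hence $\{b_h\}_{h\in(0,1]}\subset S^{0,-\infty}$ as required. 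The remainder
$$
\wtilde Q_h=-\frac{1}{\pi}\int_\C\frac{\partial\Phi}{\partial\bar z}(z)(H_h-z)^{-1}\Op_h(r_{h,N})\,dL(z)
$$
is then controlled by combining the weighted resolvent bound from the first step with the semiclassical analog of \eqref{PDO_1}, yielding $\norm{\<hD\>^N\<x\>^N\Op_h(r_{h,N})\<x\>^N}\lesssim h^N|\Im z|^{-n(N)}$ for some $n(N)\in\N$, provided $\Phi$ is chosen to vanish to sufficiently high order on $\R$. Finally, taking formal adjoints via Proposition \ref{proposition_PDO_1} converts $\Op_h(b_h)$ into $\Op_h(\tilde b_h)^*$ in the same symbol class with the same support, producing the claimed decomposition. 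The main obstacle lies in tracking both the $h^k$ gain and the $|\Im z|^{-1-2k}$ loss through the recursion so that the Helffer--Sj\"ostrand integral remains absolutely convergent once multiplied by $\partial_{\bar z}\Phi$; beyond this bookkeeping, the argument is a direct transcription of Proposition \ref{proposition_functional_2} into the semiclassical setting.
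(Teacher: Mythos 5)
Your proposal is correct and follows essentially the same route as the paper, which proves this proposition by transcribing the argument of Proposition \ref{proposition_functional_2} into the semiclassical calculus: the uniform weighted bound via the commutator $\<hD\>^{-1}[h^2\Delta,\<x\>^N]\<x\>^{-N+1}=O_{L^2}(1)$ and Helffer--Sj\"ostrand, and the parametrix via the exact expansion $(p_h-z)\# q=(p_h-z)q-2ih\xi\cdot\nabla_xq-h^2\Delta_xq$ with the same recursion (the paper keeps the factors $h^k$ outside the symbols $q_k$, whereas you fold them into $q_{h,k}$ — a purely notational difference), concluding with Lemma \ref{lemma_dilation_1} for the symbol bounds and the weighted resolvent estimate for the remainder.
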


\begin{proof}
The proof is similar to that for $\varphi(H^\lambda )$. Indeed, since $\<hD\>^{-1}[{h}^2\Delta,\<x\>^N]\<x\>^{-N+1}=O_{L^2}(1)$, the uniform $L^2$-boundedness of $\<x\>^{\alpha}\varphi(H_h  )\<x\>^{-\alpha}$ in $h\in (0,1]$ can be  verified analogously. On the other hand, since the semiclassical symbol of $(H_h  -z)\Op_{h}(q)$ has the expansion 
$$
(p_h-z)\# q=(p_h-z)q-2i{h}\xi\cdot\nabla_x q-h^2\Delta_x q,
$$
if we define $q_{k}^\lambda$ by the same manner as in \eqref{proof_proposition_functional_2_1} with $p^\lambda$ replaced by $p_h$, then, thanks to Lemma \ref{lemma_dilation_1}, $q_{k}^\lambda$ satisfies the same estimate \eqref{proof_proposition_functional_2_2} and we have
$$
(H_h  -z)^{-1}(1-\chi_R)(x)=\sum_{k=0}^{N-1}h^k\Op_{h}(q_{k}^\lambda)+h^N(H_h  -z)^{-1}\Op(r_{N}^\lambda)
$$
with $r_N^\lambda=\Delta_xq_{N-2}^\lambda+2i\xi\cdot\nabla_x q_{N-1}^\lambda+h\Delta q_{N-1}^\lambda\in S^{-N,-N-2}$. The rest of the proof is completely analogous to that of Proposition \ref{proposition_functional_2} and we omit it.
\end{proof}

\begin{remark}
\label{remark_functional_1}
Under the hypothesis (H1) and (H2), it follows by essentially the same proof as that of Proposition \ref{proposition_functional_2} that, for each $N\in \N$, $\varphi(H)$ can be decomposed as a sum of $\Op(a_N)$ with some $a_N\in S^{0,-\infty}$ and an error term $Q_N$ satisfying $\<D\>^N\<x\>^NQ_N\<x\>^N\in \mathbb B(L^2)$. This remark will be used in the proof of Proposition \ref{proposition_smoothing_2} below. 
\end{remark}

\subsection{Weighted space-time and local decay estimates}
\label{subsection_smoothing}
Here we collect several results on wighted space-time and local decay estimates, which will play a crucial role in the proof of Theorem \ref{theorem_LP_2}. 

\begin{proposition}
\label{proposition_smoothing_1}
We have
\begin{align}
\label{proposition_smoothing_1_1}
\norm{\<x\>^{-1}e^{-itH}u_0}_{L^2(\R^{1+n})}&\lesssim \norm{u_0}_{L^2(\R^n)},\\\label{proposition_smoothing_1_2}\bignorm{\<x\>^{-1}\int_0^t e^{-i(t-s)H}\<x\>^{-1}F(s)ds}_{L^2(\R^{1+n})}&\lesssim \norm{F}_{L^2(\R^{1+n})}.
\end{align}
\end{proposition}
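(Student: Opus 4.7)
The plan is to deduce both estimates from Kato's theory of smooth perturbations \cite{Kat}, with a uniform weighted resolvent bound as the sole analytic input. Concretely, the key external ingredient is
\begin{align*}
\sup_{z\in\C\setminus\R}\norm{\<x\>^{-1}(H-z)^{-1}\<x\>^{-1}}<\infty,
\end{align*}
established in \cite{Nak2} (see also \cite{BoMi2}) precisely under hypotheses (H1)--(H3): the repulsivity (H3) yields a global Mourre estimate with respect to the dilation generator, which gives the limiting absorption principle uniformly down to the threshold $0$; (H1) controls the high-energy regime by standard short-range perturbation arguments; and (H2) rules out a zero-energy resonance.

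Given this resolvent bound, Kato's theorem immediately asserts that the multiplication operator $\<x\>^{-1}$ is $H$-smooth, which is the content of \eqref{proposition_smoothing_1_1}. For the inhomogeneous estimate \eqref{proposition_smoothing_1_2} I would pass to the Fourier side in the time variable, using that the retarded propagator $t\mapsto -i\,e^{-itH}\mathbf{1}_{t>0}$ has distributional Fourier transform $(H-\tau-i0)^{-1}$; extending $F$ by zero to $\R$ and applying Plancherel in $t$ bounds the left-hand side of \eqref{proposition_smoothing_1_2} by
\begin{align*}
\Big(\sup_{\tau \in \R}\norm{\<x\>^{-1}(H-\tau-i0)^{-1}\<x\>^{-1}}\Big)\norm{F}_{L^2_{t,x}},
\end{align*}
which is finite by the limiting absorption bound. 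Alternatively one can invoke the bilinear form of Kato's theorem, which produces \eqref{proposition_smoothing_1_2} directly from the $H$-smoothness of $\<x\>^{-1}$ combined with Christ--Kiselev's lemma applied on a slightly shifted pair of exponents.

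No genuine obstacle arises here: the proposition is essentially a translation of Nakamura's resolvent estimate via Kato's abstract machinery. The substantive analysis is entirely absorbed into the cited limiting absorption principle for slowly decaying repulsive potentials, whose proof is the real technical content and is not reproduced in this paper.
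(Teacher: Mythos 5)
Your argument is essentially the paper's proof: both estimates are deduced from Nakamura's uniform weighted resolvent bound via Kato's smooth perturbation theory (the paper cites \cite[Theorem 5.1]{Kat} and \cite{Dan}, which encapsulate exactly your Plancherel-in-time argument for the retarded term). Only your parenthetical alternative via Christ--Kiselev is shaky, since that lemma fails at the diagonal $L^2_t\to L^2_t$ exponents, but it is not needed.
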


\begin{proof}
By Kato's smooth perturbation theory \cite[Theorem 5.1]{Kat} (see also \cite{Dan}), \eqref{proposition_smoothing_1_1} and \eqref{proposition_smoothing_1_2} are consequences of the following uniform resolvent estimate
$$
\sup_{z\in \C\setminus\R}\norm{\<x\>^{-1}(H-z)^{-1}\<x\>^{-1}}<\infty
$$
which was proved by Nakamura \cite[Theorem 1.8]{Nak2}. \end{proof}

\begin{remark}
It was actually proved in \cite{Nak2} that $\<x\>^{-\rho}(H-z)^{-1}\<x\>^{-\rho}$ is bounded on $L^2$ uniformly in $z\in \C\setminus\R$ if $\rho>1/2+\mu/4$, which implies \eqref{proposition_smoothing_1_1} with $\<x\>^{-1}$ replaced by $\<x\>^{-\rho}$.  However, Proposition \ref{proposition_smoothing_1} is sufficient for our purpose. 
\end{remark}

In the proof of Theorem \ref{theorem_LP_2}, the energy localized version of Proposition \ref{proposition_smoothing_1} will be also required. The following proposition concerns with low energy estimates. 

\begin{proposition}
\label{proposition_smoothing_2}
Let $\varphi\in C_0^\infty((0,\infty))$. Then we have
\begin{align}
\label{proposition_smoothing_2_1}
\norm{\<x\>^{-1}\varphi(H_h  )e^{-itH_h  /h}u_0}_{L^2(\R^{1+n}) }&\lesssim\norm{u_0}_{L^2(\R^n)},\\
\label{proposition_smoothing_2_2}
\bignorm{\<x\>^{-1}\varphi(H_h  )\int_0^t e^{-i(t-s)H_h  /h}\<x\>^{-1}F(s)ds}_{L^2(\R^{1+n})}&\lesssim \norm{F}_{L^2(\R^{1+n})}
\end{align}
uniformly in $h=\lambda^{1/2-\mu}\in (0,1]$. Moreover, for any $s>s'>0$ and $\ep>0$,
\begin{align}
\label{proposition_smoothing_2_3}
\norm{\<x\>^{-s}e^{-itH_h  /h}\varphi(H_h)\<x\>^{-s}}&\lesssim h^{-\ep}\<t\>^{-s'},\quad t\in \R,\quad \lambda\in (0,1].
\end{align}
\end{proposition}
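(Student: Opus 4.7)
The three estimates split into two groups: (2.5.1)--(2.5.2) are uniform-in-$h$ local smoothing estimates for the propagator $e^{-itH_h/h}$, while (2.5.3) is a polynomial time-decay estimate. I would derive both via positive commutator methods for the semiclassical Hamiltonian $H_h$, with conjugate operator the semiclassical dilation $A_h=(x\cdot hD+hD\cdot x)/2$, taking the rescaled versions of (H1)--(H3) provided by Lemma \ref{lemma_dilation_1} as the main analytic input.

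For (2.5.1) and (2.5.2), my plan is to apply Kato's smooth perturbation theorem to the generator $H_h/h$, in direct parallel with the proof of Proposition \ref{proposition_smoothing_1}. The central input is the uniform semiclassical limiting absorption principle
\[
\sup_{h\in(0,1]}\,\sup_{w\in\C\setminus\R}\norm{\<x\>^{-1}\varphi(H_h)(H_h-w)^{-1}\varphi(H_h)\<x\>^{-1}}\lesssim h^{-1}.
\]
Combined with the identity $(H_h/h-z)^{-1}=h(H_h-hz)^{-1}$, this translates into a uniform-in-$h$ bound on $\sup_z\norm{\<x\>^{-1}\varphi(H_h)(H_h/h-z)^{-1}\varphi(H_h)\<x\>^{-1}}$, so that Kato's theorem (together with the usual $TT^*$ argument for the inhomogeneous case) delivers (2.5.1) and (2.5.2). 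The LAP itself follows from the semiclassical Mourre method of Nakamura \cite{Nak1}: a direct computation yields
\[
i[H_h,A_h]=h\bigl(2H_h-2V_h-x\cdot\nabla V_h\bigr),
\]
and the energy localization $\varphi(H_h)$, combined with the rescaled repulsivity $-x\cdot\nabla V_h\gtrsim(\lambda^{2/\mu}+|x|)^{-\mu}$ from Lemma \ref{lemma_dilation_1}, produces the strict Mourre estimate $\varphi(H_h)\,i[H_h,A_h]\,\varphi(H_h)\gtrsim h\,\varphi(H_h)^2$ modulo a relatively compact error, uniformly in $h\in(0,1]$.

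For the polynomial time decay (2.5.3) I would follow the semiclassical multiple commutator method of \cite{Nak1}, a semiclassical adaptation of Jensen-Mourre-Perry \cite{JMP}. Starting from Helffer-Sj\"ostrand's representation of $\varphi(H_h)e^{-itH_h/h}\varphi(H_h)$ as a contour integral in the resolvent, repeated integration by parts in $t$ combined with commutators $[A_h,\cdot]$ reduces the desired $\<t\>^{-s'}$ decay to a uniform-in-$h$ weighted bound on iterated resolvents
\[
\norm{\<x\>^{-s_1}(H_h-w)^{-N}\<x\>^{-s_2}}\lesssim h^{-K(N)}|\Im w|^{-L(N)},
\]
which is precisely the content of Appendix A. Interpolating between the trivial $\<t\>^{0}$ estimate and the maximal Mourre-regularity decay by an appropriate choice of the number $N$ of iterated commutators produces the mild $h^{-\ep}$ loss.

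The main obstacle is the uniformity in $h$ of the Mourre estimate. The repulsivity bound in Lemma \ref{lemma_dilation_1} holds only on $|x|\ge\lambda^{2/\mu}R_0$, and for smaller $|x|$ the potential $V_h$ may blow up as $\lambda\to0$. Fortunately, the energy localization $\varphi(H_h)\Subset(0,\infty)$ forces $V_h(x)\lesssim1$ on the effective support, which via the pointwise lower bound $V_h\gtrsim(\lambda^{2/\mu}+|x|)^{-\mu}$ in the same lemma confines the dynamics to $|x|\gtrsim1$, where $V_h$ is uniformly bounded and (H1)--(H3) hold with $h$-independent constants. The compactly supported residual contributions are relatively compact (and harmless within the Mourre framework), which can be made quantitative using Proposition \ref{proposition_functional_3} to approximate $(1-\chi_R)\varphi(H_h)$ by a semiclassical PDO with uniformly smoothing remainder. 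The other technical point is carefully tracking the $h$-dependence through the multiple commutator iteration so as to achieve only the mild $h^{-\ep}$ loss in (2.5.3) rather than a fixed power of $h$; this is exactly what the full quantitative content of Appendix A buys.
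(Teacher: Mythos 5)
Your proposal follows essentially the same route as the paper: the paper likewise reduces \eqref{proposition_smoothing_2_1}--\eqref{proposition_smoothing_2_2} to the uniform weighted resolvent bound \eqref{proof_proposition_smoothing_2_1} (its $N=1$ case plus Kato's smooth perturbation theory, the $h^{-1}$ loss being absorbed by the $1/h$ in the generator $H_h/h$), obtains \eqref{proposition_smoothing_2_3} from the $N$-fold bound via the Jensen--Mourre--Perry/Nakamura local-decay theorem, and proves the resolvent bound in Appendix \ref{appendix_A} by precisely your semiclassical Mourre argument with conjugate operator $A_h=hA$. The one caution is that uniformity in $h$ of the Mourre estimate needs quantitative norm-smallness of the low-lying cutoff term (the paper's Lemma \ref{lemma_A_1}, plus a separate treatment of $h$ bounded away from $0$ and the $\<x\>$ versus $\<A_h\>$ weight conversion) rather than mere relative compactness, but your energy-localization remark identifies the right mechanism and these details are exactly the content of Appendix \ref{appendix_A}, which you invoke.
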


\begin{proof}
We may replace $\varphi$ by $\varphi^2$. It suffices to show that, for any $N\in \N$ and $\gamma>N-1/2$, 
\begin{align}
\label{proof_proposition_smoothing_2_1}
\sup_{z\in \C\setminus\R} \norm{\<x\>^{-\gamma}\varphi(H_h)(H_h  -z)^{-N}\varphi(H_h  )\<x\>^{-\gamma}}\lesssim h^{-N}
\end{align}
holds uniformly in $h\in (0,1]$. Indeed, as before, \eqref{proposition_smoothing_2_1} and \eqref{proposition_smoothing_2_2} follow from \eqref{proof_proposition_smoothing_2_1} with $N=1$.  \eqref{proposition_smoothing_2_3} is a consequence of \eqref{proof_proposition_smoothing_2_1} and an abstract method by Jensen-Mourre-Perry \cite{JMP} (see also \cite[Theorem 2]{Nak1}  for its semiclassical version). A sketch of the proof of the bound \eqref{proof_proposition_smoothing_2_1} can be found in \cite[Lemma 2.1]{Nak2}. We give its details in Appendix \ref{appendix_A} for the sake of completeness.  
\end{proof}

In the high energy regime $\lambda\ge1$, we also have the following similar bounds. 
\begin{proposition}
\label{proposition_smoothing_3}
For any $\varphi\in C_0^\infty((0,\infty))$  one has
\begin{align}
\label{proposition_smoothing_3_1}
\norm{\<x\>^{-1}\varphi(H^\lambda )e^{-itH^\lambda }u_0}_{L^2(\R^{1+n})}&\lesssim  \norm{u_0}_{L^2(\R^n)},\\
\label{proposition_smoothing_3_2}
\bignorm{\<x\>^{-1}\varphi(H^\lambda )\int_0^t e^{-i(t-s)H^\lambda}\<x\>^{-1}F(s)ds}_{L^2(\R^{1+n})}&\lesssim \norm{F}_{L^2(\R^{1+n})}
\end{align} 
uniformly in $\lambda\ge1$. Moreover, for all $s>s'>0$, one has
\begin{align}
\label{proposition_smoothing_3_3}
\norm{\<x\>^{-s}\varphi(H^\lambda )e^{-itH^\lambda }\<x\>^{-s}}&\lesssim \<t\>^{-s'},\quad\lambda\ge1,\quad  t\in \R.
\end{align} 
\end{proposition}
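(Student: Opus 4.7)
The proof follows the same lines as that of Proposition~\ref{proposition_smoothing_2}. As there, by Kato's smooth perturbation theorem (cf.\ the proof of Proposition~\ref{proposition_smoothing_1}) and the Jensen-Mourre-Perry multiple commutator method \cite{JMP} (in the form of Appendix~\ref{appendix_A}), both \eqref{proposition_smoothing_3_1}--\eqref{proposition_smoothing_3_2} and the local decay \eqref{proposition_smoothing_3_3} reduce to the uniform weighted resolvent bound
\begin{align*}
\sup_{z\in\C\setminus\R}\bigl\|\<x\>^{-\gamma}\varphi(H^\lambda)(H^\lambda-z)^{-N}\varphi(H^\lambda)\<x\>^{-\gamma}\bigr\|\lesssim 1,\qquad \lambda\ge 1,
\end{align*}
for every $N\in\N$ and $\gamma>N-1/2$. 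The case $N=1$ yields \eqref{proposition_smoothing_3_1} and \eqref{proposition_smoothing_3_2}, while general $N$ combined with the Jensen-Mourre-Perry iteration yields \eqref{proposition_smoothing_3_3}.

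To establish the resolvent bound, I would apply Mourre theory to $H^\lambda$ with the standard dilation generator $A=(x\cdot D+D\cdot x)/2$. By Lemma~\ref{lemma_dilation_1} and $\mu<2$, for $\lambda\ge 1$ both $V^\lambda$ and its derivatives satisfy $|\partial_x^\alpha V^\lambda(x)|\lesssim\<x\>^{-\mu-|\alpha|}$ uniformly in~$\lambda$. Rewriting the commutator as
\begin{align*}
i[H^\lambda,A]=-2\Delta-x\cdot\nabla V^\lambda = 2H^\lambda-(2V^\lambda+x\cdot\nabla V^\lambda)
\end{align*}
and localizing with $\varphi(H^\lambda)$ where $\supp\varphi\Subset(0,\infty)$, one obtains a uniform Mourre estimate
\begin{align*}
\varphi(H^\lambda)\,i[H^\lambda,A]\,\varphi(H^\lambda)\ge \theta\,\varphi(H^\lambda)^2+K_\lambda,
\end{align*}
with $\theta>0$ and $K_\lambda$ compact whose operator norm is bounded uniformly in~$\lambda\ge 1$ thanks to the decay $|V^\lambda|+|x\cdot\nabla V^\lambda|\lesssim\<x\>^{-\mu}$. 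The standard Mourre iteration---identical to the $h=1$ specialization of the argument carried out in Appendix~\ref{appendix_A} for the semiclassical Hamiltonian $H_h$---then delivers the desired uniform bound on $(H^\lambda-z)^{-N}$ sandwiched between weights and spectral cut-offs.

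The main obstacle is the $\lambda$-uniformity of the Mourre estimate, specifically absorbing the compact remainder $K_\lambda$ into the positive main term. This is the usual limiting-absorption step and requires two inputs: (i) uniform compactness of weighted energy localizations such as $\<x\>^{-\mu/2}\varphi(H^\lambda)$, which follows from Lemma~\ref{lemma_functional_1} together with the uniform bound $|V^\lambda|\lesssim\<x\>^{-\mu}$; and (ii) the absence of embedded eigenvalues of $H^\lambda$ in $\supp\varphi$, inherited from the same property of $H$ via the unitary equivalence \eqref{dilation_3_0}. Since the Jensen-Mourre-Perry scheme here carries no small parameter, the argument of Appendix~\ref{appendix_A} adapts essentially verbatim with $h$ replaced by~$1$ and $H_h$ by~$H^\lambda$, yielding the $\lambda$-uniform polynomial decay required in \eqref{proposition_smoothing_3_3}.
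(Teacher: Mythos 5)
Your overall route is the paper's: reduce \eqref{proposition_smoothing_3_1}--\eqref{proposition_smoothing_3_3} to the uniform bound $\sup_{z}\norm{\<x\>^{-\gamma}\varphi(H^\lambda)(H^\lambda-z)^{-N}\varphi(H^\lambda)\<x\>^{-\gamma}}\lesssim1$ via Kato smoothing and Jensen--Mourre--Perry, and prove that bound by a Mourre estimate for $(H^\lambda,A)$ as in Appendix \ref{appendix_A} (the paper indeed omits the details with exactly this reference). The gap is in the one step you yourself single out as the main obstacle: the $\lambda$-uniform absorption of the error in the Mourre estimate. ``Uniform compactness'' of $\<x\>^{-\mu/2}\varphi(H^\lambda)$ is not a usable notion (Lemma \ref{lemma_functional_1} gives uniform \emph{boundedness} of weighted localizations, not compactness uniform in $\lambda$), and the standard argument that absorbs a compact remainder using absence of embedded eigenvalues is a fixed-operator argument: for each $\lambda$ it produces a strict Mourre estimate only on a sufficiently small energy interval, with a lower bound $\theta=\theta(\lambda)$ and an interval length that are not controlled as $\lambda\to\infty$. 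So, as written, your scheme does not deliver the uniformity in $\lambda\ge1$ that the proposition requires; note also that the absorption device actually used in Appendix \ref{appendix_A} (Lemma \ref{lemma_A_1}) exploits the strong vanishing of $\mathds 1_{\lambda^2I}(H)$ as $\lambda\to0$ and has no verbatim analogue here.

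The fix is simple and is what makes the high-energy case easier than the low-energy one: by (H1) alone, $|V^\lambda(x)|+|x\cdot\nabla V^\lambda(x)|\lesssim\lambda^{-2+\mu}(\lambda+|x|)^{-\mu}\le C\lambda^{-2}$ for $\lambda\ge1$, so in $i[H^\lambda,A]=2H^\lambda-(2V^\lambda+x\cdot\nabla V^\lambda)$ the error is $O(\lambda^{-2})$ in operator norm. Hence for $\lambda\ge\lambda_1$ large one gets $\varphi(H^\lambda)i[H^\lambda,A]\varphi(H^\lambda)\ge\theta\,\varphi(H^\lambda)^2$ with $\theta>0$ and \emph{no} remainder at all (no compactness or eigenvalue input is needed), and the multiple commutators $\mathrm{ad}_A^j(H^\lambda)$ are $H^\lambda$-bounded uniformly by (H1), so \cite{JMP} applies uniformly. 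For the remaining compact range $1\le\lambda\le\lambda_1$ one argues as in Appendix \ref{appendix_A} for $h\in[h_0,1]$: since $\D(\lambda)$ commutes with functions of $A$, the weighted resolvent powers of $H^\lambda$ are unitarily equivalent to $\lambda^{2N}\<A\>^{-\gamma}(H-\lambda^2z)^{-N}\<A\>^{-\gamma}$ with $\lambda^2\Re z$ in a fixed compact subset of $(0,\infty)$, and \cite{JMP} applied to $H$ itself (which has no eigenvalues) gives the bound. With this replacement of your absorption step, the rest of your reduction goes through.
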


\begin{proof}
As for the previous proposition, the results follow from the uniform bound
\begin{align}
\label{proof_proposition_smoothing_3_1}
\sup_{\lambda\ge1}\sup_{z\in \C\setminus\R}\norm{\<x\>^{-\gamma}\varphi(H^\lambda )(H^\lambda -z)^{-N}\varphi(H^\lambda )\<x\>^{-\gamma}}<\infty
\end{align}
which can be proved by the same argument as that of \eqref{proof_proposition_smoothing_2_1}. We thus omit it.
\end{proof}

\section{Homogeneous estimates}
\label{section_homogeneous}

In this section we prove \eqref{theorem_LP_2_1}, completing the proof of \eqref{theorem_1_1}. Let $\varphi$ and $\chi_R$ be as in the beginning of Subsection \ref{subsection_functional} and $\wtilde\varphi\in C_0^\infty((0,\infty))$ with $\wtilde\varphi\equiv1$ on $\supp \varphi$. Let 
$\varphi^\lambda(H)=\varphi(\lambda^{-2}H)$, $\wtilde \varphi^\lambda(H)=\wtilde \varphi(\lambda^{-2}H)$, $\chi^\lambda(x)=\chi_R(\lambda x)
$ and
\begin{align}
\label{equation_reduction_0}
A^\lambda:=\D \Op(a^\lambda)^*\D ^*,\quad
R^\lambda:=\D Q^\lambda\D ^*+\chi^\lambda\wtilde \varphi^\lambda(H),
\end{align}
Proposition \ref{proposition_functional_2} then yields that 
\begin{align}
\label{equation_reduction_1}
\varphi^\lambda(H)
=\wtilde \varphi^\lambda(H) \wtilde \varphi^\lambda(H)\varphi^\lambda(H)
=\wtilde \varphi^\lambda(H)A^\lambda \varphi^\lambda(H)+\wtilde \varphi^\lambda(H)R^\lambda \varphi^\lambda(H).
\end{align}
The following proposition provides desired Strichartz estimates for the remainder term. 

\begin{proposition}
\label{proposition_reduction_1}
We have, uniform in $\lambda>0$,
\begin{align}
\label{proposition_reduction_1_1}
\norm{\wtilde \varphi^\lambda(H) R^\lambda \varphi^\lambda(H) e^{-itH}u_0}_{L^2(\R;L^{2^*}(\R^n)) }
\lesssim \norm{u_0}_{L^2(\R^n)}.
\end{align}
\end{proposition}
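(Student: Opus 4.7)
My plan is to reduce Proposition \ref{proposition_reduction_1} to a single weighted Kato-smoothing estimate with the rescaled weight $\<\lambda x\>^{-1}$, and then establish that estimate by splitting at $\lambda=1$. Since $n(1/2-1/2^*)=1$, Lemma \ref{lemma_LP_2} gives $\norm{\wtilde\varphi^\lambda(H)}_{L^2\to L^{2^*}}\lesssim\lambda$ uniformly in $\lambda>0$, so it suffices to prove
\begin{equation*}
\lambda\norm{R^\lambda\varphi^\lambda(H)e^{-itH}u_0}_{L^2_tL^2_x}\lesssim\norm{u_0}_{L^2}.
\end{equation*}

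Next, using the commutation $\varphi^\lambda(H)e^{-itH}=e^{-itH}\varphi^\lambda(H)$ together with $\wtilde\varphi\varphi=\varphi$, I will show the pointwise-in-$t$ operator bound
\begin{equation*}
\norm{R^\lambda\varphi^\lambda(H)e^{-itH}u_0}_{L^2_x}\le C\norm{\<\lambda x\>^{-1}\varphi^\lambda(H)e^{-itH}u_0}_{L^2_x}
\end{equation*}
uniformly in $\lambda>0$. The summand $\chi^\lambda\wtilde\varphi^\lambda(H)\varphi^\lambda(H)$ collapses to $\chi^\lambda\varphi^\lambda(H)$, and the support property $\supp\chi^\lambda\subset\{\lambda|x|\le 2R\}$ yields the pointwise inequality $\chi^\lambda(x)\le C\<\lambda x\>^{-1}$. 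For the summand $\D Q^\lambda\D^*\varphi^\lambda(H)$, Proposition \ref{proposition_functional_2} provides the uniform $L^2$-boundedness of $\<x\>^NQ^\lambda\<x\>^N$, and the dilation identities $\D\<x\>^{-N}=\<\lambda x\>^{-N}\D$ and $\<x\>^{-N}\D^*=\D^*\<\lambda x\>^{-N}$ produce the factorization $\D Q^\lambda\D^*=\<\lambda x\>^{-N}(\D[\<x\>^NQ^\lambda\<x\>^N]\D^*)\<\lambda x\>^{-N}$ with uniformly bounded middle factor, which yields the bound after choosing $N\ge 1$.

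Finally, the weighted smoothing estimate
\begin{equation*}
\lambda\norm{\<\lambda x\>^{-1}\varphi^\lambda(H)e^{-itH}u_0}_{L^2_tL^2_x}\lesssim\norm{u_0}_{L^2}
\end{equation*}
will be proved by splitting at $\lambda=1$. When $\lambda\le 1$, the elementary pointwise inequality $\lambda\<\lambda x\>^{-1}\le\<x\>^{-1}$ reduces this to $\norm{\<x\>^{-1}e^{-itH}\varphi^\lambda(H)u_0}_{L^2_tL^2_x}\lesssim\norm{u_0}_{L^2}$, which follows from Proposition \ref{proposition_smoothing_1}. When $\lambda\ge 1$, the dilation identities $\D\<x\>^{-1}=\<\lambda x\>^{-1}\D$ and $e^{-itH}=\D e^{-it\lambda^2H^\lambda}\D^*$, together with the change of variable $s=t\lambda^2$ (which absorbs the outer factor $\lambda$), convert the left-hand side into $\norm{\<x\>^{-1}\varphi(H^\lambda)e^{-isH^\lambda}\D^*u_0}_{L^2_sL^2_x}$, which is exactly Proposition \ref{proposition_smoothing_3} applied to $\D^*u_0$. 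The main obstacle will be identifying the correct weight $\<\lambda x\>^{-1}$ that simultaneously handles both summands of $R^\lambda$ uniformly across the two energy regimes: the naive choice $\<x\>^{-1}$ fails for the cutoff $\chi^\lambda$ when $\lambda$ is small, whereas $\<\lambda x\>^{-1}$ is geometrically adapted to $\supp\chi^\lambda$ and transforms correctly under the dilation $\D$ intertwining $H$ with $H^\lambda$.
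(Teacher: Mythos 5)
Your proof is correct and follows essentially the same route as the paper: the Bernstein-type bound $\norm{\wtilde\varphi^\lambda(H)}_{2\to 2^*}\lesssim\lambda$ from Lemma \ref{lemma_LP_2}, the weighted bound on $Q^\lambda$ from Proposition \ref{proposition_functional_2} together with $\chi^\lambda\lesssim\<\lambda x\>^{-1}$, and then Proposition \ref{proposition_smoothing_1} for $\lambda\le1$ (via $\lambda\<\lambda x\>^{-1}\le\<x\>^{-1}$) and Proposition \ref{proposition_smoothing_3} with the dilation and the change of variable $s=t\lambda^2$ for $\lambda\ge1$. The only difference is cosmetic: you unify both summands of $R^\lambda$ under the single weight $\<\lambda x\>^{-1}$ before splitting at $\lambda=1$, whereas the paper treats the two summands directly in each energy regime.
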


\begin{remark}Note that \eqref{proposition_reduction_1_1} holds for all $n\ge2$. Interpolating between \eqref{proposition_reduction_1_1} and the trivial $L^2_x$-$L^\infty_tL^2_x$ estimate also implies \eqref{proposition_reduction_1_1} with $(2,2^*)$ replaced by any admissible pair $(p,q)$. 
\end{remark}

\begin{proof}
Since $\norm{\wtilde \varphi^\lambda(H)}_{2\to 2^*}\lesssim \lambda$ by Lemma \ref{lemma_LP_2}, it suffices to show the following uniform bound
\begin{align}
\label{proof_proposition_reduction_1_1}
\lambda\norm{R^\lambda \varphi^\lambda(H) e^{-itH}u_0}_{L^2_tL^2_x }\lesssim \norm{u_0}_{L^2},\quad \lambda>0.
\end{align}
When $0<\lambda\le1$, the support property of $\chi$ implies $\lambda \chi^\lambda(x)\lesssim \<x\>^{-1}.$
Proposition \ref{proposition_functional_2}, the  formula $\<x\>^{-1}\D ^*=\D ^*\<\lambda x\>^{-1}$ and the unitarity of $\D $ also imply
$$
\lambda\norm{\D Q^\lambda\D ^*f}_2\lesssim \lambda\norm{\<\lambda x\>^{-1}f}_2\lesssim \norm{\<x\>^{-1}f}_2.
$$
These two bounds and Proposition \ref{proposition_smoothing_1} gives us \eqref{proof_proposition_reduction_1_1} for $\lambda\le1$. When $\lambda\ge1$, we use \eqref{dilation_1}--\eqref{dilation_3},  the change of variable $s=t\lambda^{2}$ and Proposition \ref{proposition_smoothing_3} to obtain
\begin{align*}
\lambda \norm{R^\lambda \varphi^\lambda(H) e^{-itH}u_0}_{L^2_tL^2_x }
&=\lambda\bignorm{\Big(Q^\lambda +\chi(x)\wtilde \varphi(H^\lambda )\Big)\varphi(H^\lambda )e^{-it\lambda ^2H^\lambda }\D ^*u_0}_{L^2_tL^2_x }\\
&\lesssim \norm{\<x\>^{-1}\varphi(H^\lambda )e^{-isH^\lambda }\D ^*u_0}_{L^2_tL^2_x }\\
&\lesssim \norm{u_0}_{L^2}
\end{align*}
which completes the proof. 
\end{proof}

By virtue of this proposition and Lemma \ref{lemma_LP_2}, it remains to show that the estimate
\begin{align}
\label{equation_reduction_2}
\norm{A^\lambda \varphi^\lambda(H) e^{-itH}u_0}_{L^p_tL^q_x}\lesssim  \norm{u_0}_{L^2}
\end{align}
holds uniformly in $\lambda>0$. The proof of \eqref{equation_reduction_2} is divided into the high energy $\lambda\ge1$ and the low energy $0<\lambda\le1$ cases. In the high energy regime, using \eqref{dilation_2} we have the equality 
$$
A^\lambda \varphi^\lambda(H) e^{-itH}=\D \Op(a^\lambda)^*\varphi(H^\lambda )e^{-it\lambda^{2}H^\lambda }\D ^*
$$
which, together with  \eqref{dilation_1}, implies that \eqref{equation_reduction_2} with $\lambda\ge1$ is equivalent to the estimate
\begin{align}
\label{IK_1}
\norm{\Op(a^\lambda)^*\varphi(H^\lambda )e^{-itH^\lambda }u_0}_{L^p_tL^q_x}\lesssim  \norm{u_0}_{L^2},\quad \lambda\ge1.
\end{align}
On the other hand, when $\lambda\le1$, we use $\D_\mu=\D(\lambda^{2/\mu}) $ to write
\begin{align*}
A^\lambda \varphi^\lambda(H) e^{-itH}
=\D_\mu \Op_{h}( a_{h})^*\varphi(H_{h})e^{-it\lambda^{2}H_{h}}\D_\mu ^*,
\end{align*}
where $a_{h}(x,\xi)=a^\lambda(h^{-1}x,\xi)$. By \eqref{dilation_1}, \eqref{equation_reduction_2} with $0<\lambda\le1$ then is equivalent to 
\begin{align}
\label{IK_2}
\norm{\Op_{h}( a_{h})^*\varphi(H_{h})e^{-i t H_{h}/h }u_0}_{L^p_tL^q_x}\lesssim  h ^{-1/p}\norm{u_0}_{L^2},\quad h=\lambda^{2/\mu-1}\in(0,1].
\end{align}

The proof of the high energy estimate \eqref{IK_1} is simpler than that of the low energy estimate \eqref{IK_2}. Therefore, we first give the proof of \eqref{IK_2} in detail and, then, explain necessary modifications for the high energy estimate \eqref{IK_1}.  

\subsection{The low energy case}
\label{subsection_low _energy}

We begin with observing that $a_h$ belongs to $S^{0,-\infty}$ and satisfies the support property
\begin{align}
\label{a_h}
\supp a_h\subset \{(x,\xi)\in \R^{2n}\ |\ |x|>c_0,\ |\xi|<c_1\}
\end{align}
with some $c_0,c_1>0$ being independent of $h\in (0,1]$. Indeed, since $V^\lambda(h^{-1}x)=V_h(x)$, $a_h$ is supported in $\supp[(1-\chi_R)(h^{-1}\cdot)\varphi \circ p_h]$ and hence
\begin{align*}
\supp a_h\subset \{(x,\xi)\in \R^{2n}\ |\ |x|\ge \lambda^{2/\mu-1} R,\ d^{-1}<|\xi|^2+V_h(x)<d\}
\end{align*}
with some $d>0$. This, together with  Lemma \ref{lemma_dilation_1}, yields that $C_1(\lambda^{2/\mu}+|x|)^{-\mu}<d$ and hence
$$
|x|\ge \max\{(C_1/d)^{1/\mu}-\lambda^{2/\mu},\lambda^{2/\mu-1} R\}\ge (C_1/d)^{1/\mu}/2
$$
provided $R\ge1$. Since $\{a^\lambda\}_{\lambda>0}$ is bounded in $S^{0,-\infty}$, this support property of $a_h$ implies
$$
|\partial_x^\alpha\partial_\xi^\beta a_{h}(x,\xi)|
\le C_{\alpha\beta N} h^{-|\alpha|}\<h ^{-1}x\>^{-|\alpha|}\<\xi\>^{-N-|\beta|}\le C_{\alpha\beta N}\<x\>^{-|\alpha|}\<\xi\>^{-N-|\beta|}
$$
for any $N\ge0$. This shows $a_h\in S^{0,-\infty}$. Note that $c_0$ in \eqref{a_h} is not necessarily large and, thus, \eqref{a_h} is not enough to construct a long-time parametrix of $\Op_{h}( a_{h})^*\varphi(H_{h})e^{-itH_{h}/h}$. Therefore, we further decompose $a_h$ into compact and non-compact parts as follows:
$$
a_h=a_h^{\mathop{\mathrm{com}}}+a_h^{\mathop{\mathrm{\infty}}},\quad a_h^{\mathop{\mathrm{com}}}=\chi_R(x)a_h,\quad a_h^{\mathop{\mathrm{\infty}}}=(1-\chi_R)(x)a_h
$$
with $\chi_R(x)=\chi(x/R)$ and $\chi\in C_0^\infty(\R^n)$ satisfying $\chi\equiv1$ near the origin. The following proposition provides desired Strichartz  estimates for the compact part. 

\begin{proposition}
\label{proposition_dispersive_1}
Let $c_1>c_0>0$ and $b_h\in S^{0,-\infty}$ be supported in $\{c_0<|x|<c_1,\,|\xi|\le c_1\}$. Then for any admissible pair $(p,q)$ one has
$$
\norm{\Op_{h}(b_{h})^*e^{-i t H_{h}/h }u_0}_{L^p(\R;L^q(\R^n))}\lesssim  h ^{-1/p}\norm{u_0}_{L^2(\R^n)},\quad h\in (0,1]. 
$$
\end{proposition}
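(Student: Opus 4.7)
I would follow the Staffilani--Tataru scheme announced in the introduction: build a semiclassical WKB parametrix for $e^{-i\tau H_h}\Op_h(b_h)$, where $\tau:=t/h$ is the semiclassical time, on a short interval $|\tau|\le\tau_0$, and patch the resulting short-time Strichartz estimates using the local smoothing estimate \eqref{proposition_smoothing_2_1}. The key geometric fact is that on $\supp b_h\subset\{c_0\le|x|\le c_1,\ |\xi|\le c_1\}$, Lemma \ref{lemma_dilation_1} gives uniform $C^\infty$-bounds on $V_h$ in $h\in(0,1]$, so the Hamiltonian flow of $p_h=|\xi|^2+V_h$ stays in a compact subset of $\R^{2n}$ for $|\tau|\le\tau_0$ with $\tau_0>0$ independent of $h$. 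This is precisely the regime in which the WKB construction goes through uniformly in $h$.

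\textbf{Short-time parametrix and dispersive bound.} Solve the Hamilton--Jacobi equation $\partial_\tau S+|\nabla_x S|^2+V_h(x)=0$ with $S|_{\tau=0}=x\cdot\xi$ by characteristics; the resulting phase satisfies $\nabla_\xi^2 S=-2\tau\,\Id+O(\tau^2)$, hence is nondegenerate for $0<|\tau|\le\tau_0$. Iterating the transport equations for the amplitudes and truncating at order $N$ yields
$$
e^{-i\tau H_h}\Op_h(b_h)=I_h(\tau)+R_h^{(N)}(\tau),\quad \norm{R_h^{(N)}(\tau)}_{L^2\to L^2}=O(h^N),
$$
where $I_h(\tau)$ is a semiclassical FIO with phase $S$ and compactly supported amplitude. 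Stationary phase on the kernel of $\Op_h(b_h)^* I_h(\tau-\sigma)\Op_h(b_h)$ gives $\norm{\Op_h(b_h)^* e^{-i(\tau-\sigma)H_h}\Op_h(b_h)}_{L^1\to L^\infty}\lesssim(h|\tau-\sigma|)^{-n/2}$ for $|\tau-\sigma|\le\tau_0$, and Keel--Tao's abstract $TT^*$ theorem applied to $U(\tau):=\Op_h(b_h)^* e^{-i\tau H_h}$ on $|\tau|\le\tau_0/2$ then produces the short-time bound
$$
\norm{\Op_h(b_h)^* e^{-i\tau H_h}u_0}_{L^p_\tau L^q_x(|\tau|\le\tau_0/2)}\lesssim \norm{u_0}_2
$$
with constant independent of $h$.

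\textbf{Patching via local smoothing.} Partition $\R$ into intervals $J_k$ of length $\tau_0/2$ centered at $\tau_k$. Egorov's theorem, which on this time scale is available thanks to the uniform bounds on $V_h$, provides $\Op_h(b_h)^*e^{-i\tau H_h}=\Op_h(b_h)^*e^{-i\tau H_h}\Op_h(\tilde b_h)+O_{L^2\to L^2}(h^\infty)$ for some $\tilde b_h\in S^{0,-\infty}$ supported in a small enlargement of $\supp b_h$. Combined with the short-time Strichartz after a time translation, this upgrades the estimate on $J_k$ to
$$
\norm{\Op_h(b_h)^* e^{-i\tau H_h}u_0}_{L^p L^q(J_k)}\lesssim \norm{\Op_h(\tilde b_h)e^{-i\tau_k H_h}u_0}_2+O(h^\infty)\norm{u_0}_2.
$$
For $p=2$, summing in $k$ and comparing the discrete sum with $\int_\R\norm{\Op_h(\tilde b_h)e^{-i\tau H_h}u_0}_2^2\,d\tau$; the comparison is quantitative because of the semiclassical identity $\partial_\tau\norm{\Op_h(\tilde b_h)e^{-i\tau H_h}u_0}_2^2=i\<[\Op_h(\tilde b_h)^*\Op_h(\tilde b_h),H_h]e^{-i\tau H_h}u_0,e^{-i\tau H_h}u_0\>$, whose commutator is of order $h$ by symbol calculus. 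The continuous integral is in turn bounded by $h^{-1}\norm{u_0}_2^2$ via the rescaling $\tau=t/h$, the compact spatial support of $\tilde b_h$, and \eqref{proposition_smoothing_2_1}. Putting these together gives $\norm{\Op_h(b_h)^* e^{-itH_h/h}u_0}_{L^2_tL^q_x}\lesssim\norm{u_0}_2$, and interpolation with the trivial $L^\infty_tL^2_x$ estimate yields the claim for every admissible $(p,q)$ with a constant at least as good as $h^{-1/p}$.

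\textbf{Main obstacle.} The most delicate aspect is maintaining $h$-uniformity throughout: since $V_h$ itself depends on $h$, one must carefully track this dependence through the Hamilton--Jacobi construction, the FIO amplitudes and the Egorov approximation; Lemma \ref{lemma_dilation_1} is tailored to provide the necessary uniform estimates on $\supp b_h$. The other subtle point is the comparison between the discrete $\ell^2$-sum and the local smoothing integral, which closes only because of the $O(h)$ gain from the commutator $[\Op_h(\tilde b_h)^*\Op_h(\tilde b_h),H_h]$ --- without this semiclassical smallness the accumulated error over $k\in\Z$ would diverge and the patching argument would fail.
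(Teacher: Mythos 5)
Your overall architecture (short-time WKB parametrix plus Keel--Tao, then patching by local smoothing) is the same Staffilani--Tataru scheme the paper follows, but two of your steps do not hold up as written. First, the time bookkeeping: for $e^{-itH_h/h}$ the WKB construction, the eikonal equation you write down, and the dispersive bound are all statements about the variable $t$ itself, uniformly on a window $|t|\le \ep_0$ with $\ep_0$ independent of $h$ (this is Lemma \ref{lemma_dispersive_2}); your substitution $\tau=t/h$ mixes the scales, and, more importantly, your conclusion that Keel--Tao gives a short-time Strichartz constant \emph{independent of} $h$ is false. With kernel bound $(h|t-s|)^{-n/2}$ the $TT^*$ argument produces exactly the factor $h^{-1/p}$, and this loss is sharp: a coherent state of width $\sqrt h$ launched along a trajectory inside $\supp b_h$ gives $\norm{\Op_h(b_h)^*e^{-itH_h/h}u_0}_{L^2_tL^{2^*}_x}\gtrsim h^{-1/2}\norm{u_0}_2$. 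Hence your final claim $\norm{\Op_h(b_h)^*e^{-itH_h/h}u_0}_{L^2_tL^q_x}\lesssim\norm{u_0}_2$ cannot be true; the $h^{-1/p}$ in the proposition is unavoidable, and it is precisely what must be propagated through the gluing.

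Second, the patching does not close. After bounding the contribution of each window $J_k$ by $\norm{\Op_h(\tilde b_h)e^{-i\tau_kH_h}u_0}_2$ you must sum over \emph{infinitely many} $k$, and your comparison of this discrete sum with the local-smoothing integral via the commutator identity leaves an error of size $O(h)\norm{u_0}_2^2$ on every single window: the $O(h)$ gain is uniform in $k$ but not summable in $k$, so the accumulated error diverges, contrary to what you assert in your closing paragraph. What is needed is a per-window bound by a quantity that is itself square-summable over the windows. The paper achieves this by writing $v_j=\theta_j\Op_h(b_h)^*u_h$ (with $u_h=\varphi(H_h)e^{-itH_h/h}u_0$) as a Duhamel integral whose forcing is $-ih\theta_j'\Op_h(b_h)^*u_h+\theta_j[H_h,\Op_h(b_h)^*]u_h$; by Lemma \ref{lemma_dispersive_3} the commutator equals $h\Op_h(c_h)$ plus a controllable remainder, and since $b_h,c_h$ are compactly supported in $x$ the forcing is bounded in $L^1(\wtilde I_j;L^2)$ by $\norm{\<x\>^{-1}u_h}_{L^2(\wtilde I_j;L^2)}$. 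Applying Lemma \ref{lemma_dispersive_2} together with Christ--Kiselev then bounds each window by $h^{-1/p}\norm{\<x\>^{-1}u_h}_{L^2(\wtilde I_j;L^2)}$, and these localized weighted norms are square-summable thanks to \eqref{proposition_smoothing_2_1}. Two further points: the local smoothing estimate \eqref{proposition_smoothing_2_1} carries the spectral cutoff $\varphi(H_h)$, which you must insert somewhere in your argument, and resting the whole proof on the $p=2$ endpoint plus interpolation excludes $n=2$, whereas the window argument applies to every admissible pair directly.
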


\begin{remark}
$a_h^{\mathop{\mathrm{com}}}$ satisfies the condition on $b_h$ in this proposition. 
\end{remark}

The following is a key ingredient in  the proof of this proposition. 

\begin{lemma}
\label{lemma_dispersive_2}
Let $b_h$ be as in Proposition \ref{proposition_dispersive_1} and $\wtilde b\in S^{0,-\infty}$ such that $\wtilde b\equiv1$ on $\supp b_h$ and $\supp\wtilde b\subset \{c_0/2<|x|<2c_1,\ |\xi|\le 2c_1\}$. Then there exists $\ep_0>0$ such that, for any interval $I$ with $|I|\le 2\ep_0$ and admissible pair $(p,q)$,
\begin{align}
\label{lemma_dispersive_2_1}
\norm{\Op_h(\wtilde b)^*e^{-itH_h/h}f}_{L^p(I;L^{q}(\R^n))}&\lesssim h^{-1/p}\norm{f}_{L^2(\R^n)},\quad h\in (0,1]. 
\end{align}
\end{lemma}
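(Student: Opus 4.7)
The plan is to apply Keel--Tao's abstract Strichartz theorem \cite{KeTa} to $U(t):=\Op_h(\wtilde b)^* e^{-itH_h/h}$. Its $L^2$-boundedness, $\sup_t\norm{U(t)}\lesssim 1$, is immediate from Calder\'on--Vaillancourt's theorem and the unitarity of $e^{-itH_h/h}$, so the entire problem reduces to the semiclassical dispersive bound
\begin{equation*}
\norm{U(t)U(s)^*}_{L^1\to L^\infty}=\norm{\Op_h(\wtilde b)^* e^{-i(t-s)H_h/h}\Op_h(\wtilde b)}_{L^1\to L^\infty}\lesssim |h(t-s)|^{-n/2},\qquad |t-s|\le 2\ep_0.
\end{equation*}
Given this bound, the rescaling $\tau=ht$ converts the dispersion into the standard $|\tau-\tau'|^{-n/2}$ and Keel--Tao's theorem yields \eqref{lemma_dispersive_2_1}, the $h^{-1/p}$ factor arising from the Jacobian $d\tau/dt=h$ in the time norm.

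To prove the dispersive bound I would construct a short-time semiclassical WKB parametrix for $W(t):=e^{-itH_h/h}\Op_h(\wtilde b)$ of the form
\begin{equation*}
J_h(t)f(x)=\frac{1}{(2\pi h)^n}\iint e^{i(S(t,x,\xi)-y\cdot\xi)/h}a(t,x,\xi;h)f(y)\,dy\,d\xi,
\end{equation*}
with $S$ solving the Hamilton--Jacobi equation $\partial_tS+|\nabla_xS|^2+V_h(x)=0$, $S|_{t=0}=x\cdot\xi$, and $a=\sum_{j=0}^{N}h^j a_j$ determined by the standard transport hierarchy with $a_0|_{t=0}=\wtilde b$, so that $(ih\partial_t-H_h)J_h(t)=h^{N+1}\Op_h(r_N(t))$ with $\{r_N(t)\}$ bounded in $S^{0,-\infty}$ uniformly in $h$. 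Although $V_h$ is not globally uniformly bounded in $h$, one has $\supp\wtilde b\subset\{|x|\ge c_0/2\}$, and on this region Lemma \ref{lemma_dilation_1} provides $h$-independent $C^\infty$-bounds for $V_h$. Since the classical Hamiltonian flow displaces $x$ only by $O(t)$, for $\ep_0$ small the characteristics emanating from $\supp \wtilde b$ remain in $\{|x|\ge c_0/4\}$ throughout $|t|\le 2\ep_0$, so the HJ and transport equations are solvable there with $S(t,x,\xi)=x\cdot\xi-t(|\xi|^2+V_h(x))+O(t^2)$ and $a_j\in S^{0,-\infty}$, all uniformly in $h\in(0,1]$.

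Duhamel gives $\norm{W(t)-J_h(t)}_{L^2\to L^2}\lesssim h^N$ on $|t|\le 2\ep_0$, so taking $N$ large and invoking Lemma \ref{lemma_LP_2} makes the contribution of this remainder to the Strichartz norm negligible. The main term $\Op_h(\wtilde b)^* J_h(t-s)$, via the composition of an $h$-PDO with a short-time semiclassical FIO, is itself an FIO of the same form with amplitude $\wtilde a\in S^{0,-\infty}$ and phase $\wtilde S$ enjoying the same expansion as $S$; its Schwartz kernel $K(t-s,x,y)$ is the oscillatory integral
\begin{equation*}
K(t-s,x,y)=\frac{1}{(2\pi h)^n}\int e^{i(\wtilde S(t-s,x,\xi)-y\cdot\xi)/h}\wtilde a(t-s,x,\xi;h)\,d\xi.
\end{equation*}
Since $\nabla_\xi \wtilde S=x-y-2(t-s)\xi+O(\ep_0^2)$ and $\nabla_\xi^2\wtilde S=-2(t-s)\,\Id+O(\ep_0^2)$, for $\ep_0$ sufficiently small the standard semiclassical stationary/non-stationary phase method gives the pointwise bound $|K(t-s,x,y)|\lesssim |h(t-s)|^{-n/2}$, which completes the dispersive estimate.

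The principal obstacle is the uniformity in $h$ of the WKB construction in the presence of the $h$-singular potential $V_h$. This is resolved precisely by the phase-space localization carried by $\wtilde b$: on its support $V_h$ and all its derivatives are $h$-uniformly bounded by Lemma \ref{lemma_dilation_1}, and a short-time flow argument keeps the classical trajectories in the same region, so the classical semiclassical FIO calculus applies uniformly in the small parameter. The auxiliary composition $\Op_h(\wtilde b)^*J_h(t)$ is then a routine short-time Egorov-type computation.
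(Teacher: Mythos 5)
Your overall route is the paper's: reduce via the $TT^*$/Keel--Tao argument to the semiclassical dispersive bound $\norm{\Op_h(\wtilde b)^*e^{-i(t-s)H_h/h}\Op_h(\wtilde b)}_{1\to\infty}\lesssim|(t-s)h|^{-n/2}$ for $|t-s|\le 2\ep_0$, prove it by a short-time WKB parametrix for $e^{-itH_h/h}\Op_h(\wtilde b)$ (Hamilton--Jacobi phase, transport amplitudes, with uniformity in $h$ coming from the localization $|x|\gtrsim c_0$ on $\supp\wtilde b$ where Lemma \ref{lemma_dilation_1} gives $h$-independent bounds on $V_h$, and short-time confinement of the characteristics), and conclude the kernel bound by stationary/non-stationary phase. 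The scaling bookkeeping producing the factor $h^{-1/p}$ is also correct.

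The genuine gap is your treatment of the parametrix error. Once you have invoked Keel--Tao, the dispersive hypothesis must hold for the \emph{full} kernel $U(t)U(s)^*=\Op_h(\wtilde b)^*e^{-i(t-s)H_h/h}\Op_h(\wtilde b)$; the remainder $W(t)-J_h(t)$ does not appear additively in $U(t)f$ itself, only inside this $TT^*$ kernel, so it cannot be discarded ``at the level of the Strichartz norm.'' Consequently it must be estimated in $L^1\to L^\infty$, and the bound $\norm{W(t)-J_h(t)}_{2\to2}\lesssim h^N$ you record gives no such information; moreover Lemma \ref{lemma_LP_2}, which concerns spectral multipliers $m(\lambda^{-2}H)$, does not apply to this operator. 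The repair is exactly what the paper does: keep the Duhamel remainder in its explicit FIO form, $e^{-itH_h/h}\Op_h(\wtilde b)-J_{\Psi_h(t)}(d_h(t))=ih^{N-1}\int_0^te^{-i(t-s)H_h/h}J_{\Psi_h(s)}(r_h(s))\,ds$, and estimate $\Op_h(\wtilde b)^*$ composed with this in $\mathbb B(L^1,L^\infty)$ by combining $\norm{\Op_h(\wtilde b)^*}_{2\to\infty}\lesssim h^{-n/2}$ (from \eqref{PDO_1}, or Sobolev embedding together with weighted $\H^s$ bounds) with $\norm{J_{\Psi_h(s)}(r_h(s))}_{\mathbb B(\H^{-s},\H^s)}\lesssim h^{-2s}$; since $|t-s|\le2\ep_0$ forces $|(t-s)h|^{-n/2}\gtrsim h^{-n/2}$, any remainder bound of size $O(h^{N-C})$ with $N$ large is admissible. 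With this correction (and noting that the $O(\ep_0^2)$ in your Hessian expansion should really be $O(|t-s|^2)$, as needed for the nondegeneracy $|\det\nabla_\xi^2\wtilde S|\gtrsim|t-s|^n$), your argument coincides with the paper's proof.
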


\begin{proof}
By the standard $TT^*$-argument in \cite{KeTa}, it suffices to show the following dispersive estimate
\begin{align}
\label{proof_lemma_dispersive_2_1}
\norm{\Op_h(\wtilde b)^*e^{-itH_h/h}\Op_h(\wtilde b)}_{1\to \infty}\lesssim |th|^{-n/2},\quad 0<|t|\le 2\ep_0,
\end{align}
whose proof is based on a semiclassical parametrix of $e^{-itH_h/h}\Op_h(\wtilde b)$ and  the stationary phase method. Since such a parametrix construction is well known (see \cite{Rob1}), we only outline it. 

Note that, since $|x|>c_0$ on $\supp \wtilde b$, $V_h$ satisfies $\partial_x^\alpha V_h(x)=O(\<x\>^{-\mu-|\alpha|})$ on $\supp \wtilde b$ uniformly in $h\in (0,1]$. Using this fact, for all $N\in \N$ and sufficiently small $\ep_0>0$, one can construct smooth functions $\Psi_h,d_h\in C^\infty( (-3\ep_0,3\ep_0)\times\R^{2n})$ satisfying the following properties: First, $\Psi_h$ satisfies the Hamilton-Jacobi equation 
$$
\partial_t \Psi_h+p_h(x,\nabla_x\Psi_h)=0;\quad \Psi_h|_{t=0}=x\cdot\xi,
$$
on a small neighborhood of $\supp \wtilde b$. Second, $\{d_h(t)\}_{|t|\le 3\ep_0}$ is bounded in $S^{-\infty,-\infty}$. Moreover, $d_h$ approximately solves the following transport equation in such a way that
\begin{align}
\label{proof_lemma_dispersive_2_1_0}
\partial_t d_h+\mathcal X_h\cdot\nabla_x d_h+\mathcal Y_hd_h=h^Nr_{h};\quad d_h|_{t=0}=\wtilde b
\end{align}
with some bounded set $\{r_h(t)\}_{|t|\le 3\ep_0}\subset S^{-\infty,-\infty}$, where $\mathcal X_h=2\nabla_x\Psi_h$ and $\mathcal Y_h=\Delta_x\Psi_h$. Finally, for all $(t,x,\xi)\in (-3\ep_0,3\ep_0)\times\R^{2n}$, $\Psi_h$ satisfies
\begin{align}
\label{proof_lemma_dispersive_2_2}
|\partial_x^\alpha\partial_x^\beta\Psi_h(t,x,\xi)|&\le C_{\alpha\beta},\quad |\alpha|+|\beta|\ge2,\\
\label{proof_lemma_dispersive_2_3}
|\nabla_x\otimes\nabla_\xi\Psi_h(t,x,\xi)-\Id|&\le C|t|,\\
\label{proof_lemma_dispersive_2_4}
|\nabla_\xi^2\Psi_h(t,x,\xi)-2t\Id|&\le C|t|^2.
\end{align}
To $\Psi_h$ and $a\in S^{0,-\infty}$, we associate with a semiclassical Fourier integral operator ($h$-FIO)
$$
J_{\Psi_h(t)}(a)f(x)=(2\pi h)^{-n}\iint e^{i(\Psi_h(t,x,\xi)-y\cdot\xi)/h}a(x,\xi)f(y)dyd\xi.
$$
If we compute $\int_0^t \frac{d}{ds}e^{isH_h/h}J_{\Psi_h(s)}(d_h(s))ds$, then by virtue of \eqref{proof_lemma_dispersive_2_1_0}, we have Duhamel's formula
\begin{align}
\label{proof_lemma_dispersive_2_5}
e^{-itH_h/h}\Op_h(\wtilde b)=J_{\Psi_h(t)}(d_h(t))+ih^{N-1}\int_0^te^{-i(t-s)H_h/h}J_{\Psi_h(s)}(r_h(s))ds.
\end{align}
With \eqref{proof_lemma_dispersive_2_2} and \eqref{proof_lemma_dispersive_2_3} at hand, we learn from a standard theory of $h$-FIO (see \cite{Rob2}) that
\begin{align}
\label{proof_lemma_dispersive_2_6}
\norm{J_{\Psi_h(t)}(d_h(t))}\lesssim 1,\quad \norm{J_{\Psi_h(t)}(r_h(t))}_{\mathbb B(\H^{-s},\H^s)}\lesssim h^{-2s}
\end{align}
for all $s\in \R$, uniformly in $|t|\le 2\ep_0$ and $h\in (0,1]$. Moreover, \eqref{proof_lemma_dispersive_2_4} and the stationary phase method yield the following decay estimate
\begin{align}
\label{proof_lemma_dispersive_2_7}
\norm{J_{\Psi_h(t)}(d_h(t))}_{1\to\infty}\lesssim \min\{h^{-n},|th|^{-n/2}\},\quad |t|\le 2\ep_0,\ h\in (0,1].
\end{align}
\eqref{proof_lemma_dispersive_2_1} follows from \eqref{proof_lemma_dispersive_2_5}--\eqref{proof_lemma_dispersive_2_7}, \eqref{PDO_1} and Sobolev's embedding. 
\end{proof}

To prove Proposition \ref{proposition_dispersive_1}, one more technical lemma will be needed.

\begin{lemma}
\label{lemma_dispersive_3}
For any $N\in \N$ there exist $c_h,r_{1,h},r_{2,h}\in S^{-\infty,-\infty}$ such that
\begin{align}
\label{lemma_dispersive_3_1}
\Op_h(b_h)^*&=\Op_h(\wtilde b)^*\Op_h(b_h)^*+h^N\Op_h(r_{1,h}),\\
\label{lemma_dispersive_3_2}
[\Op_h(b_h)^*,H_h  ]&=h\Op_h(c_h)+h^N\Op_h(r_{3,h})(1+V_h).
\end{align}
\end{lemma}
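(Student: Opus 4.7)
The lemma is a symbolic-calculus exercise based on Proposition~\ref{proposition_PDO_1}, exploiting two structural facts: $\wtilde b\equiv 1$ on a neighborhood of $\supp b_h$, and the Hamiltonian symbol $p_h=|\xi|^2+V_h$ splits into a $\xi$-quadratic piece and an $x$-only piece, both of which cause the symbolic expansions to simplify dramatically via Remark~\ref{remark_PDO_1}.

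For \eqref{lemma_dispersive_3_1}, I would take adjoints and analyze $\Op_h(b_h)\Op_h(\wtilde b)=\Op_h(b_h\#\wtilde b)$. In the expansion
\[
b_h\#\wtilde b=\sum_{|\alpha|<N}\frac{h^{|\alpha|}}{\alpha!}\partial_\xi^\alpha b_h\cdot D_x^\alpha\wtilde b+h^N r_N,
\]
the $\alpha=0$ term equals $b_h\wtilde b=b_h$, and for every $|\alpha|\ge1$ the derivative $D_x^\alpha\wtilde b$ vanishes on a neighborhood of $\supp b_h\supseteq\supp\partial_\xi^\alpha b_h$, so each product vanishes identically. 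Hence $b_h\#\wtilde b=b_h+h^N r_N$, and taking adjoints gives \eqref{lemma_dispersive_3_1} with $r_{1,h}=-r_N^*$.

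For \eqref{lemma_dispersive_3_2}, I would split $H_h=\Op_h(|\xi|^2)+\Op_h(V_h)$. Since $|\xi|^2$ is polynomial of degree two in $\xi$ and independent of $x$, Remark~\ref{remark_PDO_1} makes both Moyal products $b_h^*\#|\xi|^2$ and $|\xi|^2\#b_h^*$ exact with no remainder, yielding
\[
[\Op_h(b_h)^*,-h^2\Delta]=h\Op_h(2i\xi\cdot\partial_x b_h^*+h\Delta_x b_h^*).
\]
For the potential part, $V_h\#b_h^*=V_h b_h^*$ exactly because $V_h$ is $\xi$-independent, while \eqref{proposition_PDO_1_1} gives
\[
b_h^*\#V_h-V_h b_h^*=\sum_{1\le|\alpha|<N}\frac{h^{|\alpha|}}{\alpha!}\partial_\xi^\alpha b_h^*\cdot D_x^\alpha V_h+h^N r_N'.
\]
The finite sum supplies the remaining contribution to $h\Op_h(c_h)$.

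The only non-routine step is rewriting $h^N\Op_h(r_N')$ in the factored form $h^N\Op_h(r_{3,h})(1+V_h)$. Using the explicit oscillatory-integral representation of $r_N'$ from Remark~\ref{remark_PDO_1}, the integrand carries a factor $V_h(x+y)$ coming from $p_h$; I would invoke the pointwise bound $V_h(x+y)\lesssim 1+V_h(x)$ on the relevant localized region (which follows from the uniform decay and smoothness of $V_h$ provided by Lemma~\ref{lemma_dilation_1}) to factor $(1+V_h(x))$ out as a right multiplication operator, leaving a residual symbol $r_{3,h}$ whose required decay in $(x,\xi)$ is obtained by standard integration by parts in $\eta$. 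The main obstacle is maintaining $h$-uniform control of all constants throughout this factorization, which is precisely what the $h$-uniform bounds on $V_h$ in Lemma~\ref{lemma_dilation_1} afford.
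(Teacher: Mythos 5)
Your treatment of \eqref{lemma_dispersive_3_1} and of the kinetic part $[\Op_h(b_h)^*,-h^2\Delta]$ is sound and is essentially what the paper does. The genuine gap is in the potential part: you apply the composition formula \eqref{proposition_PDO_1_1} to $b_h^*\# V_h$, but Proposition \ref{proposition_PDO_1} presupposes that both symbols satisfy \eqref{PDO_0} \emph{uniformly in $h$}, and $V_h$ does not belong to any such class: by \eqref{lemma_dilation_1_4} one only has $|\partial_x^\alpha V_h(x)|\le C_\alpha(\lambda^{2/\mu}+|x|)^{-\mu-|\alpha|}$, so near $x=0$ the symbol and all its derivatives blow up like negative powers of $h$. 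Consequently neither the finite sum (note that $b_h^*$ is compactly supported in $x$ only modulo $h^N$-errors which are global in $x$ and therefore do meet the singular region of $V_h$) nor the remainder comes with $h$-uniform bounds from the cited proposition; this lack of uniformity is precisely why the lemma keeps the potential outside the calculus, as a separate multiplication factor $(1+V_h)$ in the error term.

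Your proposed repair is also not correct as stated: the pointwise bound $V_h(x+y)\lesssim 1+V_h(x)$ fails in general — take $y=-x$ with $|x|\ge c_0$, so that $V_h(x+y)=V_h(0)\gtrsim\lambda^{-2}\to\infty$ while $1+V_h(x)=O(1)$. It holds only when $|y|\lesssim|x|$ (then it follows from \eqref{lemma_dilation_1_4}--\eqref{lemma_dilation_1_5}), so you would have to split the $y$-integration and control the region $|y|\gtrsim|x|$ by non-stationary phase while tracking the polynomial-in-$h^{-1}$ losses coming from $y$-derivatives of $V_h$ near the origin; moreover the factor you would extract is $1+V_h(x)$ in the \emph{symbol} variable, which corresponds to multiplication on the left of the operator, not on the right as in \eqref{lemma_dispersive_3_2}. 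The paper circumvents all of this with one localization: choose $\chi\in C_0^\infty(\R^n)$ with $\chi\equiv1$ on $\supp b_h$ and $\supp\chi\Subset\pi_x(\supp\wtilde b)\subset\{|x|>c_0/2\}$, use the calculus (which only involves the uniformly nice symbols $b_h$ and $\chi$) to write $\Op_h(b_h)^*=\chi\Op_h(b_h)^*\chi+h^N\Op_h(r''_h)$, whence $[\Op_h(b_h)^*,V_h]=[\Op_h(b_h)^*,\chi V_h]$ plus an $h^N$-error in which $V_h$ appears only as a multiplication operator next to a rapidly decaying PDO. Since $\supp(\chi V_h)\subset\{|x|>c_0/2\}$, the cut-off potential satisfies $|\partial_x^\alpha(\chi V_h)|=O(\<x\>^{-\mu-|\alpha|})$ uniformly in $h$, and Proposition \ref{proposition_PDO_1} then applies legitimately to produce $h\Op_h(c_h)$. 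You should adopt this (or an equivalent localization of the potential near $\supp b_h$) rather than expanding $b_h^*\# V_h$ directly.
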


\begin{proof}
\eqref{lemma_dispersive_3_1} follows from Proposition \ref{proposition_PDO_1} since $b_h\wtilde b\equiv b_h$. To show \eqref{lemma_dispersive_3_2}, we first observe from Proposition \ref{proposition_PDO_1} and \eqref{lemma_dispersive_3_1} that there exist $c_h'\in S^{-\infty,-\infty}$ and $r'_h\in S^{-\infty,\infty}$ such that
\begin{align*}
[\Op_h(b_h)^*,-h^2\Delta]=h\Op(c_h')+h^N\Op_h(r'_h). 
\end{align*}
To deal with $[\Op_h(b_h)^*,V_h]$, we take $\chi\in C_0^\infty(\R^n)$ satisfying $\chi\equiv1$ on $\supp b_h$ and $\supp \chi\Subset \pi_x(\supp\wtilde b)$, where $\pi_x$ is the projection onto the $x$-space. Then we obtain by Proposition \ref{proposition_PDO_1} that
\begin{align*}
\Op_h(b_h)^*=\Op_h(b_h)^*\chi=\chi\Op_h(b_h)^*\chi+h^N\Op_h(r''_h),\quad r''_h\in S^{-\infty,-\infty},
\end{align*}
and hence $[\Op_h(b_h)^*,V_h]=[\Op_h(b_h)^*,\chi V_h]+h^N\Op_h(r''_h)V_h$. 
Since $|\partial_x^\alpha(\chi V_h)|=O(\<x\>^{-\mu-|\alpha|})$ uniformly in $h$ (note that $\supp\chi V_h\subset \{|x|>c_0/2\}$), the symbol of $[\Op_h(b_h)^*,\chi V_h]$ belongs to $hS^{-\infty,-\infty}$ and, thus, \eqref{lemma_dispersive_3_2} follows.  
\end{proof}

\begin{proof}[Proof of Proposition \ref{proposition_dispersive_1}]
The proof is based on a similar argument as that in \cite{StTa,BoTz1}. Let $u_h=\varphi(H_h  )e^{-itH_h/h}u_0$ and $N\gg1$ be large enough (specified later). It follows from  \eqref{lemma_dispersive_3_1} that $$\Op_h(b_h)^*=\Op_h(\wtilde b)^*\Op_h(b_h)^*+h^N\Op_h(r_{h})^*$$ with some $r_h\in S^{-\infty,-\infty}$. By \eqref{PDO_1} and \eqref{proposition_smoothing_2_1}, the remainder  $h^N\Op_h(r_{h})^*u_h$ satisfies
\begin{align}
\label{proof_proposition_dispersive_1_0}
\norm{h^N\Op_h(r_{h})^*u_h}_{L^2_tL^{2^*}_x  }\lesssim h^{N-1}\norm{\<x\>^{-1}u_h}_{L^2_tL^2_x }\lesssim h^{N-3/2}\norm{u_0}_{L^2}\lesssim \norm{u_0}_{L^2}
\end{align}
if $N\ge 3/2$. 
To deal with the main term we consider a decomposition of the time interval 
\begin{align}
\label{proof_proposition_dispersive_1_00}
\R=\bigcup_{j\in \Z} I_j,\quad I_j=[(j-1/2)\ep_0,(j+1/2)\ep_0],
\end{align}
and choose intervals $\wtilde I_j$ centered at $j\ep_0$ satisfying $I_j\Subset \wtilde I_j$, $|\wtilde I_j|\le 3\ep_0/2$. Let $\theta_j\in C_0^\infty(\R)$ be such that $\theta_j\equiv1$ on $I_j$, $I_j\Subset \supp\theta_j\Subset \wtilde I_j$. Set $v_j(t)=\theta_j(t)\Op_h(b_h)^*u_h$. Then
\begin{align*}
\norm{\Op_h(\wtilde b)^*\Op_h(b_h)^*u_h}_{L^p_tL^q_x}^p\le \sum_{j\in \Z}\norm{\Op_h(\wtilde b)^*v_j}_{L^p(\wtilde I_j;L^q)}^p
\end{align*}
by the almost orthogonality of $\theta_j$. Now we claim that $v_j$ satisfies
\begin{equation}
\norm{\Op_h(\wtilde b)^*v_j}_{L^p(\wtilde I_j;L^q)}\lesssim
\left\{\begin{aligned}
\label{proof_proposition_dispersive_1_1}
&h^{-1/p}\norm{u_0}_{L^2},\quad j=0,\\
&h^{-1/p}\norm{\<x\>^{-1}u_h}_{L^2(\wtilde I_j;L^2)},\ j\neq0.
\end{aligned}
\right.
\end{equation}
Since $p\ge2$, this claim, together with \eqref{proposition_smoothing_2_1} and Minkowski's inequality, implies
\begin{align*}
\sum_{j\in \Z}\norm{\Op_h(\wtilde b)^* v_j}_{L^p(\wtilde I_j;L^q)}^p
\lesssim h^{-1}(\norm{u_0}_{L^2}^p+\norm{\<x\>^{-1}e^{-itH_h  /h}u_0}_{L^2_tL^2_x }^p)
\lesssim h^{-1}\norm{u_0}_{L^2}^p
\end{align*}
which, combined with \eqref{proof_proposition_dispersive_1_0}, shows \eqref{lemma_dispersive_2_1}. It thus remains to show \eqref{proof_proposition_dispersive_1_1}. The case $j=0$ follows  directly from \eqref{lemma_dispersive_2_1}. For $j\neq0$ we observe that $v_j$ satisfies
$$
(ih\partial_t-H_h  )v_j=-ih\theta_j'(t)\Op_h(b_h)^*u_h(t)+\theta_j(t)[H_h  ,\Op_h(b_h)^*]u_h(t);\quad v_j|_{t=0}=0,
$$
which leads to Duhamel's formula for $\Op_h(\wtilde b)^*v_j$:
$$
\Op_h(\wtilde b)^*v_j(t)=- i\int_0^t\Op_h(\wtilde b)^*e^{-i(t-s)H_h  /h}w_h(s)ds,
$$
where $w_h(s)=\{-i \theta_j'(s)\Op_h(b_h)^*+h^{-1}\theta_j(s)[\Op_h(b_h)^*,H_h  ]\}u_h(s)$. Note that $w_h(s)$ is supported in $\wtilde I_j$ with respect to $s$. 
Now we observe from \eqref{lemma_dispersive_2_1} and Lemma \ref{lemma_dispersive_3} that
\begin{align}
\nonumber
&\bignorm{\int_0^\infty \Op_h(\wtilde b)^*e^{-i(t-s)H_h  /h}w_h(s)ds}_{L^p(\wtilde I_j;L^q)}\\
\nonumber
&\lesssim h^{-1/p}\bignorm{\int_0^\infty e^{isH_h  /h}w_h(s)ds}_{L^2}\\
\nonumber 
&\lesssim h^{-1/p}\norm{w_h}_{L^1(\wtilde I_j;L^2)}\\
\label{proof_proposition_dispersive_1_2}
&\lesssim h^{-1/p}(1+h^{N-2/(2-\mu)})\norm{\<x\>^{-1}u_h}_{L^1(\wtilde I_j;L^2)}
\end{align}
for any $N\ge0$, where we used the fact $b_h\<x\>\in C_0^\infty(\R^{2n})$ and $|V_h|\lesssim h^{-2/(2-\mu)}$ in the last line. Since $p>1$, Christ-Kiselev's lemma \cite{ChKi} shows that, in the left hand side of \eqref{proof_proposition_dispersive_1_2}, the integral over $[0,\infty)$ can be replaced by an integral over $[0,t]$. Since $|\wtilde I_j|\le 2\ep_0$, this implies that
\begin{align*}
\norm{\Op_h(\wtilde b)^*v_j(t)}_{L^p(\wtilde I_j;L^q)}
&\lesssim h^{-1/p}(1+h^{N-2/(2-\mu)})\norm{\<x\>^{-1}u_h}_{L^1(\wtilde I_j;L^2)}\\
&\lesssim h^{-1/p}(1+h^{N-2/(2-\mu)})\norm{\<x\>^{-1}u_h}_{L^2(\wtilde I_j;L^2)}
\end{align*}
Choosing $N\ge 2/(2-\mu)$, we complete the proof of \eqref{proof_proposition_dispersive_1_1}. 
\end{proof}

By virtue of Proposition \ref{proposition_dispersive_1}, in order to obtain \eqref{IK_2}, it remains to show that
\begin{align}
\label{IK_3}
\norm{\Op_{h}( a_{h}^\infty)^*\varphi(H_{h})e^{-i t H_{h}/h }u_0}_{L^p_tL^q_x}&\lesssim h ^{-1/p}\norm{u_0}_{L^2}
\end{align}
holds uniformly in $h=\lambda^{2/\mu-1}\in(0,1]$. Note that, for $R\ge1$ large enough, there exists a relatively compact open interval $I\Subset(0,\infty)$ such that $a_h^\infty$ is supported in the region
$$
\{(x,\xi)\in \R^{2n}\ |\ |x|> R,\ |\xi|^2\in I\}.
$$
We first introduce some notation. For  $R\ge 1$, a relatively compact interval $I\Subset (0,\infty)$ and $\sigma\in (-1,1)$, the outgoing/incoming regions are defined respectively by
\begin{align}
\label{outgoing}
\Gamma^\pm (R,I,\sigma):=\{(x,\xi)\in \R^{2n}\ |\ |x|>R,\ |\xi|^2\in I,\ \pm \cos(x,\xi)>-\sigma\}
\end{align}
where $\cos(x,\xi)=x\cdot\xi/(|x||\xi|)$. 
Note that 
$\{|x|>R,|\xi|^2\in I\}\subset \Gamma^+(R,I,\sigma)\cup \Gamma^-(R,I,\sigma)$ if $\sigma>0$. Also note that $\Gamma^\pm(R,I,\sigma)$ are decreasing in $R$ and increasing in $I,\sigma$, namely
$$\Gamma^\pm(R_1,I_1,\sigma_1)\subset \Gamma^\pm(R_2,I_2,\sigma_2)\quad \text{if}\quad R_2<R_1,\ I_1\Subset I_2,\ \sigma_1<\sigma_2.$$ 
Given $R\ge 2$ and $I_1\Subset I_2,\sigma_1<\sigma_2$, one can construct $\chi^\pm_{1\to 2}\in S^{0,-\infty}$ which is of the form
$$
\chi^\pm_{1\to 2}(x,\xi)=\rho_R(x)\rho_{I_1,I_2}(\xi)\rho_{\sigma_1,\sigma_2}(\pm\cos(x,\xi))
$$
with some $\rho_R\in C^\infty(\R^n)$, $\rho_{I_1,I_2}\in C_0^\infty(\R^n)$ and $\rho_{\sigma_1,\sigma_2}\in C^\infty(\R)$ such that $\chi^\pm_{1\to2}\equiv1$ on $\Gamma^\pm(R,I_1,\sigma_1)$ and $\supp \chi^\pm_{1\to2}\subset \Gamma^\pm(R^{1/2},I_2,\sigma_2)$ (see \cite[Proposition 3.2]{BoTz1}). Now we decompose $a^\infty_h$ into outgoing and incoming parts
$$
a^\infty_h=a^++a^-,\quad a^\pm \in S^{0,-\infty},\quad \supp a^\pm \subset \Gamma^\pm(R,I,1/2).
$$
As in Lemma \ref{lemma_dispersive_2}, \eqref{IK_3} is a consequence of the following dispersive estimate
\begin{align}
\label{dispersive}
\norm{\Op_{h}(a^\pm)^*\varphi(H_h)e^{-i t H_h/h }\Op_{h}(a^\pm)}_{1\to\infty}\lesssim |th|^{-n/2},\quad t\neq0,\ h\in (0,1].
\end{align}
We actually show a slightly more general statement as follows: 

\begin{theorem}	
\label{theorem_IK_1}
Let $I\Subset (0,\infty)$ be a relatively compact interval, $\sigma\in (-1,1)$, $\varphi\in C_0^\infty((0,\infty)$ and $a^\pm,b^\pm\in S^{0,-\infty}$ be supported in $\Gamma^\pm(R,I,\sigma)$. Then, for sufficiently large $R$, 
\begin{align}
\label{theorem_IK_1_1}
\norm{\Op_{h}(b^\pm)^*\varphi(H_h)e^{-i t H_h/h }\Op_{h}(a^\pm)}_{1\to\infty}\lesssim |th|^{-n/2}\end{align}
uniformly in $\pm t\le0$ and $h\in (0,1]$.
\end{theorem}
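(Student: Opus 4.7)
The plan is to prove this via a semiclassical Isozaki--Kitada (IK) parametrix construction, combined with the stationary phase method for the main term and the local decay estimate from Proposition \ref{proposition_smoothing_2} for the remainder. By unitarity together with the time--reversal symmetry that sends $\Gamma^+$ to $\Gamma^-$ under $\xi\mapsto -\xi$, it suffices to treat the outgoing case $+$ with $t\le 0$, so that $e^{-itH_h/h}$ propagates outgoing data along outgoing bicharacteristics.

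First I would construct the IK phase. On an outgoing region slightly larger than $\Gamma^+(R,I,\sigma)$ I would solve the eikonal equation
\[
|\nabla_x S^+(x,\xi)|^2 + V_h(x) = |\xi|^2,
\]
normalized so that $\phi^+:=S^+-x\cdot\xi\to 0$ along outgoing rays. Thanks to Lemma \ref{lemma_dilation_1}, which gives $|\partial_x^\alpha V_h(x)|\lesssim \<x\>^{-\mu-|\alpha|}$ uniformly in $h\in(0,1]$ on $\{|x|\gtrsim 1\}$, the classical IK construction applies and produces $\phi^+$ with uniform bounds $|\partial_x^\alpha\partial_\xi^\beta\phi^+|\lesssim \<x\>^{1-\mu-|\alpha|}$. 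Attached to $S^+$ is the IK modifier
\[
J_h^+(w)f(x)=(2\pi h)^{-n}\iint e^{i(S^+(x,\xi)-y\cdot\xi)/h}w(x,\xi)f(y)\,dy\,d\xi.
\]
Solving iteratively the associated transport equations along the Hamiltonian flow of $p_h=|\xi|^2+V_h$ produces symbols $c^+,d^+\in S^{0,-\infty}$ supported in an outgoing region so that, for any prescribed $N$,
\[
(H_h/h)J_h^+(c^+)-J_h^+(c^+)(-h\Delta)=h^N J_h^+(r^+),\qquad \Op_h(a^+)=J_h^+(c^+)J_h^+(d^+)^*+h^N T_h^+,
\]
with $r^+\in S^{-\infty,-\infty}$ and $T_h^+$ negligible on $L^2$.

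By Duhamel's formula these intertwining relations give
\[
e^{-itH_h/h}\Op_h(a^+)=J_h^+(c^+)e^{ith\Delta}J_h^+(d^+)^*+R_h^+(t),\qquad t\le 0,
\]
where $R_h^+(t)$ is an explicit time integral carrying the prefactor $h^N$. Composing on the left with $\Op_h(b^+)^*\varphi(H_h)$ and using the PDO--FIO composition calculus (with the eikonal equation, which permits replacing $\varphi(H_h)$ on the IK side by a symbol depending only on $|\xi|^2$), the main term becomes $J_h^+(\tilde e^+)e^{ith\Delta}J_h^+(d^+)^*$ with Schwartz kernel
\[
K(t,x,y)=(2\pi h)^{-n}\int e^{i[S^+(x,\xi)-th|\xi|^2-S^+(y,\xi)]/h}E(x,y,\xi)\,d\xi.
\]
For $|t|\le h$ the trivial bound $|K|\lesssim h^{-n}\le |th|^{-n/2}$ is enough, while for $|t|\ge h$ the $\xi$-Hessian of the phase is $-2thI+O(\<x\>^{-\mu})+O(\<y\>^{-\mu})$, which is uniformly of size $|t|h$ once $R$ is chosen large. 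The Morse lemma / stationary phase then yields $|K(t,x,y)|\lesssim |th|^{-n/2}$.

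For the remainder $\Op_h(b^+)^*\varphi(H_h)R_h^+(t)$ I would exploit the gain $h^N$ together with the spatial cutoffs carried by $J_h^+(r^+)$ and $J_h^+(d^+)^*$, which make $\<x\>^{-s}$ factors available on both sides. Inserting the local decay estimate \eqref{proposition_smoothing_2_3} of Proposition \ref{proposition_smoothing_2}, combined with the $L^p$-$L^q$ bound \eqref{PDO_1} and $L^2$-continuity of IK modifiers, gives $\norm{\Op_h(b^+)^*\varphi(H_h)R_h^+(t)}_{1\to\infty}\lesssim h^{N-C}\<t\>^{-s'}$ for any $s'>0$, which beats $|th|^{-n/2}$ once $N$ is chosen sufficiently large in terms of $n$, $s'$ and $\mu$. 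The most delicate part will be the first step: constructing $S^+$ and the transport symbols $c^+,d^+$ with uniform-in-$h$ symbol estimates, which requires using assumption (H3) in its rescaled form from Lemma \ref{lemma_dilation_1} to ensure that the outgoing Hamiltonian trajectories of $p_h$ escape to infinity with $x\cdot\dot x\gtrsim |x|$ for $|x|\gtrsim 1$, so that the transport equations are solvable by convergent integrals along these rays. Once the phase and symbols are in place, everything else is a fairly routine application of $h$-FIO/PDO calculus and Proposition \ref{proposition_smoothing_2}.
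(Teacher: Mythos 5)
Your overall architecture (IK phase solving the eikonal equation with uniform symbol bounds from Lemma \ref{lemma_dilation_1}, stationary phase for the main term $J_h^+(c^+)e^{ith\Delta}J_h^+(d^+)^*$, local decay for remainders) matches the paper's strategy, but there is a genuine gap in the step where you claim the transport construction yields the intertwining relation $(H_h/h)J_h^+(c^+)-J_h^+(c^+)(-h\Delta)=h^N J_h^+(r^+)$ with $r^+\in S^{-\infty,-\infty}$, so that the whole Duhamel remainder carries an $h^N$ prefactor. This is not achievable: the eikonal and transport equations are only solvable on (a slight enlargement of) the outgoing region, so the amplitudes $c^+$ must be truncated in $|x|$ and in $\cos(x,\xi)$, and when $H_h$ hits $J_h^+(c^+)$ the derivatives falling on these cutoffs produce terms of size $O(h)$ — hence, after the $h^{-1}$ in Duhamel, $O(1)$ — which cannot be absorbed into the $h^N$ error. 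These are exactly the terms $Q_3^\pm$ and $Q_4^\pm$ in Proposition \ref{proposition_IK_3}, whose amplitudes $e^\pm_{\com}$ (supported in $\{|x|\le R^{4/9}\}$) and $e^\pm$ (supported in the \emph{incoming} region $\Gamma^\mp(R^{1/4},I_4,-\wtilde\sigma)$) carry no power of $h$. Controlling them is where the real work lies: one needs the non-stationary phase/propagation estimates of Propositions \ref{proposition_IK_5} and \ref{proposition_IK_7}, which rest on the geometric facts of Lemma \ref{lemma_IK_6} (free evolution of outgoing data stays outgoing, escapes compact sets, and stays uniformly separated from incoming regions), combined with the local decay estimate \eqref{proposition_smoothing_2_3}. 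Your proposal never addresses these terms, and the mechanism you invoke for the remainder (the factor $h^{N-C}$) simply is not available for them.

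Two smaller points. First, even for the genuinely $O(h^N)$ pieces ($Q_1^\pm,Q_2^\pm$ in the paper's notation), your claim that "spatial cutoffs" yield $\<x\>^{-s}$ weights on both sides of the free evolution is not accurate: $d^+$ is supported in $\{|x|>R^{1/2}\}$, not in a compact set, and the needed $\<s\>^{-N/8}$ decay of $\<x\>^{N/4}J_h^+(r_2^+)e^{ish\Delta}J_h^+(d^+)^*\<x\>^{N/8}$ again comes from the outgoing support property via Lemma \ref{lemma_IK_6}, i.e.\ Proposition \ref{proposition_IK_5}, not from cutoffs alone. Second, in your stationary phase computation the multiplier of $e^{ith\Delta}$ in semiclassical frequency is $e^{-it|\xi|^2/h}$, so the numerator phase is $S^+(x,\xi)-t|\xi|^2-S^+(y,\xi)$ (Hessian $-2t\Id+O(R^{-\mu})$), not $-th|\xi|^2$; the final bound $|th|^{-n/2}$ is still correct, but the scaling as written is off.
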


\begin{proof}[Proof of \eqref{dispersive}, assuming Theorem \ref{theorem_IK_1}]
We use a the same argument as that in \cite{BoTz1}. Let $$
U^\pm(t)=\Op_{h}(a^\pm)^*\varphi(H_h)e^{-i t H_h/h }\Op_{h}(a^\pm).
$$
and $K^\pm (t,x,y)$ denote the Schwartz kernel of $U^\pm (t)$. Since $U^\pm(t)^*=U^\pm(-t)$, $K^\pm$ satisfies
$$
K^\pm(t,x,y)=\overline{K^\pm(-t,y,x)}.
$$
Therefore, if \eqref{theorem_IK_1_1} with $b^\pm=a^\pm$ holds for $\pm t\le0$, then  so does it for $\pm t\ge0$ and \eqref{dispersive} follows. 
\end{proof}

\begin{remark}
Although the case with $b^\pm=a^\pm$ is sufficient to obtain the homogeneous estimate \eqref{theorem_LP_2_1}, the general case will be used in the proof of the inhomogeneous estimate \eqref{theorem_LP_2_2}.
\end{remark}

The proof of  Theorem \ref{theorem_IK_1} basically follows the same line as that in Bouclet-Tzvetkov \cite[Sections 4 and 5]{BoTz2} in which long-range metric perturbations of the Laplacian (without potentials) were considered. However, since several propositions in the proof will be also used in the proof of inhomogeneous estimates \eqref{theorem_LP_2_2} which was not considered in \cite{BoTz2}, we give a complete proof. The proof is based on the so-called semiclassical Isozaki-Kitada (IK for short) parametrix for $e^{-i t H_h/h }\Op_h(a^\pm)$. Such a parametrix was originally introduced by Isozaki-Kitada \cite{IsKi1} in the non-semiclassical regime,  in order to show the  existence and asymptotic completeness of modified wave operators for the pair $(-\Delta,H)$ under the condition (H1). Since then, the IK parametrix has been extensively used in the study of long-range scattering theory in both non-semiclassical and semiclassical settings (see \cite{IsKi2,Rob2,DeSk} and references therein), and more recently, used in the proof of Strichartz estimates (see \cite{BoTz1,BoTz2,BoMi1} and references therein). To construct the IK parametrix, we basically follow the argument in \cite[Section 4]{Rob2}, \cite[Section 3]{BoTz1} and \cite[Section 4]{BoTz2}. For a basic theory of semiclassical FIOs, we refer to \cite{Rob1}.

Henceforth, although all the statements will be stated for both the outgoing ($+$) and incoming ($-$) cases, we will give the proofs for the outgoing case only, the incoming case being analogous. 

We begin with constructing the phase function. 

\begin{lemma}
\label{lemma_IK_3}
Let $I\Subset (0,\infty)$ and $\sigma\in (-1,1)$. Then there exist $R_{\IK}\ge1$ and a family of smooth functions $S^\pm_{h,R}\in C^\infty(\R^{2n})$ with parameters $R\ge R_{\IK}$ and  $h\in (0,1]$ such that, for all $\alpha,\beta\in \Z_+^n$,
\begin{align*}
|\partial_x^\alpha\partial_\xi^\beta(S^\pm_{h,R}(x,\xi)-x\cdot\xi)|&\le C_{\alpha\beta}\<x\>^{1-\mu-|\alpha|},\\
|\partial_x^\alpha\partial_\xi^\beta(S^\pm_{h,R}(x,\xi)-x\cdot\xi)|&\le C_{\alpha\beta}\min\{\<x\>^{1-\mu-|\alpha|},R^{1-\mu-|\alpha|}\},\quad |\alpha|\ge1,
\end{align*}
where $C_{\alpha\beta}$ is independent of $h,x,\xi$ and $R$.  Moreover, $S^\pm_{h,R}$ satisfies
\begin{align}
\label{lemma_IK_3_2}
|\nabla_x S^\pm_{h,R}(x,\xi)|^2+V_h(x)=|\xi|^2,\quad (x,\xi)\in \Gamma^\pm(R,I,\sigma).
\end{align}
\end{lemma}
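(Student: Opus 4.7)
The plan is to seek $S^\pm_{h,R}(x,\xi)=x\cdot\xi+\phi^\pm_{h,R}(x,\xi)$, where $\phi^\pm_{h,R}$ solves in $\Gamma^\pm(R,I,\sigma)$ the reduced eikonal equation
\begin{equation*}
2\xi\cdot\nabla_x\phi^\pm_{h,R}+|\nabla_x\phi^\pm_{h,R}|^2=-V_h(x),
\end{equation*}
and is then extended globally by a smooth cutoff. The key observation making an $h$-uniform construction feasible is that, although $V_h$ may blow up at the origin as $h\to 0$ (Lemma \ref{lemma_dilation_1} only gives $|V_h(x)|\lesssim(\lambda^{2/\mu}+|x|)^{-\mu}$), on the region $|x|\ge R\ge R_{\IK}\ge1$ the bounds $|\partial_x^\alpha V_h(x)|\lesssim\<x\>^{-\mu-|\alpha|}$ hold with constants independent of $h\in(0,1]$ (since $\lambda^{2/\mu}\le 1$ is dominated by $|x|\ge R_{\IK}\ge1$). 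Thus on the outgoing region $V_h$ behaves as a classical long-range symbol of order $-\mu$.

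I would then follow the Isozaki-Kitada scheme as presented in \cite{Rob2} and \cite{BoTz1,BoTz2}. A Gronwall argument based on the symbol estimates of $V_h$ shows that, for $R_{\IK}$ sufficiently large, the forward Hamilton flow $\Phi^t_h=(x_h(t),\xi_h(t))$ of $p_h=|\xi|^2+V_h$ with initial data in $\Gamma^+(R,I,\sigma)$ remains for all $t\ge 0$ in a slight enlargement of that region, satisfies $|x_h(t)|\gtrsim\<x\>+t$, and admits an asymptotic momentum $\xi_+(x,\xi)=\lim_{t\to\infty}\xi_h(t)$ with $|\xi_h(t)-\xi_+|\lesssim(\<x\>+t)^{-\mu}$, all uniformly in $h\in(0,1]$. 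From these flow estimates one constructs $\phi^+_h$ on a neighborhood of $\Gamma^+(R,I,\sigma)$ either via a classical action along $\Phi^t_h$ or, equivalently, by a Picard iteration starting from the free-flow approximation $\phi^+_{h,0}(x,\xi)=-\frac12\int_0^\infty V_h(x+2t\xi)\,dt$. Differentiating and using $|\partial_x^\alpha V_h(x+2t\xi)|\lesssim(\<x\>+t)^{-\mu-|\alpha|}$, then integrating in $t\in[0,\infty)$, yields the uniform symbol-type estimates
\begin{equation*}
|\partial_x^\alpha\partial_\xi^\beta\phi^+_h(x,\xi)|\lesssim\<x\>^{1-\mu-|\alpha|}
\end{equation*}
on the (enlarged) outgoing region, where each $\partial_x$ gains one $\<x\>^{-1}$ from differentiating $V_h$ and $\partial_\xi$ produces no loss of decay.

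Finally, to obtain a globally defined $S^+_{h,R}\in C^\infty(\R^{2n})$, I would introduce a cutoff $\theta^+_R\in S^{0,0}$ which equals $1$ on $\Gamma^+(R,I,\sigma)$, is supported in a slight enlargement thereof, and satisfies $|\partial^\alpha\theta^+_R|\lesssim R^{-|\alpha|}$; then set $S^+_{h,R}(x,\xi):=x\cdot\xi+\theta^+_R(x,\xi)\phi^+_h(x,\xi)$. Since $\theta^+_R\equiv 1$ on $\Gamma^+(R,I,\sigma)$, the eikonal identity \eqref{lemma_IK_3_2} is preserved there, and the bound $|\partial_x^\alpha\partial_\xi^\beta(S^+_{h,R}-x\cdot\xi)|\lesssim\<x\>^{1-\mu-|\alpha|}$ follows directly from the symbol estimates of $\phi^+_h$. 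For the strengthened bound with the minimum when $|\alpha|\ge1$, observe that $\partial_x^\alpha(\theta^+_R\phi^+_h)$ vanishes outside $\{\<x\>\gtrsim R\}$; on its support, each $x$-derivative landing on $\phi^+_h$ contributes a factor $\lesssim\<x\>^{-1}\lesssim R^{-1}$, while each $x$-derivative landing on $\theta^+_R$ contributes $R^{-1}$ with $\phi^+_h=O(R^{1-\mu})$ on the annular support of $\nabla\theta^+_R$; hence the total is $\lesssim R^{1-\mu-|\alpha|}$, which, combined with the previous bound, yields the claimed $\min$ estimate. The main technical hurdle is keeping all constants uniform in $h\in(0,1]$, which is achieved because $V_h$ enters the construction only through its restriction to $\{|x|\ge R_{\IK}\}$, where the $h$-independent bounds of Lemma \ref{lemma_dilation_1} apply; once this is secured, classical long-range Hamilton-Jacobi theory carries over verbatim.
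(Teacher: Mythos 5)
Your proposal is correct and follows essentially the same route as the paper: the paper replaces $V_h$ by a smooth $\wtilde V_h$ coinciding with $V_h$ for $|x|\ge1$ and having $h$-uniform symbol bounds everywhere, invokes the classical Isozaki--Kitada phase construction of \cite[Proposition 4.1]{Rob2} (see also \cite[Proposition 3.2]{BoTz1}) to obtain $\wtilde S^\pm_{h,R}$ solving the eikonal equation on $\Gamma^\pm(R,I,\sigma)$, and then glues with $x\cdot\xi$ via the outgoing/incoming cutoff $\chi^\pm_{1\to2}$ --- exactly your uniformity observation (only the restriction of $V_h$ to $|x|\gtrsim1$ matters, where Lemma \ref{lemma_dilation_1} gives $h$-independent long-range bounds) combined with cutoff-gluing. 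One correction to your sketch: the ``free-flow'' iterate $-\tfrac12\int_0^\infty V_h(x+2t\xi)\,dt$ both has the wrong constant (the linearized equation $2\xi\cdot\nabla_x\phi_0=-V_h$ is solved by $+\int_0^\infty V_h(x+2t\xi)\,dt$) and, more seriously, diverges in the long-range regime $0<\mu\le1$ covered by the theorem, so there the phase must be built through the Hamilton flow/generating function (equivalently, iterate on $\nabla_x\phi$ rather than $\phi$, recovering the $O(\<x\>^{1-\mu})$ bound by integrating the gradient estimate), which is precisely what the cited construction does and what your primary ``classical action'' route already amounts to.
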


\begin{proof}
Let $\wtilde V_h\in C^\infty(\R^n;\R)$ be such that $\wtilde V_h\equiv V_h$ for $|x|\ge1$ and $\partial_x^\alpha \tilde V_h(x)=O(\<x\>^{-\mu-|\alpha|})$ on $\R^n$ uniformly in $h$. It then follows from  \cite[Proposition 4.1]{Rob2} (see also \cite[Proposition 3.2]{BoTz1}) that there exists $R_{\IK}\ge1$ such that, for any  $R\ge R_{\IK}$, one can construct $\wtilde S^+_{h,R}\in C^\infty(\R^{2n})$ satisfying $$|\partial_x^\alpha\partial_\xi^\beta(\wtilde S^+_{h,R}(x,\xi)-x\cdot\xi)|\le C_{\alpha}\<x\>^{-\mu-|\alpha|},\quad (x,\xi)\in \R^{2n}$$ and $|\nabla_x \wtilde S^+_{h,R}(x,\xi))|^2+\tilde V_h(x)=|\xi|^2$ on $\Gamma^+(R,I,\sigma)$. Let $R_1=R, I_1=I$ and $\sigma_1=\sigma$ and take  $I_1\Subset I_2$ and $-1<\sigma_1<\sigma_2<1$. Using the cut-off function $\chi_{1\to2}^+$ introduced above, we define $$S^+_{h,R}(x,\xi):=x\cdot\xi +\chi_{1\to2}^+(x,\xi)(\wtilde S^+_{h,R}(x,\xi)-x\cdot \xi).$$ Then it is not hard to check that $S^+_{h,R}$ satisfies the desired properties. 
\end{proof}

In what follows we drop the subscript $R$ and write $S^\pm_h=S_{h,R}^\pm$ for simplicity. 
To a symbol $a\in S^{0,-\infty}$ we associate $h$-FIOs $J^\pm_h(a):\S(\R^n)\to \S'(\R^n)$ defined by 
$$
J^\pm_h(a)f(x)=\frac{1}{(2\pi h)^n}\iint e^{i(S^\pm_h(x,\xi)-y\cdot\xi)/h}a(x,\xi)f(y)dyd\xi.
$$
Here we record a few basic properties of $J^+_h$ without proofs (see \cite{Rob1,Bouclet1} for details). For $a,b\in S^{0,-\infty}$, the kernel of $J^+_h(a)(J^+_h(b))^*$ is given by
\begin{align*}
\frac{1}{(2\pi h)^n}\int e^{i(S^+_h(x,\xi)-S^+_h(y,\xi))/h}a(x,\xi)\overline{b(y,\xi)}d\xi,
\end{align*}
where, by Lemma \ref{lemma_IK_3}, the phase function obeys
$$
S^+_h(x,\xi)-S^+_h(y,\xi)=(x-y)\cdot\xi_h(x,y,\xi),\ \partial_x^\alpha\partial_\xi^\beta \partial_y^\gamma(\xi_h(x,y,\xi)-\xi)=O(R^{-\mu}).
$$
Hence the standard Kuranishi's trick implies that, for sufficiently large $R$
$$J^+_h(a)(J^+_h(b))^*=\Op_h(c_h),\quad c_h\in S^{0,-\infty}.$$ In particular, $J^+_h(a)$ is bounded on $L^2$ uniformly in $h\in (0,1]$. Using the following property 
$$
J^+_h(a)x_j=J^+_h(a\partial_{\xi_j}S_h^+)-ih J^+_h(\partial_{\xi_j}a)
$$
and the fact $\<x\>^{-1}a\partial_{\xi_j}S_h^+\in S^{0,-\infty}$ and a duality argument, we also see that,  for any $\alpha\in \R$, $$\sup_{h\in (0,1]}\norm{\<x\>^\alpha J^+_h(a)\<x\>^{-\alpha }}\le C_\alpha<\infty.$$
Next we recall a microlocal parametrix construction of an elliptic $h$-FIO. Take $N\in \N$, $I_1\Subset I_2\Subset I_3\Subset (0,\infty)$ and $-1<\sigma_1<\sigma_2<\sigma_3<1$ arbitrarily. Let $R\ge R_{\IK}^3$ be large enough and consider a symbol $b=b_0+hb_1+\cdots+h^Nb_N$ with $b_j\in S^{-j,-\infty}$ supported in $\Gamma^+(R^{1/3},I_3,\sigma_3)$. Assume that $b_0$ is elliptic in such a way that
$$
b_0\gtrsim1\quad\text{on}\quad \Gamma^+(R^{1/2},I_2,\sigma_2).
$$
Then, for any $a\in S^{0,-\infty}$ supported in $\Gamma^+(R,I_1,\sigma_1)$, there exist a sequence $c_j\in S^{-j,-\infty}$ supported in $\Gamma^+(R^{1/2},I_2,\sigma_2)$ and $r_N\in S^{-N,-\infty}_N$ such that, with $c=c_0+hc_1+\cdots+h^Nc_N$,
$$
\Op_h(a)=J^+_h(b)J^+_h(c)^*+\Op_h(r_N). 
$$
Finally, we consider the composition $\Op_h(a)J_h^+(b)$ for $a,b\in S^{0,-\infty}$. By a direct calculation by means of Taylor's expansion, $\Op_h(a)J_h^+(b)$ is also an $h$-FIO $J_h^+(c_h)$ with the amplitude
$$
c_h(x,\xi)=\frac{1}{(2\pi h)^n}\iint e^{i[S^+_h(y,\xi)-S^+_h(x,\xi)+(x-y)\cdot\eta]/h}a(x,\eta)b(y,\xi)dyd\eta
$$
having the following asymptotic expansion
\begin{align}
\label{composition}
c_h(x,\xi)-\sum_{|\alpha|<N}\frac{i^{-|\alpha|}h^{|\alpha|}}{\alpha!}(\partial_\xi^\alpha a)(x,\nabla_xS^+_h(x,\xi))\partial_y^\alpha \left[e^{i\Psi_h(x,y,\xi)}b(y,\xi)\right]\Big|_{y=x}\in h^NS^{-N,-\infty}
\end{align}
for any $N\ge0$, where $\Psi_h(x,y,\xi)=S^+_h(y,\xi)-S^+_h(x,\xi)+(x-y)\cdot\nabla_xS^+_h(x,\xi)$. 
 In particular, since $\nabla_xS^+_h(x,\xi)=\xi+O(R^{-\mu})$, for any neighborhood $U$ of $\supp a\cap \supp b$, there exists $R'>0$ such that, modulo an error term in $S^{-N,-\infty}_N$,  $c_h$ is supported in $U$ provided that $R\ge R'$. 

\begin{proposition}[Semiclassical Isozaki-Kitada parametrix]
\label{proposition_IK_3}
Let $I_1\Subset I_{2}\Subset  I_3\Subset I_4\Subset (0,\infty)$ be relatively compact open intervals and $-1<\sigma_1<\sigma_{2}<\sigma_3<\sigma_4<1$ so that $$\Gamma^\pm(R^{1/j},I_j,\sigma_j)\subset \Gamma^\pm(R^{1/(j+1)},I_{j+1},\sigma_{j+1}),\quad 1\le j\le4.$$ 
For $R\ge R_{\IK}^4$ large enough, any $a^\pm\in S^{0,-\infty}$ supported in $ \Gamma^\pm(R,I_1,\sigma_1)$ and any $N\ge0$, we have
\begin{align*}
e^{-itH_h/h}\Op_h(a^\pm)
&=J^\pm_h(c^\pm)e^{ith\Delta}J^\pm_h(d^\pm)^*+\sum_{j=1}^4Q_j^\pm(t,h),
\end{align*}
where the remainder terms $Q_j^\pm(t,h)$, $1\le j\le 4$, are given by 
\begin{align*}
Q_1^\pm(t,h)&=h^Ne^{-itH_h/h}\Op_h(r_{1}^\pm),\\
Q^\pm_2(t,h)&=-ih^{N-1}\int_0^t e^{-i(t-s)H_h/h}J^\pm_h(r_{2}^\pm)e^{ish\Delta}J^\pm_h(d^\pm)^*ds,\\
Q^\pm_3(t,h)&=-\frac ih\int_0^t e^{-i(t-s)H_h/h}\Op_h(\wtilde e_{\com}^\pm)J^\pm_h(e_{\com}^\pm)e^{ish\Delta}J^\pm_h(d^\pm)^*ds,\\
Q^\pm_4(t,h)&=-\frac ih\int_0^t e^{-i(t-s)H_h/h}\Op_h(\wtilde e^\pm)J^\pm_h(e^\pm)e^{ish\Delta}J^\pm_h(d^\pm)^*ds
\end{align*}
and $S^\pm_h$ is given by Lemma \ref{lemma_IK_3} with $R,I,\sigma$ replaced by $R^{1/4},I_4,\sigma_4$. Moreover, the amplitudes satisfy the following properties:
\begin{itemize}
\item $c^\pm,d^\pm,e^\pm,\wtilde e^\pm\in S^{0,-\infty}$, $r_{1}^\pm,r_{2}^\pm\in S^{-N,\infty}$ and $e_{\com}^\pm\in C_0^\infty(\R^{2n})$;
\item $\supp c^\pm\subset \Gamma^\pm(R^{1/3},I_3,\sigma_3)$, $\supp d^\pm \subset \Gamma^\pm(R^{1/2},I_2,\sigma_2)$;
\item $\supp e_{\com}^\pm,\supp \wtilde e_{\com}^\pm\subset\Gamma^\pm(R^{1/4},I_4,\sigma_4)\cap\{|x|\le R^{4/9}\}$;
\item $\supp e^\pm,\supp \wtilde e^\pm\subset \Gamma^\pm(R^{1/4},I_4,\sigma_4)\cap \Gamma^\mp(R^{1/4},I_4,-\wtilde\sigma)$ with some $\wtilde \sigma\in (\sigma_2,\sigma_3)$.
\end{itemize}
\end{proposition}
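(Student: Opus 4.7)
Make the ansatz $U^\pm(t) := J_h^\pm(c^\pm) e^{ith\Delta} J_h^\pm(d^\pm)^*$ and set $G^\pm(t) := (ih\partial_t - H_h) U^\pm(t)$. Duhamel's formula then gives
$$e^{-itH_h/h}\Op_h(a^\pm) = U^\pm(t) + e^{-itH_h/h}\bigl[\Op_h(a^\pm) - U^\pm(0)\bigr] - \frac{i}{h}\int_0^t e^{-i(t-s)H_h/h}\,G^\pm(s)\,ds.$$
A direct differentiation of $e^{iS_h^\pm/h} c^\pm$ and application of $H_h$ shows $G^\pm(t) = -J_h^\pm(\mathfrak E^\pm)\,e^{ith\Delta} J_h^\pm(d^\pm)^*$ with
$$\mathfrak E^\pm := (|\nabla_x S_h^\pm|^2 + V_h - |\xi|^2)\,c^\pm - ih\,L_h^\pm c^\pm - h^2\Delta_x c^\pm,\qquad L_h^\pm c := 2\nabla_x S_h^\pm\cdot\nabla_x c + (\Delta_x S_h^\pm)\,c.$$
The plan is therefore to choose $c^\pm$ and $d^\pm$ so that $\Op_h(a^\pm) - U^\pm(0) = h^N \Op_h(r_1^\pm)$ (producing $Q_1^\pm$) and so that $\mathfrak E^\pm$ splits into three pieces which, after Duhamel integration, account for $Q_2^\pm, Q_3^\pm, Q_4^\pm$ respectively.

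\textbf{Transport hierarchy and initial matching.} By Lemma~\ref{lemma_IK_3}, the eikonal equation $|\nabla_x S_h^\pm|^2 + V_h = |\xi|^2$ holds on $\Gamma^\pm(R^{1/4}, I_4, \sigma_4)$, so the first term of $\mathfrak E^\pm$ drops out as long as $\supp c^\pm \subset \Gamma^\pm(R^{1/3}, I_3, \sigma_3)$. Writing $c^\pm = \sum_{j=0}^{N-1} h^j c_j^\pm$ with $c_j^\pm \in S^{-j,-\infty}$ and matching powers of $h$ in $\mathfrak E^\pm$ reduces matters to the transport hierarchy $L_h^\pm c_0^\pm = 0$ and $L_h^\pm c_j^\pm = i\Delta_x c_{j-1}^\pm$ for $j \ge 1$. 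These are solved by integration along the characteristics of $L_h^\pm$, which are projections of the bicharacteristics of $p_h$ onto the base; the bound $\nabla_x S_h^\pm - \xi = O(R^{-\mu})$ from Lemma~\ref{lemma_IK_3} ensures, for $R$ large, that characteristics started in $\Gamma^\pm(R^{1/3}, I_3, \sigma_3)$ remain inside $\Gamma^\pm(R^{1/4}, I_4, \sigma_4)$ in the $\pm$ time direction. I take the initial data for $c_0^\pm$ on an outgoing cross-section to be elliptic enough that the resulting $c^\pm$ is $\gtrsim 1$ on $\Gamma^\pm(R^{1/2}, I_2, \sigma_2)$, and then invoke the elliptic FIO parametrix recalled just before the proposition to obtain $d^\pm \in S^{0,-\infty}$ supported in $\Gamma^\pm(R^{1/2}, I_2, \sigma_2)$ with $J_h^\pm(c^\pm) J_h^\pm(d^\pm)^* = \Op_h(a^\pm) + h^N\Op_h(r_1^\pm)$.

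\textbf{Error splitting and main obstacle.} Once the transport equations are satisfied, the residual $\mathfrak E^\pm$ consists of two types of terms. First, the $h^N$ truncation remainder from the $h$-expansion, which lives in $h^N S^{-N,-\infty}$ and yields $Q_2^\pm$ after Duhamel integration. Second, commutator contributions from the cutoff $\chi^\pm_{1\to 2}$ used to localize $c^\pm$ inside $\Gamma^\pm(R^{1/3}, I_3, \sigma_3)$: the derivatives of $\chi^\pm_{1\to 2}$ live on the boundary of this region, which splits into a radial piece near $\{|x|\sim R^{4/9}\}$ — producing a compactly supported $e_\com^\pm \in C_0^\infty(\R^{2n})$ with support in $\Gamma^\pm(R^{1/4}, I_4, \sigma_4) \cap \{|x| \le R^{4/9}\}$ — and an angular piece near $\{\pm\cos(x,\xi)\sim -\sigma_3\}$, which sits in $\Gamma^\pm \cap \Gamma^\mp(R^{1/4}, I_4, -\wtilde\sigma)$ for a suitable $\wtilde\sigma \in (\sigma_2, \sigma_3)$. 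Factoring each of the corresponding residual FIOs through the composition formula~\eqref{composition} as $\Op_h(\wtilde e_\com^\pm) J_h^\pm(e_\com^\pm)$ and $\Op_h(\wtilde e^\pm) J_h^\pm(e^\pm)$ respectively produces $Q_3^\pm$ and $Q_4^\pm$ with the asserted support properties.

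\textbf{Main obstacle.} The hardest part is the rigorous support-tracking: one must verify using Lemma~\ref{lemma_IK_3} that the characteristic flow of $L_h^\pm$ confines the boundary residuals to exactly the two regions above, and that the composition formulas preserve the prescribed supports when rewriting each FIO as $\Op_h(\cdot)J_h^\pm(\cdot)$. This is precisely where the nested scale hierarchy $R \gg R^{1/2} \gg R^{1/3} \gg R^{1/4}$ becomes essential, each intermediate level providing the geometric margin required by one application of symbolic/FIO calculus, and where the long-range hypothesis (H1) with $\mu > 0$ is used through the smallness of $\nabla_x S_h^\pm - \xi = O(R^{-\mu})$ to compare the transport flow with the free flow $\dot x = 2\xi$.
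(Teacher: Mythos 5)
Your proposal follows essentially the same route as the paper's proof: Duhamel's formula comparing $e^{-itH_h/h}$ with $J_h^\pm(c^\pm)e^{ith\Delta}J_h^\pm(d^\pm)^*$, the eikonal/transport hierarchy solved via Hamilton--Jacobi theory on the nested outgoing regions, a product-type cutoff whose radial and angular derivatives generate $e_{\com}^\pm$ and $e^\pm$ (then factored as $\Op_h(\wtilde e_{\com}^\pm)J_h^\pm(e_{\com}^\pm)$ and $\Op_h(\wtilde e^\pm)J_h^\pm(e^\pm)$ via the composition formula \eqref{composition}), and the elliptic FIO parametrix producing $d^\pm$ and $r_1^\pm$. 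Apart from applying Duhamel directly to the full ansatz rather than to $J_h^\pm(c^\pm)$ alone (and minor sign/bookkeeping details such as absorbing the factorization remainders into $r_2^\pm$), this is the paper's argument, and it is correct.
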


\begin{proof}
This proposition is basically known (see \cite{Rob2} and also \cite[Proposition 4.2]{BoTz2}). Hence, we only give a brief outline of the proof. Given a symbol $c\in S^{0,-\infty}$, we obtain, by computing $\int_0^t\frac{d}{ds}(e^{isH_h/h}J^+_h(c)e^{ish\Delta})ds$ in two ways, the following Duhamel formula
\begin{align}
\label{proof_proposition_IK_3_1}
e^{-itH_h/h}J^+_h(c)=J^+_h(c)e^{ith\Delta}-\frac ih\int_0^t e^{-i(t-s)H_h/h}\Big(H_hJ^+_h(c)+ J^+_h(c)h^2\Delta\Big)e^{ish\Delta}ds.
\end{align}
Here $H_hJ^+_h(c)+ J^+_h(c)h^2\Delta$ is an $h$-FIO with the phase $S^+_h$ and the amplitude
\begin{align*}
&e^{-\frac ih S^+_h}H_h(e^{\frac ih S^+_h}c)-\xi^2\\
&=(|\nabla_x S^+_h|^2+V_h-|\xi|^2)c+ih\{-2(\nabla_xS^+_h)\cdot \nabla_xc+(\Delta_x S^+_h)c\}-h^2\Delta_x c.
\end{align*}
With Lemma \ref{lemma_IK_3} at hand,  one then can construct, by means of Hamilton-Jacobi theory,  $\tilde c_j^+\in S^{-j,\infty}$ supported in $\Gamma^+(R^{1/3},I_3,\sigma_3)$ such that $\wtilde c^+=\tilde c_0^++h\tilde c_1^++\cdots+h^{N}\tilde c_N^+$ satisfies
\begin{align}
\label{proof_proposition_IK_3_2}
r_0^+:=ih\{-2(\nabla_xS^+_h)\cdot \nabla_x\tilde c^++(\Delta_x S^+_h)\tilde c^+\}-h^2\Delta_x \tilde c^+\in h^N S^{-N,-\infty}.
\end{align}
Let $\chi^+\in S^{0,-\infty}$ be such that $\chi^+\equiv1$ on $\Gamma^+(R^{1/3},I_3,\sigma_3)$, $\chi^+$ is supported in a sufficiently small neighborhood of $\Gamma^+(R^{1/3},I_3,\sigma_3)$, and is of the form
$$
\chi^+(x,\xi)=\rho_1(x/R^{1/3})\rho_2(\xi)\rho_3(\cos(x,\xi))
$$ 
with some $\rho_1\in C^\infty(\R^n)$, $\rho_2\in C_0^\infty(\R^{n})$ and $\rho_3\in C^\infty(\R)$ such that $\rho_1(x)=1$ if $|x|\ge1$, $\rho_2(\xi)=1$ if $|\xi|^2\in I_3$ and $\rho_3(s)=1$ if $s>-\sigma_3$. 
We set $
c^+:=\chi^+\tilde c^+
\in S^{0,-\infty}$ which is well defined on $\R^{2n}$ and, by virtue of \eqref{lemma_IK_3_2}, satisfies
\begin{align}
\label{proof_proposition_IK_3_4}
(|\nabla_x S^+_h|^2+V_h-|\xi|^2)c^+\equiv0,\quad x,\xi\in \R^{2n}. 
\end{align}
Moreover, by \eqref{proof_proposition_IK_3_2} and \eqref{proof_proposition_IK_3_4}, we have
\begin{align*}
ih(-2(\nabla_xS^+_h)\cdot \nabla_xc^++(\Delta_x S^+_h)c^+)-h^2\Delta_x c^+=h^Nr_2'+e_{\com}^++e^+,
\end{align*}
where $r_2'=\chi^+r_0^+$, and $e_{\com}^+$ (resp. $e^+$) consists of the parts for which at least one derivative in $x$ falls on the factor $\rho_1$ (resp. $\rho_3$). Then, it is easy to see that $|x|\le R^{1/3}$ on $\supp e^+_{\com}$ and that $\cos(x,\xi)\le -\sigma_3$ on $\supp e^+$ which show that $e^+$ and $e_{\com}^+$ satisfy desired properties in the statement. By using the formula \eqref{composition}, one can also construct $\wtilde e^+_{\com},\wtilde e^+\in S^{0,-\infty}$ such that $\wtilde e^+_{\com}\equiv1$ (resp. $\wtilde e^+\equiv1$) on a sufficiently small neighborhood of $e^+_{\com}$ (resp. $\supp e^+$) and that 
\begin{align*}
\supp \wtilde e^+_{\com}&\subset \Gamma^+(R^{1/4},I_4,\sigma_4)\cap\{|x|<R^{4/9}\},\\
\supp \wtilde e^+&\subset \Gamma^+(R^{1/4},I_4,\sigma_4)\cap \Gamma^-(R^{1/4},I_4,-\wtilde \sigma)
\end{align*} 
with some $\sigma_2<\wtilde \sigma<\sigma_3$ and that
\begin{align*}
J_h^+(e^+_{\com})&=\Op_h(\wtilde e^+_{\com})J_h^+(e^+_{\com})+h^NJ_h^+(r_2''),\\
J_h^+(e^+)&=\Op_h(\wtilde e^+)J_h^+(e^+)+h^NJ_h^+(r_2''')
\end{align*}
with some $r_2'',r_2'''\in S^{-N,-\infty}$. If we define $r_2^+=r_2'+r_2''+r_2'''$ then $r_2^+$ satisfies the desired properties. By construction, we have
\begin{align*}
H_hJ^+_h(c^+)+ J^+_h(c^+) h^2\Delta=h^N J_h^+(r_2^+)+\Op_h(\wtilde e^+_{\com})J_h^+(e_{\com}^+)+\Op_h(\wtilde e^+)J_h^+(e^+).
\end{align*}
Finally, since  $ c_0\gtrsim 1$ on $\Gamma^+(R^{1/3},I_3,\sigma_3)$ by construction, one can also construct $d^+\in S^{0,-\infty}$ and $r_1^+\in S^{-N,-\infty}$ such that $d^+$ is supported in $\Gamma^+(R^{1/2},I_2,\sigma_2)$ and $$\Op_h(a^+)=J_h^+(c^+)J_h^+(d^+)^*-h^N\Op_h(r^+_1).$$
Multiplying \eqref{proof_proposition_IK_3_1} with $c=c^+$ by $J_h^+(d^+)^*$ from the right hand side, we complete the proof. 
\end{proof}

Now we give the proof of  Theorem \ref{theorem_IK_1} which is divided into a series of propositions. 
\begin{proposition}
\label{proposition_IK_4}
Let $I\Subset (0,\infty)$ be a relatively compact interval. For sufficiently large $R\ge R_{\IK}$ and any symbols $a,b\in S^{0,-\infty}$ satisfying $|\xi|^2\in I$ on $\supp a$ and on $\supp b$, we have
\begin{align}
\label{proposition_IK_4_1}
\norm{J^\pm_h(a)e^{ith\Delta}J^\pm_h(b)^*}_{1\to \infty}\lesssim \min\{|th|^{-n/2},h^{-n}\},\quad t\in \R,\ h\in (0,1].
\end{align}
\end{proposition}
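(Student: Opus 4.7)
My plan is to express the Schwartz kernel $K(t,x,y)$ of $J^+_h(a)e^{ith\Delta}J^+_h(b)^*$ as an oscillatory integral in $\xi$, establish the trivial bound $|K|\le Ch^{-n}$ by the support properties, and then obtain the non-trivial bound $|K|\lesssim (|t|h)^{-n/2}$ in the regime $|t|\ge h$ via the semiclassical stationary phase method in the region where the phase has a non-degenerate critical point, and via non-stationary phase (iterated integration by parts) elsewhere. I treat only the outgoing case, the incoming one being analogous.

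Composing the three operators (for instance by writing $e^{ith\Delta}$ as a semiclassical Fourier multiplier and carrying out the two intermediate $y$-integrations), I expect
\[
K(t,x,y)=\frac{1}{(2\pi h)^n}\int_{\R^n} e^{i\Phi(x,y,t,\xi)/h}\,a(x,\xi)\overline{b(y,\xi)}\,d\xi,\quad \Phi:=S^+_h(x,\xi)-S^+_h(y,\xi)-t|\xi|^2.
\]
Since $a\overline b$ is bounded and $\xi$-supported in the compact set $\{|\xi|^2\in I\}$, the estimate $|K|\le Ch^{-n}$ is immediate. Because $\min\{h^{-n},(|t|h)^{-n/2}\}=h^{-n}$ exactly when $|t|\le h$, it suffices to prove $|K|\lesssim (|t|h)^{-n/2}$ uniformly for $|t|\ge h$.

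Writing $\phi_h(x,\xi):=S^+_h(x,\xi)-x\cdot\xi$ and applying the fundamental theorem of calculus with the uniform min-bound $|\partial_x\partial_\xi^\beta\phi_h|\le C_\beta R^{-\mu}$ from Lemma \ref{lemma_IK_3}, I get
\[
\nabla_\xi\Phi=(x-y)(\Id+M_h)-2t\xi,\qquad \nabla_\xi^2\Phi=-2t\Id+E_h,
\]
with $|M_h|\le CR^{-\mu}$ and $|E_h|\le C|x-y|R^{-\mu}$ uniformly in $h\in(0,1]$. Choosing $R$ large enough that $CR^{-\mu}\le 1/8$, and using that $|\xi|$ stays in a compact subset of $(0,\infty)$ on $\supp a\overline b$, I split into three regions according to the ratio $|x-y|/|t|$: in \emph{Case A} ($|x-y|\ge C_*|t|$ with $C_*$ large) one has $|\nabla_\xi\Phi|\gtrsim |x-y|$ on $\supp a\overline b$; in \emph{Case B} ($|x-y|\le c_*|t|$ with $c_*$ small) one has $|\nabla_\xi\Phi|\gtrsim |t|$; and in \emph{Case C} ($c_*|t|\le|x-y|\le C_*|t|$) the equation $\nabla_\xi\Phi=0$ has a unique solution $\xi_*$ near $(x-y)/(2t)$, at which the Hessian $-2t\Id+O(|t|R^{-\mu})$ is non-degenerate with $|\det\nabla_\xi^2\Phi|\sim|t|^n$. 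In Cases A and B, I iterate the transposed operator $L^*=\nabla_\xi\cdot(ih\nabla_\xi\Phi/|\nabla_\xi\Phi|^2)$ a total of $N\ge n/2$ times, gaining a factor of $h/\max(|x-y|,|t|)$ at each step, which combined with $|t|\ge h$ yields $|K|\lesssim h^{N-n}\max(|x-y|,|t|)^{-N}\lesssim (|t|h)^{-n/2}$. In Case C, the semiclassical stationary phase lemma gives $\bigl|\int e^{i\Phi/h}a\overline b\,d\xi\bigr|\lesssim (h/|t|)^{n/2}$, whence $|K|\lesssim h^{-n}(h/|t|)^{n/2}=(|t|h)^{-n/2}$.

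The main delicate point will be keeping all constants uniform in $h\in(0,1]$; this relies crucially on the uniform smallness of $M_h$ and $E_h/|x-y|$ granted by the min-bound of Lemma \ref{lemma_IK_3} for sufficiently large $R$, together with the compactness of the $\xi$-support of $a\overline b$, which converts the $S^{0,-\infty}$ decay of $a,b$ into uniform pointwise bounds on all their $\xi$-derivatives. The higher $\xi$-derivatives of $\Phi$ that enter the IBP remainder terms and the stationary phase expansion are controlled in the same way, by the symbolic bounds on $\phi_h$ combined with the fundamental theorem of calculus.
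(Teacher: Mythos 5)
Your proposal is correct and follows essentially the same route as the paper: write the kernel as $(2\pi h)^{-n}\int e^{i\Phi/h}a\overline b\,d\xi$ with $\Phi=S^+_h(x,\xi)-S^+_h(y,\xi)-t|\xi|^2$, use the $\min$-bounds of Lemma \ref{lemma_IK_3} to get $\nabla_\xi\Phi=(x-y)(\Id+O(R^{-\mu}))-2t\xi$ and $\nabla_\xi^2\Phi=-2t\Id+O(|x-y|R^{-\mu})$, dispose of $|t|\lesssim h$ trivially, and then combine non-stationary phase (integration by parts when the gradient dominates) with the quantitative stationary phase theorem when $|x-y|\sim|t|$. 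The only difference is cosmetic: the paper splits into two regions $|x-y|/|t|\gtrless C_0$ and applies stationary phase on all of $\{|x-y|\le C_0|t|\}$, whereas you carve out the intermediate region C and treat the small-ratio region B by integration by parts as well; both are valid.
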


\begin{proof}
The kernel $I_{a,b}(t,x,y)$ of $J^+_h(a)e^{ith\Delta}J^+_h(b)^*$ is given by
$$
I_{a,b}(t,x,y)=\frac{1}{(2\pi h)^n}\int e^{i\Phi^+(t,x,\xi)/h}a(x,\xi)\overline{b(y,\xi)}d\xi
$$
where
$
\Phi^+(t,x,\xi)=-t|\xi|^2+{S^+_h(x,\xi)-S^+_h(y,\xi)}
$. By virtue of Lemma \ref{lemma_IK_3}, $\Phi^+$ satisfies
\begin{align}
\label{proof_proposition_IK_4_1}
{\nabla_\xi\Phi^+(t,x,\xi)}=-2t\xi+\Big(1+Q^+(x,y,\xi)\Big)({x-y})
\end{align}
with some $Q^+$ satisfying, for all $\alpha,\beta,\gamma\in \Z_+^n$,
$$
|\partial_x^\alpha\partial_y^\beta\partial_\xi^\gamma Q^+(x,y,\xi)|\le C_{\alpha\beta\gamma}R^{-\mu},\quad x,y,\xi\in \R^n,\ h\in (0,1].
$$
Since \eqref{proposition_IK_4_1} is trivial if $|t|\lesssim h$, we may assume $|t|\gtrsim h$. Then, for sufficiently large $R$, there exists $C_0>0$ such that if $|(x-y)/t|\ge C_0$ then
$$
\left|\frac{\nabla_\xi\Phi^+(t,x,\xi)}{t}\right|\gtrsim \left|\frac{x-y}{t}\right|\ge C_0
$$
and we have, by integration by parts in $\xi$, that
$$
|I(t,x,y)|\le C_Nh^{-n}|th^{-1}|^{-N}\lesssim |th|^{-n/2}
$$
for all $|t|\gtrsim h$ provided $N\ge n/2$. Otherwise, we have that
$$
\frac{\nabla_\xi^2\Phi^+(t,x,\xi)}{t}=-2\Id+O(R^{-\mu}),\quad \frac{\partial_x^\alpha \partial_y^\beta \partial_\xi^\gamma \Phi^+(t,x,\xi)}{t}=O(1)
$$
for all $\alpha,\beta,\gamma\in \Z_+^n$ and $x,y\in \R^n$, $h\in (0,1]$, $|\xi|^2\in I$. Therefore, choosing $R\ge R_{\IK}$ sufficiently large if necessary, we can apply the stationary phase theorem to obtain
$$
|I_{a,b}(t,x,y)|\lesssim h^{-n}|th^{-1}|^{-n/2}=|th|^{-n/2}
$$
for $|t|\gtrsim h$, which completes the proof. 
\end{proof}

\begin{proposition}
\label{proposition_IK_5}
Let $d^\pm$, $r_2^\pm$, $e_{\com}^\pm$ and $e^\pm$  be as in Proposition \ref{proposition_IK_3}. Let $R\ge1$ be large enough. Then for all $s\in \R$ and sufficiently large $N\ge1$, we have
\begin{align}
\label{proposition_IK_5_1}
\norm{\<D\>^{s}\<x\>^{N/4}J^\pm_h(r_2^\pm)e^{ith\Delta}J^\pm_h(d^\pm)^*\<x\>^{N/8}\<D\>^{s}}\lesssim h^{-n-2s}\<t\>^{-N/8}
\end{align}
uniformly in $\pm t\ge0$ and $h\in (0,1]$. Moreover, for any $M\ge0$ and $s\in \R$,
\begin{align}
\label{proposition_IK_5_2}
\norm{\<D\>^{s}\<x\>^{M}J^\pm_h(\chi^\pm)e^{ith\Delta}J^\pm_h(d^\pm)^*\<x\>^{M}\<D\>^{s}}\lesssim h^{M-2s}\<t\>^{-M},
\end{align}
uniformly in $\pm t\ge0$ and $h\in (0,1]$, where $\chi^\pm=e_{\com}^\pm$ or $e^\pm$. 
\end{proposition}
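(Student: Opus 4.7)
The Schwartz kernel of $J_h^\pm(\alpha)\,e^{ith\Delta}\,J_h^\pm(d^\pm)^*$, where $\alpha$ stands for one of $r_2^\pm$, $e_{\com}^\pm$, or $e^\pm$, is the oscillatory integral
\[
K(t,x,y) = (2\pi h)^{-n}\int e^{i\Phi^\pm(t,x,y,\xi)/h}\,\alpha(x,\xi)\,\overline{d^\pm(y,\xi)}\,d\xi,
\]
with $\Phi^\pm(t,x,y,\xi) = \mp t|\xi|^2 + S_h^\pm(x,\xi) - S_h^\pm(y,\xi)$. By Lemma~\ref{lemma_IK_3} and Taylor's formula, $\nabla_\xi\Phi^\pm = \mp 2t\xi + (1+Q^\pm)(x-y)$ with $\partial_x^\alpha\partial_y^\beta\partial_\xi^\gamma Q^\pm = O(R^{-\mu})$. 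The plan is to derive pointwise decay on $K(t,x,y)$ by iterating the non-stationary phase operator $L^\ast = (h/i)\nabla_\xi\cdot(\nabla_\xi\Phi^\pm/|\nabla_\xi\Phi^\pm|^2)$, each application gaining a factor $h/|\nabla_\xi\Phi^\pm|$, and then to deduce the operator bounds via the Schur test. The weights $\langle D\rangle^s$ on each side each contribute a factor $h^{-s}$ since $\langle D\rangle^s J_h^\pm(a) = h^{-s}J_h^\pm(\tilde a)$ modulo negligible terms with $\tilde a \in S^{0,-\infty}$ (an expansion analogous to~\eqref{composition}), while the weights $\langle x\rangle^M$ are absorbed directly into the amplitudes.

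The crucial ingredient is a lower bound on $|\nabla_\xi\Phi^\pm|$ valid on $\supp\alpha\otimes\supp d^\pm$ for $\pm t\ge 0$; I treat the three cases separately. For $\alpha = e^\pm$: on $\supp e^+$, $\cos(x,\xi) < -\tilde\sigma$, while on $\supp d^+$, $\cos(y,\xi) > -\sigma_2$ with $\sigma_2 < \tilde\sigma$. The free trajectory $s\mapsto y + 2s\xi$ satisfies $\cos(y+2s\xi,\xi) \ge \cos(y,\xi) > -\sigma_2$ for all $s \ge 0$ (since $s \mapsto s/\sqrt{s^2 + v^2}$ is monotone increasing), so the leading-order stationary point $y + 2t\xi$ stays strictly away from the incoming cone in which $x$ lives. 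Making this quantitative, the errors $Q^+ = O(R^{-\mu})$ and $\nabla_\xi S_h^+(x,\xi) - x = O(\langle x\rangle^{1-\mu})$ are dominated by $(\tilde\sigma - \sigma_2)|x|$ for $R$ large, and one obtains $|\nabla_\xi\Phi^+| \gtrsim \langle t\rangle + |x| + |y|$ on the relevant support. For $\alpha = e_{\com}^\pm$: now $|x|\le R^{4/9} \ll R^{1/2}\le |y|$, so $|x-y|\gtrsim |y|$, and the same trajectory obstruction yields $|\nabla_\xi\Phi^+| \gtrsim \langle t\rangle + |y|$. In either subcase, $M$ iterations of $L^\ast$ followed by Schur's test produce the claimed bound $\lesssim h^{M-2s}\langle t\rangle^{-M}$ for any $M$.

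For $\alpha = r_2^\pm$: both supports are outgoing, so stationary points $x = y + 2t\xi/(1+Q^+)$ can occur on the support, and the temporal decay must be extracted from the amplitude $r_2^+ \in S^{-N,-\infty}$ rather than from integration by parts alone. At any stationary point in $\supp r_2^+\otimes\supp d^+$ with $t\ge 0$, the relation $|y+2t\xi|^2 \ge 4t^2|\xi|^2(1-\sigma_2^2)$ forces $|x| \gtrsim \langle t\rangle + |y|$, so the amplitude decay $|r_2^+(x,\xi)|\lesssim \langle x\rangle^{-N}$ supplies a factor $\lesssim \langle t\rangle^{-N/8}$ together with enough residual $\langle x\rangle$- and $\langle y\rangle$-decay to absorb the weights $\langle x\rangle^{N/4}$ and $\langle y\rangle^{N/8}$ and to render the kernel integrable. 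Combining the stationary phase bound near the stationary set with non-stationary phase off it via $L^\ast$, the Schur test then delivers the bound $\lesssim h^{-n-2s}\langle t\rangle^{-N/8}$ once $N$ is large enough.

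The main obstacle is the quantitative lower bound $|\nabla_\xi\Phi^+|\gtrsim \langle t\rangle + |x| + |y|$ in the $e^\pm$ case: in the regime $|y|\gg |x|$, the $\hat\xi$-projection of $\nabla_\xi\Phi^+$ alone is insufficient and one must combine it with the projection onto $\hat y$, carefully tracking the errors of orders $R^{-\mu}$ and $\langle x\rangle^{-\mu}$ so that the angular gap $\tilde\sigma - \sigma_2 > 0$ remains effective throughout.
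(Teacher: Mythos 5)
Your proposal follows essentially the same route as the paper: write the kernel as an oscillatory integral with phase $-t|\xi|^2+S_h^\pm(x,\xi)-S_h^\pm(y,\xi)$, use the geometric facts that the forward free flow keeps $\supp d^\pm$ outgoing with $|y+2t\xi|\gtrsim|y|+t$ and that the angular (or spatial) separation in the $e^\pm$ and $e_{\com}^\pm$ cases makes the phase non-stationary with $|\nabla_\xi\Phi^\pm|\gtrsim 1+|x|+|y|+\langle t\rangle$ (yielding arbitrary powers of $h$ and $\langle t\rangle$ by integration by parts), while in the $r_2^\pm$ case the decay is extracted from $r_2^\pm\in S^{-N,-\infty}$ near the stationary set, where $|x|\gtrsim\langle t\rangle+|y|$, combined with non-stationary phase away from it; the weights $\langle D\rangle^s$ cost $h^{-s}$ per side exactly as in the paper. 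This is the paper's argument (its splitting on $|\nabla_\xi\Phi^+|\lessgtr 1$ corresponds to your near/off-stationary decomposition), so no substantive difference or gap to report.
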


To prove this proposition, we need the following elementary fact (see \cite[Lemma 4.1]{BoTz2}).

\begin{lemma}
\label{lemma_IK_6}Let $x,y,\xi\in \R^n\setminus\{0\}$, $-1<\sigma<1$ and $-1<\sigma_2<\sigma_3<1$. 
\begin{itemize}
\item[{\rm (1)}] If $\pm \cos(x,\xi)>-\sigma$ and $\pm t\ge0$, then 
$\pm\cos(x+2t\xi,\xi)>-\sigma $ and $|x+2t\xi|\gtrsim |x|+t|\xi|$. 
\item[{\rm (2)}] If $\pm\cos(x,\xi)\le-\sigma_3$ and $\pm \cos(y,\xi)>-\sigma_2$ then $|x-y|\gtrsim |x|+|y|$. 
\end{itemize}
Here implicit constants are independent of $x,y$ and $t$.
\end{lemma}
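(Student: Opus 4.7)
My plan is to treat both items as elementary geometric statements about three vectors in $\R^n$, which reduce to trigonometry in the two-dimensional plane spanned by pairs of the vectors. In both cases I would present only the outgoing ($+$) version; the incoming case follows by the substitution $\xi \mapsto -\xi$, which replaces $\cos(\cdot,\xi)$ by $-\cos(\cdot,\xi)$ and leaves $|x+2t\xi|$, $|x-y|$, $|x|$, $|y|$ unchanged.

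For (1), I would decompose $x = a\hat\xi + x_\perp$, where $\hat\xi = \xi/|\xi|$, $a = x\cdot\hat\xi$ and $x_\perp \perp \xi$. Then $x+2t\xi = (a+2t|\xi|)\hat\xi + x_\perp$ and $\cos(x+2t\xi,\xi) = (a+2t|\xi|)/\sqrt{(a+2t|\xi|)^2 + |x_\perp|^2}$. Since the map $u \mapsto u/\sqrt{u^2 + |x_\perp|^2}$ is nondecreasing and $t \ge 0$, we get $\cos(x+2t\xi,\xi) \ge \cos(x,\xi) > -\sigma$. For the norm bound, expand
\begin{equation*}
|x+2t\xi|^2 = |x|^2 + 4t\,x\cdot\xi + 4t^2|\xi|^2 \ge |x|^2 - 4\sigma t|x||\xi| + 4t^2|\xi|^2.
\end{equation*}
Setting $s = t|\xi|/|x|$ (with $|x|>0$; the case $|x|=0$ is trivial), this reads $|x+2t\xi|^2/|x|^2 \ge 1 - 4\sigma s + 4 s^2$, whose ratio to $(1+s)^2$ attains a positive minimum depending only on $\sigma \in (-1,1)$, because the quadratic $1 - 4\sigma s + 4s^2$ has negative discriminant $16(\sigma^2-1)<0$. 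This yields $|x+2t\xi| \gtrsim |x| + t|\xi|$ with an implicit constant depending only on $\sigma$.

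For (2), I would translate the hypotheses into angle bounds: if $\alpha = \angle(x,\xi)$ and $\beta = \angle(y,\xi)$, then $\cos\alpha \le -\sigma_3$ and $\cos\beta > -\sigma_2$, and since $\arccos$ is strictly decreasing, $\alpha \ge \arccos(-\sigma_3) > \arccos(-\sigma_2) > \beta$, where the strict middle inequality uses $\sigma_2 < \sigma_3$. Setting $\delta := \arccos(-\sigma_3) - \arccos(-\sigma_2) > 0$, the spherical triangle inequality $\angle(x,y) + \angle(y,\xi) \ge \angle(x,\xi)$ gives $\angle(x,y) \ge \alpha - \beta > \delta$, hence $\cos\angle(x,y) < \cos\delta =: c_0 < 1$. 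Finally, writing $|x-y|^2 = |x|^2 + |y|^2 - 2|x||y|\cos\angle(x,y)$ and minimizing $|x-y|^2/(|x|+|y|)^2$ over the scale-invariant ratio $t = \min(|x|,|y|)/\max(|x|,|y|) \in [0,1]$ is an elementary one-variable calculus exercise whose minimum equals $(1-\cos\angle(x,y))/2 \ge (1-c_0)/2 > 0$. This gives $|x-y| \gtrsim |x|+|y|$ with an implicit constant depending only on $\sigma_2,\sigma_3$.

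There is no serious obstacle here; both parts are textbook-level. The only point to watch is that all implicit constants must depend solely on the angular parameters $\sigma$ (respectively $\sigma_2,\sigma_3$) and not on $x,y,\xi,t$ themselves, which is transparent from the explicit reductions above. The role of the lemma in the sequel is to ensure that outgoing/incoming support conditions on the IK amplitudes propagate under the classical flow (part (1)) and that overlapping supports of outgoing versus incoming cut-offs force spatial separation (part (2)); both uses only need qualitative lower bounds with uniform constants, which the plan above provides.
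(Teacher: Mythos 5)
Your proof is correct. There is in fact nothing in the paper to compare it against: the paper states the lemma as an elementary fact and refers to \cite[Lemma 4.1]{BoTz2} without giving a proof, so your argument simply supplies the missing elementary details. The decomposition $x=a\hat\xi+x_\perp$ together with the monotonicity of $u\mapsto u/\sqrt{u^2+|x_\perp|^2}$ handles the angle claim in (1), the quadratic $1-4\sigma s+4s^2$ with negative discriminant $16(\sigma^2-1)<0$ gives the uniform lower bound $|x+2t\xi|\gtrsim |x|+t|\xi|$, and in (2) the strict angular gap $\delta=\arccos(-\sigma_3)-\arccos(-\sigma_2)>0$ plus the law of cosines and the one-variable minimization of $(t^2+1-2ct)/(1+t)^2$ (minimum $(1-c)/2$ at $t=1$) yield $|x-y|\gtrsim |x|+|y|$; in all cases the constants depend only on $\sigma$, respectively $\sigma_2,\sigma_3$, as the lemma requires. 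Two minor points of bookkeeping: to deduce the incoming case of (1) you should substitute both $\xi\mapsto-\xi$ and $t\mapsto-t$ (with $\xi\mapsto-\xi$ alone the quantity $|x+2t\xi|$ is not invariant, contrary to your parenthetical remark), and for $t\le0$ the conclusion is of course to be read as $|x+2t\xi|\gtrsim |x|+|t|\,|\xi|$. Neither point affects the substance of the argument.
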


\begin{proof}[Proof of Proposition \ref{proposition_IK_5}]
Let $I_{b,d^+}$ be the kernel of $J^+_h(b)e^{ith\Delta}J^+_h(d^+)^*$. 
By \eqref{lemma_IK_6} (1), one has
\begin{align}
\label{proof_proposition_IK_5_1}
|y|\ge R^{1/2},\quad |y+2t\xi|\gtrsim |y|+t
\end{align}
for $t\ge0$ on the support of $d^+(y,\xi)$. We first consider \eqref{proposition_IK_5_1} with $s=0$. Take $\chi\in C_0^\infty(\R^n)$ supported in a unit ball $\{x\ |\ |x|\le1\}$ and decompose $I_{r_2^+,d^+}$ as
\begin{align*}
I_{r_2^+,d^+}
&=\frac{1}{(2\pi h)^n}\int e^{i\Phi^+(t,x,\xi)/h}\chi\Big(\nabla_\xi \Phi^+(t,x,\xi)\Big)r_2^+(x,\xi)\overline{d^+(y,\xi)}d\xi\\
&+\frac{1}{(2\pi h)^n}\int e^{i\Phi^+(t,x,\xi)/h}(1-\chi)\Big(\nabla_\xi \Phi^+(t,x,\xi)\Big)r_2^+(x,\xi)\overline{d^+(y,\xi)}d\xi\\
&=:I^{(1)}+I^{(2)}. 
\end{align*}
For the first part $I^{(1)}$, we know by \eqref{proof_proposition_IK_4_1} that
$$
1\ge |\nabla_\xi \Phi^+|\ge |y+2t\xi|-|x|-C(1+|x|+|y|)^{1-\mu}
$$
with some universal $C>0$, which, together with \eqref{proof_proposition_IK_5_1} implies
$
\<x\>^{-1}\lesssim (1+|y|+|t|)^{-1}
$
on the support of the amplitude of $I^{(1)}$ provided $R$ is large enough. This bound implies
\begin{align}
\label{proof_proposition_IK_5_2}
|I^{(1)}(t,x,y)|\lesssim h^{-n}\<x\>^{-N/2}\<y\>^{-N/4}\<t\>^{-N/4}
\end{align}
since $r^+_2\in S^{-N,-\infty}$. On the other hand, when $|\nabla_\xi \Phi^+|\ge1$, we have
$$
|y|+t\lesssim |y+2t\xi|=\left|\nabla_\xi \Phi^+(t,x,\xi)-x+O((1+|x|+|y|)^{1-\mu})\right|\lesssim |\nabla_\xi \Phi^+|\<x\>+C|y|^{1-\mu}
$$
which yields $|\nabla_\xi \Phi^+|^{-1}\lesssim \<x\>\<y\>^{-1/2}\<t\>^{-1/2}$ if $R$ is large enough. Integrating by parts with respect to the operator
$$
L_{\Phi^+}=\frac{h}{i|\nabla_\xi \Phi^+|^2}(\nabla_\xi\Phi^+)\cdot\nabla_\xi,
$$ we thus obtain that
\begin{align}
\label{proof_proposition_IK_5_3}
|I^{(2)}(t,x,y)|\lesssim h^{N/2}\<x\>^{-N/2}\<y\>^{-N/4}\<t\>^{-N/4},\quad t\ge0.
\end{align}
Then \eqref{proposition_IK_5_1} with $s=0$ and $b=r_2^+$ easily follows from \eqref{proof_proposition_IK_5_2} and \eqref{proof_proposition_IK_5_3} provided $N$ is large enough. When $s\in \N$, taking into account the bound
$$
\nabla_x\Phi^+(t,x,\xi)=\xi+O(\<x\>^{-\mu})
$$
and the formula $J_h^+(a)\partial_{x_j}=ih^{-1}J_h^+(\xi_ja)$, we obtain \eqref{proposition_IK_5_1} by the same argument. For general $s\ge0$, \eqref{proposition_IK_5_1} with $b=r_2^+$ then follows from the case with $s\in \N\cup\{0\}$ and  complex interpolation. 

In case of \eqref{proposition_IK_5_2} with $\chi^+=e_{\com}^+$, since $|x|\le R^{1/3}$ on $\supp e_{\com}^+$ and $|y|\ge R^{1/2}$ on $\supp d^+$, we know by Lemma \ref{lemma_IK_6} (1) that $|x-y-2t\xi|\gtrsim 1+|x|+|y|+t$ and thus
$$
|\nabla_\xi\Phi^+|=|x-y-2t\xi+O((1+|x|+|y|)^{1-\mu})|\gtrsim 1+|x|+|y|+t
$$
for $t\ge0$ on the support of the amplitude of $I_{e_{\com}^+,d^+}$ provided that $R$ is large enough. Therefore, integrating by parts with respect to $L_{\Phi^+}$ shows
$$
|I_{e_{\com}^+,d^+}(t,x,y)|\le h^{M}(1+|x|+|y|+t)^{-M},\quad t\ge0,
$$
for all $M\ge0$ and \eqref{proposition_IK_5_2} follows. Finally, to obtain \eqref{proposition_IK_5_2} with $\chi^+=e^+$, we use the support properties of $e^+$ and $d^+$ which yield that
$$
\cos(x,\xi)< -\wtilde \sigma<-\sigma_2<\cos(y+2t\xi,\xi)
$$
for all $t\ge0$. Lemma \ref{lemma_IK_6} (2) then implies
$$
|x-y-2t\xi|\gtrsim 1+|x|+|y|+|t|
$$
for all $t\ge0$ on the support of the amplitude of $I_{e^+,d^+}$ which gives us \eqref{proposition_IK_5_1} as above. \end{proof}

Using Propositions \ref{proposition_smoothing_2_3}, \ref{proposition_IK_4} and \ref{proposition_IK_5}  we have the following propagation estimates. 

\begin{proposition}
\label{proposition_IK_7}
Let $R\ge1$ be large enough and $b^\pm\in S^{0,-\infty}$ supported in $\Gamma^\pm(R,I,\sigma)$. Then the following statements are satisfied with implicit constants independent of $t$ and $h\in (0,1]$. 
\begin{itemize}
\item For sufficiently large integer $N$ and all $s\ge0$, 
\begin{align}
\label{proposition_IK_7_1}
\norm{\<x\>^{-N}\varphi(H_h)e^{-itH_h/h}\Op_h(b^\pm)\<D\>^s}\lesssim h^{-n-s-1}\<t\>^{-N/8},\quad \pm t\ge 0. 
\end{align}
\item For any $\chi\in C_0^\infty (\R^n)$ supported in a unit ball $\{|x|<1\}$ and all $M,s\ge0$, 
\begin{align}
\label{proposition_IK_7_2}
\norm{\chi(x/R^{1/3})\varphi(H_h)e^{-itH_h/h}\Op_h(b^\pm)\<D\>^s}\lesssim h^{M-s}\<t\>^{-M},\quad \pm t\ge 0. 
\end{align}
\item If $\chi^\mp\in S^{0,-\infty}$ is supported in $\Gamma^\mp(R^{1/4},I_4,-\wtilde\sigma)$ with some $\wtilde \sigma>\sigma$, then for all $M,s\ge0$
\begin{align}
\label{proposition_IK_7_3}
\norm{\<D\>^s\Op_h(\chi^\mp)^*\varphi(H_h)e^{-itH_h/h}\Op_h(b^\pm)\<x\>^M\<D\>^s}\lesssim h^{M-2s}\<t\>^{-M},\quad \pm t\ge 0. 
\end{align}
\end{itemize}
\end{proposition}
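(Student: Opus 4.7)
The plan is to apply the Isozaki--Kitada parametrix of Proposition \ref{proposition_IK_3} to $e^{-itH_h/h}\Op_h(b^\pm)$, with $b^\pm$ playing the role of $a^\pm$ and with the parameters $\sigma_1<\sigma_2<\sigma_3<\sigma_4$ chosen so that $\sigma_1=\sigma$ and, for the application to \eqref{proposition_IK_7_3}, $\sigma_3<\wtilde\sigma$. This yields
\[
e^{-itH_h/h}\Op_h(b^\pm)=J_h^\pm(c^\pm)e^{ith\Delta}J_h^\pm(d^\pm)^*+Q_1^\pm+Q_2^\pm+Q_3^\pm+Q_4^\pm,
\]
and each of the three estimates will follow by pre-multiplying this identity by the respective operator in front of $e^{-itH_h/h}$ in \eqref{proposition_IK_7_1}--\eqref{proposition_IK_7_3} and bounding the five pieces separately.

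For the four remainders $Q_j^\pm$ I proceed uniformly in all three cases. Insert an auxiliary $\wtilde\varphi(H_h)$ with $\wtilde\varphi\equiv 1$ on $\supp\varphi$, absorb the prefactor $\varphi(H_h)$ into the weights via the uniform bound $\<x\>^\alpha\varphi(H_h)\<x\>^{-\alpha}\in\mathbb B(L^2)$ from Proposition \ref{proposition_functional_3}, and split the Duhamel integrand (or, for $Q_1^\pm$, the product $e^{-itH_h/h}\Op_h(r_1^\pm)$) into a left factor of the shape $\<x\>^{-\alpha}e^{-i(t-s)H_h/h}\wtilde\varphi(H_h)\<x\>^{-\alpha}$, bounded by \eqref{proposition_smoothing_2_3}, and a right factor of the shape $\<x\>^{\alpha}\wtilde\varphi(H_h)J_h^\pm(\cdot)e^{ish\Delta}J_h^\pm(d^\pm)^*\<D\>^s$ (with an extra $\<x\>^M$ on the right in case \eqref{proposition_IK_7_3}), bounded by Proposition \ref{proposition_IK_5}. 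The convolution of the two polynomially decaying time factors gives the required $\<t\>^{-N/8}$ or $\<t\>^{-M}$ decay, and the prefactors $h^N$ or $h^{N-1}$ absorb all negative powers of $h$ coming from $\<D\>^s$, from the weight commutations, and from the $h^{-\varepsilon}$ loss in \eqref{proposition_smoothing_2_3}.

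For the main term, the three cases are handled separately via support considerations. In case \eqref{proposition_IK_7_2}, $\chi(x/R^{1/3})$ is supported in $\{|x|<R^{1/3}\}$ whereas $c^\pm$ is supported in $\{|x|\ge R^{1/3}\}$, so the composition formula \eqref{composition} applied with $a=\chi(x/R^{1/3})$ (where $\partial_\xi^\alpha a\equiv 0$ for $|\alpha|\ge 1$) and $b=c^\pm$ gives $\chi(x/R^{1/3})J_h^\pm(c^\pm)=h^M J_h^\pm(\tilde r)$ with $\tilde r\in S^{-M,-\infty}$ for any $M$; the intervening $\varphi(H_h)$ is handled by writing $\chi(x/R^{1/3})\varphi(H_h)=\varphi(H_h)\chi(x/R^{1/3})+[\chi(x/R^{1/3}),\varphi(H_h)]$ and observing that both pieces yield $O(h^\infty)$ contributions when composed with $J_h^\pm(c^\pm)$. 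In case \eqref{proposition_IK_7_3}, the arrangement $\sigma_3<\wtilde\sigma$ makes the angular supports of $\chi^\mp$ and $c^\pm$ disjoint in $(x,\xi)$, so again by \eqref{composition} one obtains $\Op_h(\chi^\mp)^*J_h^\pm(c^\pm)=h^M J_h^\pm(\tilde r)$ with $\tilde r\in S^{-M,-\infty}$, and the desired bound follows from the stationary-phase kernel argument in the proof of \eqref{proposition_IK_5_2}.

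The only genuine difficulty will be the main term of \eqref{proposition_IK_7_1}: the temporal decay $\<t\>^{-N/8}$ must be extracted from the external weight $\<x\>^{-N}$ alone, since $c^+$ is merely bounded in $x$ (unlike the $S^{-N,-\infty}$ amplitude in \eqref{proposition_IK_5_1}). I will mimic the proof of Proposition \ref{proposition_IK_5}: split the $\xi$-integral of the kernel of $\<x\>^{-N}J_h^+(c^+)e^{ith\Delta}J_h^+(d^+)^*\<D\>^s$ via $\chi(\nabla_\xi\Phi^+)$; on $\{|\nabla_\xi\Phi^+|\ge 1\}$, integrate by parts with $L_{\Phi^+}$ to gain arbitrary $h$-powers; on $\{|\nabla_\xi\Phi^+|\le 1\}$, apply Lemma \ref{lemma_IK_6}(1) together with the support condition $|y|\ge R^{1/2}$ on $\supp d^+$ to deduce $\<x\>\gtrsim\<y\>+\<t\>$, and distribute $\<x\>^{-N}$ as $\<t\>^{-N/4}\<x\>^{-N/2}\<y\>^{-N/4}$. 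A Schur-type kernel bound then produces $h^{-n}\<t\>^{-N/4}$, and the room $h^{-s-1}$ in the final claim accommodates the $\<D\>^s$ factor and minor losses from commuting $\varphi(H_h)$ through the weights.
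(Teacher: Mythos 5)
Your overall architecture (the Isozaki--Kitada parametrix of Proposition \ref{proposition_IK_3}, kernel bounds of Proposition \ref{proposition_IK_5} type, Duhamel splitting of the remainders $Q_j^\pm$) is the paper's, and your treatment of \eqref{proposition_IK_7_1} is essentially the paper's argument. The genuine gap is your claim that the four remainders can be handled \emph{uniformly in all three cases} with a left Duhamel factor of the shape $\<x\>^{-\alpha}e^{-i(t-s)H_h/h}\wtilde\varphi(H_h)\<x\>^{-\alpha}$ controlled by \eqref{proposition_smoothing_2_3}. In \eqref{proposition_IK_7_3} the prefactor is $\<D\>^s\Op_h(\chi^\mp)^*\varphi(H_h)$: the symbol $\chi^\mp$ has no decay in $x$, so no factor $\<x\>^{-\alpha}$ can be peeled off to the left of the propagator ($\<D\>^s\Op_h(\chi^\mp)^*\varphi(H_h)\<x\>^{\alpha}$ is not uniformly bounded), and \eqref{proposition_smoothing_2_3} needs decaying weights on \emph{both} sides of $e^{-i(t-s)H_h/h}$. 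Concretely, your recipe produces no $t$-decay whatsoever for $\<D\>^s\Op_h(\chi^\mp)^*\varphi(H_h)e^{-itH_h/h}\Op_h(r_1^\pm)\<x\>^M\<D\>^s$. The paper closes this by a bootstrap that your proposal misses: it first proves \eqref{proposition_IK_7_1} for \emph{both} signs and then uses its adjoint with $b^\mp=\chi^\mp$ and reversed time, i.e. $\norm{\Op_h(\chi^\mp)^*\varphi(H_h)e^{-itH_h/h}\<x\>^{-N}}=\norm{\<x\>^{-N}\varphi(H_h)e^{itH_h/h}\Op_h(\chi^\mp)}\lesssim h^{-n-1}\<t\>^{-N/8}$ for $\pm t\ge0$, as the left factor in the Duhamel splitting for the third estimate; this microlocal propagation input cannot be replaced by the weighted local decay estimate alone.

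Two secondary points. First, in \eqref{proposition_IK_7_2} and \eqref{proposition_IK_7_3} your main-term argument only delivers smallness in $h$, while the claimed bounds also contain $\<t\>^{-M}$: an $O(h^\infty)$ operator-norm bound is uniform but not decaying in $t$, so the time decay must still be extracted by running the kernel argument of Proposition \ref{proposition_IK_5} on the residual FIO (its amplitude lies in $S^{-M,-\infty}$ and is paired with $d^\pm$, supported in $|y|\ge R^{1/2}$); you never say this. Second, your disposal of the intervening $\varphi(H_h)$ in \eqref{proposition_IK_7_2} via $\chi\varphi(H_h)=\varphi(H_h)\chi+[\chi,\varphi(H_h)]$ is circular: since $\chi(x/R^{1/3})J_h^\pm(c^\pm)=0$, the commutator piece composed with $J_h^\pm(c^\pm)$ is exactly the operator you started from, so asserting it is $O(h^\infty)$ is precisely what must be proved; one needs the pseudodifferential approximation $\varphi(H_h)(1-\wtilde\chi)=\Op_h(b_h)+O_{L^2}(h^N\<x\>^{-N})$ from Proposition \ref{proposition_functional_3}, which is how the paper proceeds (and the same issue recurs, unaddressed, for $\Op_h(\chi^\mp)^*\varphi(H_h)J_h^\pm(c^\pm)$ in the third estimate). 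On the positive side, your idea of choosing the parametrix parameters with $\sigma_3<\wtilde\sigma$ so that $\Op_h(\chi^\mp)^*J_h^\pm(c^\pm)$ is negligible is a legitimate alternative to the paper's use of Lemma \ref{lemma_IK_6} (2) for the main term of \eqref{proposition_IK_7_3}, provided the two points above are supplied.
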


\begin{proof}We may prove the proposition for the case $s=0$  only. The case $s>0$ follows from the case $s=0$ since $b^\pm\<\xi\>^s,\chi^\mp\<\xi\>^s\in S^{0,-\infty}$ and $\norm{\<hD\>^{-s}\<D\>^s}\lesssim h^{-s}$. 

In order to prove \eqref{proposition_IK_7_1}, choosing $N\gg M$ large enough , we decompose the operator in \eqref{proposition_IK_7_1} into corresponding five parts
$$
\<x\>^{-N}\varphi(H_h)J_h^+(c^+)e^{ith\Delta}J_h^+(d^+)^*,\quad
\<x\>^{-N}\varphi(H_h)Q_j^+(t,h),\quad 1\le j\le 4,
$$
where $c^+,d^+\in S^{0,-\infty}$ are given by Proposition \ref{proposition_IK_3}.  We shall show that
\begin{align}
\label{proof_proposition_IK_7_1}
\norm{\<x\>^{-N}\varphi(H_h)J_h^+(c^+)e^{ith\Delta}J_h^+(d^+)^*}&\lesssim h^{-n-1}\<t\>^{-N/8},\\
\label{proof_proposition_IK_7_3}
\norm{\<x\>^{-N}\varphi(H_h)Q_j^+(t,h)}&\lesssim h^{N-n-2}\<t\>^{-N/8},
\end{align}
for $t\ge0$ and $h\in (0,1]$ and $1\le j\le4$ . \eqref{proof_proposition_IK_7_1} is a direct consequence of \eqref{proposition_IK_5_1} since $\<x\>^{-N}\varphi(H_h)\<x\>^N$ is bounded on $L^2$ uniformly in $h\in (0,1]$ by Proposition \ref{proposition_functional_3}. For the part 
$$
\<x\>^{-N}\varphi(H_h)Q_1^+(t,h)=h^N\<x\>^{-N}\varphi(H_h)e^{-itH_h/h}\<x\>^{-N}\<x\>^N\Op_h(r_1^+),
$$ 
we use \eqref{proposition_smoothing_2_3} and the fact $\<x\>^Nr_1^+\in S^{0,-\infty}$ to obtain \eqref{proof_proposition_IK_7_3}. To deal with the term
$$
\<x\>^{-N}\varphi(H_h)Q_2^+(t,h)=-ih^{N-1}\int_0^t \<x\>^{-N}\varphi(H_h)e^{-i(t-s)H_h/h}J^+_h(r_2^+)e^{irh\Delta}J^+_h(d^+)^*ds,
$$
we again use \eqref{proposition_smoothing_2_3} and \eqref{proposition_IK_5_1} to obtain for $s\le t$ that
\begin{align*}
\norm{\<x\>^{-N}\varphi(H_h)e^{-i(t-s)H_h/h}\<x\>^{-N/4}}&\lesssim h^{-1}\<t-s\>^{-N/8},\\
\norm{\<x\>^{N/4}J^+_h(r_2^+)e^{ish\Delta}J^+_h(d^+)^*}&\lesssim h^{-n}\<s\>^{-N/8},
\end{align*}
which imply \eqref{proof_proposition_IK_7_3} since
\begin{equation}
\begin{aligned}
\label{proof_proposition_IK_7_4}
\int_0^t\<t-s\>^{-\frac N8}\<s\>^{-\frac N8}ds
\lesssim \int_0^{t/2}\<t\>^{-\frac N8}\<s\>^{-\frac N8}ds+\int_{t/2}^t\<t-s\>^{-\frac N8}\<t\>^{-\frac N8}ds
\lesssim \<t\>^{-\frac N8}.
\end{aligned}
\end{equation}
The estimate \eqref{proof_proposition_IK_7_3} for the terms $\<x\>^{-N}\varphi(H_h)Q_j^+(t,h)$, $j=3,4$, can be also verified  similarly by means of \eqref{proposition_smoothing_2_3} and \eqref{proposition_IK_5_2} (instead of \eqref{proposition_IK_5_1}). This completes the proof of \eqref{proposition_IK_7_1}. 

We next show  \eqref{proposition_IK_7_2} which, as above, follows from the following estimates
\begin{align*}
\norm{\chi(x/R^{1/3})\varphi(H_h)J_h^+(c^+)e^{ith\Delta}J_h^+(d^+)^*\<x\>^M}&\lesssim h^M\<t\>^{-M},\\
\norm{\chi(x/R^{1/3})\varphi(H_h)Q_j(t,h)\<x\>^M}&\lesssim h^M\<t\>^{-M},\ 1\le j\le 4.
\end{align*}
The latter bounds for the remainder terms follow from the same argument as that for \eqref{proof_proposition_IK_7_3}. To deal with the main term, we observe from the support property $|x|>R^{1/4}$ on $\supp c^+$ that
$$
\varphi(H_h)J_h^+(c^+)=\varphi(H_h)(1-\wtilde \chi)(x/R^{1/4})J_h^+(c^+)
$$
with some $\wtilde \chi\in C_0^\infty(\R^n)$. By Proposition \ref{proposition_functional_3}, we then see that $\chi(x/R^{1/3})\varphi(H_h)(1-\wtilde \chi)(x/R^{1/4})$ is a sum of $\Op(\wtilde c^+)$ and an error term $O_{L^2}(h^N\<x\>^{-N})$, where $\wtilde c^+\in S^{0,-\infty}$ is supported in $\Gamma^+(R^{1/4},I_4,\sigma_4)\cap\{|x|<R^{1/3}\}$. Then the error term can be estimated by using \eqref{proposition_IK_5_1}. Hence it suffices to deal with the operator
$
\Op(\wtilde c^+)J_h^+(c^+)e^{ith\Delta}J_h^+(d^+)^*
$. 
Moreover, by virtue of \eqref{composition}, we may replace $\Op(\wtilde c^+)J_h^+(c^+)$ by $J_h^+(c^+_0)$ with some $c^+_0\in S^{-\infty,-\infty}$ supported in $\Gamma^+(R^{1/4},I_4,\sigma_4)\cap\{|x|<R^{1/3}\}$ without loss of generality. Then, since $|x|<R^{1/3}$ on $\supp c_0^+$ and $|x|>R^{1/2}$ on $\supp d^+$, the same argument as that in the proof of \eqref{proposition_IK_5_2} shows 
$$
\norm{J_h^+(c^+_0)e^{ith\Delta}J_h^+(d^+)^*}\lesssim h^M\<t\>^{-M}
$$
for all $M\ge0$ uniformly in $t\ge0$, which completes the proof of \eqref{proposition_IK_7_2}. 

In order to prove \eqref{proposition_IK_7_3}, by a similar argument as above, it suffices to show that
\begin{align}
\label{proof_proposition_IK_7_5}
\norm{\Op_h(\chi^-)^*\varphi(H_h)J_h^+(c^+)e^{ith\Delta}J_h^+(d^+)^*\<x\>^M}&\lesssim h^M\<t\>^{-M},\\
\label{proof_proposition_IK_7_6}
\norm{\Op_h(\chi^-)^*\varphi(H_h)Q_j(t,h)\<x\>^M}&\lesssim h^M\<t\>^{-M}.
\end{align}
Using \eqref{proposition_IK_7_1} with $b^-=\chi^-$, we see that
\begin{align*}
\norm{\Op_h(\chi^-)^*\varphi(H_h)e^{-itH_h/h}\<x\>^{-N}}
=\norm{\<x\>^{-N}\varphi(H_h)e^{itH_h/h}\Op_h(\chi^-)}
\lesssim h^{-n}\<t\>^{N/8}
\end{align*}
for $-t\le0$, that is $t\ge0$. This bound, together with \eqref{proposition_IK_5_1}, \eqref{proposition_IK_5_2} and \eqref{proof_proposition_IK_7_3}, implies \eqref{proof_proposition_IK_7_6}. To deal with the main term, we recall that $d^+$ is supported in $\Gamma^+(R^{1/2},I_2,\sigma_2)$, where one can choose $\sigma_2$ sufficiently close to $\sigma$ so that $\sigma<\sigma_2<\wtilde \sigma$. Then \eqref{proof_proposition_IK_7_5} can be verified by essentially the same argument as above, as follows. At first, by means of Proposition \ref{proposition_functional_3} and \eqref{composition}, we may replace without loss of generality $\Op_h(\chi^-)^*\varphi(H_h)J_h^+(c^+)$ by $J_h^+(\wtilde \chi^-)$ with some $\wtilde \chi^-\in S^{0,-\infty}$ supported in $\supp \chi^-$. Next, by means of Lemma \ref{lemma_IK_6} (1), we have the following property
$$\cos(x,\xi)<-\wtilde \sigma<-\sigma_2<\cos (y+2t\xi,\xi)$$
for all $t\ge0$ on the support  of the amplitude of $J_h^+(\wtilde \chi^-)e^{ith\Delta}J_h^+(d^+)^*\<x\>^{M}$. By virtue of Lemma \ref{lemma_IK_6} (2), this support property implies $|x-y-2t\xi|\gtrsim |x|+|y|+t$ for all $t\ge0$. Then the same argument as that in the proof of Proposition \ref{proposition_IK_5} yields 
$$
\norm{J_h^+(\wtilde \chi^-)e^{ith\Delta}J_h^+(d^+)^*\<x\>^{M}}\lesssim h^M\<t\>^{-M},\quad t\ge0,
$$
for all $M\ge0$, which completes the proof. 
\end{proof}

We are now ready to show Theorem \ref{theorem_IK_1}. 
\begin{proof}[Proof of Theorem \ref{theorem_IK_1}]Let $t\le0$. As before, by means of  Proposition \ref{proposition_IK_3} we decompose the operator $\Op_h(b^+)^*\varphi(H_h)e^{-itH_h/h}\Op_h(a^+)$ into five parts
\begin{align*}
U_0(t)&:=\Op_h(a^+)^*\varphi(H_h) J^+_h(c^+)e^{ith\Delta}J^+_h(d^+)^*,\\
U_j(t)&:=\Op_h(a^+)^*\varphi(H_h)Q_j^+(t,h),\quad 1\le j\le4. 
\end{align*}

To deal with the main term $U_0(t)$, thanks to Proposition \ref{proposition_IK_4}, it is enough to check that $\Op_h(a^+)^*\varphi(H_h)$ is bounded on $L^\infty$ uniformly in $h$. To this end, taking into account the fact that $\Op_h(a^+)=\Op_h(a^+)^*\rho(x/R)$ with some $\rho\in C_0^\infty$ supported away from the origin, we use Proposition \ref{proposition_functional_3} to write $\rho(x/R)\varphi(H_h)=\Op_h(b_h)^*+Q_h$ with some $b_h\in S^{0,-\infty}$ and $Q_h$ satisfying
$
\norm{Q_h\<x\>^N}\lesssim h^N
$
for $N>n/2$. Then $\norm{\Op_h(a^+)^*\Op_h(b_h)^*}_{\infty\to\infty}\lesssim 1$ by \eqref{PDO_1}. Moreover, \eqref{PDO_1} and the embedding $L^\infty(\R^n)\subset \<x\>^{N}L^2(\R^n)$ show that
$$
\norm{\Op_h(a^+)^*Q_hf}_{\infty}\le \norm{\Op_h(a^+)^*}_{2\to \infty}\norm{Q_h\<x\>^N}\norm{\<x\>^{-N}f}_{L^2}\lesssim h^{N-n/2}\norm{f}_\infty\lesssim \norm{f}_\infty.
$$ 

To deal with $U_1(t)$, we take $N\ge 4n$ and use \eqref{PDO_1}, the dual estimate of \eqref{proposition_IK_7_1} to see that
\begin{align*}
\norm{U_1(t)^*f}_{\infty}
&\lesssim h^{N-n/2}\norm{\Op_h(a^+)^*\varphi(H_h)e^{-it H_h/h}\Op_h(r_1^+)f}_{2}\\
&\lesssim h^{N-n}\<t\>^{-N/8}\norm{\<x\>^N\Op_h(r_1^+)f}_{2}\\
&\lesssim \<t\>^{-n/2}\norm{f}_{1}.
\end{align*}

To deal with $U_{2}(t)$ which is of the form
$$
-ih^{N-1}\int_0^t \Op_h(a^+)^*\varphi(H_h)e^{-i(t-s)H_h/h}J^+_h(r_2^+)e^{ish\Delta}J^+_h(d^+)^*ds
$$
we also use the dual estimate of \eqref{proposition_IK_7_1}, Proposition \ref{proposition_IK_5} and Sobolev's embedding to see that
\begin{align*}
\norm{\Op_h(a^+)^*\varphi(H_h)e^{-i(t-s)H_h/h}\<x\>^{-N}}_{\infty\to \infty}
&\lesssim h^{-3n/2-1}\<t-s\>^{-N/8},\\
\norm{\<x\>^{N}J_h^+(r_2^+)e^{ish\Delta}J_h^+(d^+)^*}_{1\to \infty}
&\lesssim \min\{h^{-n},|s h|^{-n/2}\}
\end{align*}
for $t\le s$. Thus, choosing $N\ge 4n+2$ large enough, we obtain
$$
\norm{U_2(t)}_{1\to\infty}\lesssim h^{N-3n/2-2}\left|\int_0^t\<t-s\>^{-n}\min\{h^{-n},|s h|^{-n/2}\}
ds\right|\lesssim \min\{h^{-n},|th|^{-n/2}\},\quad t\le0. 
$$
Taking the embedding $\<x\>^{-n/2-\ep}L^\infty(\R^n)\subset L^2(\R^n)$ into account, by using the dual estimates of \eqref{proposition_IK_7_2} and \eqref{proposition_IK_7_3} instead of \eqref{proposition_IK_7_1}, one can also obtain for $b=\wtilde e_{\com}$ or $\wtilde e^+$ that
$$
\norm{\Op_h(a^+)^*\varphi(H_h)e^{-i(t-s)H_h/h}\Op_h(b)}_{\infty\to \infty}\lesssim h^M\<t-s\>^{-M},\quad t\le s,
$$
which, together with \eqref{proposition_IK_4_1}, implies the desired bounds for the terms $U_3(t)$ and $U_4(t)$. 
\end{proof}

By Theorem \ref{theorem_IK_1} and the $TT^*$- argument, we have obtained \eqref{IK_3} for $0<\lambda\le1$ which, combined with Proposition \ref{proposition_dispersive_1}, concludes the proof of \eqref{IK_2} for $0<\lambda\le1$. 

\subsection{The high energy case}
\label{subsection_high_energy}

The proof of the high energy estimate \eqref{IK_1} basically follows the same line as that of \eqref{IK_3}. Note however that we do not have to decompose $a^\lambda$ into the compact and non-compact parts in contrast to the low energy case since, for $R\ge1$ large enough, $a^\lambda$ is supported in the region
$$
\{(x,\xi)\in \R^{2n}\ |\ |x|> R,\ |\xi|^2\in I\}
$$
with some interval $I\Subset(0,\infty)$. 

As in the low energy case, decomposing $a^\lambda$ as $a^\lambda=a_++a_-$ with some $a_\pm \in S^{0,-\infty}$ supported in $\Gamma^\pm (R,I,1/2)$, we see that \eqref{IK_1} is deduced from the following dispersive estimate
\begin{align}
\label{high_energy_1}
\norm{\Op(a_\pm)^*\varphi(H^\lambda)e^{-itH^\lambda}\Op(a_\pm)}_{1\to \infty}\lesssim |t|^{-n/2},\quad t\neq0,\ \lambda\ge1. 
\end{align}
In what follows, we summarize several propositions from which \eqref{high_energy_1} follows as in the low energy case. The proofs of these propositions are essentially same as those of the corresponding propositions in the low energy case. Indeed, by using Lemma \ref{lemma_functional_1}, Propositions \ref{proposition_functional_2} and \ref{proposition_smoothing_3}, instead of Propositions \ref{proposition_functional_3} and \ref{proposition_smoothing_2}, all the propositions can be obtained by simply adapting the proof of the low energy case with, firstly, replacing $V_h$ by $V^\lambda$ and, then, taking $h=1$. 

\begin{proposition}[IK parametrix]
\label{proposition_high_energy_1}
Let $I_1\Subset I_{2}\Subset  I_3\Subset I_4\Subset (0,\infty)$ be relatively compact open intervals and $-1<\sigma_1<\sigma_{2}<\sigma_3<\sigma_4<1$.  Then there exists a large constant $R_{\IK}\ge1$ such that, for all $R\ge R_{\IK}^4$, the following statements are satisfied:  
\begin{itemize}
\item There exist $\{S_\pm^\lambda\}_{\lambda\ge1}\subset  C^\infty(\R^2n;\R)$ such that 
\begin{align*}
|\partial_x^\alpha\partial_\xi^\beta(S_\pm^\lambda(x,\xi)-x\cdot\xi|&\le C_{\alpha\beta}\<x\>^{1-\mu-|\alpha|},\\
|\partial_x^\alpha\partial_\xi^\beta(S_\pm^\lambda(x,\xi)-x\cdot\xi|&\le C_{\alpha\beta}\min\{\<x\>^{1-\mu-|\alpha|},R^{1-\mu-|\alpha|}\},\quad |\alpha|\ge1,
\end{align*}
uniformly in $\lambda\ge1$, and that
\begin{align*}
|\nabla_x S_\pm^\lambda(x,\xi)|^2+V_h(x)=|\xi|^2,\quad (x,\xi)\in \Gamma^\pm(R^{1/4},I_4,\sigma_4).
\end{align*}
\item For any $a_\pm\in S^{0,-\infty}$ supported in $ \Gamma^\pm(R,I_1,\sigma_1)$ and any $N\ge0$, we have
\begin{align*}
e^{-itH^\lambda}\Op(a_\pm)
&=J_\pm^\lambda(c_\pm)e^{it\Delta}J_\pm^\lambda(d_\pm)^*+\sum_{j=1}^4Q_{j,\pm}(t,\lambda),
\end{align*}
where the remainder terms $Q_{j,\pm}$, $1\le j\le 4$, are given by 
\begin{align*}
Q_{1,\pm}(t,\lambda)&=e^{-itH^\lambda}\Op(r_{1,\pm}),\\
Q_{2,\pm}(t,\lambda)&=-i\int_0^t e^{-i(t-s)H^\lambda}J_\pm^\lambda(r_{2,\pm})e^{is\Delta}J_\pm^\lambda(d_\pm)^*ds,\\
Q_{3,\pm}(t,\lambda)&=- i\int_0^t e^{-i(t-s)H^\lambda}\Op(\wtilde e^{\com}_\pm)J_\pm^\lambda(e^{\com}_\pm)e^{is\Delta}J_\pm^\lambda(d_\pm)^*ds,\\
Q_{4,\pm}(t,\lambda)&=- i\int_0^t e^{-i(t-s)H^\lambda}\Op(\wtilde e_\pm)J_\pm^\lambda(e_\pm)e^{is\Delta}J_\pm^\lambda(d_\pm)^*ds
\end{align*}
and the amplitudes satisfy the following properties:
\begin{itemize}
\item $c_\pm,d_\pm,e_\pm,\wtilde e_\pm\in S^{0,-\infty}$, $r_{1,\pm},r_{2,\pm}\in S^{-N,\infty}$ and $e^{\com}_\pm\in C_0^\infty(\R^{2n})$;
\item $\supp c_\pm\subset \Gamma^\pm(R^{1/3},I_3,\sigma_3)$, $\supp d_\pm \subset \Gamma^\pm(R^{1/2},I_2,\sigma_2)$;
\item $\supp e^{\com}_\pm,\supp \wtilde e^{\com}_\pm\subset\Gamma^\pm(R^{1/4},I_4,\sigma_4)\cap\{|x|\le R^{4/9}\}$;
\item $\supp e_\pm,\supp \wtilde e_\pm\subset \Gamma^\pm(R^{1/4},I_4,\sigma_4)\cap \Gamma^\mp(R^{1/4},I_4,-\wtilde\sigma)$ with some $\wtilde \sigma\in (\sigma_2,\sigma_3)$.
\end{itemize}
\end{itemize}
\end{proposition}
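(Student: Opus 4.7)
The plan is to mirror, line by line, the low-energy construction in Lemma \ref{lemma_IK_3} and Proposition \ref{proposition_IK_3}, with $V_h$ replaced by $V^\lambda$ throughout and the semiclassical parameter set to $h=1$. The only non-trivial point is uniformity in $\lambda\ge1$, and this is guaranteed by Lemma \ref{lemma_dilation_1}: for $\lambda\ge1$ and $|x|\ge R$ one has $\lambda^{\mu-2}\le1$ and $\lambda+|x|\gtrsim\<x\>$, whence $|\partial_x^\alpha V^\lambda(x)|\lesssim\<x\>^{-\mu-|\alpha|}$ uniformly in $\lambda\ge1$. Thus $V^\lambda$ satisfies, on the relevant outgoing/incoming region, exactly the long-range symbolic bounds that $V_h$ satisfied on the analogous region in the low-energy proof, which in turn lets every estimate transfer with constants depending on only finitely many seminorms of $V^\lambda$, and therefore uniform in $\lambda\ge 1$.

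I would first build the phase $S^\lambda_\pm$ following \cite[Prop.~4.1]{Rob2} and the proof of Lemma \ref{lemma_IK_3}: extend $V^\lambda|_{\{|x|\ge 1\}}$ to a globally smooth $\wtilde V^\lambda$ in the symbol class $S(\<x\>^{-\mu})$, solve the eikonal equation $|\nabla_xS|^2+\wtilde V^\lambda=|\xi|^2$ on $\Gamma^\pm(R^{1/4},I_4,\sigma_4)$ by Hamilton-Jacobi theory, and glue the solution to $x\cdot\xi$ via the cutoff $\chi^\pm_{1\to2}$ so as to obtain $S^\lambda_\pm$ globally defined with the claimed $\<x\>^{1-\mu-|\alpha|}$ bounds. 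Next, computing $\int_0^t\tfrac{d}{ds}\bigl(e^{isH^\lambda}J^\lambda_\pm(c)e^{is\Delta}\bigr)\,ds$ gives the Duhamel identity, and the symbol of $H^\lambda J^\lambda_\pm(c)+J^\lambda_\pm(c)\Delta$ equals $(|\nabla_xS^\lambda_\pm|^2+V^\lambda-|\xi|^2)c+i\bigl(-2\nabla_xS^\lambda_\pm\cdot\nabla_xc+(\Delta_xS^\lambda_\pm)c\bigr)-\Delta_xc$. Iteratively solving the resulting transport equations produces $\tilde c_\pm=\sum_{j=0}^N\tilde c_{\pm,j}$ with $\tilde c_{\pm,j}\in S^{-j,-\infty}$ supported in $\Gamma^\pm(R^{1/3},I_3,\sigma_3)$; localizing via a product cutoff of the form $\rho_1(x/R^{1/3})\rho_2(\xi)\rho_3(\pm\cos(x,\xi))$ kills the principal term thanks to the eikonal equation, while the terms obtained when a derivative falls on $\rho_1$ (resp.\ on $\rho_3$) are absorbed into $e^{\mathrm{com}}_\pm$ (resp.\ $e_\pm$) with the stated supports. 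An elliptic FIO parametrix for $\Op(a_\pm)$ with phase $S^\lambda_\pm$ then produces $d_\pm\in S^{0,-\infty}$ supported in $\Gamma^\pm(R^{1/2},I_2,\sigma_2)$ together with $r_{1,\pm}\in S^{-N,-\infty}$, and the PDO/FIO composition expansion \eqref{composition} (with $h=1$) peels off the companion PDOs $\Op(\wtilde e^{\mathrm{com}}_\pm)$ and $\Op(\wtilde e_\pm)$ from $J^\lambda_\pm(e^{\mathrm{com}}_\pm)$ and $J^\lambda_\pm(e_\pm)$, modulo terms absorbed into $r_{2,\pm}$. Multiplying the Duhamel identity by $J^\lambda_\pm(d_\pm)^*$ from the right and collecting remainders yields the claimed decomposition into $Q_{j,\pm}$ for $1\le j\le 4$.

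The main bookkeeping point is precisely the $\lambda$-uniformity: one must check that every constant in the Hamilton-Jacobi theory, the transport hierarchy, and the composition expansion depends only on seminorms of $V^\lambda$ on $\{|x|\ge R\}$, which are controlled by universal constants via Lemma \ref{lemma_dilation_1}. Beyond this, the high-energy setting is structurally easier than the low-energy one: setting $h=1$ removes the need to track semiclassical powers in the transport hierarchy and in Proposition \ref{proposition_PDO_1}, and the relevant supporting results -- Lemma \ref{lemma_functional_1} and Proposition \ref{proposition_functional_2} for the functional calculus, Proposition \ref{proposition_smoothing_3} for the weighted $L^2$-estimates -- are already formulated with constants uniform in $\lambda\ge1$. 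Hence I expect no essentially new analytic difficulty, only a careful transcription of the low-energy argument.
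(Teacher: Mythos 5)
Your proposal is correct and is essentially the paper's own argument: the paper proves Proposition \ref{proposition_high_energy_1} precisely by transcribing the proofs of Lemma \ref{lemma_IK_3} and Proposition \ref{proposition_IK_3} with $V_h$ replaced by $V^\lambda$ and $h=1$, invoking Lemma \ref{lemma_dilation_1} (which for $\lambda\ge1$ gives $|\partial_x^\alpha V^\lambda(x)|\lesssim\<x\>^{-\mu-|\alpha|}$ uniformly) together with Lemma \ref{lemma_functional_1}, Proposition \ref{proposition_functional_2} and Proposition \ref{proposition_smoothing_3} in place of their low-energy counterparts. Your treatment of the phase construction, transport hierarchy, elliptic parametrix, and the uniformity bookkeeping matches this route, so there is nothing to add.
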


Here $J_\pm^\lambda(a)$ denotes the FIO with the phase function $S_\pm^\lambda$ and the amplitude $a$, namely
$$
J_\pm^\lambda(a)f(x)=\frac{1}{(2\pi)^n}\iint e^{i(S_\pm^\lambda(x,\xi)-y\cdot\xi)}a(x,\xi)f(y)dyd\xi. 
$$

\begin{proposition}
\label{proposition_high_energy_2}
Let $I\Subset (0,\infty)$ be a relatively compact interval. For sufficiently large $R\ge R_{\IK}$ and any symbols $a,b\in S^{0,-\infty}$ satisfying $|\xi|^2\in I$ on $\supp a$ and on $\supp b$, we have
$$
\norm{J_\pm^\lambda(a)e^{it\Delta}J_\pm^\lambda(b)^*}_{1\to \infty}\lesssim |t|^{-n/2},\quad t\neq0,\ \lambda\ge1.
$$
\end{proposition}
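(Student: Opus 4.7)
The plan is to mimic the proof of Proposition \ref{proposition_IK_4} with the semiclassical parameter $h$ set to $1$ and the semiclassical data $(H_h,V_h,S_h^\pm)$ replaced by $(H^\lambda,V^\lambda,S_\pm^\lambda)$; uniformity in $\lambda\ge 1$ will follow from the uniform-in-$\lambda$ bounds on $S_\pm^\lambda$ stated in Proposition \ref{proposition_high_energy_1}.

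First I would write the Schwartz kernel of $J_\pm^\lambda(a)e^{it\Delta}J_\pm^\lambda(b)^*$ as the oscillatory integral
$$
I_{a,b}(t,x,y)=(2\pi)^{-n}\int e^{i\Phi_\pm(t,x,y,\xi)}a(x,\xi)\overline{b(y,\xi)}\,d\xi,\qquad \Phi_\pm=-t|\xi|^2+S_\pm^\lambda(x,\xi)-S_\pm^\lambda(y,\xi),
$$
noting that the $\xi$-integration effectively runs over the compact set $\{|\xi|^2\in I\}$ by the support assumption on $a,b$. Using the bounds $\partial_x^\alpha\partial_\xi^\beta(S_\pm^\lambda-x\cdot\xi)=O(\<x\>^{1-\mu-|\alpha|})$ and $\partial_x^\alpha\partial_\xi^\beta(S_\pm^\lambda-x\cdot\xi)=O(R^{1-\mu-|\alpha|})$ for $|\alpha|\ge1$ from Proposition \ref{proposition_high_energy_1}, I would then expand
$$
\nabla_\xi\Phi_\pm(t,x,y,\xi)=-2t\xi+\big(I+Q_\pm(x,y,\xi)\big)(x-y),
$$
where the remainder obeys $|\partial_x^\alpha\partial_y^\beta\partial_\xi^\gamma Q_\pm|\le C_{\alpha\beta\gamma}R^{-\mu}$ uniformly in $\lambda\ge1$.

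The case $|t|\le 1$ is handled trivially: since $|\xi|^2\in I$ on the support of the amplitude, the kernel is bounded uniformly in $\lambda$, giving $\norm{J_\pm^\lambda(a)e^{it\Delta}J_\pm^\lambda(b)^*}_{1\to\infty}\lesssim 1\lesssim |t|^{-n/2}$. For $|t|\ge 1$ I would perform the standard two-regime analysis. In the \emph{far} regime $|(x-y)/t|\ge C_0$ (with $C_0>\sup\sqrt{I}$ absolute), choosing $R$ large enough forces $|\nabla_\xi\Phi_\pm/t|\gtrsim |(x-y)/t|\gtrsim C_0$, so $N$-fold integration by parts via the operator $L_{\Phi_\pm}=(i|\nabla_\xi\Phi_\pm|^2)^{-1}(\nabla_\xi\Phi_\pm)\cdot\nabla_\xi$ yields $|I_{a,b}(t,x,y)|\lesssim |t|^{-N}\lesssim |t|^{-n/2}$ for $N\ge n/2$. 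In the \emph{close} regime $|(x-y)/t|\le C_0$, the Hessian satisfies $\nabla_\xi^2\Phi_\pm/t=-2I+O(R^{-\mu})$ and all higher $\xi$-derivatives of $\Phi_\pm/t$ are uniformly bounded on the compact $\xi$-support; choosing $R$ large enough to ensure non-degeneracy, the stationary phase theorem applied with large parameter $|t|$ gives $|I_{a,b}(t,x,y)|\lesssim |t|^{-n/2}$.

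I do not anticipate a genuine obstacle: the only delicate point is the uniformity of all estimates in $\lambda\ge 1$, but this is baked into Proposition \ref{proposition_high_energy_1}. The threshold for $R$ depends only on $I$ (through $C_0$) and on the implicit constants in the bounds for $Q_\pm$, all of which are independent of $\lambda$.
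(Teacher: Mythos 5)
Your proposal is correct and follows essentially the same route as the paper: the paper proves the high-energy Proposition \ref{proposition_high_energy_2} by repeating the argument of Proposition \ref{proposition_IK_4} with $(H_h,V_h,S^\pm_h)$ replaced by $(H^\lambda,V^\lambda,S^\lambda_\pm)$ and $h=1$, i.e.\ the same kernel representation, the same expansion $\nabla_\xi\Phi_\pm=-2t\xi+(\Id+O(R^{-\mu}))(x-y)$, non-stationary integration by parts in the regime $|(x-y)/t|\ge C_0$, and stationary phase in the complementary regime, with uniformity in $\lambda\ge1$ coming from the uniform bounds on $S^\lambda_\pm$. Your treatment of $|t|\le1$ by the trivial $O(1)$ kernel bound matches the paper's handling of the corresponding regime $|t|\lesssim h$.
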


\begin{proposition}
\label{proposition_high_energy_3}
Let $d_\pm$, $r_{2,\pm}$, $e^{\com}_\pm$ and $e_\pm$  be as above. Then for all $s\in \R$ and large $N\ge1$, 
$$
\norm{\<D\>^{s}\<x\>^{N/4}J_\pm^\lambda(r_{2,\pm})e^{it\Delta}J_\pm^\lambda(d_\pm)^*\<x\>^{N/8}\<D\>^{s}}\lesssim \<t\>^{-N/8}
$$
uniformly in $\pm t\ge0$ and $\lambda\ge 1$. Moreover, for any $M\ge0$ and $s\in \R$,
$$
\norm{\<D\>^{s}\<x\>^{M}J_\pm^\lambda(\chi_\pm)e^{it\Delta}J_\pm^\lambda(d_\pm)^*\<x\>^{M}\<D\>^{s}}\lesssim \<t\>^{-M},
$$
uniformly in $\pm t\ge0$ and $\lambda\ge1$, where $\chi_\pm=e^{\com}_\pm$ or $e_\pm$. 
\end{proposition}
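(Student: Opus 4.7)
The plan is to adapt the proof of Proposition \ref{proposition_IK_5} essentially verbatim, with $V_h$ replaced by $V^\lambda$ and $h$ set to $1$, now using Proposition \ref{proposition_high_energy_1} in place of Lemma \ref{lemma_IK_3}. For the outgoing case ($+$), the Schwartz kernel of $J_+^\lambda(a)e^{it\Delta}J_+^\lambda(b)^*$ is represented by an oscillatory integral with phase
\begin{align*}
\Phi_+(t,x,y,\xi) = -t|\xi|^2 + S_+^\lambda(x,\xi) - S_+^\lambda(y,\xi),
\end{align*}
and the uniform bound $\partial_x^\alpha\partial_\xi^\beta(S_+^\lambda(x,\xi)-x\cdot\xi)=O(\<x\>^{1-\mu-|\alpha|})$ from Proposition \ref{proposition_high_energy_1} gives
\begin{align*}
\nabla_\xi\Phi_+(t,x,y,\xi) = -2t\xi + (1+Q_+(x,y,\xi))(x-y)
\end{align*}
with $|\partial_x^\alpha\partial_y^\beta\partial_\xi^\gamma Q_+|\le C_{\alpha\beta\gamma}R^{-\mu}$ uniformly in $\lambda\ge 1$.

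For the first bound, with $r_{2,+}\in S^{-N,-\infty}$ and $d_+$ supported in $\Gamma^+(R^{1/2},I_2,\sigma_2)$, I would split the kernel by inserting $\chi(\nabla_\xi\Phi_+)+(1-\chi)(\nabla_\xi\Phi_+)$ for $\chi\in C_0^\infty(\R^n)$ supported in the unit ball. On the first piece, the outgoing property of $d_+$ and Lemma \ref{lemma_IK_6}(1) give $|y|\ge R^{1/2}$ and $|y+2t\xi|\gtrsim |y|+t$ for $t\ge 0$; together with $|\nabla_\xi\Phi_+|\le 1$ this forces $\<x\>^{-1}\lesssim(1+|y|+t)^{-1}$ for $R$ sufficiently large, and since $r_{2,+}\in S^{-N,-\infty}$ we distribute the weights to obtain $|I^{(1)}(t,x,y)|\lesssim \<x\>^{-N/2}\<y\>^{-N/4}\<t\>^{-N/4}$. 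On the complement, the bound $|\nabla_\xi\Phi_+|^{-1}\lesssim \<x\>\<y\>^{-1/2}\<t\>^{-1/2}$ permits repeated integration by parts with the operator $L_{\Phi_+}=\frac{1}{i|\nabla_\xi\Phi_+|^2}(\nabla_\xi\Phi_+)\cdot\nabla_\xi$ (no factor of $h$ here), yielding comparable decay. The $\<D\>^s$ weights are inserted via the identity $J_+^\lambda(a)\partial_{x_j}=iJ_+^\lambda(\xi_j a)$ modulo lower order, iterated to integer $s$ and then interpolated; the outer $\<x\>^{N/4}, \<x\>^{N/8}$ weights are absorbed into the kernel estimate and handled by Schur's lemma after choosing $N$ large enough.

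For the second bound, treat $\chi_+=e^{\com}_+$ and $\chi_+=e_+$ separately. In the compact case, the support constraints $|x|\le R^{1/3}$ on $\supp e^{\com}_+$ and $|y|\ge R^{1/2}$ on $\supp d_+$, combined with Lemma \ref{lemma_IK_6}(1), directly give $|x-y-2t\xi|\gtrsim 1+|x|+|y|+t$ for $t\ge 0$. In the non-compact case, the support properties $\cos(x,\xi)<-\wtilde\sigma$ on $\supp e_+$ and $\cos(y+2t\xi,\xi)>-\sigma_2$ on $\supp d_+$ (with $\sigma_2<\wtilde\sigma$, using Lemma \ref{lemma_IK_6}(1) to propagate the outgoing property under the free flow) together with Lemma \ref{lemma_IK_6}(2) yield the same lower bound. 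In either situation, $|\nabla_\xi\Phi_+|\gtrsim 1+|x|+|y|+t$ on the support of the amplitude, so $M$-fold integration by parts with $L_{\Phi_+}$ produces the kernel decay $\<x\>^{-M}\<y\>^{-M}\<t\>^{-M}$, dominating the outer factors $\<x\>^M$ after Schur's lemma.

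The main obstacle, insofar as there is one, is bookkeeping of uniformity in $\lambda\ge 1$: every phase estimate, amplitude bound, and integration-by-parts factor must avoid $\lambda$-dependent constants. This uniformity is already guaranteed by the $\lambda$-independent control of $S_\pm^\lambda-x\cdot\xi$ in Proposition \ref{proposition_high_energy_1} and the fact that the symbol classes $S^{\mu,m}$ for $r_{2,\pm}, d_\pm, e_\pm, e^{\com}_\pm$ are bounded uniformly in $\lambda$, so once $R$ is fixed sufficiently large the adaptation is routine. The incoming case ($-$) follows by the symmetric argument using Lemma \ref{lemma_IK_6} with opposite signs and $-t\ge 0$.
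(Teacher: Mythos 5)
Your proposal is correct and follows essentially the same route as the paper, which obtains Proposition \ref{proposition_high_energy_3} precisely by adapting the proof of Proposition \ref{proposition_IK_5} with $V_h$ replaced by $V^\lambda$ and $h=1$, using the phase bounds of Proposition \ref{proposition_high_energy_1} in place of Lemma \ref{lemma_IK_3}. The kernel splitting via $\chi(\nabla_\xi\Phi_\pm)$, the use of Lemma \ref{lemma_IK_6}, the non-stationary phase argument with $L_{\Phi_\pm}$, and the treatment of the $\<D\>^s$ and weight factors all match the paper's (implicit) argument, and the uniformity in $\lambda\ge1$ is handled as the paper intends.
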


\begin{proposition}
\label{proposition_high_energy_4}
Let $R\ge1$ be large enough and $b_\pm\in S^{0,-\infty}$ supported in $\Gamma^\pm(R,I,\sigma)$. Then the following statements are satisfied with constants independent of $t$ and $\lambda\ge1$. 
\begin{itemize}
\item For sufficiently large integer $N$ and all $s\ge0$, 
$$
\norm{\<x\>^{-N}\varphi(H^\lambda)e^{-itH^\lambda}\Op(b_\pm)\<D\>^s}\lesssim \<t\>^{-N/8},\quad \pm t\ge 0. 
$$
\item For any $\chi\in C_0^\infty (\R^n)$ supported in a unit ball $\{|x|<1\}$ and all $M,s\ge0$, 
$$
\norm{\chi(x/R^{1/3})\varphi(H^\lambda)e^{-itH^\lambda}\Op(b_\pm)\<D\>^s}\lesssim \<t\>^{-M},\quad \pm t\ge 0. 
$$
\item If $\chi_\mp\in S^{0,-\infty}$ is supported in $\Gamma^\mp(R^{1/4},I_4,-\wtilde\sigma)$ with some $\wtilde \sigma>\sigma$, then for all $M,s\ge0$
$$
\norm{\<D\>^s\Op(\chi_\mp)^*\varphi(H^\lambda)e^{-itH^\lambda}\Op(b_\pm)\<x\>^M\<D\>^s}\lesssim \<t\>^{-M},\quad \pm t\ge 0. 
$$
\end{itemize}
\end{proposition}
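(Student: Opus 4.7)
The plan is to mirror the proof of Proposition \ref{proposition_IK_7} essentially verbatim, making only the formal substitutions $h\mapsto 1$, $H_h\mapsto H^\lambda$, $V_h\mapsto V^\lambda$, $J_h^\pm(\cdot)\mapsto J_\pm^\lambda(\cdot)$, and replacing the semiclassical tools by their high-energy analogues. Concretely, Proposition \ref{proposition_IK_3} is replaced by Proposition \ref{proposition_high_energy_1}, the kernel estimates of Proposition \ref{proposition_IK_5} by those of Proposition \ref{proposition_high_energy_3}, the local decay \eqref{proposition_smoothing_2_3} by \eqref{proposition_smoothing_3_3}, and the mapping properties of $\varphi(H_h)$ from Proposition \ref{proposition_functional_3} by Proposition \ref{proposition_functional_2} and Lemma \ref{lemma_functional_1} (the latter providing the uniform $L^2$-boundedness of $\<x\>^\alpha\varphi(H^\lambda)\<x\>^{-\alpha}$ in $\lambda\ge1$, which is what powers all the weighted estimates below). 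Since $b_\pm\<\xi\>^s\in S^{0,-\infty}$, we can reduce to $s=0$ throughout.

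For \eqref{proposition_IK_7_1} in the high-energy setting (the first bulleted estimate), I would apply the IK parametrix of Proposition \ref{proposition_high_energy_1} to $e^{-itH^\lambda}\Op(b_+)$ with $a_+:=b_+$, obtaining a main term $\Op(b_+)^*\varphi(H^\lambda)J_+^\lambda(c_+)e^{it\Delta}J_+^\lambda(d_+)^*$ together with four remainders $\varphi(H^\lambda)Q_{j,+}$. Wait---the target is $\<x\>^{-N}\varphi(H^\lambda)e^{-itH^\lambda}\Op(b_+)$, so I would decompose directly the operator $\<x\>^{-N}\varphi(H^\lambda)e^{-itH^\lambda}\Op(b_+)$ by writing $e^{-itH^\lambda}\Op(b_+) = J_+^\lambda(c_+)e^{it\Delta}J_+^\lambda(d_+)^* + \sum_j Q_{j,+}(t,\lambda)$. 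The main term is estimated by pulling $\<x\>^{-N}\varphi(H^\lambda)\<x\>^N$ through (uniformly bounded on $L^2$ by Lemma \ref{lemma_functional_1}) and then invoking Proposition \ref{proposition_high_energy_3}. For $Q_{1,+}$ one sandwiches with $\<x\>^{-N}$ and uses \eqref{proposition_smoothing_3_3}. For $Q_{2,+}$, $Q_{3,+}$, $Q_{4,+}$, combine \eqref{proposition_smoothing_3_3} with the relevant bound from Proposition \ref{proposition_high_energy_3}, and then close the time integral by the standard $\<t-s\>^{-N/8}\<s\>^{-N/8}$ convolution argument carried out in \eqref{proof_proposition_IK_7_4}.

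For the second bulleted estimate, I would observe that $\varphi(H^\lambda)J_+^\lambda(c_+) = \varphi(H^\lambda)(1-\tilde\chi)(x/R^{1/4})J_+^\lambda(c_+)$ since $|x|>R^{1/4}$ on $\supp c_+$. Applying Proposition \ref{proposition_functional_2} with a cut-off factor of $1-\tilde\chi(x/R^{1/4})$, the composition $\chi(x/R^{1/3})\varphi(H^\lambda)(1-\tilde\chi)(x/R^{1/4})$ reduces to a symbol supported in $\{|x|<R^{1/3}\}$ (modulo a rapidly decaying weighted remainder). The resulting FIO $J_+^\lambda(c_0)e^{it\Delta}J_+^\lambda(d_+)^*\<x\>^M$ has amplitude supported where $|x|<R^{1/3}$ and $|y|>R^{1/2}$, so Lemma \ref{lemma_IK_6}(1) forces $|x-y-2t\xi|\gtrsim 1+|x|+|y|+t$ and the non-stationary phase / integration by parts argument from the proof of \eqref{proposition_IK_5_2} gives the $\<t\>^{-M}$ decay. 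The remainder terms $Q_{j,+}$ are handled as in step 2.

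For the third bulleted estimate, I would again replace $\Op(\chi_-)^*\varphi(H^\lambda)J_+^\lambda(c_+)$, via Proposition \ref{proposition_functional_2} and the composition formula \eqref{composition}, by an FIO $J_+^\lambda(\tilde\chi_-)$ with amplitude still supported in $\Gamma^-(R^{1/4},I_4,-\tilde\sigma)$. Choosing $\sigma_2$ with $\sigma<\sigma_2<\tilde\sigma$ so that $d_+$ is supported in $\Gamma^+(R^{1/2},I_2,\sigma_2)$, Lemma \ref{lemma_IK_6}(1)--(2) yields $|x-y-2t\xi|\gtrsim |x|+|y|+t$ for $t\ge0$ on the amplitude of $J_+^\lambda(\tilde\chi_-)e^{it\Delta}J_+^\lambda(d_+)^*\<x\>^M$, and the same integration by parts used in Proposition \ref{proposition_IK_5} gives the bound. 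The remainders follow from the dual of the first bulleted estimate applied at $-t\le 0$. The main obstacle, if any, is simply bookkeeping the uniformity in $\lambda\ge1$ of the various operators involving $\varphi(H^\lambda)$---but this is exactly what Lemma \ref{lemma_functional_1}, Proposition \ref{proposition_functional_2} and Proposition \ref{proposition_smoothing_3} were designed to provide, so no new idea is needed beyond transcribing the low-energy argument.
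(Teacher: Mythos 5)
Your proposal is correct and coincides with the paper's own treatment: the paper proves Proposition \ref{proposition_high_energy_4} precisely by transcribing the proof of Proposition \ref{proposition_IK_7} with $V_h$ replaced by $V^\lambda$ and $h=1$, substituting Proposition \ref{proposition_high_energy_1} for Proposition \ref{proposition_IK_3}, Proposition \ref{proposition_high_energy_3} for Proposition \ref{proposition_IK_5}, \eqref{proposition_smoothing_3_3} for \eqref{proposition_smoothing_2_3}, and Lemma \ref{lemma_functional_1} together with Proposition \ref{proposition_functional_2} for Proposition \ref{proposition_functional_3}. Your handling of the three bullets (main term plus the four parametrix remainders, the support argument with Lemma \ref{lemma_IK_6}, and the incoming/dual estimate for the third bullet) matches the low-energy argument step by step, so no new input is needed.
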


\section{Inhomogeneous estimates}
This section is devoted to the proof of the inhomogeneous estimate \eqref{theorem_LP_2_2}. As in case of homogeneous estimates, the proof for the high energy case is almost identical to that for the low energy case. We thus may assume $\lambda\le1$. Since the non-endpoint inhomogeneous estimates follow from the homogeneous estimates and Christ-Kiselev's lemma \cite{ChKi}, we also may assume $n\ge3$ and $(p,q)=(\tilde p,\tilde q)=(2,2^*)$. By  duality, \eqref{theorem_LP_2_2} is a consequence of the sesquilinear estimate
\begin{align}
\label{inhomogeneous_1}
\left|\int_\R\int_0^t\<\varphi^\lambda(H)e^{-i(t-s)H}F(s),G(t)\>dsdt\right|\lesssim \norm{F}_{L^2_tL^{2_*}_x  }\norm{G}_{L^2_tL^{2_*}_x  }.
\end{align}
On the other hand, we have already proved the homogeneous endpoint estimate \eqref{theorem_LP_2_1}, which implies \eqref{inhomogeneous_1} with the time interval $[0,t]$ replaced by $I=[0,\infty),(-\infty,0]$ or $\R$. Therefore, \eqref{inhomogeneous_1} is deduced from the retarded estimate
\begin{align}
\label{inhomogeneous_2}
\left|\iint_{s<t}\<\varphi^\lambda(H)e^{-i(t-s)H}F(s),G(t)\>dsdt\right|\lesssim \norm{F}_{L^2_tL^{2_*}_x  }\norm{G}_{L^2_tL^{2_*}_x  }.
\end{align}
In what follows the notation in the previous section will be used. We then write $\varphi^\lambda (H)e^{-itH}$ as
\begin{align*}
\varphi^\lambda (H)e^{-itH}
&=\wtilde \varphi^\lambda(H)^2\varphi^\lambda (H)e^{-itH}(\wtilde \varphi^\lambda(H)^*)^2\\
&=\wtilde \varphi^\lambda(H)\Big(\D_\mu\Op_h(a_h)^*\D_\mu^*+R^\lambda\Big)\varphi^\lambda (H)e^{-itH}\Big(\D_\mu\Op_h(a_h)\D_\mu^*+(R^\lambda)^*\Big)\wtilde \varphi^\lambda(H). 
\end{align*}
$a_h$ is further decomposed as $a_h=a^++a^-+a^{\com}$, where $a^\pm,a^{\com}\in S^{-0,-\infty}$, $\supp a^\pm\subset \Gamma^\pm(R,I,1/2)$ and $\supp a^{\com} \subset \{|x|<2R,\ |\xi|\le c_1\}$ with some $R\gg1$, $I\Subset (0,\infty)$ and $c_1>0$ being independent of $h$. Then $\varphi^\lambda (H)e^{-itH}$ is a sum of operators
\begin{align*}
Y_1^\pm (t)&=\wtilde \varphi^\lambda(H)\D_\mu \Op_h(a^\pm)^*\D_\mu^*\varphi^\lambda (H)e^{-itH}\D_\mu \Op_h(a^\pm)\D_\mu^*\wtilde \varphi^\lambda(H),\\
Y_2(t)&=\wtilde \varphi^\lambda(H)\D_\mu \Op_h(a^{\com})^*\D_\mu^*\varphi^\lambda (H)e^{-itH}\D_\mu \Op_h(a^{\com})\D_\mu^*\wtilde \varphi^\lambda(H),\\
Y_3(t)&=\wtilde \varphi^\lambda(H)R^\lambda\varphi^\lambda (H)e^{-itH}(R^\lambda)^*\wtilde \varphi^\lambda(H),\\
Z_1^\pm(t)&=\wtilde \varphi^\lambda(H)\D_\mu \Op_h(a^\mp)^*\D_\mu^*\varphi^\lambda (H)e^{-itH}\D_\mu \Op_h(a^\pm)\D_\mu^*\wtilde \varphi^\lambda(H),\\
Z_2^\pm(t)&=\wtilde \varphi^\lambda(H)\D_\mu \Op_h(a^{\com})^*\D_\mu^*\varphi^\lambda (H)e^{-itH}\D_\mu \Op_h(a^\pm)\D_\mu^*\wtilde \varphi^\lambda(H),\\
Z_3^\pm(t)&=Z_2^\pm(-t)^*=\wtilde \varphi^\lambda(H)\D_\mu \Op_h(a^\pm)^*\D_\mu^*\varphi^\lambda (H)e^{-itH}\D_\mu \Op_h(a^{\com})\D_\mu^*\wtilde \varphi^\lambda(H),\\
Z_4^\pm(t)&=\wtilde \varphi^\lambda(H)R^\lambda\varphi^\lambda (H)e^{-itH}\D_\mu \Op_h(a^\pm)\D_\mu^*\wtilde \varphi^\lambda(H),\\
Z_5^\pm(t)&=Z_4^\pm(-t)^*=\wtilde \varphi^\lambda(H)\D_\mu \Op_h(a^\pm)^*\D_\mu^*\varphi^\lambda (H)e^{-itH}(R^\lambda)^*\wtilde \varphi^\lambda(H),\\
Z_6(t)&=\wtilde \varphi^\lambda(H)\D_\mu \Op_h(a^{\com})^*\D_\mu^*\varphi^\lambda (H)e^{-itH}(R^\lambda)^*\wtilde \varphi^\lambda(H),\\
Z_7(t)&=Z_6(-t)^*=\wtilde \varphi^\lambda(H)R^\lambda\varphi^\lambda (H)e^{-itH}\D_\mu \Op_h(a^{\com})\D_\mu^*\wtilde \varphi^\lambda(H),
\end{align*}
where $Y_j,Y_j^\pm$ (resp. $Z_j,Z_j^\pm$) correspond to the diagonal (resp. off-diagonal) terms. To prove \eqref{inhomogeneous_2}, it is sufficient to show
\begin{align}
\label{inhomogeneous_3}
\left|\iint_{s<t}\<W(t-s)F(s),G(t)\>dsdt\right|&\lesssim \norm{F}_{L^2_tL^{2_*}_x  }\norm{G}_{L^2_tL^{2_*}_x  }
\end{align}
for all $W\in \{Y_1^\pm,Y_2,Y_3,Z_1^\pm,...,Z_5^\pm,Z_6,Z_7\}$. Before starting its proof, we make a small but useful remark.

\begin{remark}
Since $\D_\mu^*=\D(\lambda^{-\mu/2})$ and $R^\lambda=\varphi^\lambda(H)-\D\Op(a^\lambda)^*\D$, by \eqref{PDO_1} and \eqref{dilation_1} and Lemma \ref{lemma_LP_2}, all of operators $\wtilde \varphi^\lambda(H),\D_\mu \Op_h(a^\pm)^*\D_\mu^*$, $\D_\mu \Op_h(a^{\com})^*\D_\mu^*$, $R^\lambda$ and their adjoints are bounded on $L^{2_*}$ uniformly in $h=\lambda^{2/\mu-1}\in (0,1]$. Hence \eqref{theorem_LP_2_1} implies
$$
\left|\iint\<W(t-s)F(s),G(t)\>dsdt\right|\lesssim \norm{F}_{L^2_tL^{2_*}_x  }\norm{G}_{L^2_tL^{2_*}_x  }
$$
for all $W\in \{Y_1^\pm,Y_2,Y_3,Z_1^\pm,...,Z_5^\pm,Z_6,Z_7\}$. 
Therefore, \eqref{inhomogeneous_3} is equivalent to 
\begin{align}
\label{inhomogeneous_4}
\left|\iint_{s>t}\<W(t-s)F(s),G(t)\>dsdt\right|&\lesssim \norm{F}_{L^2_tL^{2_*}_x  }\norm{G}_{L^2_tL^{2_*}_x  }. 
\end{align}In particular, for each $W\in \{Y_1^\pm,Y_2,Y_3,Z_1^\pm,...,Z_5^\pm,Z_6,Z_7\}$, one can fix the sign of $t-s$ for which $W(t-s)$ behaves better than the case when $t-s$ has the opposite sign. Such an observation was previously pointed out by Hassell-Zhang \cite{HaZh}. 
\end{remark}

With this remark at hand, one sees that \eqref{inhomogeneous_3} follows from the following Lemmas \ref{lemma_inhomogeneous_2}--\ref{lemma_inhomogeneous_7}. By density, we may assume $F,G\in \S(\R\times\R^n)$ in the sequel. 

\begin{lemma}
\label{lemma_inhomogeneous_2}
Let $W=Y_1^\pm$. Then \eqref{inhomogeneous_3} holds. 
\end{lemma}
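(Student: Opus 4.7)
The plan is to reduce \eqref{inhomogeneous_3} for $W = Y_1^\pm$ to the Keel--Tao abstract bilinear endpoint Strichartz estimate, whose only ingredients are (a) uniform $L^2$-boundedness of $Y_1^\pm(\tau)$ in $\tau\in\R$ and $\lambda\in(0,1]$, and (b) the scale-invariant dispersive bound $\norm{Y_1^\pm(\tau)}_{1\to\infty}\lesssim|\tau|^{-n/2}$ for $\tau\ne 0$.

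I would first recast $Y_1^\pm$ in symmetric $TT^*$-form. Taking $\varphi\ge 0$ (which one may always arrange in the reduction to Theorem \ref{theorem_LP_2}), writing $\varphi^\lambda(H)=\chi^\lambda(H)^2$ with $\chi=\sqrt{\varphi}\in C_0^\infty((0,\infty))$, absorbing one $\chi^\lambda(H)$ on each side and commuting it through $e^{-i\tau H}$ gives $Y_1^\pm(\tau) = A_h^\pm e^{-i\tau H}(A_h^\pm)^*$ with
\[
A_h^\pm:=\wtilde\varphi^\lambda(H)\,\D_\mu\,\Op_h(a^\pm)^*\,\D_\mu^*\,\chi^\lambda(H).
\]
Each factor of $A_h^\pm$ is bounded on $L^2$ uniformly in $\lambda$ by \eqref{PDO_1}, Lemma \ref{lemma_LP_1} and the unitarity of $\D_\mu$, which supplies (a).

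For (b) I would transfer Theorem \ref{theorem_IK_1} through the dilation $\D_\mu$. By \eqref{dilation_3_0}--\eqref{dilation_3},
\[
Y_1^+(\tau) = \D_\mu\Bigl[\wtilde\varphi(H_h)\Op_h(a^+)^*\varphi(H_h)e^{-it'H_h/h}\Op_h(a^+)\wtilde\varphi(H_h)\Bigr]\D_\mu^*,\qquad t':=\tau\lambda^2 h.
\]
After absorbing each outer $\wtilde\varphi(H_h)$ into $\Op_h(a^+)^*$ via Proposition \ref{proposition_functional_3} (the resulting error being controlled by the local decay \eqref{proposition_smoothing_2_3}), Theorem \ref{theorem_IK_1} with $b^+=a^+$ yields the bound $|t'h|^{-n/2}=|\tau\lambda^2 h^2|^{-n/2}$ for $t'\le 0$; the opposite sign follows from the kernel symmetry $K(\tau,x,y)=\overline{K(-\tau,y,x)}$ already used to deduce \eqref{dispersive} from Theorem \ref{theorem_IK_1}. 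Combining with $\norm{\D(\lambda^{2/\mu})T\D(\lambda^{-2/\mu})}_{1\to\infty}=\lambda^{2n/\mu}\norm{T}_{1\to\infty}$ (a direct consequence of \eqref{dilation_1}) and the algebraic identity
\[
\lambda^{2n/\mu}\cdot(\lambda^2 h^2)^{-n/2}=\lambda^{2n/\mu-n-n(2/\mu-1)}=1,
\]
which is precisely the design of $h=\lambda^{2/\mu-1}$, yields (b) uniformly in $\lambda\in(0,1]$ and $\tau\ne 0$. The case $W=Y_1^-$ is entirely analogous.

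With (a) and (b) in hand, Keel--Tao's bilinear endpoint inhomogeneous Strichartz estimate \cite{KeTa} applied to $U(t):=A_h^\pm e^{-itH}$ yields
\[
\left|\iint_{s<t}\<Y_1^\pm(t-s)F(s),G(t)\>\,ds\,dt\right|=\left|\iint_{s<t}\<U(t)U(s)^*F(s),G(t)\>\,ds\,dt\right|\lesssim\norm{F}_{L^2_tL^{2_*}_x}\norm{G}_{L^2_tL^{2_*}_x},
\]
which is \eqref{inhomogeneous_3}. I expect the main technical point to be the critical-scaling bookkeeping, which must simultaneously ensure the cancellation of all $\lambda$- and $h$-powers and the reduction of the outer spectral multipliers $\wtilde\varphi(H_h)$ to outgoing PDOs preserving the support structure required by Theorem \ref{theorem_IK_1}; this is the same mechanism underlying the homogeneous endpoint estimate.
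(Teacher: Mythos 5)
Your proposal is correct and follows essentially the same route as the paper: after conjugating by $\D_\mu$ and rescaling time, everything rests on the Keel--Tao $TT^*$ argument \cite{KeTa} combined with the dispersive estimate \eqref{dispersive} (i.e.\ Theorem \ref{theorem_IK_1} plus the kernel symmetry), and your scaling bookkeeping $\lambda^{2n/\mu}(\lambda^2h^2)^{-n/2}=1$ is exactly the cancellation the paper exploits. The only divergence is organizational: you absorb the outer $\wtilde\varphi(H_h)$ factors into outgoing $h$-PDOs via Proposition \ref{proposition_functional_3} with local-decay control of the errors (workable, and the same mechanism used in the proof of Theorem \ref{theorem_IK_1}), whereas the paper avoids this extra step by simply moving $\wtilde\varphi^\lambda(H)$ onto $F$ and $G$, using its uniform boundedness on $L^{2_*}$ from Lemma \ref{lemma_LP_1}.
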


\begin{proof}
Using a similar argument as in the previous section based on \eqref{dilation_1} and the change of variable $t\mapsto \lambda^2h^{-1}t$, \eqref{inhomogeneous_3} with $W=Y_1^\pm$ is equivalent to 
\begin{align*}
&\left|\iint_{s<t}\<\Op_h(a^\pm)^*\varphi(H_h)e^{-i(t-s)H_h/h}\Op_h(a^\pm)\wtilde \varphi(H_h)F(s),\wtilde \varphi(H_h)G(t)\>dsdt\right|\\
&\lesssim h^{-1}\norm{F}_{L^2_tL^{2_*}_x  }\norm{G}_{L^2_tL^{2_*}_x  }
\end{align*}
which, by virtue of the $TT^*$-argument by Keel-Tao \cite{KeTa} and the fact that $\wtilde \varphi(H_h)$ is bounded on $L^{2_{*}}$ uniformly in $h$, follows from \eqref{dispersive}. 
\end{proof}

For $W=Y_2$ and $Y_3$, we consider the original estimate \eqref{inhomogeneous_1}.
\begin{lemma}
\label{lemma_inhomogeneous_3}
We have
$$
\bignorm{\int_0^tY_2(t-s)F(s)ds}_{L^2_tL^{2^*}_x  }\lesssim \norm{F}_{L^2_tL^{2_*}_x  }.
$$
\end{lemma}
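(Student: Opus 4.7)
The plan is to reduce the desired inhomogeneous $L^2_tL^{2^*}_x$-bound to the weighted smoothing estimate \eqref{proposition_smoothing_2_2} by conjugating everything by the low-energy dilation $\D_\mu$ and then rescaling time. Using the intertwining identities from Subsection \ref{subsection_dilation}, one may rewrite
\[
Y_2(t)=\D_\mu\wtilde\varphi(H_h)\Op_h(a^{\com})^*\varphi(H_h)e^{-it\lambda^2 H_h}\Op_h(a^{\com})\wtilde\varphi(H_h)\D_\mu^*,
\]
so that $\int_0^tY_2(t-s)F(s)\,ds=\D_\mu\wtilde\varphi(H_h)\Op_h(a^{\com})^*\varphi(H_h)v(t)$, where
$v(t)=\int_0^t e^{-i(t-s)\lambda^2 H_h}\Op_h(a^{\com})\wtilde\varphi(H_h)\D_\mu^* F(s)\,ds$.

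The first main ingredient is that $\wtilde\varphi(H_h)$ is bounded from $L^2$ to $L^{2^*}$ and from $L^{2_*}$ to $L^2$ uniformly in $h\in(0,1]$, which is extracted directly from the proof of Lemma \ref{lemma_LP_2} together with the identity $\norm{m(H_h)}_{p\to q}=\norm{m(H^\lambda)}_{p\to q}$ and the scaling of $\D$. Combining this with the dilation identities $\norm{\D_\mu g}_{L^{2^*}}=\lambda^{2/\mu}\norm{g}_{L^{2^*}}$ and $\norm{\D_\mu^* g}_{L^{2_*}}=\lambda^{2/\mu}\norm{g}_{L^{2_*}}$, and with the fact that the compact $x$-support of $a^{\com}$ makes both $\Op_h(a^{\com})^*\<x\>$ and $\<x\>\Op_h(a^{\com})$ bounded on $L^2$ uniformly in $h$, I would insert the harmless weights $\<x\>^{-1}\<x\>$ on each side of the propagator and reduce the desired estimate to the weighted semiclassical bound
\[
\bignorm{\<x\>^{-1}\varphi(H_h)\int_0^t e^{-i(t-s)\lambda^2 H_h}\<x\>^{-1}\tilde G(s)\,ds}_{L^2_tL^2_x}\lesssim (\lambda^2 h)^{-1}\norm{\tilde G}_{L^2_tL^2_x}
\]
for $\tilde G(s)=\<x\>\Op_h(a^{\com})\wtilde\varphi(H_h)\D_\mu^* F(s)$, which by the listed bounds satisfies $\norm{\tilde G}_{L^2_tL^2_x}\lesssim \lambda^{2/\mu}\norm{F}_{L^2_tL^{2_*}_x}$.

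The above display is precisely the time-rescaled form of \eqref{proposition_smoothing_2_2}: the substitution $\tau=\lambda^2 h\,t$ converts $e^{-it\lambda^2 H_h}$ into $e^{-i\tau H_h/h}$ and generates the prefactor $(\lambda^2 h)^{-1}$. The main potential obstacle is simply the bookkeeping of the powers produced by the dilation and the time change; collecting the three scaling factors yields the overall prefactor $\lambda^{2/\mu}\cdot(\lambda^2 h)^{-1}\cdot\lambda^{2/\mu}=\lambda^{4/\mu-2}h^{-1}=h$, which is $\le 1$ since $h=\lambda^{2/\mu-1}\in(0,1]$. Therefore no cancellation beyond \eqref{proposition_smoothing_2_2} is needed, and the compact $x$-support of $a^{\com}$ at the semiclassical scale is exactly what permits the weight $\<x\>^{-1}$ to be introduced without additional cost.
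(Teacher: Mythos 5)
There is a genuine gap, and it sits in your ``first main ingredient''. The identity $\norm{m(H_h)}_{p\to q}=\norm{m(H^\lambda)}_{p\to q}$ used in the paper (proof of Lemma \ref{lemma_LP_1}) holds only for $p=q$, because the dilations are isometries of $L^p$ only up to the scaling factor in \eqref{dilation_1}, which cancels only when the two exponents coincide. For $p\neq q$ one gets instead, from $m(H_h)=\D(h^{-1})m(H^\lambda)\D(h)$ and \eqref{dilation_1}, $\norm{m(H_h)}_{2\to 2^*}\lesssim h^{-n(\frac12-\frac1{2^*})}=h^{-1}$, and this is sharp (it is the semiclassical Bernstein bound, already visible for $-h^2\Delta$ and consistent with \eqref{PDO_1}). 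So $\wtilde\varphi(H_h)$ is \emph{not} uniformly bounded $L^2\to L^{2^*}$ or $L^{2_*}\to L^2$; each such step costs $h^{-1}$. Redoing your bookkeeping with the correct factors, the sandwich ``Bernstein $\times$ double smoothing \eqref{proposition_smoothing_2_2} $\times$ Bernstein'' yields $\lambda^{2/\mu}\cdot h^{-1}\cdot(\lambda^2h)^{-1}\cdot h^{-1}\cdot\lambda^{2/\mu}=h^{-1}$ rather than your claimed factor $h$; equivalently, in the rescaled formulation your scheme proves \eqref{proof_lemma_inhomogeneous_3_1} with $h^{-2}$ in place of the required $h^{-1}$. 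Since $h^{-1}=\lambda^{1-2/\mu}\to\infty$ as $\lambda\to0$ and uniformity in $\lambda$ is exactly what Theorem \ref{theorem_LP_2} demands (it must survive the Littlewood--Paley summation), this loss is fatal, not a cosmetic constant.

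This is precisely why the paper's proof of Lemma \ref{lemma_inhomogeneous_3} cannot rely on the smoothing estimate alone: the double weighted bound \eqref{proposition_smoothing_2_2} is too weak by one power of $h$ to reach endpoint-Strichartz strength. Instead the paper runs a Staffilani--Tataru type two-step argument: it first proves the intermediate estimate \eqref{proof_lemma_inhomogeneous_3_2} (weighted $L^2$ data to $L^2_tL^{2^*}_x$, loss $h^{-1/2}$) by slicing time into unit intervals, applying the local semiclassical Strichartz bound of Lemma \ref{lemma_dispersive_2} (which costs only $h^{-1/2}$ per exponent) together with the commutator bound \eqref{lemma_dispersive_3_2}, Christ--Kiselev, and \eqref{proposition_smoothing_2_2}; it then dualizes to get \eqref{proof_lemma_inhomogeneous_3_4} and reruns the time-slicing argument with that estimate in place of \eqref{proposition_smoothing_2_2}, arriving at the full $L^{2_*}\to L^{2^*}$ bound with total loss $h^{-1}$, exactly the amount the rescaling permits. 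If you want to salvage your approach, you must replace at least one of the two Bernstein steps by a genuinely dispersive input of this kind; as written, the argument does not close.
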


\begin{proof}
By the same scaling considerations as in Lemma \ref{lemma_inhomogeneous_2}, it suffices to show
\begin{align}
\label{proof_lemma_inhomogeneous_3_1}
\bignorm{\int_0^t\Op_h(a^{\com})^*\varphi (H_h)e^{-i(t-s)H_h/h}\Op_h(a^{\com})F(s)ds}_{L^2_tL^{2^*}_x  }\lesssim h^{-1}\norm{F}_{L^2_tL^{2_*}_x  }.
\end{align}
To this end, we shall first show the following estimate
\begin{align}
\label{proof_lemma_inhomogeneous_3_2}
\bignorm{\int_0^t\Op_h(a^{\com})^*\varphi (H_h)e^{-i(t-s)H_h/h}F(s)ds}_{L^2_tL^{2^*}_x  }\lesssim h^{-1/2}\norm{\<x\>F}_{L^2_tL^{2}_x}.
\end{align}
Set
$$
u(t)=-i\int_0^t\varphi (H_h)e^{-i(t-s)H_h/h}F(s)ds. 
$$
Choosing $\wtilde a^{\com}\in C_0^\infty(\R^{2n})$ satisfying $\wtilde a^{\com}\equiv1$ on $\supp a^{\com}$, we know by Proposition \ref{proposition_PDO_1} that 
$$\Op_h(a^{\com})^*u=\Op_h(\wtilde a^{\com})\Op_h(a^{\com})^*u+h^N\Op_h(r)u$$
 with some $r\in S^{-N,-\infty}$ and $N\gg1$. For the remainder, \eqref{PDO_1} and \eqref{proposition_smoothing_2_2} imply
\begin{equation}
\begin{aligned}
\label{proof_lemma_inhomogeneous_3_3}
\norm{h^N\Op_h(r)u}_{L^2_tL^{2^*}_x  }
&\lesssim h^{N}\norm{\Op_h(r)\<x\>}_{2\to 2^*}\norm{\<x\>^{-1}u}_{L^2_tL^{2}_x  }\\
&\lesssim \norm{\<x\>F}_{L^2_tL^{2}_x  }
\end{aligned}
\end{equation}
Let $\theta_j,\wtilde \theta_j$ be as that in the proof of Proposition \ref{proposition_dispersive_1}. To deal with the main term, we consider 
$$
v_j(t)=\theta_j(t)\Op_h(a^{\com})^*u(t),
$$
which satisfies the Cauchy problem
$$
i\partial_t v_j(t)=h^{-1}H_hv_j(t)+G_j(t);\quad v_j(0)=0,
$$
and thus Duhamel's formula
$$
v_j(t)=-i\int_0^te^{-i(t-s)H_h/h}G_j(s)ds,
$$
where $G_j=i\theta_j'\Op_h(a^{\com})^*u+h^{-1}\theta_j[\Op_h(a^{\com})^*,H_h]u+i\theta_j\Op_h(a^{\com})^*\varphi (H_h)F$. Therefore, since $\wtilde \theta_j\theta_j=\theta_j$, $\Op_h(\wtilde a^{\com})v_j(t)$ is a linear combination of
\begin{align*}
w_{1,j}(t)&=\wtilde\theta_j(t)\Op_h(\wtilde a^{\com})\int_0^te^{-i(t-s)H_h/h}\theta_j'(s)\Op_h(a^{\com})^*u(s)ds,\\
w_{2,j}(t)&=\wtilde\theta_j(t)\Op_h(\wtilde a^{\com})\int_0^te^{-i(t-s)H_h/h}\theta_j(s)h^{-1}[\Op_h(a^{\com})^*,H_h]u(s)ds,\\
w_{3,j}(t)&=\wtilde\theta_j(t)\Op_h(\wtilde a^{\com})\int_0^te^{-i(t-s)H_h/h}\theta_j(s)\Op_h(a^{\com})^*\varphi (H_h)F(s)ds. 
\end{align*}
Let us set
$$
\wtilde w_{1,j}(t)=\wtilde\theta_j(t)\Op_h(\wtilde a^{\com})\int_0^\infty e^{-i(t-s)H_h/h}\theta_j'(s)\Op_h(a^{\com})^*u(s)ds.
$$
Since $|\supp\theta_j|\ll1$, we apply  \eqref{lemma_dispersive_2_1} to the operator $\wtilde\theta_j(t)\Op_h(\wtilde a^{\com})e^{-itH_h/h}$ yielding
\begin{align}
\label{proof_lemma_inhomogeneous_3_4}
\norm{\wtilde w_{1,j}}_{L^2_tL^{2^*}_x}\lesssim h^{-1/2} \norm{\theta_j'(s)\Op_h(a^{\com})^*u}_{L^1_tL^2_x}
\end{align}
Since the exponents of Lebesgue norms with respect to $t$ of the left and right hand sides in \eqref{proof_lemma_inhomogeneous_3_4} are different from each other, Christ-Kiselev's lemma can be applied to see that $w_{1,j}$ also satisfies the same estimate as \eqref{proof_lemma_inhomogeneous_3_4}. Therefore, we have
\begin{align*}
\norm{w_{1,j}}_{L^2_tL^{2^*}_x  }
&\lesssim h^{-1/2}\norm{\theta_j'\Op_h(a^{\com})^*u}_{L^1_tL^2_x}\\
&\lesssim h^{-1/2}\norm{\theta_j'\Op_h(a^{\com})^*u}_{L^2_tL^{2}_x  }\\
&\lesssim h^{-1/2}\norm{\Op_h(a^{\com})^*\<x\>}\norm{\theta_j'\<x\>^{-1}u}_{L^2_tL^{2}_x  }\\
&\lesssim h^{-1/2}\norm{\theta_j'\<x\>^{-1}u}_{L^2_tL^{2}_x}
\end{align*}
uniformly in $j\in \Z$ and $h\in (0,1]$, where we have used H\"older's inequality and the uniform bound $|\supp\theta_j'|\lesssim1$ with respect to $j$ in the second line. 

Similarly, we have
$$
\norm{w_{3,j}}_{L^2_tL^{2^*}_x  }\lesssim h^{-1/2}\norm{\theta_jF}_{L^2_tL^{2}_x  }.
$$

For $w_{2,j}$, we find by the same argument that
$$
\norm{w_{2,j}}_{L^2_tL^{2^*}_x  }\lesssim h^{-3/2}\norm{[\Op_h(a^{\com})^*,H_h]\theta_ju}_{L^2_tL^{2}_x  }\lesssim h^{-1/2}\norm{\theta_j\<x\>^{-1}u}_{L^2_tL^{2}_x  },
$$
where we have used  \eqref{lemma_dispersive_3_2} with sufficiently large $N=N(\mu)\in \N$ in the last line. These bounds for $w_{k,j}$,  the almost orthogonality of $\{\theta_j\}_{j\in \Z}$ and \eqref{proposition_smoothing_2_2} show that
\begin{align*}
&\norm{\Op_h(\wtilde a^{\com})^*\Op_h(a^{\com})^*u}_{L^2_tL^{2^*}_x  }^2\\
&\lesssim h^{-1}\sum_{j\in \Z}\Big(\norm{\theta_j\<x\>^{-1}u}_{L^2_tL^{2}_x  }^2+\norm{\theta_j'\<x\>^{-1}u}_{L^2_tL^{2}_x  }^2+\norm{\theta_jF}_{L^2_tL^{2}_x  }^2\Big)\\
&\lesssim h^{-1}\Big(\norm{\<x\>^{-1}u}_{L^2_tL^{2}_x  }^2+\norm{F}_{L^2_tL^{2}_x  }^2\Big)\\
&\lesssim h^{-1}\norm{\<x\>F}_{L^2_tL^{2}_x  }^2
\end{align*}
which, together with \eqref{proof_lemma_inhomogeneous_3_3}, implies \eqref{proof_lemma_inhomogeneous_3_2}. 

Next, we obtain by \eqref{proof_lemma_inhomogeneous_3_2}, duality and the change of variables $t\mapsto -t$,$s\mapsto -s$ that
\begin{align}
\label{proof_lemma_inhomogeneous_3_5}
\bignorm{\int_0^t\<x\>^{-1}\varphi (H_h)e^{-i(t-s)H_h/h}\Op_h(a^{\com})F(s)ds}_{L^2_tL^{2}_x}\lesssim h^{-1/2}\norm{F}_{L^2_tL^{2_*}_x  }.
\end{align}
Now we set 
$$
\wtilde u(t)=\int_0^t\varphi (H_h)e^{-i(t-s)H_h/h}\Op_h(a^{\com})F(s)ds
$$ 
Then, by repeating the same argument as above with \eqref{proof_lemma_inhomogeneous_3_5} instead of \eqref{proposition_smoothing_2_2}, we have
\begin{align*}
&\norm{\Op_h(a^{\com})^*\wtilde u}_{L^2_tL^{2^*}_x  }^2\\
&\lesssim h^{-1}\sum_{j\in \Z}\Big(\norm{\theta_j\<x\>^{-1}\wtilde u}_{L^2_tL^{2}_x  }^2+\norm{\theta_j'\<x\>^{-1}\wtilde u}_{L^2_tL^{2}_x  }^2\Big)+h^{-2}\sum_{j\in \Z}\norm{\theta_jF}_{L^2_tL^{2_*}_x  }^2\\
&\lesssim h^{-1}\norm{\<x\>^{-1}\wtilde u}_{L^2_tL^{2}_x  }^2+h^{-2}\norm{F}_{L^2_tL^{2_*}_x  }\\
&\lesssim h^{-2}\norm{F}_{L^2_tL^{2_*}_x  }^2,
\end{align*}
which completes the proof of \eqref{proof_lemma_inhomogeneous_3_1}. 
\end{proof}

\begin{lemma}
\label{lemma_inhomogeneous_4}
We have
$$
\bignorm{\int_0^tY_3(t-s)F(s)ds}_{L^2_tL^{2^*}_x  }\lesssim \norm{F}_{L^2_tL^{2_*}_x  }.
$$
\end{lemma}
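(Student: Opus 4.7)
The plan is to reinterpret $Y_3(t)$ as $\mathcal{A}\,e^{-itH}\mathcal{A}^*$ and then invoke Kato's inhomogeneous smoothing, paralleling the arguments for $Y_1^\pm$ and $Y_2$. As in the preceding lemmas, I restrict to $\lambda\in(0,1]$ (the high-energy case is analogous), and without loss of generality take $\varphi\ge 0$ (the Littlewood--Paley cutoff in the reduction of Theorem \ref{theorem_1} to Theorem \ref{theorem_LP_2} can be so chosen), so that $\varphi^\lambda(H)^{1/2}$ is a self-adjoint operator that is bounded on $L^2$ uniformly in $\lambda$, commutes with $e^{-itH}$, and satisfies $\wtilde\varphi^\lambda(H)\varphi^\lambda(H)^{1/2}=\varphi^\lambda(H)^{1/2}$ (since $\wtilde\varphi\equiv1$ on $\supp\varphi$). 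Using $\varphi^\lambda(H)=\varphi^\lambda(H)^{1/2}\varphi^\lambda(H)^{1/2}$ and commuting one factor through $e^{-itH}$, I then write
\[
Y_3(t) = \wtilde\varphi^\lambda(H)\,\tilde A\, e^{-itH}\,\tilde A^*\,\wtilde\varphi^\lambda(H),\qquad \tilde A := R^\lambda\varphi^\lambda(H)^{1/2}.
\]

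The key claim is that $\lambda \tilde A$ is uniformly $H$-smooth, i.e.
\[
\lambda\,\norm{\tilde A\, e^{-itH} u_0}_{L^2(\R;L^2(\R^n))}\lesssim \norm{u_0}_{L^2(\R^n)}.
\]
This is an essentially verbatim repetition of the proof of Proposition \ref{proposition_reduction_1}: after commuting $\varphi^\lambda(H)^{1/2}$ past $e^{-itH}$ and splitting $R^\lambda=\D Q^\lambda\D^*+\chi^\lambda\wtilde\varphi^\lambda(H)$, the second piece simplifies via $\wtilde\varphi^\lambda(H)\varphi^\lambda(H)^{1/2}=\varphi^\lambda(H)^{1/2}$, and both summands are controlled in $L^2_{t,x}$ by $\norm{\<x\>^{-1}e^{-itH}\varphi^\lambda(H)^{1/2}u_0}_{L^2(\R;L^2)}$, using the pointwise bound $\lambda\chi^\lambda(x)\lesssim\<x\>^{-1}$ and the operator bound $\lambda\norm{\D Q^\lambda\D^* f}_2\lesssim\norm{\<x\>^{-1}f}_2$ already employed in Proposition \ref{proposition_reduction_1}. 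The global smoothing estimate \eqref{proposition_smoothing_1_1} together with the uniform $L^2$-boundedness of $\varphi^\lambda(H)^{1/2}$ then closes the claim.

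Granted this, Kato's smooth perturbation theory (the inhomogeneous counterpart used in Proposition \ref{proposition_smoothing_1}) yields
\[
\lambda^2\bignorm{\tilde A\int_0^t e^{-i(t-s)H}\tilde A^* G(s)\,ds}_{L^2(\R;L^2)}\lesssim \norm{G}_{L^2(\R;L^2)}.
\]
Applying this with $G=\wtilde\varphi^\lambda(H) F$ and sandwiching by the Sobolev-type bound $\norm{\wtilde\varphi^\lambda(H)}_{L^2\to L^{2^*}}\lesssim\lambda$ from Lemma \ref{lemma_LP_2} on the outer left together with its dual $\norm{\wtilde\varphi^\lambda(H)}_{L^{2_*}\to L^2}\lesssim\lambda$ on the outer right, the three factors of $\lambda$ combine as $\lambda\cdot\lambda^{-2}\cdot\lambda=1$ to produce the desired inequality. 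The main (mild) obstacle is the $H$-smoothness claim above, which requires only a minor variant of the argument behind Proposition \ref{proposition_reduction_1}; all remaining steps are straightforward algebraic manipulations or direct applications of already established results.
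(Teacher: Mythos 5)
Your reduction of $Y_3$ to $\wtilde\varphi^\lambda(H)\,\tilde A\,e^{-itH}\,\tilde A^*\,\wtilde\varphi^\lambda(H)$ with $\tilde A=R^\lambda\varphi^\lambda(H)^{1/2}$, the uniform $H$-smoothness of $\lambda\tilde A$ (which indeed follows by rerunning the proof of Proposition \ref{proposition_reduction_1}), and the final $\lambda$-bookkeeping via Lemma \ref{lemma_LP_2} are all fine. The gap is the middle step: from the homogeneous smoothing bound $\lambda\norm{\tilde A e^{-itH}u_0}_{L^2_tL^2_x}\lesssim\norm{u_0}_{L^2}$ you conclude the \emph{retarded} estimate
\begin{equation*}
\lambda^2\bignorm{\tilde A\int_0^t e^{-i(t-s)H}\tilde A^*G(s)\,ds}_{L^2_tL^2_x}\lesssim\norm{G}_{L^2_tL^2_x},
\end{equation*}
attributing this to ``Kato's smooth perturbation theory''. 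That implication is not available: Kato $H$-smoothness of $\tilde A$ is equivalent to a uniform bound on $\tilde A\,\Im(H-z)^{-1}\tilde A^*$ and, via the $TT^*$ argument, yields only the \emph{untruncated} estimate with $\int_\R$ in place of $\int_0^t$. Passing to the time-ordered integral at the double endpoint $p=\tilde p'=2$ is exactly where Christ--Kiselev \cite{ChKi} fails, and what is actually needed is a uniform bound on the full sandwiched resolvent $\sup_{z\notin\R}\lambda^2\norm{\tilde A(H-z)^{-1}\tilde A^*}<\infty$, which you never establish. Note that Proposition \ref{proposition_smoothing_1} derives both \eqref{proposition_smoothing_1_1} and \eqref{proposition_smoothing_1_2} from Nakamura's uniform resolvent estimate $\sup_z\norm{\<x\>^{-1}(H-z)^{-1}\<x\>^{-1}}<\infty$ (via \cite{Kat,Dan}), not from the homogeneous smoothing alone; so the ``inhomogeneous counterpart'' you invoke is a consequence of a resolvent bound, not of $H$-smoothness.

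The repair is short and is what the paper does: do not apply Kato theory to $\tilde A$ afresh, but factor the weight through and use the already-proven retarded weighted estimate \eqref{proposition_smoothing_1_2}. Concretely, the proof of Proposition \ref{proposition_reduction_1} gives $\norm{\wtilde\varphi^\lambda(H)R^\lambda\<x\>}_{2\to2^*}+\norm{\<x\>(R^\lambda)^*\wtilde\varphi^\lambda(H)}_{2_*\to2}\lesssim1$ uniformly in $0<\lambda\le1$ (for $\lambda\le1$ one has $\lambda\chi^\lambda(x)\<x\>\lesssim1$ and $\lambda\D Q^\lambda\D^*\<x\>$ bounded, and the Bernstein bound $\norm{\wtilde\varphi^\lambda(H)}_{2\to2^*}\lesssim\lambda$ supplies the remaining factor of $\lambda$); sandwiching \eqref{proposition_smoothing_1_2} between these two operators, with $\varphi^\lambda(H)$ commuted onto the data, gives the lemma directly. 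Your smoothness computation contains essentially these same operator bounds, so the argument is salvageable, but as written the key implication you rely on is unjustified.
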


\begin{proof}
It follows from the proof of Proposition \ref{proposition_reduction_1} that 
$$\norm{\wtilde \varphi^\lambda(H)R^\lambda\<x\>}_{2\to 2^*}+\norm{\<x\>(R^\lambda)^*\wtilde \varphi^\lambda(H)}_{2_*\to 2}\lesssim1$$ uniformly in $0<\lambda\le1$. This bound allows us to deduce the desired estimate from \eqref{proposition_smoothing_1_2}. 
\end{proof}

\begin{lemma}
\label{lemma_inhomogeneous_5}
For $W=Z_1^+$ (resp. $W=Z^-_1$), \eqref{inhomogeneous_3} (resp. \eqref{inhomogeneous_4}) holds. 
\end{lemma}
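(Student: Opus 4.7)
Mimicking the rescaling of Lemma \ref{lemma_inhomogeneous_2} via \eqref{dilation_1} and \eqref{dilation_3}, and absorbing the outer $\wtilde\varphi^\lambda(H)$ and $\D_\mu$ factors using their uniform $L^{2_*}$-boundedness, the estimate \eqref{inhomogeneous_3} with $W=Z_1^+$ reduces to
$$
\left|\iint_{s<t}\<T_h(t-s)F(s),G(t)\>\,ds\,dt\right|\lesssim h^{-1}\norm{F}_{L^2_tL^{2_*}_x}\norm{G}_{L^2_tL^{2_*}_x},
$$
uniformly in $h\in(0,1]$, where $T_h(\tau):=\Op_h(a^-)^*\varphi(H_h)e^{-i\tau H_h/h}\Op_h(a^+)$ and the retarded condition corresponds to $\tau>0$. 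Noting the factorization $T_h(t-s)=A(t)^*B(s)$ with $A(t):=e^{itH_h/h}\varphi(H_h)\Op_h(a^-)$ and $B(s):=e^{isH_h/h}\Op_h(a^+)$, the $L^2\to L^2$-boundedness of $T_h(\tau)$ is immediate, and the bilinear version of Keel-Tao's endpoint inhomogeneous argument \cite{KeTa} will apply once the causal semiclassical dispersive bound $\norm{T_h(\tau)}_{L^1\to L^\infty}\lesssim |\tau h|^{-n/2}$ has been established for $\tau>0$.

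To prove this dispersive bound, I insert the semiclassical Isozaki-Kitada parametrix (Proposition \ref{proposition_IK_3}) for $e^{-i\tau H_h/h}\Op_h(a^+)$, obtaining a principal term $J_h^+(c^+)e^{i\tau h\Delta}J_h^+(d^+)^*$ and four remainders $Q_j^+(\tau,h)$. For the principal part, Proposition \ref{proposition_functional_3} together with the FIO--PDO composition formula \eqref{composition} lets me rewrite $\Op_h(a^-)^*\varphi(H_h)J_h^+(c^+)$ as $J_h^+(\tilde c)$ modulo an $h^N$-smoothing error, where $\tilde c\in S^{0,-\infty}$ has $|\xi|$-support contained in a compact subset of $(0,\infty)$ (inherited from $\supp a^-\cap\supp c^+\cap\{|\xi|^2\in\supp\varphi\}$). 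Proposition \ref{proposition_IK_4} then delivers $\min\{|\tau h|^{-n/2},h^{-n}\}\le|\tau h|^{-n/2}$. The four remainder pieces $\Op_h(a^-)^*\varphi(H_h)Q_j^+(\tau,h)$ are treated term-by-term along the lines of $U_1,\dots,U_4$ in the proof of Theorem \ref{theorem_IK_1}: the crucial ingredient is that each propagation estimate of Proposition \ref{proposition_IK_7}, applied with $b^-=a^-$ at the positive time direction (which is the valid regime for the dual estimates since $-(-\tau)\ge 0$), combined with the FIO decay of Proposition \ref{proposition_IK_5} and the time-integral bound \eqref{proof_proposition_IK_7_4}, produces the $(\tau h)^{-n/2}$-dispersive decay with the appropriate $h$-powers absorbed by the smoothing order $N$.

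The case $W=Z_1^-$ with \eqref{inhomogeneous_4} follows from the symmetry $Z_1^+(\tau)^*=Z_1^-(-\tau)$ by taking adjoints and swapping $F,G$: the advanced integral for $Z_1^-$ becomes the retarded one for $Z_1^+$. The main obstacle is the principal-term analysis: one must verify that $\Op_h(a^-)^*\varphi(H_h)J_h^+(c^+)$ really is an $h$-FIO with an $S^{0,-\infty}$ amplitude whose $|\xi|$-support is compact in $(0,\infty)$, so that Proposition \ref{proposition_IK_4} supplies the sharp $|\tau h|^{-n/2}$-decay rather than merely $h^{-n}$. This is exactly where the energy cutoff $\varphi(H_h)$, built into the very definition of $Z_1^\pm$, plays the essential role.
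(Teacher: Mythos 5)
Your overall skeleton (reduction by rescaling and the bilinear Keel--Tao argument to the one-sided dispersive bound for $\Op_h(a^-)^*\varphi(H_h)e^{-i\tau H_h/h}\Op_h(a^+)$, $\tau>0$, followed by insertion of the Isozaki--Kitada parametrix of Proposition \ref{proposition_IK_3}) matches the paper, and your handling of the main term and of $Q_1^+,Q_2^+,Q_3^+$ via the dual, incoming case of \eqref{proposition_IK_7_1} (resp. weighted/compactly supported variants) together with Proposition \ref{proposition_IK_5} is sound. The genuine gap is in your blanket claim that all four remainders can be treated ``along the lines of $U_1,\dots,U_4$'' using only the propagation estimates of Proposition \ref{proposition_IK_7} with $b^-=a^-$. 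This fails precisely for $Q_4^+$, whose prefix is $\Op_h(a^-)^*\varphi(H_h)e^{-i(t-\tau)H_h/h}\Op_h(\wtilde e^{+})$ with $t-\tau\ge0$: here $\wtilde e^{+}$ is supported in $\Gamma^-(R^{1/4},I_4,-\wtilde\sigma)$, i.e.\ it is \emph{strongly incoming}, while $a^-$ is incoming as well, so there is no angular separation between the two supports. Consequently \eqref{proposition_IK_7_3} does not apply (its hypothesis is exactly an outgoing/incoming separation), \eqref{proposition_IK_7_2} does not apply ($\wtilde e^{+}$ has no compact $x$-support), and \eqref{proposition_IK_7_1} cannot be used either since $\wtilde e^{+}$ provides no $\<x\>^{-N}$ decay to pair with the weight. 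Indeed this operator does \emph{not} decay rapidly in $t-\tau$; microlocally, an incoming packet evolved forward in time re-enters the region $\supp a^-$, so only a dispersive bound can hold.

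What the paper does at this point, and what your argument is missing, is to invoke the \emph{incoming case of Theorem \ref{theorem_IK_1} with two distinct symbols} ($b^-=a^-$ on the left, $\wtilde e^{+}$ on the right, both incoming, $t-\tau\ge0$) to get
$\norm{\Op_h(a^-)^*\varphi(H_h)e^{-i(t-\tau)H_h/h}\Op_h(\wtilde e^{+})}_{1\to\infty}\lesssim\min\{h^{-n},|(t-\tau)h|^{-n/2}\}$,
and to combine this with the rapid $L^1\to L^1$ decay $\norm{J_h^+(e^{+})e^{i\tau h\Delta}J_h^+(d^{+})^*}_{1\to1}\lesssim h^{M}\<\tau\>^{-M}$, $\tau\ge0$, which does come from angular separation (between the strongly incoming $e^{+}$ and the forward free evolution of the outgoing $d^{+}$, via Lemma \ref{lemma_IK_6}); the $\tau$-integral of the product then yields $|th|^{-n/2}$. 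This is exactly why Theorem \ref{theorem_IK_1} was stated and proved for general $b^\pm\neq a^\pm$ (see the remark following it), a fact your proposal never uses. As a secondary point, your rewriting of the main term $\Op_h(a^-)^*\varphi(H_h)J_h^+(c^+)$ as a single FIO is workable but creates an $h^N$-error whose contribution must still be shown to decay like $|th|^{-n/2}$ for large $t$; the paper avoids this by simply proving uniform $L^\infty\to L^\infty$ boundedness of $\Op_h(a^-)^*\varphi(H_h)$ and applying Proposition \ref{proposition_IK_4} to $J_h^+(c^+)e^{ith\Delta}J_h^+(d^+)^*$ directly.
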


\begin{proof}
We shall consider the case $W=Z_1^+$ only, the proof for the other case being analogous. As in case of $Y_1^+$, it suffices to show the following dispersive estimate
\begin{align}
\label{proof_lemma_inhomogeneous_5_1}
\norm{\Op_h(a^-)^*\varphi (H_h)e^{-itH_h/h}\Op_h(a^+)}_{1\to \infty}\lesssim |th|^{-n/2},\quad t>0. 
\end{align}
By Proposition \ref{proposition_IK_3}, we decompose the operator in \eqref{proof_lemma_inhomogeneous_5_1} into five parts
$$
\Op_h(a^-)^*\varphi (H_h)J^+_h(c^+)e^{ith\Delta}J^+_h(d^+)^*,\quad \Op_h(a^-)^*\varphi (H_h)Q_j^+(t,h),\quad 1\le j\le4. 
$$
The estimate for the main term follows from \eqref{proposition_IK_4_1} as in the proof of Theorem \ref{theorem_IK_1}. The desired estimate for the term 
$$
\Op_h(a^-)^*\varphi (H_h)Q_1^+(t,h)=h^N\Op_h(a^-)^*\varphi (H_h)e^{-itH_h/h}\Op_h(r^+_1)
$$ follows from the dual estimate of the incoming case of \eqref{proposition_IK_7_1} and \eqref{PDO_1}. To deal with the term
$$
\Op_h(a^-)^*\varphi (H_h)Q_2^+(t,h)=-ih^{N-1}\int_0^t \Op_h(a^-)^*\varphi (H_h)e^{-i(t-\tau)H_h/h}J_h^+(r^+_2)e^{i\tau h\Delta}J^+_h(d^+)^*d\tau,
$$
we again use the dual estimate of the incoming case of \eqref{proposition_IK_7_1} and also \eqref{proposition_IK_5_1} to see that
\begin{align*}
\norm{\<D\>^s\Op_h(a^-)^*\varphi (H_h)e^{-i(t-\tau)H_h/h}\<x\>^{-N/4}}&\lesssim h^{-n-s-1}\<t-\tau\>^{-N/8}\\
\norm{\<x\>^{N/4}J_h^+(r^+_2)e^{i\tau h\Delta}J^+_h(d^+)^*\<D\>^s}&\lesssim h^{-n-s}\<\tau\>^{-N/8}
\end{align*}
for $0\le \tau\le t$ and all $s\in \R$. Then, by using Sobolev's embedding and integrating over $[0,t]$, we obtain the desired estimate provided that $N$  is large enough. The term $\Op_h(a^-)^*\varphi (H_h)Q_3^+(t,h)$ can be dealt with similarly to the second term by using \eqref{proposition_IK_7_3} instead of \eqref{proposition_IK_7_1}. Finally, to deal with the last term
\begin{align*}
&\Op_h(a^-)^*\varphi (H_h)Q_4^+(t,h)\\
&=-\frac ih\int_0^t \Op_h(a^-)^*\varphi (H_h)e^{-i(t-\tau)H_h/h}\Op_h(\wtilde e^+)J^+_h(e^+)e^{i\tau h\Delta}J^+_h(d^+)^*d\tau ,
\end{align*}
taking into account the support property
\begin{align*}
\supp a^-&\subset \Gamma^-(R,I,1/2)\subset \Gamma^-(R^{1/4},I_4,\sigma_4),\\
 \supp e^+&\subset \Gamma^-(R^{1/4},I_4,-\wtilde \sigma)\subset  \Gamma^-(R^{1/4},I_4,\sigma_4)
\end{align*}
since $1/2<\wtilde \sigma<\sigma_4$, we use  the incoming case of Theorem \ref{theorem_IK_1} and \eqref{proposition_IK_7_3} to obtain
\begin{align*}
\norm{\Op_h(a^-)^*\varphi (H_h)e^{-i(t-\tau)H_h/h}\Op_h(\wtilde e^+)}_{1\to \infty}
&\lesssim \min\{h^{-n},|(t-\tau)h|^{-n/2}\},\\
\norm{J^+_h(e^+)e^{i\tau h\Delta}J^+_h(d^+)^*}_{1\to 1}
&\lesssim h^M\<\tau\>^{-M}
\end{align*}
for $0\le \tau\le t$ and all $M>0$. These two bounds imply the desired dispersive estimate for the last term and we complete the proof of \eqref{proof_lemma_inhomogeneous_5_1}. 
\end{proof}

\begin{lemma}
\label{lemma_inhomogeneous_6}
For $W=Z_6,Z_7$ we have
$$
\bignorm{\int_0^tW(t-s)F(s)ds}_{L^2_tL^{2^*}_x  }\lesssim \norm{F}_{L^2_tL^{2_*}_x  }
$$
\end{lemma}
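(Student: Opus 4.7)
The plan is to treat the two operators $Z_6$ and $Z_7$ in a unified way, exploiting that $Z_6(t)^*=Z_7(-t)$. By the remark following Lemma \ref{lemma_inhomogeneous_2}, the full bilinear estimate without time restriction follows from the homogeneous Strichartz estimate \eqref{theorem_LP_2_1}, so the retarded and advanced sesquilinear estimates are equivalent. Consequently, it is enough to establish the retarded bound for one of them, say $W=Z_7$, and the $Z_6$ case will follow by the indicated duality.

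For $W=Z_7$, I would follow the template of the proof of Lemma \ref{lemma_inhomogeneous_4}: the weighted boundedness $\norm{\wtilde\varphi^\lambda(H)R^\lambda\<x\>}_{2\to 2^{*}}\lesssim 1$, recalled from the proof of Proposition \ref{proposition_reduction_1}, lets me peel off the $\wtilde\varphi^\lambda(H)R^\lambda$ factor on the output side. This reduces the desired estimate to the smoothing-type inequality
$$
\bignorm{\<x\>^{-1}\varphi^\lambda(H)\int_0^t e^{-i(t-s)H}\D_\mu\Op_h(a^{\com})\D_\mu^{*}\wtilde\varphi^\lambda(H)F(s)\,ds}_{L^2_tL^2_x}\lesssim\norm{F}_{L^2_tL^{2_*}_x},
$$
uniform in $\lambda\in(0,1]$. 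Since the operators $\varphi^\lambda(H)$ and $\wtilde\varphi^\lambda(H)$ commute with the propagator and are uniformly bounded on $L^2$, the issue is to control the composition $\<x\>^{-1}\int_0^t e^{-i(t-s)H}\D_\mu\Op_h(a^{\com})\D_\mu^{*}(\cdot)\,ds$ from $L^2_tL^{2_*}_x$ to $L^2_tL^2_x$.

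To handle this I would rescale by $\D_\mu$ and perform the time change of variable $\tau=\lambda^{1+2/\mu}t$ as in Section \ref{section_homogeneous}: this converts $e^{-i(t-s)H}$ into $e^{-i(\tau-\sigma)H_h/h}$, sends $\D_\mu\Op_h(a^{\com})\D_\mu^{*}$ into $\Op_h(a^{\com})$ at the semiclassical level, and transforms the weight $\<x\>^{-1}$ into $\<\lambda^{-2/\mu}x\>^{-1}\le\<x\>^{-1}$ (for $\lambda\le 1$). Once the estimate is expressed in rescaled coordinates, it is precisely of the form handled by the auxiliary bound \eqref{proof_lemma_inhomogeneous_3_4} proved inside Lemma \ref{lemma_inhomogeneous_3} (duality-symmetric to \eqref{proof_lemma_inhomogeneous_3_2}); the uniform $L^{2_*}$-boundedness of $\wtilde\varphi(H_h)$ absorbs the surviving spectral localiser.

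The main obstacle will be the careful bookkeeping of the scaling constants: the factors coming from $\norm{\D_\mu^{*}}_{L^{2_*}\to L^{2_*}}$ (which produces a power $\lambda^{2/\mu}$ via \eqref{dilation_1}) and from the jacobian $\beta^{\pm 1/2}$ with $\beta=\lambda^{1+2/\mu}$ of the time change must combine with the $h^{-1/2}$ loss of \eqref{proof_lemma_inhomogeneous_3_4} (where $h=\lambda^{2/\mu-1}$) to produce a constant independent of $\lambda$, just as the analogous accounting forced the factor $h^{-1}$ in the scaled form of the $Y_1^{\pm}$ estimate in Lemma \ref{lemma_inhomogeneous_2}. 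Once this balance is verified, the inhomogeneous estimate for $Z_7$ is established, and the estimate for $Z_6$ follows by the duality $Z_6(t)^{*}=Z_7(-t)$ together with the remark recalled above.
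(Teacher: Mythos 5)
Your reduction of $Z_6$ to $Z_7$ via $Z_6(t)^*=Z_7(-t)$ and the remark is fine and matches the paper, but the core step has a genuine quantitative gap: the scaling bookkeeping you defer does not close. Rescaling by $\D_\mu$ and the time change of variables used in Lemma \ref{lemma_inhomogeneous_2} converts the desired $\lambda$-uniform retarded estimate for $Z_7$ into the scaled estimate \eqref{proof_lemma_inhomogeneous_6_0}; the conversion factor is exactly $h$ (indeed $\lambda^{4/\mu}\cdot(\lambda^2h)^{-1}=\lambda^{4/\mu-2}h^{-1}=h$, just as for $Y_1^\pm$), so the admissible loss in rescaled variables is exactly $h^{-1}$, with no slack. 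Your chain, however, peels off the output factor globally in time: in rescaled variables this costs $\norm{\wtilde\varphi(H_h)R_h\<x\>}_{2\to2^*}\lesssim h^{-1}$ (the weight only helps on $L^2$, and the available $2\to2^*$ bound for the spectral cutoff is the Bernstein bound $\norm{\wtilde\varphi(H_h)}_{2\to2^*}\lesssim h^{-1}$), and then \eqref{proof_lemma_inhomogeneous_3_4} costs a further $h^{-1/2}$, for a total of $h^{-3/2}$. Equivalently, carrying out your unscaled computation with $\norm{\wtilde\varphi^\lambda(H)R^\lambda\<x\>}_{2\to2^*}\lesssim1$ produces the constant $\lambda^{2/\mu-2}h^{-3/2}$, and even with the sharper form $O(\lambda)$ of that bound one is left with $h^{-1/2}=\lambda^{-(2/\mu-1)/2}$, which blows up as $\lambda\to0$ for every $\mu\in(0,2)$. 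So the proposed shortcut proves the lemma only up to a loss of $h^{-1/2}$, not uniformly in $\lambda$.

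This missing half power is precisely what the paper's proof is built to recover, and it cannot be obtained by composing global-in-time bounds: one must keep the $L^{2^*}_x$ output and exploit that on unit time intervals the Strichartz norm costs only $h^{-1/2}$ rather than the global $h^{-1}$ of Bernstein. Concretely, the paper localizes in time with the partition $\theta_j$, writes a Duhamel formula for $\wtilde v_j=\theta_jR_h\wtilde u$, uses the identity $h^{-1}[R_h,H_h]=-h^{-1}[\Op_h(a_h)^*,H_h]=\Op(c_h)$ with $c_h\in S^{-1,-\infty}$ (which gains a factor $h$ and spatial decay thanks to the support property \eqref{a_h}), estimates the three source terms by the homogeneous Strichartz estimate \eqref{theorem_LP_2_1}, the $Y_3$-type bound \eqref{proof_lemma_inhomogeneous_6_3} and the weighted space-time bounds \eqref{proof_lemma_inhomogeneous_6_2}, and finally sums the squares over $j$ by almost orthogonality of $\theta_j,\wtilde\theta_j$. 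It is this $\ell^2$-summation of unit-interval Strichartz estimates (each costing $h^{-1/2}$) against the globally square-integrable weighted quantity $\norm{\<x\>^{-1}\wtilde u}_{L^2_tL^2_x}+\norm{R_h\wtilde u}_{L^2_tL^2_x}\lesssim h^{-1/2}\norm{F}_{L^2_tL^{2_*}_x}$ that lands on the admissible total $h^{-1}$. Your proposal bypasses this mechanism by passing to $L^2_x$ globally in time before any Strichartz estimate is used, so as written it does not prove the lemma.
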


\begin{proof}
We may prove the lemma for $Z_7$ only. Let 
$$
\wtilde u(t)=\int_0^t\varphi (H_h)e^{-i(t-s)H_h/h}\Op_h(a^{\com})F(s)ds.
$$
As before, by a scaling consideration, it is enough to show
\begin{align}
\label{proof_lemma_inhomogeneous_6_0}
\norm{\wtilde \varphi(H_h)R_h\wtilde u}_{L^2_tL^{2^*}_x  }\lesssim h^{-1}\norm{F}_{L^2_tL^{2_*}_x  }
\end{align}
uniformly in $h=\lambda^{2/\mu-1}\in (0,1]$, where $R_h=\D_\mu^*R^\lambda \D_\mu$. The proof is similar to that of \eqref{proof_lemma_inhomogeneous_3_1}. At first note that $\norm{\<x\>R_h^*}\lesssim1$ uniformly in $h\in (0,1]$. Indeed, by \eqref{dilation_2}, \eqref{proposition_functional_2_1} and unitarity of $\D$ and $\D_\mu$, we have stronger bounds
\begin{align}
\label{proof_lemma_inhomogeneous_6_1_0}
\norm{\<x\>^NR_h^*}\le \norm{\<h^{-1} x\>^N R_h^*}=\norm{\<x\>^N(Q^\lambda+\chi(x)\varphi(H^\lambda))}\le C_N<\infty,\quad h\in (0,1],
\end{align}
for all $N\ge0$. 
This bound and \eqref{proof_lemma_inhomogeneous_3_2} imply
\begin{align}
\label{proof_lemma_inhomogeneous_6_1}
\bignorm{\int_0^t\Op_h(a^{\com})^*\varphi (H_h)e^{-i(t-s)H_h/h}R_h^*\wtilde \varphi(H_h)F(s)ds}_{L^2_tL^{2^*}_x  }\lesssim h^{-1/2}\norm{F}_{L^2_tL^{2}_x}.
\end{align}
Then \eqref{proof_lemma_inhomogeneous_3_5} and the dual estimate of \eqref{proof_lemma_inhomogeneous_6_1} imply
\begin{align}
\label{proof_lemma_inhomogeneous_6_2}
\norm{\<x\>^{-1}\wtilde u}_{L^2_tL^{2}_x  }+\norm{R_h\wtilde u}_{L^2_tL^{2}_x  }\lesssim h^{-1/2}\norm{F}_{L^2_tL^{2_*}_x  }.
\end{align}
Using $\theta_j,\wtilde\theta_j$ given in the proof of Lemma \ref{lemma_inhomogeneous_3}, we set $\wtilde v_j=\theta_j R_h \wtilde u$ which satisfies
$$
\wtilde v_j=-i\int_0^t e^{-i(t-s)H_h/h}\wtilde G_j(s)ds
$$
where $\wtilde G_j=i\theta_j'R_h\wtilde u+\theta_jh^{-1}[R_h,H_h]\wtilde u+i\theta_jR_h\varphi (H_h)\Op_h(a^{\com})F(s)$. For the last term of $\wtilde G_j$, we know by Lemma \ref{lemma_inhomogeneous_4} and the scaling argument as above that
\begin{align}
\label{proof_lemma_inhomogeneous_6_3}
\bignorm{\int_0^t \wtilde \theta_j \wtilde \varphi(H_h)R_he^{-i(t-s)H_h/h}\theta_jR_h\varphi (H_h)\Op_h(a^{\com})F(s)ds}_{L^2_tL^{2^*}_x  }\lesssim h^{-1}\norm{\theta_jF}_{L^2_tL^{2_*}_x  }.
\end{align}
To deal with the second term of $G_j$, we note that by definition of $R_h$,  
$$
h^{-1}[R_h,H_h]=h^{-1}[\varphi(H_h)-\Op_h(a_h)^*,H_h]=-h^{-1}[\Op_h(a_h)^*,H_h].
$$
Since $|x|\gtrsim1$ on $\supp a_h$ (see \eqref{a_h}), it follows from the same proof as that for \eqref{lemma_dispersive_3_2} that $h^{-1}[\Op_h(a_h)^*,H_h]=\Op(c_h)$ with some $c_h\in S^{-1,-\infty}$. Then, by the homogeneous Strichartz estimate \eqref{theorem_LP_2_1}, \eqref{proof_lemma_inhomogeneous_6_3} and  similar computations as in the proof of Lemma \ref{lemma_inhomogeneous_3}, we have
\begin{align*}
\norm{\wtilde \theta_j \wtilde \varphi(H_h)R_h\wtilde u}_{L^2_tL^{2^*}_x  }^2\lesssim h^{-1} \norm{\theta_j'R_h\wtilde u}_{L^2_tL^{2}_x  }+h^{-1}\norm{\theta_j\<x\>^{-1}\wtilde u}_{L^2_tL^{2}_x  }+h^{-2}\norm{\theta_jF}_{L^2_tL^{2_*}_x  }
\end{align*}
which, together with \eqref{proof_lemma_inhomogeneous_6_2} and the almost orthogonality of $\theta_j$ and $\wtilde \theta_j$, implies the desired bound \eqref{proof_lemma_inhomogeneous_6_0}. This completes the proof. 
\end{proof}

\begin{lemma}
\label{lemma_inhomogeneous_7}
For $W=Z^+_2,Z^-_3,Z_4^+,Z_5^-$ (resp. $W=Z^-_2,Z^+_3,Z_4^-,Z_5^+$), \eqref{inhomogeneous_3} (resp. \eqref{inhomogeneous_4}) holds. 
\end{lemma}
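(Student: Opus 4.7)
The proof follows the template of Lemmas \ref{lemma_inhomogeneous_5} and \ref{lemma_inhomogeneous_6}. First, using the relations $Z_3^\pm(t)=Z_2^\pm(-t)^*$ and $Z_5^\pm(t)=Z_4^\pm(-t)^*$ together with the change of variables $(s,t)\mapsto(-t,-s)$ in the bilinear form, \eqref{inhomogeneous_3} for $W=Z_3^\mp$ (resp. $W=Z_5^\mp$) is equivalent to \eqref{inhomogeneous_4} for $W=Z_2^\pm$ (resp. $W=Z_4^\pm$), with the roles of $F$ and $G$ exchanged. By the remark following \eqref{inhomogeneous_3}, the estimates \eqref{inhomogeneous_3} and \eqref{inhomogeneous_4} are equivalent for any fixed $W$. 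Hence it suffices to prove \eqref{inhomogeneous_3} in the two cases $W=Z_2^+$ and $W=Z_4^+$.

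For $W=Z_2^+$, I would proceed as in Lemma \ref{lemma_inhomogeneous_5}. Rescaling to the semiclassical setting as in the proof of Lemma \ref{lemma_inhomogeneous_2} and using the uniform $L^{2_*}$-boundedness of $\wtilde\varphi(H_h)$ from Lemma \ref{lemma_LP_1}, the Keel-Tao $TT^*$ argument reduces the claim to the dispersive estimate
\[
\norm{\Op_h(a^{\com})^*\varphi(H_h)e^{-itH_h/h}\Op_h(a^+)}_{1\to\infty}\lesssim |th|^{-n/2},\quad t>0.
\]
To prove this I would apply the IK parametrix (Proposition \ref{proposition_IK_3}) to $e^{-itH_h/h}\Op_h(a^+)$ and treat the five resulting pieces separately. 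For the main term $\Op_h(a^{\com})^*\varphi(H_h)J_h^+(c^+)e^{ith\Delta}J_h^+(d^+)^*$, one uses Proposition \ref{proposition_functional_3} together with the composition formula \eqref{composition} to absorb $\varphi(H_h)$ into a PDO and then rewrite the left factor, modulo an $O(h^M\<x\>^{-M})$ remainder, as an FIO $J_h^+(\tilde c^+)$ whose amplitude $\tilde c^+$ has compact $x$-support contained in $\supp a^{\com}$. Since $d^+$ is supported in $\{|y|>R^{1/2}\}$, Lemma \ref{lemma_IK_6}(1) yields $|\nabla_\xi\Phi^+|\gtrsim |y|+t\gtrsim 1$ for $t\ge 0$ on the support of the amplitude, and integration by parts in $\xi$ produces decay of order $h^M(1+|y|+t)^{-M}$, which easily dominates $|th|^{-n/2}$. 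The four remainder terms $\Op_h(a^{\com})^*\varphi(H_h)Q_j^+$ are controlled exactly as in the proof of Lemma \ref{lemma_inhomogeneous_5}, using the propagation estimates \eqref{proposition_IK_7_1}--\eqref{proposition_IK_7_3} together with \eqref{proposition_IK_5_1}--\eqref{proposition_IK_5_2}.

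For $W=Z_4^+$, I would adapt the proof of Lemma \ref{lemma_inhomogeneous_6}, using the weighted bound $\norm{\<x\>^N R_h^*}\lesssim 1$ from \eqref{proof_lemma_inhomogeneous_6_1_0} in place of the compact support of $a^{\com}$. Setting $\tilde u(t):=\int_0^t\varphi(H_h)e^{-i(t-s)H_h/h}\Op_h(a^+)F(s)\,ds$, the estimate reduces after scaling to
\[
\norm{\wtilde\varphi(H_h)R_h\tilde u}_{L^2_tL^{2^*}_x}\lesssim h^{-1}\norm{F}_{L^2_tL^{2_*}_x}.
\]
I would establish this by the same time-slicing Duhamel argument as in the proof of Lemma \ref{lemma_inhomogeneous_6}: one first proves the analogue of \eqref{proof_lemma_inhomogeneous_6_1} using the $Z_2^+$ dispersive estimate above in place of \eqref{proof_lemma_inhomogeneous_3_2}, together with the local smoothing bound \eqref{proposition_smoothing_2_2} and the commutator estimate \eqref{lemma_dispersive_3_2} (applicable since $|x|\gtrsim 1$ on $\supp a^+$), and then partitions the time axis using the cutoff family $\theta_j$ of Lemma \ref{lemma_inhomogeneous_3}.

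The main obstacle will be the PDO/FIO composition in the $Z_2^+$ step: after absorbing $\varphi(H_h)$ into a semiclassical PDO via Proposition \ref{proposition_functional_3}, one must verify that the formula \eqref{composition} leaves a symbol with genuinely compact $x$-support inherited from $a^{\com}$ (rather than merely a symbol bounded in $\xi$), so that the subsequent phase analysis produces decay in $h$ strong enough to dominate $|th|^{-n/2}$. In the $Z_4^+$ step the corresponding technical point is that $R^\lambda$ is not a pseudodifferential operator, forcing all arguments to proceed through the weighted bound \eqref{proof_lemma_inhomogeneous_6_1_0} rather than a direct symbolic composition.
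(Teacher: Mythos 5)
Your reduction to the two cases $W=Z_2^+$ and $W=Z_4^+$ is the same as the paper's, but the heart of your $Z_2^+$ argument has a genuine gap. You keep $\Op_h(a^{\com})^*$ on the left, with $\supp a^{\com}\subset\{|x|<2R\}$, and apply the Isozaki--Kitada parametrix directly to $\Op_h(a^+)$ with $a^+$ supported in $\Gamma^+(R,I,1/2)$; then $d^+$ is only supported in $\{|y|>R^{1/2}\}$, and your claimed bound $|\nabla_\xi\Phi^+|\gtrsim |y|+t\gtrsim 1$ on the support of the amplitude is false. Indeed $\nabla_\xi\Phi^+=x-y-2t\xi+O((1+|x|+|y|)^{1-\mu})$, and by Lemma \ref{lemma_IK_6} (1) an outgoing point with $|y|\sim R^{1/2}$ satisfies $|y+2t\xi|\sim R^{1/2}+t$ for small $t\ge0$, which lies well inside $\{|x|<2R\}$ since $R^{1/2}\ll 2R$; choosing $x=y+2t\xi$ there makes the phase stationary, so no integration by parts gives decay. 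The separation-of-scales argument you are importing from the proofs of \eqref{proposition_IK_5_2} and \eqref{proposition_IK_7_2} requires the compact $x$-support (there of size $R^{4/9}$ or $R^{1/3}$) to be much \emph{smaller} than the inner radius of $\supp d^+$ (there $R^{1/2}$); in your setting the inequality goes the wrong way. Consequently the dispersive estimate $\norm{\Op_h(a^{\com})^*\varphi(H_h)e^{-itH_h/h}\Op_h(a^+)}_{1\to\infty}\lesssim |th|^{-n/2}$, $t>0$, is not established -- and note that the paper never proves, nor needs, any dispersive bound in which $a^{\com}$ appears: for intermediate times $\ep_0\lesssim t\lesssim R$ outgoing packets launched from $R^{1/2}<|y|<2R$ are still inside $\supp a^{\com}$, and such interactions are precisely what the compact-part machinery (Proposition \ref{proposition_dispersive_1}, local smoothing plus time slicing, WKB only for $|t|\le 2\ep_0$) is designed to circumvent. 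Your $Z_4^+$ step inherits the problem, since it is built on this unproved estimate.

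The paper's route avoids this by one further decomposition, which you should adopt: write $a^+=\wtilde a^++b^{\com}$ with $\supp \wtilde a^+\subset\Gamma^+(R^2,I,1/2)$ and $b^{\com}\in S^{-\infty,-\infty}$ supported in $\{c_0<|x|<2R^2,\ |\xi|^2\in I\}$. The pairing of $a^{\com}$ with $b^{\com}$ is handled exactly as $Y_2$ in Lemma \ref{lemma_inhomogeneous_3} (local smoothing, no dispersive estimate at all), while for the pairing with $\wtilde a^+$ the scales are now correctly ordered and one only needs the crude bound $\norm{\wtilde\varphi(H_h)\Op_h(a^{\com})^*\varphi(H_h)e^{-i(t-s)H_h/h}\Op_h(\wtilde a^+)\wtilde\varphi(H_h)}_{1\to\infty}\lesssim h^{-n}\norm{\Op_h(a^{\com})^*\varphi(H_h)e^{-i(t-s)H_h/h}\Op_h(\wtilde a^+)}\lesssim h^{M-n}\<t-s\>^{-M}$ for $s\le t$, obtained by rerunning the propagation-estimate arguments of Proposition \ref{proposition_IK_7}; this trivially dominates $|(t-s)h|^{-n/2}$, so the $TT^*$ argument closes. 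Similarly, for $Z_4^+$ the paper does not run a time-slicing scheme: it writes $R_h=B_1\chi(x)+h^NB_2\<x\>^{-N}$ with $B_1,B_2$ uniformly bounded, applies \eqref{proposition_IK_7_1} and \eqref{proposition_IK_7_2} to get $\norm{\wtilde\varphi(H_h)R_h\varphi(H_h)e^{-i(t-s)H_h/h}\Op_h(\wtilde a^+)\wtilde\varphi(H_h)}_{1\to\infty}\lesssim h^{N-2n-1}\<t-s\>^{-N/8}$ for $s\le t$, and again concludes by $TT^*$. So the correct mechanism for these off-diagonal terms is rapid $\<t-s\>^{-M}$ decay in operator norm for the favorable sign of $t-s$ (converted to $1\to\infty$ with an $h^{-n}$ loss), not a genuine stationary-phase dispersive estimate.
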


\begin{proof}
By duality, it suffices to show the lemma for $Z^+_2$ and $Z^+_4$. We first consider the case with $Z^+_2$. As before, we may show
\begin{equation}
\begin{aligned}
\label{proof_lemma_inhomogeneous_7_1}
&\iint_{s<t}\<\wtilde \varphi(H_h)\Op_h(a^{\com})^*\varphi (H_h)e^{-i(t-s)H_h/h}\Op_h(a^+)\wtilde \varphi(H_h)F(s),G(t)\>dsdt\\
&\lesssim h^{-1}\norm{F}_{L^2_tL^{2_*}_x  }\norm{G}_{L^2_tL^{2_*}_x  }.
\end{aligned}
\end{equation}
Decompose $a^{+}=\wtilde a^{+}+b^{\com}$ with $a^{+}\in S^{0,-\infty},b^{\com}\in S^{-\infty,-\infty}$ satisfying $\supp a^+\subset \Gamma^+(R^2,I,1/2)$ and $\supp b^{\com}\subset \{c_0<|x|<2R^2,\ |\xi|^2\in I\}$. The part associated with $b^{\com}$ can be dealt with the same argument as that for $Y_2$. Moreover, since $|x|\le 2R$ on $\supp a^{\com}$, the same argument as in the proof of \eqref{proposition_IK_7_3} yields that
\begin{align*}
&\norm{\wtilde \varphi(H_h)\Op_h(a^{\com})^*\varphi (H_h)e^{-i(t-s)H_h/h}\Op_h(\wtilde a^+)\wtilde \varphi(H_h)}_{1\to \infty}\\
&\lesssim h^{-n}\norm{\Op_h(a^{\com})^*\varphi (H_h)e^{-i(t-s)H_h/h}\Op_h(\wtilde a^+)}\\
&\lesssim h^{M-n}\<t-s\>^{-M},\quad s\le t,
\end{align*}
for all $M\ge0$ which, together with the $TT^*$-argument, implies \eqref{proof_lemma_inhomogeneous_7_1} with $a^+$ replaced by $\wtilde a^+$. This completes the proof of \eqref{proof_lemma_inhomogeneous_7_1}. 

In case of $W=Z_4^+$, taking the bound \eqref{proof_lemma_inhomogeneous_6_1_0} into account, we decompose $R_h$ as
$$
R_h=B_1\chi(x)+h^NB_2\<x\>^{-N}
$$
where $\norm{B_1}+\norm{B_2}\lesssim1$ uniformly in $h\in (0,1]$,  $\chi\in C_0^\infty(\{|x|<1\})$ and $N\gg n$. Then, using \eqref{proposition_IK_7_1} and \eqref{proposition_IK_7_2}, we similarly obtain
\begin{align*}
&\norm{\wtilde \varphi(H_h)R_h\varphi (H_h)e^{-i(t-s)H_h/h}\Op_h(\wtilde a^+)\wtilde \varphi(H_h)}_{1\to \infty}\\
&\lesssim h^{-n}\norm{\chi(x)\varphi (H_h)e^{-i(t-s)H_h/h}\Op_h(\wtilde a^+)}+h^{N-n}\norm{\<x\>^{-N}\varphi (H_h)e^{-i(t-s)H_h/h}\Op_h(\wtilde a^+)}\\
&\lesssim h^{N-2n-1}\<t-s\>^{-N/8}
\end{align*}
for $s\le t$. Choosing $N\gg n$ large enough and applying the $TT^*$-argument, we obtain \eqref{inhomogeneous_3}. 
\end{proof}

\section{Proof of Theorem \ref{theorem_2}}
\label{section_theorem_2}

In this section we prove Theorem \ref{theorem_2}. For a given self-adjoint operator $A$ on $L^2$, $U_A$ and $\Gamma_A$ denote the homogeneous and inhomogeneous propagators
$$
U_A=e^{-itA}:L^2_x\to L^\infty_t L^2_x,\quad \Gamma_A[F]=\int_0^te^{-i(t-s)A}F(s)ds:L^1_tL^2_x\to L^\infty_t L^2_x. 
$$
The following space-time weighted $L^2$-estimates play a crucial role in the proof of Theorem \ref{theorem_2}.

\begin{proposition}
\label{proposition_singular_1}
Let $\mu\in (0,2)$ and $W\in C^2(\R^n\setminus\{0\})$ satisfy
\begin{align}
\label{proposition_singular_1_1}
W(x)\gtrsim |x|^{-\mu},\quad -x\cdot \nabla W(x)\gtrsim |x|^{-\mu},\ |(x\cdot \nabla)^2W(x)|\lesssim |x|^{-\mu}.
\end{align}
Let $\chi\in C_0^\infty(\R^n)$ and $\rho(x)=\chi(x)|x|^{-\mu/2}$. Then, for $H_1=-\Delta+W(x)$,
$$
\norm{\rho e^{-itH_1}u_0}_{L^2_tL^2_x }\lesssim \norm{u_0}_{L^2},\ \norm{\rho \Gamma_{H_1}[\rho F]}_{L^2_tL^2_x }\lesssim \norm{F}_{L^2_tL^2_x }. 
$$
\end{proposition}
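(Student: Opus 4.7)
The plan is to reduce the two space-time estimates to a single uniform resolvent bound, and then to establish that bound by a positive-commutator (Mourre-type) argument. By Kato's smooth perturbation theory \cite{Kat}, applied exactly as in the proof of Proposition \ref{proposition_smoothing_1}, both estimates follow from the uniform resolvent bound
\begin{align}\label{reduced}
\sup_{z\in\C\setminus\R}\|\rho(H-z)^{-1}\rho\|<\infty.
\end{align}

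To establish \eqref{reduced} I would take as conjugate operator the generator of dilations $A=\frac12(x\cdot D+D\cdot x)$. A formal computation on $C_0^\infty(\R^n\setminus\{0\})$ yields
$$i[H,A]=-2\Delta-x\cdot\nabla V,$$
and the repulsivity hypothesis \eqref{proposition_singular_1_1} together with $-\Delta\ge 0$ gives the globally positive commutator bound
$$i[H,A]\ge c\,|x|^{-\mu}\ge c\,\rho^*\rho$$
as a quadratic form on a suitable dense subspace. Because positivity holds without any spectral or energy cutoff, a Putnam-type argument then yields \eqref{reduced}: for $f\in\S(\R^n)$ and $u=(H-z)^{-1}f$, the identity
$$\langle i[H,A]u,u\rangle=-2\,\Im\langle Au,f\rangle+2\,\Im(z)\langle Au,u\rangle$$
combined with $\|\rho u\|^2\le c^{-1}\langle i[H,A]u,u\rangle$ gives, after absorbing cross-terms, the desired bound $\|\rho u\|^2\lesssim \|f\|^2$ uniformly in $z$.

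The main technical obstacle is to make this positive-commutator argument rigorous in the presence of the unbounded $A$ and the singularity of $V$ at the origin. The standard remedy is to replace $A$ by bounded regularizations $F_R(A)$ with $F_R\to\mathrm{Id}$, carry out the analogous computation, and pass to the limit $R\to\infty$; closing this bootstrap requires control of the iterated commutator $[i[H,A],iA]=-4\Delta+(x\cdot\nabla)^2V$, and the bound $|(x\cdot\nabla)^2V|\lesssim|x|^{-\mu}$ in \eqref{proposition_singular_1_1} is precisely what makes this iterated commutator dominated by $i[H,A]$ itself. Combined with the fact that $|x|^{-\mu}$ is form-subordinate to $-\Delta$ via Hardy's inequality (for $n\ge 3$, which is why the proposition is stated in that range), this justifies the form-definition of $H$, the density of the relevant domains, and the limiting procedure, completing the proof of \eqref{reduced}.
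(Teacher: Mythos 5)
Your route is, at bottom, the proof that lies behind the paper's citation rather than a different theorem: the paper disposes of this proposition in two lines by checking that \eqref{proposition_singular_1_1} are exactly the hypotheses of \cite[Example 2.18]{BoMi2} and then quoting \cite[Corollary 2.21]{BoMi2}, and the machinery inside that reference is precisely the weakly conjugate (globally positive) commutator method with the dilation generator that you sketch: $i[H,A]=-2\Delta-x\cdot\nabla V\gtrsim |x|^{-\mu}\gtrsim\rho^*\rho$ with no energy localization, second commutator $-4\Delta+(x\cdot\nabla)^2V$ dominated by the first, and a regularization of $A$ to make everything rigorous; the reduction of both space--time bounds to a uniform resolvent estimate via Kato's theory is the same reduction the paper uses for Proposition \ref{proposition_smoothing_1}. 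So conceptually you and the paper agree; what the paper buys by outsourcing is exactly the part your sketch leaves thin. Be aware that the displayed Putnam-type argument, as literally written, does not close: from $\<i[H,A]u,u\>=-2\Im\<Au,f\>+2\Im(z)\<Au,u\>$ you cannot ``absorb'' the terms $\<Au,f\>$ and $\Im(z)\<Au,u\>$ into $\|\rho u\|^2+\|f\|^2$, since $Au$ is controlled neither by $\rho u$ nor by $f$; the uniform bound genuinely requires the full weakly conjugate apparatus (regularized conjugate operators, $B^{1/2}$-weighted resolvents with $B=i[H,A]$, and a differential-inequality or iterative scheme as in \cite{Ric} and the abstract part of \cite{BoMi2}), which is exactly what the second-commutator bound $|(x\cdot\nabla)^2V|\lesssim|x|^{-\mu}\lesssim -x\cdot\nabla V$ is designed to feed. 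Finally, your parenthetical that Hardy's inequality forces $n\ge3$ here is off the mark: the proposition carries no dimensional restriction, and since $\mu<2$ and $V\ge0$ the form definition of $H$ and the subordination of $|x|^{-\mu}$ need no Hardy inequality; the restriction $n\ge3$ in Theorem \ref{theorem_2} comes from using $\chi(x)|x|^{-\mu/2}\in L^n$ together with the endpoint Strichartz estimate, as the paper's remark explains, not from this resolvent bound.
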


\begin{proof}
Under the above conditions, $W$ fulfills the conditions in \cite[Example 2.18]{BoMi2}. Then the result is a consequence of \cite[Corollary 2.21]{BoMi2}. 
\end{proof}

\begin{proof}[Proof of Theorem \ref{theorem_2}]
Let $M_\ell =\norm{|x|^\mu (x\cdot \nabla)^\ell V_S}_\infty$ for $\ell=0,1$ and assume $M_\ell>0$ without loss of generality. It is easy to see that if we choose
$
\ep_*=\min(Z/M_0,\ \mu Z/M_1),
$
then $W(x):=Z|x|^{-\mu}+\ep V_S(x)$ fulfills \eqref{proposition_singular_1_1} provided $\ep\in [0,\ep_*)$.   Choose $\chi\in C_0^\infty(\R^n)$ such that $0\le \chi\le1$ and $\chi\equiv1$ on a unit ball, and decompose $Z|x|^{-\mu}+\ep V_S(x)=V_1(x)+V_2(x)$ where 
$$
V_1(x)=\chi(x)+(1-\chi(x))(Z|x|^{-\mu}+\ep V_S(x)),\quad V_2(x)=\chi(x)(Z|x|^{-\mu}+\ep V_S(x)-1).
$$
It is easy to see that $V_1$ satisfies (H1)--(H3) and $|V_2(x)|\lesssim |\chi(x)||x|^{-\mu}$. 
Let $H_1=H+Z|x|^{-\mu}+\ep V_S(x)$ and $H=-\Delta+V_1(x)$. Decompose $V_2=v_1v_2$ with $v_1=|V_2|^{1/2}$ and $v_2=v_1\sgn V_2$. Note that $|v_1|,|v_2|\lesssim |\chi(x)||x|^{-\mu/2}$ and thus $v_1,v_2\in L^n$ since $\mu<2$. 

Now we use a perturbation method, originated from Rodnianski-Schlag \cite[Section 4]{RoSc} and extended by \cite{BPST2,BoMi2}, which is based on Duhamel's formulas (See \cite[Proposition 4.4]{BoMi2})
\begin{align}
\label{proof_theorem_2_1}
U_{H_1}u_0&=U_Hu_0-i\Gamma_H[V_2U_{H_1}u_0]\\
\label{proof_theorem_2_2}
\Gamma_{H_1}[F]&=\Gamma_H[F]-i\Gamma_H[V_2\Gamma_{H_1}[F]],\\
\label{proof_theorem_2_3}
\Gamma_{H_1}[F]&=\Gamma_H[F]+i\Gamma_{H_1}[V_2\Gamma_H[F]].
\end{align}
We may prove the theorem in the endpoint case only: the other cases follow from complex interpolation. By \eqref{proof_theorem_2_1}, \eqref{proof_theorem_2_2} and Theorem \ref{theorem_1}, it suffices to deal with the inhomogeneous terms $\Gamma_HV_2U_{H_1}$ and $\Gamma_HV_2\Gamma_{H_1}$. By Theorem \ref{theorem_1}, H\"older's inequality and Proposition \ref{proposition_singular_1}, 
\begin{align*}
\norm{\Gamma_H[V_2U_{H_1}u_0]}_{L^2_tL^{2^*}_x  }\lesssim\norm{\Gamma_H}_{L^2_tL^{2_*}_x  \to L^2_tL^{2^*}_x  }\norm{v_1}_{L^n}\norm{v_2U_{H_1}u_0}_{L^2_tL^2_x }\lesssim \norm{u_0}_{L^2}
\end{align*}
which gives us the desired estimate for $\Gamma_HV_2U_{H_1}$. We similarly have
$$
\norm{\Gamma_H[V_2\Gamma_{H_1}[F]]}_{L^2_tL^{2^*}_x  }\lesssim \norm{v_2\Gamma_{H_1}[F]}_{L^2_tL^2_x }.
$$
Then, using \eqref{proof_theorem_2_3}, Theorem \ref{theorem_1}, H\"older's inequality and Proposition \ref{proposition_singular_1}, we see that
\begin{align*}
\norm{v_2\Gamma_{H_1}[F]}_{L^2_tL^2_x }
&\le \norm{v_2\Gamma_H[F]}_{L^2_tL^2_x }+\norm{v_2\Gamma_{H_1}[V_2\Gamma_H[F]]}_{L^2_tL^2_x }\\
&\lesssim (\norm{v_2}_{L^n}+\norm{v_2\Gamma_{H_1}v_1}_{L^2_tL^2_x \to L^2_tL^2_x }\norm{v_2}_{L^n})\norm{\Gamma_H[F]}_{L^2_tL^{2^*}_x  }\\
&\lesssim \norm{F}_{L^2_tL^{2_*}_x  }
\end{align*}
which implies the double endpoint estimate for $\Gamma_HV_2\Gamma_{H_1}$. This completes the proof. 
\end{proof}

\appendix

\section{Proof of the resolvent bound \eqref{proof_proposition_smoothing_2_1}}
\label{appendix_A}
Here we give the proof of \eqref{proof_proposition_smoothing_2_1}. Let $I\Subset (0,\infty)$ be a compact interval such that $\supp \varphi\subset I$ and set $I_\pm=\{z\ |\ \Re z\in I, 0\le \pm \Im z\le 1\}$. When $z\in (I_+\cup I_-)^c$, the spectral theorem implies
\begin{align*}
&\sup_{h\in (0,1]}\sup_{z\in (I_+\cup I_-)^c}\norm{\<x\>^{-\gamma}\varphi(H_h  )(H_h  -z)^{-N}\varphi(H_h  )\<x\>^{-\gamma}}\\
&\lesssim \sup_{(z,\rho)\in (I_+\cup I_-)^c\times \supp\varphi}|\rho-z|^{-N}<\infty.
\end{align*}
To deal with the case when $z\in I_\pm$, we first show that
\begin{align}
\label{proof_proposition_smoothing_2_2}
\sup_{z\in I_\pm}\norm{\<A_h\>^{-\gamma}(H_h  -z)^{-N}\<A_h\>^{-\gamma}}\le C h^{-N},\quad h\in (0,1],
\end{align}
with some $C>0$ independent of $h$, where $A_h=hA$ and $A=-\frac{i}{2}(x\cdot\nabla+\nabla\cdot x)$ is the generator of the dilation $\D$. The proof of \eqref{proof_proposition_smoothing_2_2} is based on a semiclassical version \cite[Theorem 1]{Nak1} of the multiple commutator method by \cite{JMP}. At first, by decomposing $I$ into finitely many intervals, we may assume that $|I|$ is sufficiently small. We also note that, for each fixed $h_0>0$, there exists $C=C(h_0)$ such that \eqref{proof_proposition_smoothing_2_2} with $h\in [h_0,1]$ holds true. Indeed, we obtain by \eqref{dilation_3} that
$$
\<A_h\>^{-\gamma}(H_h  -z)^{-N}\<A_h\>^{-\gamma}=\D_\mu^*\<A_h\>^{-\gamma}(\lambda^{-2}H-z)^{-N}\<A_h\>^{-\gamma}\D_\mu
$$
which, combined with the fact $\<A_h\>^{-\gamma}\lesssim h^{-\gamma}\<A\>^{-\gamma}$ as a bounded operator, implies
$$
\norm{\<A_h\>^{-\gamma}(H_h  -z)^{-N}\<A_h\>^{-\gamma}}\lesssim h^{-2\gamma}\lambda^{2N}\norm{\<A\>^{-\gamma}(H-\lambda^2z)^{-N}\<A\>^{-\gamma}}.
$$
Let $I=[a,b]\Subset (0,\infty)$ and $\lambda_0\in (0,1]$ be such that $h_0=\lambda_0^{2/\mu-1}$. Since 
$$
\lambda^2I_\pm\subset \{Z\ |\ \Re Z\in [\lambda_0^2a,b],\ 0\le \pm \Im Z\le 1\}=:\tilde I^\pm,\quad \lambda \in [\lambda_0,1],
$$
the proof for $h\in [h_0,1]$ is reduced to that of the uniform bound for $\norm{\<A\>^{-\gamma}(H-Z)^{-N}\<A\>^{-\gamma}}$ with respect to $Z\in \tilde I^\pm$. It is easy to see that $H$ satisfies the hypothesis {\it a})--{\it e}) in \cite[Definition 2.1]{JMP} (see \cite[Section 5]{JMP}). Taking the fact that $H$ has no eigenvalue into account, we thus can apply \cite[Theorem 2.2]{JMP} to obtain
$$
\sup_{Z\in \tilde I_\pm}\norm{\<A\>^{-\gamma}(H-Z)^{-N}\<A\>^{-\gamma}}\le C(\lambda_0)
$$
with some $C(\lambda_0)>0$. This implies \eqref{proof_proposition_smoothing_2_2} with $C=h_0^{N-2\gamma}\lambda_0^{2N}C(\lambda_0)$ for $h\in [h_0,1]$. 

Next, we shall check that  all of hypothesis (H1)--(H3) and (H5) in \cite[Section 2]{Nak1} are satisfied for the pair $(H_h,A_h)$ and $h\in (0,h_0]$ with sufficiently small $h_0$. Note that the hypothesis (H4) in \cite[Section 2]{Nak1} is not necessary in the present case (see \cite[Remark after Theorem 1]{Nak1}). At first, $D(A_h)\cap D(H_h)$ is dense in $D(H_h)$ since $C_0^\infty(\R^n)\subset D(A_h)$ and $D(H_h)=\H^2$. By Lemma \ref{lemma_dilation_1}, 
\begin{align}
\label{proof_proposition_smoothing_2_2_0}
|(x\cdot \nabla)^jV_h(x)|\le C_j(\lambda^{2/\mu}+|x|)^{-\mu}\le C_jV_h(x),\quad x\in \R^n,\quad j=0,1,...
\end{align}
with some $C_j>0$. 
Therefore, if we define $B_j$ inductively by $B_0=H_h$ and
\begin{align*}
B_j=[B_{j-1},iA_h]=h^j(-2^jh^2\Delta-(x\cdot\nabla)^j V_h),\quad j\in \N,
\end{align*}
then $B_j$ are well-defined as forms on $D(A_h)\cap D(H_h)$ and extended to bounded operators from $D(H_h)$ to $L^2$. Thus $H_h$ is smooth with respect to $A_h$ in the sense of \cite{JMP}. Since $B_j\lesssim h^jH_h$ by the above computations, it is also easy to see that $\norm{B_j(H_h+i)^{-1}}\le C_jh^j$. This proves (H1)--(H3) in \cite[Section 2]{Nak1}. Next, choosing $\chi\in C_0^\infty(\R^n)$ satisfying $0\le \chi\le1$ and $\chi\equiv1$ on $\{|x|<R_0\}$, we know by \eqref{lemma_dilation_1_4}--\eqref{lemma_dilation_1_6} and \eqref{proof_proposition_smoothing_2_2_0} that  there exists constants $a_0,a_1>0$ such that
\begin{align*}
[H_h,iA_h]&=h(-2h^2\Delta -x\cdot\nabla V_h)\\
&\ge h a_0H_h-ha_1\chi(\lambda^{-2/\mu}x)V_h\\
&\ge h(a_0-a_1\chi(\lambda^{-2/\mu}x) )H_h.
\end{align*}
Here we claim that there exists $h_0>0$  such that
\begin{align}
\label{proof_proposition_smoothing_2_2_1}
\norm{E_{H_h}(I)\chi(\lambda^{-2/\mu} x)H_hE_{H_h}(I)}\le \frac{a_0}{2a_1}
\end{align}
for all $h\in (0,h_0]$, where $E_{H_h}$ is the spectral measure for $H_h$.  We postpone the proof of this claim to Lemma \ref{lemma_A_1} below. It follows from this claim that semiclassical strict Mourre's inequality
\begin{align}
E_{H_h}(I)[H_h  ,iA_h]E_{H_h}(I)\gtrsim hE_{H_h}(I)
\end{align}
holds uniformly in $h\in (0,h_0]$. Hence (H1)--(H3) and (H5)  in \cite[Section 2]{Nak1} are satisfied for $(H_h,A_h)$ and we can apply \cite[Theorem 3]{Nak1} obtaining \eqref{proof_proposition_smoothing_2_2} for $h\in (0,h_0]$. 

To pass \eqref{proof_proposition_smoothing_2_1} from \eqref{proof_proposition_smoothing_2_2}, it remains to show the following uniform bound:  
\begin{align}
\label{proof_proposition_smoothing_2_3}
\norm{\<x\>^{-\gamma}\varphi(H_h)\<A_h\>^\gamma}\lesssim1,\quad h=\lambda^{2/\mu-1}\in (0,1].
\end{align}
We first prove as an intermediate step that
\begin{align}
\label{proof_proposition_smoothing_2_4}
\norm{\<x\>^{-\gamma}\<A_h\>^{2\gamma}\varphi(H_h)\<x\>^{-\gamma}}\lesssim h^{-4\gamma}.
\end{align}
To this end, using \eqref{dilation_2} and the fact that $A_h$ commutes with $\D_\mu$, we see that
\begin{align}
\label{proof_proposition_smoothing_2_5}
\norm{\<x\>^{-\gamma}\<A_h\>^{2\gamma}\varphi(H_h)\<x\>^{-\gamma}}=\norm{\<\lambda^{2/\mu}x\>^{-\gamma}\<A_h\>^{2\gamma}\varphi(\lambda^{-2}H)\<\lambda^{2/\mu}x\>^{-\gamma}}.
\end{align}
Since $\<\lambda^{2/\mu}x\>^{-1}\le \lambda^{-2/\mu}\<x\>^{-1}$ for $\lambda\in (0,1]$ and $\norm{\<A_h\>^\gamma\<A\>^{-\gamma}}\le 1$ for all $h\in (0,1]$ by the spectral theorem, 
the right hand side of \eqref{proof_proposition_smoothing_2_5} is dominated by 
$\lambda^{-4\gamma/\mu}\norm{\<x\>^{-\gamma}\<A\>^{2\gamma}\varphi(\lambda^{-2}H)\<x\>^{-\gamma}}$. 
To deal with this term, we take $\wtilde \varphi\in C_0^\infty(\R)$ so that $\wtilde \varphi\equiv1$ on $(-1,\sup(|\supp\varphi|)+1)$ and write
$$
\<x\>^{-\gamma}\<A\>^{2\gamma}\varphi(\lambda^{-2}H)\<x\>^{-\gamma}=\<x\>^{-\gamma}\<A\>^{2\gamma}\wtilde \varphi(H)\<x\>^{-\gamma}\<x\>^{\gamma}\varphi(\lambda^{-2}H)\<x\>^{-\gamma}
$$
where we have used the fact $\wtilde \varphi(x)\varphi(\lambda^{-2}x)=\varphi(\lambda^{-2}x)$ for $x\in \R$, $0<\lambda\le1$. 
By the same proof as that of Lemma \ref{lemma_functional_1}, we have $\norm{\<x\>^{\gamma}\varphi(\lambda^{-2}H)\<x\>^{-\gamma}}\lesssim \lambda^{-2}$. Moreover, by Remark \ref{remark_functional_1}, $\wtilde \varphi(H)\<x\>^{-\gamma}=\Op(a_\gamma)+Q_\gamma$ with some $a_\gamma\in S^{-N,-\infty}$ and $Q_\gamma$ satisfying $\<x\>^{\gamma}\<D\>^{2\gamma}Q_N\in \mathbb B(L^2)$. Since $\<x\>^{-\gamma}\<A\>^{2\gamma}\<D\>^{-2\gamma}\<x\>^{-\gamma}\in \mathbb B(L^2)$ by Proposition \ref{proposition_PDO_1} and complex interpolation, we thus conclude that 
\begin{align*}
\norm{\<x\>^{-\gamma}\<A\>^{2\gamma}\varphi(\lambda^{-2}H)\<x\>^{-\gamma}}\lesssim \lambda^{-4\gamma/\mu-2},
\end{align*}
which implies \eqref{proof_proposition_smoothing_2_4}. Next we shall show
$$
\sup_{h\in (0,1]}\norm{\<x\>^{-\gamma}\varphi(H_h)\<A_h\>^{2\gamma}\varphi(H_h)\<x\>^{-\gamma}}<\infty
$$
which implies \eqref{proof_proposition_smoothing_2_3}. 
We decompose the operator in the left hand side into two parts
$$
\<x\>^{-\gamma}\varphi(H_h)\chi_R(x)\<A_h\>^{2\gamma}\varphi(H_h)\<x\>^{-\gamma},\quad
\<x\>^{-\gamma}\varphi(H_h)(1-\chi_R(x))\<A_h\>^{2\gamma}\varphi(H_h)\<x\>^{-\gamma},
$$
where $\chi_R$ is as in Section \ref{section_homogeneous}. For the first term, \cite[Theorem 1.1 and (2.4)]{Nak2} implies
$$
\norm{\<x\>^{-\gamma}\varphi(H_h)\chi_R\<x\>^\gamma}\le C_{M,R}h^M
$$
for any $M\in \N$ if $R>0$ is small enough. Choosing $M\ge 4\gamma$, we thus learn by \eqref{proof_proposition_smoothing_2_4} that
$$
\norm{\<x\>^{-\gamma}\varphi(H_h)\chi_R(x)\<A_h\>^{2\gamma}\varphi(H_h)\<x\>^{-\gamma}}\lesssim1.
$$ 
On the other hand, Proposition \ref{proposition_functional_3} yields that 
$
\<x\>^{-\gamma}\varphi(H_h)(1-\chi_R)=\Op(a_h)+Q_h$ with some $a_h\in S^{-\gamma,-\infty}$ and $Q_h$ satisfying $\norm{Q_h\<x\>^{2\gamma}\<hD\>^{2\gamma}}\lesssim h^{2\gamma}
$
for each $R>0$. Therefore, 
\begin{align*}
&\norm{\<x\>^{-\gamma}\varphi(H_h)(1-\chi_R(x))\<A_h\>^{2\gamma}\varphi(H_h)\<x\>^{-\gamma}}\\
&\le\norm{\Op(a_h)\<A_h\>^{2\gamma}\<x\>^{-\gamma}}\norm{\<x\>^\gamma\varphi(H_h)\<x\>^{-\gamma}}+\norm{Q_h\<A_h\>^{2\gamma}}\norm{\varphi(H_h)\<x\>^{-\gamma}}\lesssim 1,
\end{align*}
which completes the proof of \eqref{proof_proposition_smoothing_2_3}. \qed

\begin{lemma}
\label{lemma_A_1}
For sufficiently small $h_0>0$, \eqref{proof_proposition_smoothing_2_2_1} holds for all $h\in (0,h_0]$. 
\end{lemma}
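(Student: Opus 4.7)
The plan is to show the bound directly by exploiting the fact that $V_h$ is extremely large on the support of $\chi(\lambda^{-2/\mu}x)$, which forces any state in the range of $E_{H_h}(I)$ to have vanishingly small mass there as $h\to 0$. Since $0<\mu<2$, the scaling $\lambda=h^{\mu/(2-\mu)}$ tends to $0$ with $h$, and the support of $\chi(\lambda^{-2/\mu}\cdot)$ shrinks to the origin. Because the lower bound \eqref{lemma_dilation_1_5} gives $V_h(x)\gtrsim\lambda^{-2}$ there, while elements of $\Range E_{H_h}(I)$ satisfy $\langle V_h f,f\rangle\le\langle H_h f,f\rangle\lesssim 1$, such states cannot concentrate in that shrinking region.

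Concretely, let $\chi_1(x):=\chi(\lambda^{-2/\mu}x)$ and fix $R_1$ with $\supp\chi\subset\{|y|\le R_1\}$, so that $\supp\chi_1\subset\{|x|\le R_1\lambda^{2/\mu}\}$. On this set \eqref{lemma_dilation_1_5} yields
\[
V_h(x)\ge C_1(\lambda^{2/\mu}+|x|)^{-\mu}\ge C_1(1+R_1)^{-\mu}\lambda^{-2}=:c_0\lambda^{-2},
\]
so the pointwise inequality $\chi_1(x)\le c_0^{-1}\lambda^2\,V_h(x)\chi_1(x)$ holds on $\R^n$. I will then show that for every $f\in\Range E_{H_h}(I)\subset D(H_h)\subset D(V_h^{1/2})$,
\[
\langle\chi_1 f,f\rangle\le c_0^{-1}\lambda^2\langle V_h\chi_1 f,f\rangle\le c_0^{-1}\lambda^2\langle V_h f,f\rangle\le c_0^{-1}\lambda^2\langle H_h f,f\rangle\le c_0^{-1}\lambda^2 b\,\|f\|^2,
\]
where $b:=\sup I$, using $0\le\chi_1\le 1$, $V_h\ge 0$, and the form inequality $V_h\le H_h$ (coming from $-h^2\Delta\ge 0$). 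Since $0\le\chi_1\le 1$ gives $\|\chi_1 f\|^2\le\langle\chi_1 f,f\rangle$, this produces the key operator-norm bound
\[
\|\chi_1 E_{H_h}(I)\|\le\sqrt{b/c_0}\,\lambda.
\]

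To finish, I will factor the operator of interest as $(E_{H_h}(I)\chi_1)\cdot(H_h E_{H_h}(I))$, use the obvious bound $\|H_h E_{H_h}(I)\|\le b$ from the spectral theorem, and take the adjoint of the estimate just derived:
\[
\|E_{H_h}(I)\chi_1 H_h E_{H_h}(I)\|\le\|E_{H_h}(I)\chi_1\|\cdot\|H_h E_{H_h}(I)\|\le b^{3/2}c_0^{-1/2}\,\lambda.
\]
Since $\lambda=h^{\mu/(2-\mu)}\to 0$ as $h\to 0^+$, choosing $h_0$ small enough that the right-hand side is at most $a_0/(2a_1)$ yields \eqref{proof_proposition_smoothing_2_2_1} for all $h\in(0,h_0]$.

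There is no genuine obstacle here; the calculation is essentially immediate once the scaling identity $V_h\gtrsim\lambda^{-2}$ on $\supp\chi_1$ is written down. The only mild subtlety is the form-domain bookkeeping ensuring $\Range E_{H_h}(I)\subset D(V_h^{1/2})$ so that $\langle V_h f,f\rangle$ makes sense and is controlled by $\langle H_h f,f\rangle$; this follows from the definition of $H_h$ via the non-negative quadratic form $\langle(-h^2\Delta+V_h)f,f\rangle$ on $C_0^\infty(\R^n)$ and the closure argument, since $V_h^{1/2}$ is then form-bounded by $H_h^{1/2}$.
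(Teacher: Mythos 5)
Your argument is correct: on $\supp\chi(\lambda^{-2/\mu}\cdot)\subset\{|x|\lesssim\lambda^{2/\mu}\}$ the rescaled lower bound \eqref{lemma_dilation_1_5} indeed gives $V_h\gtrsim\lambda^{-2}$, the chain $\langle\chi_1 f,f\rangle\le c_0^{-1}\lambda^2\langle V_hf,f\rangle\le c_0^{-1}\lambda^2\langle H_hf,f\rangle\le c_0^{-1}\lambda^2 b\|f\|^2$ is valid for $f\in\Range E_{H_h}(I)$ (there is no domain issue, since under (H1) $V_h$ is bounded for each fixed $h$ and $D(H_h)=\H^2$), and the factorization $\|E_{H_h}(I)\chi_1H_hE_{H_h}(I)\|\le\|\chi_1E_{H_h}(I)\|\,\|H_hE_{H_h}(I)\|\lesssim\lambda$ with $\lambda=h^{\mu/(2-\mu)}\to0$ gives \eqref{proof_proposition_smoothing_2_2_1} for small $h_0$. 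This is, however, a genuinely different route from the paper's. The paper undoes the scaling via \eqref{dilation_3}, writing $E_{H_h}(I)\chi(\lambda^{-2/\mu}x)=\D_\mu^*\mathds1_{\lambda^2I}(H)\chi(x)\D_\mu$, and then deduces $\norm{\mathds1_{\lambda^2I}(H)\chi(x)}\to0$ from the compactness of $\mathds1_J(H)\chi(x)$ together with the strong convergence $\mathds1_{\lambda^2I}(H)\to0$; this is a soft, qualitative argument with no rate, and it does not use the positivity hypothesis (H2) at all (only that $\inf I>0$, so the shrinking intervals $\lambda^2 I$ stay away from any fixed spectral point). Your proof, by contrast, leans on the repulsivity lower bound (H2) — available here anyway — and in exchange is purely elementary (no compactness, no dilation identity) and quantitative, yielding the explicit bound $\norm{E_{H_h}(I)\chi(\lambda^{-2/\mu}x)H_hE_{H_h}(I)}\le b^{3/2}c_0^{-1/2}h^{\mu/(2-\mu)}$, which is stronger than what the lemma requires.
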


\begin{proof}By the spectral theorem, $\norm{H_hE_{H_h}(I)}\le |\sup I|\lesssim1$ uniformly in $h\in (0,1]$. Moreover, \eqref{dilation_3} implies $E_{H_h}(I)\chi(\lambda^{-2/\mu} x)=\D_\mu^* E_{\lambda^{-2}H}(I)\chi(x)\D_\mu$ where $h=\lambda^{2/\mu-1}$. Since $\mathds 1_I(\lambda^{-2}x)=\mathds1_{\lambda^2I}(x)$, we also have $E_{\lambda^{-2}H}(I)=\mathds1_{\lambda^2 I}(H)$. 
Therefore, it suffices to show $\norm{\mathds1_{\lambda^2I}(H)\chi(x)}\to0$ as $\lambda\to0$. 
Taking into account the formula $\mathds1_{\lambda^2I}(H)\chi(x)=\mathds1_{\lambda^2I}(H)\mathds1_{J}(H)\chi(x)$ for $0<\lambda\le1$ where $J=(0,\sup|I|)$, we see that this norm convergence follows from the facts that $\mathds1_{\lambda^2I}(H)\to 0$ strongly as $\lambda\to 0$ since $|\lambda^2I|\le \lambda^2\sup|I|$ and $\mathds1_{J}(H)\chi(x)$ is compact. 
\end{proof}


\end{document}